\newtheorem{theorem}{Theorem} \rm
\newtheorem{lemma}[theorem]{Lemma}
\newtheorem{corollary}[theorem]{Corollary}
\newtheorem{conjecture}[theorem]{Conjecture}
\newtheorem{remark}[theorem]{Remark}
\newtheorem{question}[theorem]{Question}
\theoremstyle{plain}
\title{The square of a subcubic planar graph without a 5-cycle is 7-choosable}
\author
{
Seog-Jin Kim$^{\rm a}$,
Xiaopan Lian$^{\rm b}$,
Atsuhiro Nakamoto$^{\rm c}$,
Kenta Ozeki$^{\rm c}$,
\\
{\footnotesize$^{\rm a}$Department of Mathematics Education, Konkuk University, Korea}\\
{\footnotesize$^{\rm a}$Center for Combinatorics and LPMC, Nankai University, China}\\
{\footnotesize$^{\rm c}$Graduate school of Environmental and Information Science, Yokohama National University, Japan}\\
}
\begin{document}

\maketitle

\begin{abstract}
The square of a graph $G$, denoted $G^2$, has the
same vertex set as $G$ and has an edge between two vertices if the distance between
them in $G$ is at most $2$.
Thomassen \cite{Thomassen} showed that $\chi(G^2) \leq 7$ if $G$ is a subcubic planar graph.
A natural question is whether $\chi_{\ell}(G^2) \leq 7$ or not if $G$ is a subcubic planar graph.  Recently Kim and Lian \cite{KL23} showed that $\chi_{\ell}(G^2) \leq 7$ if $G$ is a subcubic planar graph of girth  at least 6. And Jin, Kang, and Kim \cite{JKK} showed that $\chi_{\ell}(G^2) \leq 7$ if $G$ is a subcubic planar graph without 4-cycles and 5-cycles.
In this paper, we show that the square of a subcubic planar graph without 5-cycles is 7-choosable, which improves the results of \cite{JKK} and \cite{KL23}.
\end{abstract}
\noindent\textbf{Key words.} planar graph, list coloring, square of graph, Combinatorial Nullstellensatz

\tableofcontents

\section{Introduction}
The {\em square} of a graph $G$, denoted $G^2$, has the
same vertex set as $G$ and has an edge between two vertices if the distance between
them in $G$ is at most $2$.
We say a graph $G$ is  {\em subcubic} if $\Delta(G) \leq 3$, where $\Delta(G)$ is the maximum degree in $G$.  The  {\em girth} of $G$, denoted $g(G)$, is the size of smallest cycle in $G$.
Let $\chi(G)$ be the  chromatic number of a graph $G$.

Wegner \cite{Wegner} posed the following conjecture.

\begin{conjecture}\cite{Wegner} \label{conj-Wegner}
Let $G$ be a planar graph. The chromatic number
$\chi(G^2)$ of $G^2$ is at most 7 if $\Delta(G) = 3$,
at most $\Delta(G)+5$ if $4 \leq \Delta(G) \leq 7$, and at most $\lfloor \frac{3 \Delta(G)}{2} \rfloor$ if $\Delta(G) \geq 8$.
\end{conjecture}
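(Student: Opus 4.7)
Wegner's conjecture is a long-standing open problem that splits into three regimes, $\Delta(G)=3$, $4\leq\Delta(G)\leq 7$, and $\Delta(G)\geq 8$, with different optimal bounds and different techniques naturally adapted to each; my plan is to attack the three regimes separately and then assemble them.

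The subcubic case $\Delta=3$ is already resolved: Thomassen's theorem, cited in the introduction, gives $\chi(G^2)\leq 7$ for every planar subcubic $G$, so for that regime my contribution would reduce to quoting that result (and, as a side remark, noting that the main theorem of the present paper strengthens it to $7$-choosability under a $5$-cycle-free assumption, which is at least consistent with a stronger list version of the conjecture).

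For the middle range $4\leq\Delta\leq 7$ I would attempt a reducible-configuration plus discharging proof for each fixed value of $\Delta$, within a shared template. Take a minimum counterexample $G$ and a colouring (or list assignment) witnessing the violation of $\chi(G^2)\leq\Delta+5$; build a catalogue of reducible configurations -- short faces in various local contexts, chains of low-degree vertices, and pairs of adjacent short faces -- using the deletion-plus-extension template, with the Combinatorial Nullstellensatz (prominent in this paper's keywords) filling the gap whenever greedy extension is one colour short. Initial charges $\mu(v)=d(v)-4$ and $\mu(f)=d(f)-4$ satisfy $\sum\mu=-8$ by Euler's formula; discharging rules must send positive charge from long faces and $\Delta$-vertices to short faces and low-degree vertices so that, in the absence of every reducible configuration, final charges are everywhere non-negative, producing the desired contradiction. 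The template is standard, but the case analysis already expands sharply at $\Delta=4,5$ and has resisted completion for $\Delta=6,7$.

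For $\Delta\geq 8$ the target $\lfloor 3\Delta/2\rfloor$ is currently known only in the asymptotic form $(1+o(1))\tfrac{3\Delta}{2}$ of Havet, van den Heuvel, McDiarmid, and Reed. My proposal would be to prove a new structural separator theorem for $G^2$, exploiting the planarity of $G$ to exhibit a separator whose removal leaves pieces of controlled clique number, colour each piece greedily using the planar bound $\omega(G^2)\leq\lfloor 3\Delta/2\rfloor+1$, and then patch the separator by a nibble or local-lemma argument. The main obstacle, unambiguously, is removing the $o(1)$ slack to hit $\lfloor 3\Delta/2\rfloor$ exactly for every $\Delta\geq 8$: this has been the barrier for decades, and I would expect a complete proof to require a genuinely new structural theorem about squares of planar graphs, proven as the technical heart of the paper. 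Even the middle range is not fully resolved today, so the bulk of the effort in my plan is split between (i) pushing the discharging method past its current limits at $\Delta=6,7$, and (ii) finding the hypothetical separator / sparsity theorem that would close the large-$\Delta$ gap.
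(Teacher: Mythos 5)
This statement is Wegner's Conjecture, which the paper does not prove: it is quoted from \cite{Wegner} precisely as an open problem, and the paper states explicitly that the only settled case is $\Delta(G)=3$ (via Thomassen's theorem, and independently Hartke, Jahanbekam and Thomas). So there is no ``paper's own proof'' to compare against, and your proposal should not be read as a proof either --- by your own account it establishes nothing beyond the already-known subcubic case. The regime $4\leq\Delta\leq 7$ is left as a discharging template whose case analysis you concede ``has resisted completion,'' and the regime $\Delta\geq 8$ rests on a ``hypothetical separator / sparsity theorem'' that you acknowledge does not yet exist. A plan that names the obstacles is not an argument that overcomes them: the gap is the entire content of the conjecture for $\Delta\geq 4$.

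Two further concrete issues. First, for $4\leq\Delta\leq 7$ your charge assignment $\mu(v)=d(v)-4$, $\mu(f)=d(f)-4$ with total $-8$ is fine as bookkeeping, but you give no reducible configurations and no discharging rules, so nothing can be checked; note that the present paper's $\Delta=3$ argument already requires roughly ten reducible configurations ($F_1$--$F_4$, $H_1$--$H_6$), several of which need the Combinatorial Nullstellensatz with machine-verified coefficients, and the difficulty only grows with $\Delta$. Second, for $\Delta\geq 8$ the clique-number bound you invoke, $\omega(G^2)\leq\lfloor 3\Delta/2\rfloor+1$, even if granted, does not by itself let you ``colour each piece greedily'' within $\lfloor 3\Delta/2\rfloor$ colours --- greedy colouring is controlled by degeneracy, not clique number --- so the patching step is not merely incomplete but structurally unsupported. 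If your goal is to contribute something verifiable in the spirit of this paper, the realistic target is the $\Delta=3$ list version (Question~\ref{CK-question}), toward which Theorem~\ref{main-thm} is a partial step, not the full conjecture.
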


Conjecture \ref{conj-Wegner} is still wide open. The only case for which the answer is known is when $\Delta(G) = 3$.
Thomassen \cite{Thomassen} showed that $\chi(G^2) \leq 7$ if $G$ is a planar graph with $\Delta(G)  = 3$, which implies that Conjecture \ref{conj-Wegner} is true for $\Delta(G)  = 3$. Conjecture \ref{conj-Wegner} for $\Delta(G)  = 3$ is also confirmed by
Hartke, Jahanbekam and Thomas \cite{Hartke}.
Many results were obtained with conditions on $\Delta(G)$.
One may see  a detailed story on the study of Wegner's conjecture in \cite{Cranston22}.

A list assignment for a graph    $G$   is a function $L$ that assigns each vertex a list of
available colors. The graph is {\em $L$-colorable} if it has a proper coloring $f$ such that
$f (v) \in L(v)$ for all $v$.  If $G$ is $L$-colorable whenever
all lists have size $k$, then it is called {\em $k$-choosable}.  The {\em list chromatic number $\chi_{\ell}(G)$} is the minimum $k$ such that $G$
is $k$-choosable.

Since it was known in \cite{Thomassen} that $\chi(G^2) \leq 7$ if $G$ is a subcubic planar graph, the following natural question was raised in \cite{CK} and \cite {Havet}, independently.

\begin{question} \cite{CK,Havet} \label{CK-question}
Is it true that  $\chi_{\ell} (G^2) \leq 7$ if $G$ is a subcubic planar graph?
\end{question}

Considering the Thomassen's proof in \cite{Thomassen},
it seems difficult  to answer Question \ref{CK-question} completely if
the answer of the question is positive.

For general upper bound on $\chi_{\ell}(G^2)$ for a subcubic graph $G$,
Cranston and Kim \cite{CK} proved that $\chi_{\ell} (G^2) \leq 8$ if $G$ is a connected graph (not necessarily planar) with $\Delta(G) = 3$ and if $G$ is not the Petersen graph. To the direction of Question \ref{CK-question}, Cranston and Kim \cite{CK} proved that $\chi_{\ell} (G^2) \leq 7$ if $G$ is a  subcubic planar graph with $g(G) \geq 7$.  Recently, Kim and Lian \cite{KL23} made an interesting progress by showing that
that $\chi_{\ell} (G^2) \leq 7$ if $G$ is a  subcubic planar graph with $g(G) \geq 6$. And Jin, Kang, and Kim \cite{JKK} improved the result further by showing that  $\chi_{\ell} (G^2) \leq 7$ if $G$ is a  subcubic planar graph with 4-cycles and 5-cycles.

\medskip
In this paper, we make a big improment of previous results by showing
the following main theorem.

\begin{theorem} \label{main-thm}
If $G$ is a subcubic planar graph without 5-cycles, then $\chi_{\ell}(G^2) \leq 7$.
\end{theorem}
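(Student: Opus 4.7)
The plan is to argue by contradiction. Suppose $G$ is a counterexample to Theorem \ref{main-thm} minimizing $|V(G)|+|E(G)|$, and let $L$ be a 7-list assignment for $G^2$ that admits no proper $L$-coloring. I would first compile a family of \emph{reducible configurations}, i.e., local configurations in $G$ whose presence lets one delete or contract a small piece, invoke minimality to $L$-color the smaller graph's square, and then extend the coloring back to $G^2$. I would then combine Euler's formula with the absence of $5$-cycles in a discharging argument to show that every plane embedding of $G$ must contain one of these configurations, yielding the desired contradiction.

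For the reducibility step the main tool is the Combinatorial Nullstellensatz, as flagged in the abstract. Because $\Delta(G^2)$ can be as large as $9$ in a subcubic graph, greedy extension of a precoloring often leaves uncolored vertices whose reduced lists are barely longer than their residual degrees in the conflict graph; in that regime one writes the graph polynomial on the conflict graph among the uncolored vertices and verifies that a specific coefficient is nonzero to conclude that an extension exists. The easy reductions will cover $1$-vertices, $2$-vertices with small second neighbourhood in $G$, and certain triangles with pendants. The hard reductions will forbid specific clusters of $3$-faces and $4$-faces sharing vertices or edges, where several uncolored vertices have reduced list size equal to their conflict-graph degree and the Alon--Tarsi coefficient must be computed by hand.

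For the discharging step I would assign to each vertex $v$ the initial charge $\deg_G(v)-4$ and to each face $f$ the initial charge $|f|-4$, where $|f|$ denotes the length of $f$, so that by Euler's formula the total charge equals $-8$. Since $G$ has no $5$-cycle, every face has length $3$, $4$, or at least $6$; thus $3$-faces carry charge $-1$, $4$-faces are neutral, and faces of length at least $6$ carry charge at least $2$. The discharging rules will push charge from faces of length at least $6$, and from $3$-vertices incident to such faces, toward $3$-faces, toward $4$-faces that sit in bad positions, and toward low-degree vertices, all calibrated so that the absence of the reducible configurations forces every final charge to be nonnegative. The contradiction is then that the total charge is simultaneously $-8$ and nonnegative.

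The main obstacle I expect is the interaction between $3$-faces and $4$-faces. A $4$-face is neutral for discharging yet contributes a dense batch of $G^2$-edges, and the earlier papers \cite{KL23,JKK} avoided precisely this tension by assuming girth at least $6$ or by forbidding $4$-cycles entirely. With both $3$- and $4$-faces now permitted, intricate clusters can form around low-degree vertices, and the family of reducible configurations must be rich enough that every such cluster is either reducible or cleanly absorbed by the discharging rules. Pinning down the right list of configurations, and verifying the Nullstellensatz coefficient for each of the hardest reductions, is where I expect most of the technical work to lie.
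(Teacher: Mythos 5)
What you have written is a strategy outline, not a proof: it correctly identifies the architecture the paper actually uses (minimal counterexample, reducible configurations extended via greedy colouring and the Combinatorial Nullstellensatz, then discharging from Euler's formula), but every step that carries the actual difficulty of the theorem is deferred. You do not name a single concrete reducible configuration beyond "$1$-vertices, $2$-vertices, certain triangles with pendants, clusters of $3$- and $4$-faces," you do not state any discharging rule, and you explicitly acknowledge that "pinning down the right list of configurations" and "verifying the Nullstellensatz coefficient" remain to be done. That is precisely where the theorem lives. The paper's proof requires a specific catalogue ($F_1$--$F_4$, $H_1$--$H_6$, plus auxiliary subgraphs $J_1$--$J_8$), each with its own case analysis over how the configuration can embed in $G^2$ (induced or not, extra edges, common external neighbours), and several of the reducibility proofs ($H_3$, $H_5$, $H_6$) hinge on hand-verified nonzero coefficients of degree-30 graph polynomials. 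None of this can be taken on faith; without it the argument does not close.

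One concrete point on the discharging itself: your normalization $\omega(v)=d(v)-4$, $\omega(f)=d(f)-4$ makes every $3$-vertex carry charge $-1$ and makes $4$-faces neutral, so your rules must feed charge into essentially all vertices of a cubic graph as well as into $3$-faces. The paper instead uses $\omega(v)=2d(v)-6$, $\omega(f)=d(f)-6$ (total $-12$), which zeroes out cubic vertices and $6$-faces entirely, so that the whole argument reduces to showing that each $7^{+}$-face can afford its adjacent $3$-faces and $4$-faces; under that normalization a $4$-face needs $2$ units, and the amount it receives across each edge ($1$, $\tfrac34$, or $\tfrac12$) is calibrated by the lengths of the two faces at the endpoints of that edge. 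Your normalization is not provably unworkable, but it is a genuinely different bookkeeping whose feasibility you have not demonstrated, and the "bad positions" of $4$-faces that you wave at are exactly the configurations ($H_2$--$H_6$, and the $3$-face/$4$-face interactions $F_2$, $F_4$) that must be proved reducible for any such scheme to balance. As it stands the proposal is a plausible research plan aligned with the paper's method, but it is not a proof of the statement.
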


Theorem \ref{main-thm} improves the result of \cite{JKK} and \cite{KL23} since it forbids only 5-cycles. In  \cite{KL23}, $k$-cycles are forbidden for $k \in \{3, 4, 5\}$,  and in \cite{JKK}
$4$-cycles and $5$-cycles are forbidden.  But, in our paper, only $5$-cycles are forbidden.

On the other hand, it was asked in \cite{Hartke} whether $\chi(G^2) \leq 6$ if $G$ is a subcubic planar graph drawn without 5-faces, since every example having $\chi(G^2) = 7$ has a 5-cycle.  As a weaker version, it was conjectured in \cite{DST08, FHS} that $\chi(G^2) \leq 6$ when $G$ is a cubic bipartite planar graph.  As a natural question, one may ask the following question.

\begin{question}
Is it true that $\chi_{\ell}(G^2) \leq 6$ if $G$ is a cubic bipartite planar graph?
\end{question}
But, it is not known whether the square of a cubic bipartite planar graph is 7-choosable or not.
In this direction, our result provides an interesting upper bound in a  more general setting.

\begin{corollary} \label{cor1}
If $G$ is a subcubic bipartite planar graph, then $\chi_{\ell}(G^2) \leq 7$.
\end{corollary}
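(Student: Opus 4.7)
The plan is to derive Corollary \ref{cor1} as an immediate consequence of Theorem \ref{main-thm}. The key observation is that a bipartite graph contains no odd cycles, and in particular no 5-cycle, since a 5-cycle is an odd cycle of odd length. Therefore, any subcubic bipartite planar graph automatically satisfies the hypothesis of Theorem \ref{main-thm}.

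More concretely, I would let $G$ be a subcubic bipartite planar graph, with bipartition $(A,B)$. Any cycle of $G$ alternates between $A$ and $B$, so it must have even length. In particular, $G$ has no 5-cycle. Since $G$ is also subcubic and planar, Theorem \ref{main-thm} applies directly and yields $\chi_{\ell}(G^2) \leq 7$.

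There is essentially no obstacle in this reduction: the only content lies in recognizing that the ``no 5-cycle'' hypothesis of Theorem \ref{main-thm} is strictly weaker than being bipartite. The corollary therefore requires no additional machinery beyond the main theorem and the elementary fact that bipartite graphs have no odd cycles.
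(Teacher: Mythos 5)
Your proof is correct and is exactly the intended argument: the paper states Corollary \ref{cor1} as an immediate consequence of Theorem \ref{main-thm}, relying on the same observation that a bipartite graph has no odd cycles and hence no 5-cycles. Nothing further is needed.
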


This paper is organized as follows. In Section 2, we introduce Combinatorial Nullstellensatz, which is an important tool for list coloring. In Section \ref{section-reducible}, we summarize the list of subgraphs which do not appear in a minimal counterexample to Theorem \ref{main-thm}.
In Section \ref{section-discharging}, we prove Theorem \ref{main-thm} by discharging argument.
In Section \ref{proof-reducible-config}, we provide the proofs of reducible configurations, which completes the proof of Theorem \ref{main-thm}.

\section{Preliminary}
Let $G$ be a graph and let `$<$' be an arbitrary fixed ordering of the vertices of $G$.
The \emph{graph polynomial} of $G$ is defined as
 $$P_{G}(\bm{x})=\prod_{u\sim v,u<v}(x_u-x_v),$$
 where $u\sim v$ means that $u$ and $v$ are adjacent, and $\bm{x}=(x_v)_{v\in V(G)}$ is a vector of $|V(G)|$ variables indexed by the vertices of $G$.
It is easy to see that a mapping $c:V(G) \to \mathbb{N}$ is a proper coloring of $G$ if and only if $P_{G}(\bm{c}) \neq 0$, where $\bm{c} = \big(c(v) \big)_{v \in V(G)}$.
Therefore, to find a proper coloring of $G$ is equivalent to find an assignment of $\bm{x}$ so that $P_{G}(\bm{x}) \neq 0$.
The following theorem, which was proved by Alon and Tarsi, gives sufficient conditions for the existence of such assignments as above.

\begin{theorem}[\cite{Alon1999}]\label{cnull}(Combinatorial Nullstellensatz) Let $\mathbb{F}$ be an arbitrary field and let $f=f(x_1,x_2,\ldots,x_n)$ be a polynomial in $\mathbb{F}[x_1,x_2,\ldots,x_n]$. Suppose that the degree $\deg(f)$ of $f$ is $\sum_{i=1}^n t_i$ where each $t_i$ is a nonnegative integer, and suppose that the coefficient of $\prod_{i=1}^n x_i^{t_i}$ of $f$ is nonzero. Then if $S_1,S_2,\ldots,S_n$ are subsets of $\mathbb{F}$ with $|S_i|\ge t_i+1$, then there are $s_1\in S_1$,$s_2\in S_2$,\ldots,$s_n\in S_n$ so that $f(s_1,s_2,\ldots,s_n)\neq 0$.
\end{theorem}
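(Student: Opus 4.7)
The plan is to follow Alon's original approach via polynomial division combined with a fundamental vanishing lemma. Without loss of generality I may shrink each $S_i$ to a subset of size exactly $t_i+1$, since a witness $(s_1,\ldots,s_n)$ with $f \neq 0$ found in the smaller grid remains a witness in the original.

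The central auxiliary result is a vanishing lemma: \emph{if $P \in \mathbb{F}[x_1, \ldots, x_n]$ satisfies $\deg_{x_i}(P) \leq t_i$ for every $i$ and $P$ vanishes identically on $S_1 \times \cdots \times S_n$ with $|S_i| = t_i+1$, then $P \equiv 0$.} I would prove this by induction on $n$: the base case is that a univariate polynomial of degree $\leq t_1$ with $t_1+1$ distinct roots must vanish. For the inductive step, write $P = \sum_{j=0}^{t_n} P_j(x_1, \ldots, x_{n-1}) x_n^j$; fixing $(s_1, \ldots, s_{n-1}) \in S_1 \times \cdots \times S_{n-1}$, the univariate polynomial $\sum_j P_j(s_1, \ldots, s_{n-1}) y^j$ has degree $\leq t_n$ and $t_n+1$ roots in $S_n$, which forces each $P_j(s_1, \ldots, s_{n-1}) = 0$; the induction hypothesis then yields $P_j \equiv 0$ for all $j$.

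Next I would perform polynomial reduction on $f$. For each $i$, set $g_i(x_i) = \prod_{s \in S_i}(x_i - s) = x_i^{t_i+1} - r_i(x_i)$ with $\deg r_i \leq t_i$. Repeatedly substituting $x_i^{t_i+1} = g_i(x_i) + r_i(x_i)$ within monomials of $f$, I obtain a decomposition $f = \sum_{i=1}^n h_i \, g_i + \tilde f$ in which $\deg_{x_i}(\tilde f) \leq t_i$ for every $i$. Because each $g_i$ vanishes on $S_i$, the polynomials $f$ and $\tilde f$ agree on the entire grid $S_1 \times \cdots \times S_n$.

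The crucial observation is that each single reduction step $x_i^{t_i+1} \cdot x^\beta \longmapsto r_i(x_i) \cdot x^\beta$ strictly decreases the total degree of the monomial being rewritten. Hence any monomial in $\tilde f$ of maximal total degree $\sum_{i=1}^n t_i = \deg(f)$ must already have been present unchanged in $f$ and must have $x_i$-exponent $\leq t_i$ for all $i$; the only such monomial is $\prod_i x_i^{t_i}$. Therefore the coefficient of $\prod_i x_i^{t_i}$ in $\tilde f$ equals its (nonzero) coefficient in $f$. If $f$ vanished on all of $S_1 \times \cdots \times S_n$, then so would $\tilde f$, and the vanishing lemma would force $\tilde f \equiv 0$, contradicting this nonvanishing leading coefficient. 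The main obstacle is precisely this top-coefficient bookkeeping: once one recognises that each reduction strictly lowers total degree, the preservation of the coefficient of $\prod_i x_i^{t_i}$ is immediate, and the rest of the argument is routine.
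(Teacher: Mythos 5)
Your proof is correct and is essentially the standard argument from Alon's original paper: the multivariate vanishing lemma for polynomials with $\deg_{x_i}\le t_i$ on a grid $S_1\times\cdots\times S_n$, followed by reduction modulo the $g_i(x_i)=\prod_{s\in S_i}(x_i-s)$, with the observation that each reduction step strictly lowers total degree and therefore cannot disturb the coefficient of the top-degree monomial $\prod_i x_i^{t_i}$. The paper itself gives no proof of Theorem~\ref{cnull} (it is quoted from \cite{Alon1999}), so there is nothing to compare against; your write-up correctly supplies the missing argument, including the one genuinely delicate point, namely that the coefficient of $\prod_i x_i^{t_i}$ survives the reduction unchanged.
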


In particular, a graph polynomial $P_{G}(\bm{x})$ is a homogeneous polynomial and $\deg(P_{G})$ is equal to $|E(G)|$.

Let $G$ be a graph and let $L: V(G) \to 2^{\mathbb{N}}$ be a list.
Then $\bm{c}$ is an $L$-coloring of $G$
if and only if
$P_{G}(\bm{c}) \neq 0$,
where $\bm{c} = ( c(v_1), c(v_2), \dots, c(v_n))$.
Therefore, if there exists a monomial $\alpha \prod_{v \in V(G)} {x_{v}}^{t_{v}}$ in the expansion of $P_{G}$ so that $\alpha \neq 0$ and $t_{v} < k$ for each $v \in V(G)$, then $G$ is $k$-choosable.

\section{Summary of reducible configurations} \label{section-reducible}

In this section,
let $G$ be a minimal counterexample to Theorem \ref{main-thm}.
We will study structural properties of a minimal counterexample to Theorem \ref{main-thm}.
A configuration is {\em reducible} if a planar graph containing it cannot be a minimal counterexample.

 A vertex of degree $k$ is called a  $k$-vertex.  A cycle of size $k$ is called a $k$-cycle, and a cycle of size at least $k$ (resp.~at most $k$) is called a $k^+$-cycle (resp.~a $k^-$-cycle).

We summarize the key reducible configurations which will be used in the discharging part.
In Section \ref{proof-reducible-config}, we will prove that the following subgraphs do not appear in $G$.

 \medskip
First, we list the reducible configurations related with a 3-cycle
(see Figure \ref{key configuration-C3}).

\begin{enumerate}[(1)]

\item Subgraph $F_1$,
which consists of a $3$-cycle adjacent to a $4^-$-cycle.  (Lemma \ref{C3-C6} (a))

\item Subgraph $F_2$,
which consists of a $3$-cycle and a $4$-cycle whose distance is at most 1.
(Lemma \ref{reducible-F2})

\item Subgraph $F_3$,
which consists of a $3$-cycle adjacent to a $6$-cycle. (Lemma \ref{C3-C6} (b))

\item Subgraph $F_4$,
which consists of a $8^-$-cycle $F$ adjacent to a $3$-cycle and a $4$-cycle
such that the distance between the 3-cycle and the 4-cycle is 2.
 (Lemma \ref{reducible-F4})

\begin{figure}[htbp]
  \begin{center}
\begin{tikzpicture}[
  v2/.style={fill=black,minimum size=4pt,ellipse,inner sep=1pt},
  node distance=1.5cm,scale=0.6
]

\node[v2] (F1_1) at (0,2) {};
\node[v2] (F1_2) at (1.5,1) {};
\node[v2] (F1_4) at (0,0) {};
\node[v2] (F1_5) at (-1.5,1) {};

\draw (F1_1) -- (F1_2) --(F1_4)--(F1_1);
\draw (F1_1) -- (F1_5) -- (F1_4);

\node[font=\scriptsize\bfseries] at (0,-1.4) {$F_1$};
\node[font=\scriptsize] at (0.6,1) {$C_3$};
\node[font=\scriptsize] at (-0.4,1) {$C_3$};
\end{tikzpicture} \hspace{1.1cm}
\begin{tikzpicture}[
  v2/.style={fill=black,minimum size=4pt,ellipse,inner sep=1pt},
  node distance=1.5cm,scale=0.6
]

 \node[v2] (F2_1) at (0, 1) {};
    \node[v2] (F2_2) at (-1, 2) {};
    \node[v2] (F2_3) at (-1, 0) {};

    \node[v2] (F2_4) at (2, 1) {};
    \node[v2] (F2_5) at (3, 2) {};
    \node[v2] (F2_6) at (4, 1) {};
    \node[v2] (F2_7) at (3, 0) {};

    \draw (F2_1) -- (F2_2) -- (F2_3) -- (F2_1);

    \draw (F2_4) -- (F2_5) -- (F2_6) -- (F2_7) -- (F2_4);

    \draw (F2_1) -- (F2_4);

    \node[font=\scriptsize] at (-0.6, 1) {$C_3$};
    \node[font=\scriptsize] at (3, 1) {$C_4$};
    \node[font=\scriptsize\bfseries] at (1.2, -1.4) {$F_2$};
\end{tikzpicture}\hspace{1.1cm}
\begin{tikzpicture}[
  v2/.style={fill=black,minimum size=4pt,ellipse,inner sep=1pt},
  node distance=1.5cm,scale=0.45
]
 \node[v2] (F3_1) at (0, 0) {};
    \node[v2] (F3_2) at (0, 2) {};

    \node[v2] (F3_4) at (1.732, 3) {};
    \node[v2] (F3_5) at (3.464, 2) {};
    \node[v2] (F3_6) at (3.464, 0) {};
    \node[v2] (F3_7) at (1.732, -1) {};
    \node[v2] (F3_3) at (-1.5, 1) {};

    \draw (F3_1) -- (F3_2);
    \draw (F3_1) -- (F3_3) -- (F3_2);
    \draw (F3_1) -- (F3_7) -- (F3_6) -- (F3_5) -- (F3_4) -- (F3_2);

    \node[font=\scriptsize] at (-0.5, 1) {$C_3$};
    \node[font=\scriptsize] at (1.5, 1) {$C_6$};
    \node[font=\scriptsize\bfseries] at (1.5, -2) {$F_3$};
\end{tikzpicture}\hspace{1.1cm}
\begin{tikzpicture}[
  v2/.style={fill=black,minimum size=4pt,ellipse,inner sep=1pt},invis/.style={circle,draw=none,fill=none,inner sep=0pt,minimum size=0pt},
  node distance=1.5cm,scale=0.6
]

 \node[v2] (F2_1) at (1, 1) {};
    \node[v2] (F2_2) at (-1, 2) {};
    \node[v2] (F2_3) at (-1, 0) {};

    \node[v2] (F2_4) at (2, 1) {};
    \node[v2] (F2_5) at (3, 2) {};
    \node[v2] (F2_6) at (4, 1) {};
    \node[v2] (F2_7) at (3, 0) {};
    \node[v2] (F2_8) at (0, 1) {};
    \node[invis] (F2_9) at (1, 2) {};
     \node[v2] (F2_10) at (-0, -1) {};
    \node[v2] (F2_11) at (2, -1) {};

    \draw (F2_8) -- (F2_2) -- (F2_3) -- (F2_8);

    \draw (F2_4) -- (F2_5) -- (F2_6) -- (F2_7) -- (F2_4);

    \draw (F2_1) -- (F2_4);
    \draw (F2_1) -- (F2_8);
    \draw (F2_1) -- (F2_9);
     \draw (F2_3) -- (F2_10)  -- (F2_11) -- (F2_7);

    \node[font=\scriptsize] at (-0.6, 1) {$C_3$};
    \node[font=\scriptsize] at (3, 1) {$C_4$};
    \node[font=\scriptsize] at (1, -0.2) {$F$};
    \node[font=\scriptsize\bfseries] at (1, -2) {$F_4$};
\end{tikzpicture}

  \end{center}
  \caption{Subgraphs $F_1, F_2, F_3, F_4$}\label{key configuration-C3}
\end{figure}
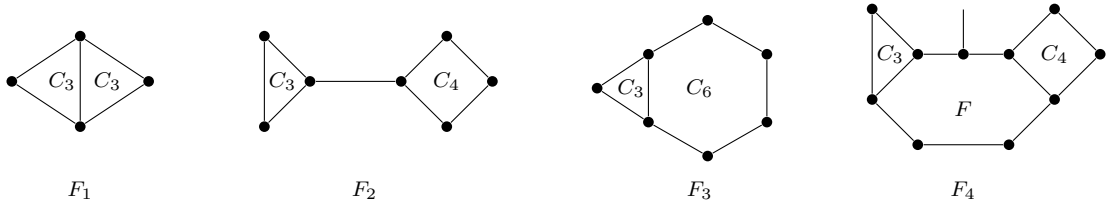

\begin{figure}[htbp]
\begin{center}
 \begin{tikzpicture}[
    v2/.style={fill=black,minimum size=4pt,ellipse,inner sep=1pt},
    scale=0.4
]

    \node[v2] (H1_1) at (0, 2) {};
    \node[v2] (H1_2) at (2, 2) {};
    \node[v2] (H1_3) at (4, 2) {};

    \node[v2] (H1_4) at (0, 0) {};
    \node[v2] (H1_5) at (2, 0) {};
    \node[v2] (H1_6) at (4, 0) {};

    \draw (H1_1) -- (H1_2) -- (H1_3);
    \draw (H1_4) -- (H1_5) -- (H1_6);

    \draw (H1_1) -- (H1_4);
    \draw (H1_2) -- (H1_5);
    \draw (H1_3) -- (H1_6);

    \node[font=\scriptsize] at (1, 1) {$C_4$};
    \node[font=\scriptsize] at (3, 1) {$C_4$};
    \node[font=\scriptsize\bfseries] at (2, -2.5) {$H_1$};
\end{tikzpicture}\hspace{1.1cm}
\begin{tikzpicture}[
    v2/.style={fill=black,minimum size=4pt,ellipse,inner sep=1pt},
    scale=0.4
]

    \node[v2] (H2_1) at (0, 0) {};
    \node[v2] (H2_2) at (0, 2) {};
    \node[v2] (H2_3) at (1.732, 3) {};
    \node[v2] (H2_4) at (3.464, 2) {};
    \node[v2] (H2_5) at (3.464, 0) {};
    \node[v2] (H2_6) at (1.732, -1) {};

    \node[v2] (H2_7) at (-2, 0) {};
    \node[v2] (H2_8) at (-2, 2) {};
    \node[v2] (H2_9) at ({1.732 + 1}, {3 + 1.732}) {};
    \node[v2] (H2_10) at ({3.464 + 1}, {2 + 1.732}) {};

    \draw (H2_1) -- (H2_2) -- (H2_3) -- (H2_4) -- (H2_5) -- (H2_6) -- (H2_1);

    \draw (H2_1) -- (H2_7) -- (H2_8) -- (H2_2);

    \draw (H2_3) -- (H2_9) -- (H2_10) -- (H2_4);

    \node[font=\scriptsize] at (-1, 1) {$C_4$};
    \node[font=\scriptsize] at (1.732, 1) {$C_6$};
    \node[font=\scriptsize] at (3.1, 3.2) {$C_4$};

    \node[font=\scriptsize\bfseries] at (1.732, -3) {$H_2$};

\end{tikzpicture} \hspace{1.1cm}
\begin{tikzpicture}[
    v2/.style={fill=black,minimum size=4pt,ellipse,inner sep=1pt},
    scale=0.4
]
     \node[v2] (H3_1) at (0, 0) {};
    \node[v2] (H3_2) at (0, 2) {};
    \node[v2] (H3_3) at (1.5, 3.4) {};
    \node[v2] (H3_4) at (3, 2) {};
    \node[v2] (H3_5) at (3, 0) {};
    \node[v2] (H3_6) at (1.5, -1.4) {};
    \node[v2] (H3_7) at (-2, 0) {};
    \node[v2] (H3_8) at (-2, 2) {};
    \node[v2] (H3_9)  at (0, 4.8) {};
    \node[v2] (H3_10) at (-2,4.8) {};
    \node[v2] (H3_11) at (-3.5, 3.4){};

\draw(H3_1)--(H3_2)--(H3_3)--(H3_4)--(H3_5)--(H3_6)--(H3_1);
\draw (H3_2)--(H3_8)--(H3_7)--(H3_1);
        \draw (H3_3) -- (H3_9) -- (H3_10) -- (H3_11) -- (H3_8);

        \node[font=\scriptsize] at (-1, 1) {$C_4$};
    \node[font=\scriptsize] at (1.732, 1) {$C_6$};
    \node[font=\scriptsize] at (-1, 3.2) {$C_6$};
          \node[font=\scriptsize\bfseries] at (0.6, -2.4) {$H_3$};
\end{tikzpicture}\hspace{1.1cm}\begin{tikzpicture}[
    v2/.style={fill=black,minimum size=4pt,ellipse,inner sep=1pt},
    scale=0.4
]
\node[v2] (H4_1) at (0, 0){};
    \node[v2] (H4_2) at (0, 2){};
    \node[v2] (H4_3) at (-1.732,-1){};
 \node[v2] (H4_4) at (-2*1.732,0){};
 \node[v2] (H4_5) at (-2*1.732,2){};
 \node[v2] (H4_6) at (-1.732,3){};
 \node[v2] (H4_7) at (1.732,-1){};
 \node[v2] (H4_8) at (2*1.732,0){};
 \node[v2] (H4_9) at (2*1.732,2){};
 \node[v2] (H4_10) at (1.732,3){};
 \node[v2] (H4_11) at (-4.464,-1.732){};
 \node[v2] (H4_12) at (-2.732,-2.732){};
 \node[v2] (H4_13) at (4.464,-1.732){};
 \node[v2] (H4_14) at (2.732,-2.732){};
 \node[v2] (H4_15) at (-2,-4.5){};
 \node[v2] (H4_16) at (0,-5.5){};
 \node[v2] (H4_17) at (2,-4.5){};
   \draw (H4_1) -- (H4_2) -- (H4_6) -- (H4_5) -- (H4_4) -- (H4_3)-- (H4_1);
   \draw (H4_2) -- (H4_10) -- (H4_9) -- (H4_8) -- (H4_7) -- (H4_1);
   \draw (H4_4) -- (H4_11) -- (H4_12) -- (H4_3);
   \draw (H4_8) -- (H4_13) -- (H4_14) -- (H4_7);
   \draw (H4_12) -- (H4_15) -- (H4_16) -- (H4_17) -- (H4_14);

    \node[font=\scriptsize] at (-0, -3) {$C_8$};
    \node[font=\scriptsize] at (1.732, 1) {$C_6$};
    \node[font=\scriptsize] at (-1.732, 1) {$C_6$};
 \node[font=\scriptsize] at (3.1, -1.5) {$C_4$};
 \node[font=\scriptsize] at (-3.1, -1.5) {$C_4$};
 \node[font=\scriptsize\bfseries] at (0, -6.5) {$H_4$};
\end{tikzpicture}
\end{center}
\caption{Subgraphs $H_1, H_2, H_3, H_4$} \label{key configuration}
\end{figure}
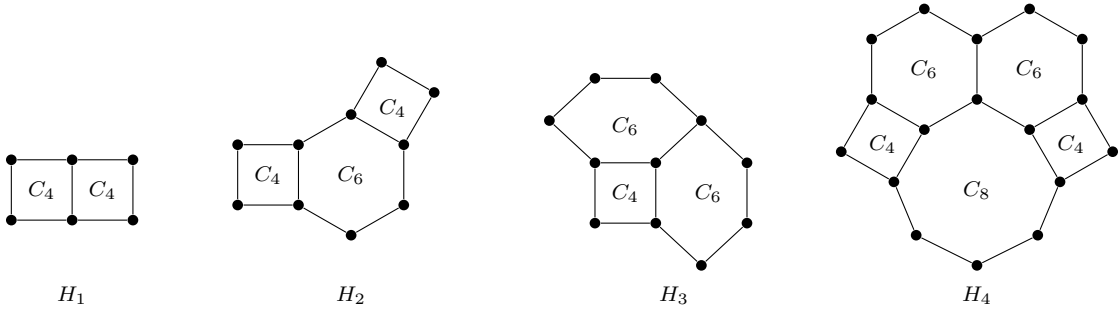

\begin{figure}[htbp]
  \begin{center}
\begin{tikzpicture}[
  v2/.style={fill=black,minimum size=4pt,ellipse,inner sep=1pt},
  node distance=1.5cm,scale=0.4]

      \node[v2] (H5_1) at (0, 0){};
      \node[v2] (H5_2) at (0,-2){};
      \node[v2] (H5_3) at (1,-3.5){};
      \node[v2] (H5_4) at (2.8,-3.5){};
      \node[v2] (H5_5) at (4,-2){};
      \node[v2] (H5_6) at (4, 0){};
      \node[v2] (H5_7) at (2, 1.2){};
      \node[v2] (H5_9) at (-2, 0){};
      \node[v2] (H5_10) at (-2,-2){};
      \node[v2] (H5_11) at (6, 0){};
      \node[v2] (H5_12) at  (6,-2){};

    \node[font=\scriptsize] at (-1, -1) {$C_4$};
    \node[font=\scriptsize] at (5, -1) {$C_4$};
    \node[font=\scriptsize] at (2, -1.2) {$C_7$};

   \draw (H5_1)--(H5_2)--(H5_3)--(H5_4)--(H5_5)--(H5_6)--(H5_7)--(H5_1);
   \draw (H5_1)--(H5_9)--(H5_10)--(H5_2);
   \draw (H5_5)--(H5_12)--(H5_11)--(H5_6);

  \node[font=\scriptsize]  at (2,-5.5) {Graph $H_5$};
  \end{tikzpicture}\hspace{3cm}
\begin{tikzpicture}[
  v2/.style={fill=black,minimum size=4pt,ellipse,inner sep=1pt},
  node distance=1.5cm,scale=0.4]

      \node[v2] (H6_1) at (0, 0){};
      \node[v2] (H6_2) at (0,-2){};
      \node[v2] (H6_3) at (1,-3.5){};
      \node[v2] (H6_4) at (2.8,-3.5){};
      \node[v2] (H6_5) at (4,-2){};
      \node[v2] (H6_6) at (4, 0){};
      \node[v2] (H6_7) at (2, 1.2){};
      \node[v2] (H6_9) at (-2, 0){};
      \node[v2] (H6_10) at (-2,-2){};
      \node[v2] (H6_11) at  (3.2,3.2){};
      \node[v2] (H6_12) at  (5.2,2){};

     \node[font=\scriptsize] at (-1, -1) {$C_4$};
    \node[font=\scriptsize] at (3.6, 1.6) {$C_4$};
    \node[font=\scriptsize] at (2, -1.2) {$C_7$};

   \draw (H6_1)--(H6_2)--(H6_3)--(H6_4)--(H6_5)--(H6_6)--(H6_7)--(H6_1);
   \draw (H6_1)--(H6_9)--(H6_10)--(H6_2);
   \draw (H6_6)--(H6_12)--(H6_11)--(H6_7);

  \node[font=\scriptsize]  at (2,-5.5) {Graph $H_6$};
\end{tikzpicture}
  \end{center}
  \caption{Subgraphs $H_5$ and $H_6$} \label{Conf-H56}
\end{figure}
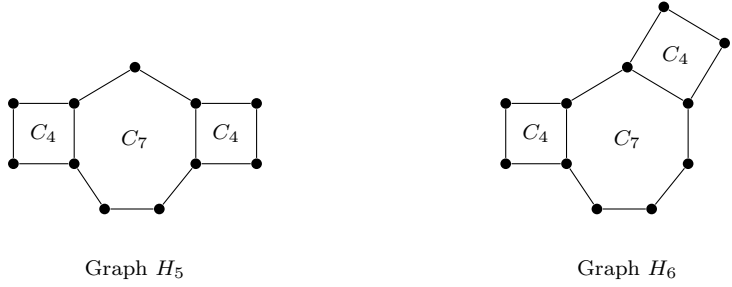


Next, we list the reducible configurations related with a 4-cycle,
 (see Figure \ref{key configuration}).

\item Subgraph $H_1$,
which consists of two adjacet 4-cycles.  (Lemma \ref{reducible-H0})

\item Subgraph $H_2$,
which consists of a 6-cycle adjacent to two 4-cycles
such that the distance between the two $4$-cycles is 1.
(Lemma \ref{reducible-H2})

\item Subgraph $H_3$,
which consists of a 4-cycle adjacent to two 6-cycles consecutively.
(Lemma \ref{reducible-H3})

\item Subgraph $H_4$,
which consists of a 8-cycle adjacent to four faces $f_1, f_2, f_3, f_4$ in order
such that $f_1$ and $f_4$ are 4-cycles and $f_2$ and $f_3$ are 6-cycles.  (Lemma \ref{reducible-H4})

\item
Subgraphs $H_5$ and $H_6$,
both of which consist of a 7-cycle adjacent to two 4-cycles. (Lemmas \ref{reducible-H6} and \ref{reducible-H7})

\end{enumerate}

Let $G$ be a plane graph drawn on the plane without crossing edges. A face of size $k$ is called a $k$-face, and a face  of size at least $k$ (resp.~at most $k$) is called a $k^+$-face  (resp.~a $k^-$-face).

\medskip
In addtion, we use the following properties for a minimal counterexample $G$.

\newtheorem*{lemma:C3-C6}{Lemma \ref{C3-C6}}
\begin{lemma:C3-C6}(c)
The distance between any two 3-cycles is at least 3 in $G$.
\end{lemma:C3-C6}

\newtheorem*{lemma:no-2-vertex}{Lemma \ref{no-2-vertex}}
\begin{lemma:no-2-vertex}
A minimal counterexample $G$ to Theorem \ref{main-thm} is a cubic planar graph.
\end{lemma:no-2-vertex}

\newtheorem*{cor-reducible-F4}{Corollary \ref{cor-reducible-F4}}
\begin{cor-reducible-F4}
If a $7$-face is adjacent to a $3$-face  in $G$,
then it is not adjacent to a $4$-face.
And if a $8$-face is adjacent to a $3$-face, then it cannot be adjacent to two $4$-faces.
\end{cor-reducible-F4}

We have the following by
Lemma \ref{C3-C6} (c) and Corollary \ref{cor-reducible-F4},
together with the fact that $F_2$ and $H_1$ do not exist in $G$:

\begin{itemize}
\item[(A)]
If a $7$-face is adjacent to a $3$-face,
then it is not adjacent to a $4$-face and no other 3-faces.
\item[(B)]
If an $8$-face is adjacent to a $3$-face,
then it is adjacent to at most one another $4^-$-face.
\item[(C)]
If a $9$-face is adjacent to a $3$-face,
then it is adjacent to at most two other $4^-$-faces.
\item[(D)]
If a $10$-face is adjacent to a $3$-face,
then it is adjacent to at most three other $4^-$-faces.
\item[(E)]
 An $11^+$-face $f$ is adjacent to
at most $\lfloor\frac{1}{2} d(f)\rfloor$ $4^-$-faces, where $d(f)$ is the length of $f$.
\end{itemize}

\section{Proof of Theorem \ref{main-thm}} \label{section-discharging}
In this section, we prove Theorem \ref{main-thm},
assuming reducible configurations and properties of a minimal counterexample introduced in the previous section.
Let $G$ be a minimal counterexample to Theorem \ref{main-thm} and 
let $G$ be a plane graph drawn on the plane without crossing edges.  Let $F(G)$ be the set of faces of $G$. For a face $f \in F(G)$, let $d(f)$ be the length of $f$.

We assign $2d(x)-6$ to each vertex $x \in V(G)$ and $d(x) - 6$ for each face $x \in F(G)$ as an original charge function $\omega(x)$ of $x$.
According to Euler's formula $|V(G)| - |E(G)| + |F(G)| = 2$,
we have
\begin{equation*} \label{Eulereq}
\sum_{v\in V(G)}(2d(v)-6)+\sum_{f\in F(G)}(d(f)-6) =  -12. 
\end{equation*}

We next design some discharging rules to redistribute charges along the graph with conservation of the total charge. Let $\omega'(x)$ be the charge of $x \in V(G)\cup F(G)$ after the discharge procedure that we will later explain. Note that ${\displaystyle \sum_{x\in V(G)\cup F(G)}\omega(x)=\sum_{x\in V(G)\cup F(G)}\omega'(x)}$. Next, we will show that $\omega'(x)\ge 0$ for all $x\in V(G)\cup F(G)$, which leads to the following contradiction.
\[
0\le \sum_{x\in V(G)\cup F(G)}\omega'(x)=\sum_{x\in V(G)\cup F(G)}\omega(x) = \sum_{v\in V(G)}(2d(v)-6)+\sum_{f\in F(G)}(d(f)-6) = -12.
\]

\medskip
We now discharge by the following rules.

\medskip
\noindent {\bf The discharging rule:} \\
For each edge $e$ between a 
$7^{+}$-face
$f$ and a $4$-face $f'$,
let $f_1$ and $f_2$ be the faces
containing one of the end vertices of $e$,
and then
\begin{enumerate}[(R1)]
\item $f$ sends $1$ to $f'$ if both $f_1$ and $f_2$ are $6$-faces,
\item $f$ sends $\frac{3}{4}$ to $f'$ if one of $f_1$ and $f_2$ is a $6$-face and the other is a $7^{+}$-face,
\item $f$ sends $\frac{1}{2}$ to $f'$ if both $f_1$ and $f_2$ are $7^{+}$-faces.\end{enumerate}

\begin{figure}[htbp]
  \begin{center}
\begin{tikzpicture}[
    v2/.style={fill=black,minimum size=4pt,ellipse,inner sep=1pt},invis/.style={circle,draw=none,fill=none,inner sep=0pt,minimum size=0pt},
    scale=0.45]

    \node[invis] (R1_1) at (0, 0) {};
    \node[invis] (R1_2) at (0, 2) {};
    \node[invis] (R1_3) at (1.732, 3) {};
    \node[invis] (R1_4) at (3.464, 2) {};
    \node[invis] (R1_5) at (3.464, 0) {};
    \node[invis] (R1_6) at (1.732, -1) {};
    \node[invis] (R1_7) at (-2, 0) {};
    \node[invis] (R1_8) at (-2, 2) {};
     \node[invis] (R1_9) at (-3.732, 3) {};
    \node[invis] (R1_10) at (-5.464, 2) {};
    \node[invis] (R1_11) at (-5.464, 0) {};
    \node[invis] (R1_12) at (-3.732, -1) {};
  \node[invis] (R1_13) at (-3.732, -2) {};
   \node[invis] (R1_14) at (1.732, -2) {};
   \node[invis] (R1_15) at (-1, -1) {};
   \node[invis] (R1_16) at (-1, 0.5) {};

    \draw (R1_1) -- (R1_2) -- (R1_3) -- (R1_4) -- (R1_5) -- (R1_6) -- (R1_1);

    \draw (R1_1) -- (R1_7) -- (R1_8) -- (R1_2);

    \draw (R1_7) -- (R1_12) -- (R1_11) -- (R1_10) -- (R1_9)-- (R1_8);
    \draw (R1_6) -- (R1_14);
     \draw (R1_12) -- (R1_13);
      \draw[->, >=to, ultra thick]  (R1_15) -- (R1_16);

    \node[font=\scriptsize] at (-1, 1) {$f'$};
     \node[font=\scriptsize] at (-3.6, 1) {$f_1$: 6-face};
     \node[font=\scriptsize] at (1.6, 1) {$f_2$: 6-face};
     \node[font=\scriptsize] at (-0.6, -0.4) {1};
      \node[font=\scriptsize] at (-1, -1.4) {$f$: $7^+$-face};
      \node[font=\scriptsize]  at  (-5, 3) {(R1)};
      \end{tikzpicture}
\hspace{0.8cm}
\begin{tikzpicture}[
    v2/.style={fill=black,minimum size=4pt,ellipse,inner sep=1pt},invis/.style={circle,draw=none,fill=none,inner sep=0pt,minimum size=0pt},scale=0.45]

    \node[invis] (R1_1) at (0, 0) {};
    \node[invis] (R1_2) at (0, 2) {};
    \node[invis] (R1_3) at (1.732, 3) {};
    \node[invis] (R1_4) at (3.464, 2) {};
    \node[invis] (R1_5) at (3.464, 0) {};
    \node[invis] (R1_6) at (1.732, -1) {};
    \node[invis] (R1_7) at (-2, 0) {};
    \node[invis] (R1_8) at (-2, 2) {};
     \node[invis] (R1_9) at (-2.866, 2.5) {};
    \node[invis] (R1_11) at (-4.732, -1) {};
    \node[invis] (R1_12) at (-3.732, -1) {};
  \node[invis] (R1_13) at (-3.732, -2) {};
   \node[invis] (R1_14) at (1.732, -2) {};
   \node[invis] (R1_15) at (-1, -1) {};
   \node[invis] (R1_16) at (-1, 0.5) {};

    \draw (R1_1) -- (R1_2) -- (R1_3)-- (R1_4)-- (R1_5) -- (R1_6) -- (R1_1);

    \draw (R1_1) -- (R1_7) -- (R1_8) -- (R1_2);

    \draw (R1_7) -- (R1_12) -- (R1_11);
    \draw(R1_9)-- (R1_8);
    \draw (R1_6) -- (R1_14);
     \draw (R1_12) -- (R1_13);
      \draw[->, >=to, ultra thick]  (R1_15) -- (R1_16);

    \node[font=\scriptsize] at (-1, 1) {$f'$};
     \node[font=\scriptsize] at (-3.8, 1) {$f_1$: $7^+$-face};
     \node[font=\scriptsize] at (1.6, 1) {$f_2$: 6-face};
     \node[font=\scriptsize] at (-0.6, -0.45) {$\frac{3}{4}$};
      \node[font=\scriptsize] at (-1, -1.4) {$f$: $7^+$-face};
      \node[font=\scriptsize]  at  (-4.5, 3) {(R2)};
      \end{tikzpicture}
\hspace{0.8cm}
            \begin{tikzpicture}[
    v2/.style={fill=black,minimum size=4pt,ellipse,inner sep=1pt},invis/.style={circle,draw=none,fill=none,inner sep=0pt,minimum size=0pt},
    scale=0.45
]

    \node[invis] (R1_1) at (0, 0) {};
    \node[invis] (R1_2) at (0, 2) {};
    \node[invis] (R1_3) at (0.85, 2.5) {};
    \node[invis] (R1_4) at (3.464, 2) {};
    \node[invis] (R1_5) at (2.732, -1) {};
    \node[invis] (R1_6) at (1.732, -1) {};
    \node[invis] (R1_7) at (-2, 0) {};
    \node[invis] (R1_8) at (-2, 2) {};
     \node[invis] (R1_9) at (-2.866, 2.5) {};
    \node[invis] (R1_11) at (-4.732, -1) {};
    \node[invis] (R1_12) at (-3.732, -1) {};
  \node[invis] (R1_13) at (-3.732, -2) {};
   \node[invis] (R1_14) at (1.732, -2) {};
   \node[invis] (R1_15) at (-1, -1) {};
   \node[invis] (R1_16) at (-1, 0.5) {};

    \draw (R1_1) -- (R1_2) -- (R1_3);
    \draw  (R1_5) -- (R1_6) -- (R1_1);

    \draw (R1_1) -- (R1_7) -- (R1_8) -- (R1_2);

    \draw (R1_7) -- (R1_12) -- (R1_11);
    \draw(R1_9)-- (R1_8);
    \draw (R1_6) -- (R1_14);
     \draw (R1_12) -- (R1_13);
      \draw[->, >=to, ultra thick]  (R1_15) -- (R1_16);

    \node[font=\scriptsize] at (-1, 1) {$f'$};
     \node[font=\scriptsize] at (-3.8, 1) {$f_1$: $7^+$-face};
     \node[font=\scriptsize] at (1.8, 1) {$f_2$: $7^+$-face};
     \node[font=\scriptsize] at (-0.6, -0.45) {$\frac{1}{2}$};
      \node[font=\scriptsize] at (-1, -1.4) {$f$: $7^+$-face};
      \node[font=\scriptsize]  at  (-4.5, 3) {(R3)};
      \end{tikzpicture}
  \end{center}
  \caption{Discharging rules} \label{rulefig}
\end{figure}
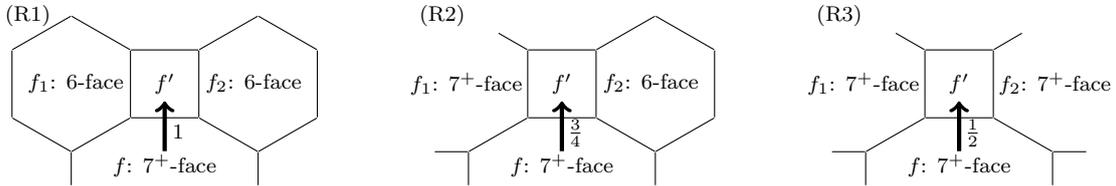

%

In addition,
we give one more discharging rule:
\begin{enumerate}
\item[(R4)] If a face $f$ of size at least 7 is adjacent to a 3-face $f'$, then $f$ sends $1$ to $f'$.
\end{enumerate}
\medskip

We now show that $\omega'(x)\ge 0$ for all $x\in V(G)\cup F(G)$.
Note that $G$ is a cubic graph by Lemma \ref{no-2-vertex}.
In addition,
$G$ has no subgraphs $F_1$--$F_4$
and $H_1$--$H_6$,
and satisfies (A)--(E)
as in Section \ref{section-reducible}.

\medskip

First,
for each vertex $v$,
$v$ is a $3$-vertex by Lemma \ref{no-2-vertex},
and hence $\omega'(v) = \omega(v) = 0$.

Next,
we will show $\omega'(f) \geq 0$ for each face $f$,
depending on the value of $d(f)$.
Note that only $7^{+}$-faces may send charge to adjacent 3-faces or 4-faces.

\medskip

\noindent  (1) The case $d(f) = 3$. \\
Note that $\omega(f) = 3-6 = -3$.
Let $f_1, f_2, f_3$ be the faces adjacent to $f$.
Since $F_1$ and $F_3$ do not exist,
$d(f_i) \geq 7$ for $1 \leq i \leq 3$.
So, by (R4), each of $f_i$ sends $1$ to $f$.  Hence
$\omega'(f) = -3 + 3 \times 1 = 0$.

\medskip
\noindent  (2) The case $d(f) = 4$. \\
Note that $\omega(f) = 4-6 = -2$.
Let $f_1, f_2, f_3, f_4$ be the faces adjacent to $f$ in the clockwise order.
Since $F_1$ and $H_1$ do not exist in $G$,
$d(f_i) \geq 6$ for $1 \leq i \leq 4$.
\begin{itemize}
\item
If $f$ is adjacent to no $6$-face,
then it follows from (R3) 
that each face adjacent to $f$
sends $\frac{1}{2}$ to $f$,
and hence $\omega'(f) = -2 + \frac{1}{2} \times 4 = 0$.
\item
Suppose that $f$ is adjacent to a $6$-face, say $f_1$.
Since $H_3$ does not exist in $G$,
both $f_2$ and $f_4$ are $7^{+}$-faces.
If $f_3$ is a $6$-face,
then both $f_2$ and $f_4$ send $1$ to $f$ by (R1), 
and hence $\omega'(f) = -2 + 1 \times 2 = 0$.
Otherwise,
that is,
if $f_3$ is a $7^{+}$-face,
then both $f_2$ and $f_4$ send $\frac{3}{4}$ to $f$
by (R2),
and $f_3$ sends $\frac{1}{2}$ to $f$ by (R3).
Thus, $\omega'(f) = -2 + \frac{3}{4} \times 2 + \frac{1}{2} = 0$.
\end{itemize}

\bigskip
\noindent  (3) The case  $d(f) = 6$. \\
In this case, $f$ does not send nor receive any charge. Hence $\omega'(f) = \omega(f) = d(f) -6=0$.

\medskip


\noindent  (4) The case $d(f) = 7$. \\
Note that $\omega(f) = 7-6 = 1$.
We consider the following two cases.

\begin{itemize}
\item
Suppose that $f$ is adjacent to a $3$-face.
By (A), $f$ is not adjacent to  another $4^-$-face.
Then by (R1)--(R4),
$\omega'(f) =  \omega(f) - 1 = 0$.

\item
Suppose that $f$ is not adjacent to a $3$-face.
Since neither $H_5$ nor $H_6$  exists  in $G$,
$f$ is adjacent to at most one 4-face.
Then by (R1)--(R4),
$\omega'(f) \geq  \omega(f) - 1 = 0$.
\end{itemize}

\medskip
\noindent  (5) The case $d(f) = 8$. \\
Note that $\omega(f) = 8-6 = 2$.
If $f$ is adjacent to a $3$-face,
then $f$ is adjacent to  at most one another $4^-$-face  by (B).
Hence $\omega'(f) \geq 2 - 1 \times 2 = 0$.
Thus, we may assume that $f$ is not adjacent to a $3$-face.
\medskip

Since $H_1$ does not exist in $G$,
$f$ is adjacent to at most four $4$-faces,
and no two $4$-faces are adjacent.
Let $f_1, f_2, \dots, f_8$ be the faces adjacent to $f$
in the clockwise order.
\begin{itemize}
\item
If $f$ is adjacent to at most two $4$-faces,
then it follows from (R1)--(R3) 
that $f$ sends at most $1$ to
each $4$-face adjacent to $f$,
and hence $\omega'(f) \geq 2 - 1 \times 2 = 0$.
\item
Suppose that $f$ is adjacent to exactly three $4$-faces.
By symmetry,
we have the following two cases;

\begin{itemize}
\item
Suppose that $f_1, f_3$ and $f_5$ are $4$-faces.
Since $H_2$ does not exist in $G$,
both $f_2$ and $f_4$ are $7^{+}$-faces.
Thus,
$f$ sends $\frac{1}{2}$ to $f_3$ by (R3),
and at most $\frac{3}{4}$ to $f_1$ and $f_5$ by (R2) and (R3).
Therefore, $\omega'(f) \geq 2 - \frac{1}{2} - \frac{3}{4} \times 2 = 0$.

\item
Suppose that $f_1, f_3$ and $f_6$ are $4$-faces.
Since $H_2$ does not exist in $G$,
$f_2$ is a $7^{+}$-face.
\begin{itemize}
\item
If both $f_4$ and $f_8$ are $7^{+}$-faces,
then $f$ sends $\frac{1}{2}$ to $f_1$ and $f_3$ by (R3),
and at most $1$ to $f_6$ by (R1)--(R3),
and hence $\omega'(f) \geq 2 - \frac{1}{2} \times 2 -1 = 0$.
\item
Suppose that $f_4$ is a $7^{+}$-face and $f_8$ is a $6$-face.
Note that $f$ sends $\frac{3}{4}$ to $f_1$ by (R2),
and $\frac{1}{2}$ to $f_3$ by (R3).
Since $H_4$ does not exist in $G$,
$f_7$ is a $7^{+}$-face,
and hence $f$ sends at most $\frac{3}{4}$ to $f_6$ by (R2) and (R3).
Thus, $\omega'(f) \geq 2 - \frac{3}{4} \times 2 - \frac{1}{2} = 0$.
\item
If $f_4$ is a $6$-face and $f_8$ is a $7^{+}$-face,
then by the symmetry to the previous case,
$\omega'(f) \geq 2 - \frac{3}{4} \times 2 - \frac{1}{2} = 0$.
\item
Suppose that both $f_4$ and $f_8$ are $6$-faces.
Note that $f$ sends $\frac{3}{4}$ to $f_1$ and $f_3$ by (R2).
Since $H_4$ does not exist in $G$,
both $f_5$ and $f_7$ are $7^{+}$-faces,
and hence $f$ sends $\frac{1}{2}$ to $f_6$ by (R3).
Thus, $\omega'(f) \geq 2 - \frac{3}{4} \times 2 - \frac{1}{2} = 0$.
\end{itemize}
\end{itemize}

\item
Suppose that $f$ is adjacent to exactly four $4$-faces.
Since $H_1$ does not exists in $G$,
it follows from the symmetry that $f_1, f_3, f_5$ and $f_7$ are $4$-faces.
Since $H_2$ does not exists in $G$,
all of $f_2, f_4, f_6$ and $f_8$ are $7^{+}$-faces.
Then it follows from (R1) that
$f$ sends $\frac{1}{2}$ to $f_1, f_3, f_5$ and $f_7$,
and hence $\omega'(f) = 2 - \frac{1}{2} \times 4 = 0$.
\end{itemize}

\medskip

\noindent  (6) The case $d(f) = 9$. \\
Note that $\omega(f) = 9-6 = 3$.  We consider the following two cases.

\begin{itemize}
\item
If $f$ is adjacent to a $3$-face,
by (C),
$f$ is adjacent to at most two other $4^-$-faces,
and hence $\omega'(f) \geq 3 - 1 \times 3 = 0$.

\item
Suppoce that $f$ is not adjacent to a $3$-face.
Since $H_1$ does not exist in $G$,
$f$ is adjacent to at most four $4$-faces.
\begin{itemize}
\item
If $f$ is adjacent to at most three $4$-faces,
then
it follows from (R1)--(R3) that
$f$ sends at most $1$ to each $4$-face adjacent to $f$,
and hence $\omega'(f) \geq 3 - 1 \times 3 = 0$.
\item
Suppose that $f$ is adjacent to exactly four $4$-faces.
Let $f_1, f_2, \dots, f_9$ be the faces adjacent to $f$
in the clockwise order.
Since $H_1$ does not exist in $G$,
we may assume that $f_1, f_3, f_5, f_7$ are the 4-faces.
Since $H_2$ does not exist in $G$,
$f_2, f_4, f_6$ are $7^{+}$-faces.
By (R2) and (R3), $f$ sends $\frac{1}{2}$ to $f_3$ and $f_5$, respectively,
and sends at most $\frac{3}{4}$ to $f_1$ and $f_7$, respectively.
Thus
$\omega'(f) \geq 3 - \frac{1}{2} \times 2 - \frac{3}{4} \times 2  > 0$.
\end{itemize}
\end{itemize}


\medskip
\noindent  (7) The case $d(f) = 10$. \\
Note that $\omega(f) = 10-6 = 4$.  We consider the following two cases.

\begin{itemize}
\item
If $f$ is adjacent to a $3$-face, then
it follows from (D) that $f$ is adjacent to at most three other $4^-$-faces.
So, $\omega'(f) \geq 4 - 1 \times 4 = 0$.

\item
Suppose that $f$ is not adjacent to a $3$-face.
Since $H_1$ does not exist in $G$,
$f$ is adjacent to at most five $4$-faces.
If $f$ is adjacent to at most four $4$-faces,
then
it follows from (R1)--(R3) that
$f$ sends at most $1$ to each $4$-face adjacent to $f$,
and hence $\omega'(f) \geq 4 - 1 \times 4 = 0$.
Thus,
we may assume that $f$ is adjacent to exactly five $4$-faces.
Since neither $H_1$ nor $H_2$  exists  in $G$,
the $4$-faces appear every other along the clockwise order of the faces adjacent to $f$
and the other faces are $7^{+}$-faces.
Thus,
by (R3),
$\omega'(f) = 4 - \frac{1}{2} \times 5 > 0$.
\end{itemize}

\noindent (8) The case $d(f) \geq 11$. \\
By (E), the number of $4^-$-faces adjacent to $f$ is at most $\lfloor\frac{1}{2} d(f)\rfloor$.
We have $\omega'(f) \geq (d(f)-6) - 1 \times \lfloor\frac{1}{2} d(f)\rfloor = \lceil\frac{1}{2} d(f)\rceil  -6\geq 0$ by (R1)--(R4).

\bigskip
Therefore,
$\omega'(x)\ge 0$ for all $x\in V(G)\cup F(G)$,
which is a contradiction.
This completes the proof of Theorem \ref{main-thm}.
\qed

\section{Proofs of reducible configurations} \label{proof-reducible-config}

In this section, we study the properties of a minimal counterexample  to  Theorem \ref{main-thm}, and show that
then subgraphs  $F_1$--$F_4$
and subgraphs $H_1$--$H_6$ do not appear.
 Throughout this section, let $G$ be a minimal counterexample to Theorem \ref{main-thm}.

\subsection{Basic configurations} \label{subsec-basic}
In this subsection, we prove several important reducible configurations which will be used in the proofs of important lemmas.
First, we will list reducible configurations related with a $3$-face,
where (a), (b) and (c) show that
subgraphs $F_1$ and $F_3$ in Figure \ref{key configuration-C3}
and subgraph $T$ in Figure \ref{distance-triangle}
are all reducible configurations.

\begin{lemma} \label{C3-C6} We have the following properties.
\begin{enumerate}[(a)]
\item A $3$-cycle do not share an edge with a $4^-$-cycle.

\item A $3$-cycle do not share an edge with a $6$-cycle.

\item The distance between any two 3-cycles is at least 3 in $G$.

\end{enumerate}
\end{lemma}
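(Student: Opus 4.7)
My plan is to prove each of (a), (b), (c) by contradiction: assuming the forbidden configuration appears in $G$, I take an arbitrary $7$-list assignment $L$ on $V(G)$, identify a small set $S \subseteq V(G)$ containing the configuration, apply the minimality of $G$ to obtain an $L$-coloring of $(G-S)^2$, and extend that coloring to $S$ by greedy choice or Combinatorial Nullstellensatz (Theorem~\ref{cnull}). The subcubic, planar, and no-$5$-cycle conditions are inherited by $G-S$. Ideally $S$ is chosen so that every $s \in S$ satisfies $|N_G(s) \setminus S| \le 1$, which guarantees that $(G-S)^2$ coincides with $G^2[V(G)\setminus S]$ and hence no distance-$2$ constraint among exterior vertices is lost when we move from $G$ to $G-S$.

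For part (a), the subcase in which a $3$-cycle $abc$ shares an edge $ab$ with a $4$-cycle $abxy$ is immediate: the closed walk $c, b, x, y, a, c$ is a $5$-cycle in $G$, contradicting the hypothesis. The remaining subcase is configuration $F_1$: two $3$-cycles $abc$ and $abd$ sharing $ab$. Because $G$ is cubic (Lemma~\ref{no-2-vertex}), $a$ and $b$ have no neighbors outside $\{a,b,c,d\}$, while $c$ and $d$ each have exactly one neighbor outside. Taking $S = \{a,b,c,d\}$, the set $S$ induces a $K_4$ in $G^2$, and the number of colored $G^2$-neighbors outside $S$ is at most $2$ for each of $a,b$ and at most $3$ for each of $c,d$. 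The residual lists on $S$ thus have sizes at least $5, 5, 4, 4$, which suffice to color $K_4$ greedily.

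For part (b), with $3$-cycle $abc$ sharing edge $ab$ with a $6$-cycle $a, b, d_1, d_2, d_3, d_4$, I would take $S$ to consist of the triangle vertices together with $d_1$ and $d_4$, so that again each $s \in S$ has at most one neighbor outside $S$. The graph $G^2[S]$ is then $K_5$ minus the non-edge $d_1 d_4$; after subtracting the colors of $G^2$-neighbors outside $S$ from $L$, the existence of a valid coloring of $G^2[S]$ is guaranteed by Combinatorial Nullstellensatz, applied to the graph polynomial of $G^2[S]$ with a monomial of low multidegree. For part (c), the distance-$0$ case is immediate by a degree count, since a shared vertex between two $3$-cycles would have degree at least $4$; the distance-$1$ and distance-$2$ cases follow the same template, with $S$ consisting of the union of the two triangles together with any connecting vertex.

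The main obstacle, uniform across (a) for $F_1$, all of (b), and the nontrivial subcases of (c), is controlling the $G^2$-degrees of vertices in $S$ precisely enough that the residual list sizes, after subtracting the colors of already-colored $G^2$-neighbors in $V(G)\setminus S$, remain large enough to permit a coloring of $G^2[S]$. When the greedy bound is tight, one expands the graph polynomial of $G^2[S]$ and certifies the nonvanishing of a monomial whose exponents are strictly smaller than the residual list sizes; this monomial computation, routine in principle, is the technical heart of the argument, and it relies on the subcubic and no-$5$-cycle hypotheses to force the local structure around $S$ into one of a short list of enumerable patterns.
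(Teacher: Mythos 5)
Your part (a) is sound and in fact more complete than the paper's own treatment: the paper dispatches the $3$-cycle/$4$-cycle subcase the same way (a shared edge forces a $5$-cycle) but defers the two-triangle subcase to \cite[Lemma 8]{JKK} with ``we leave the detail to the readers,'' whereas your deletion set $\{a,b,c,d\}$ does satisfy the one-exterior-neighbour condition, the residual lists of sizes $5,5,4,4$ on a $K_4$ in $G^2$ are greedily colorable, and only $\Delta(G)\le 3$ (not the full cubicity of Lemma~\ref{no-2-vertex}) is actually needed, so there is no circularity.

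Part (b) has a genuine gap. With the $6$-cycle $a,b,d_1,d_2,d_3,d_4$ and $S=\{a,b,c,d_1,d_4\}$, the vertex $d_1$ has \emph{two} neighbours outside $S$ (namely $d_2$ and its third neighbour $x_1$), so your own hypothesis guaranteeing $(G-S)^2=G^2[V(G)\setminus S]$ fails and the coloring of $(G-S)^2$ need not be a proper partial coloring of $G^2$. More damagingly, the residual list sizes do not support any coloring argument: $d_1$ has up to six $G^2$-neighbours outside $S$ ($d_2$, $x_1$, and up to two further neighbours of each), so $|L'(d_1)|$ can be as small as $1$, and likewise $|L'(d_4)|$, while $a,b,c$ each keep only about $4$ colors. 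Since $a,b,c$ form a triangle in $G^2[S]$ and each is adjacent to both $d_1$ and $d_4$ there, the residual lists $L'(d_1)=\{1\}$, $L'(d_4)=\{2\}$, $L'(a)=L'(b)=L'(c)=\{1,2,3,4\}$ leave the triangle only two usable colors; hence $G^2[S]$ is not colorable from lists of these sizes, and no monomial for Combinatorial Nullstellensatz can exist (the only candidate exponents are $t_{d_1}=t_{d_4}=0$, $t_a=t_b=t_c=3$, and its coefficient must vanish). The repair is to delete the entire $6$-cycle together with $c$, which restores the one-exterior-neighbour property and yields residual lists of sizes about $5,5,4,3,2,2,3$ --- precisely the configuration handled in \cite[Lemma 8]{JKK}, which the paper simply cites. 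Part (c) suffers a milder version of the same problem: the distance-$0$ case is fine, but for distances $1$ and $2$ the graph $G^2[S]$ is two copies of $K_4$ glued along an edge (resp.\ a vertex, plus one more $G^2$-edge), with residual lists of sizes $5,5,3,3,3,3$ (resp.\ similar); a greedy order does not close (the last high-degree vertex sees five colored neighbours against a list of five), so a short case analysis or a Nullstellensatz coefficient computation is genuinely required and is not supplied. The paper again defers this to \cite[Lemma 9]{JKK}.
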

\begin{proof}
(b) follows from \cite[Lemma 8]{JKK}.
If a $3$-cycle share an edge with a $4$-cycle,
then there is a $5$-cycle, a contradiction.
Thus, in (a) it suffices to show the case when
two $3$-cycles share an edge.
This can be shown similarly to \cite[Lemma 8]{JKK} or the proof of the next lemma.
(We leave the detail to the readers.)
(c) is obtained from \cite[Lemma 9]{JKK}, together with (a).
\end{proof}

\begin{figure}[htbp]
\begin{center}
\begin{tikzpicture}[u/.style={fill=black,minimum size =4pt,ellipse,inner sep=1pt},invisible/.style={circle,draw=none,fill=none,inner sep=0pt,minimum size=0pt},node distance=1.5cm,scale=0.9]

\node[u] (v1) at (0,0){};
\node[u] (v2) at (-1,0.7){};
\node[u] (v3) at (-1,-0.7){};
\node[u] (v5) at (2.4,0){};
\node[u] (v6) at (3.4,0.7) {};
\node[u] (v7) at (3.4,-0.7) {};
\node[u] (v8) at (1.2,0) {};

\draw  (v1) to (v2){};
\draw  (v1) to (v3){};
\draw  (v1) to (v5){};
\draw  (v2) to (v3){};
\draw  (v5) to (v6){};
\draw  (v5) to (v7){};
\draw  (v6) to (v7){};

\node[above] at (v1) {$v_3$};
\node[below] at (v3) {$v_1$};
\node[above] at (v2) {$v_2$};
\node[above] at (v5) {$v_4$};
\node[above] at (v6) {$v_5$};
\node[below] at (v7) {$v_6$};
\node[above] at (v8) {$v_7$};
\node[below=0.8cm,left=0.5cm] at (v5) {}; 
\end{tikzpicture}
\end{center}
\caption{Graph $T$} \label{distance-triangle}
\end{figure}


Next, we will show that subgraph $H_1$ does not appear in $G$.

\begin{lemma} \label{reducible-H0}
The subgraph $H_1$ in Figure \ref{key configuration} does not appear in $G$.
\end{lemma}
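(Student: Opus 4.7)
The plan is the standard delete-and-extend argument for a minimal counterexample. Assume $G$ contains a copy of $H_1$ and label its six vertices $v_1,\ldots,v_6$ so that the two $4$-cycles are $v_1v_2v_5v_4$ and $v_2v_3v_6v_5$, sharing the edge $v_2v_5$. By Lemma~\ref{no-2-vertex}, $G$ is cubic; in particular $v_2$ and $v_5$ have all three neighbors inside $V(H_1)$, while each of $v_1,v_3,v_4,v_6$ has a unique outside neighbor $u_1,u_3,u_4,u_6$, respectively. Setting $G'=G-V(H_1)$, by the minimality of $G$ there is a proper $L$-coloring $\phi$ of $(G')^2$. For each $v\in V(H_1)$ define $L'(v)=L(v)\setminus\{\phi(w):w\in N_{G^2}(v)\setminus V(H_1)\}$. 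It suffices to find a proper $L'$-coloring of the induced graph $G^2[V(H_1)]$, since combining it with $\phi$ will contradict the choice of $G$.

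First I would pin down the structure of $G^2[V(H_1)]$. Every pair in $V(H_1)$ except $\{v_1,v_6\}$ and $\{v_3,v_4\}$ is joined by a path of length at most $2$ inside $V(H_1)$, so these thirteen pairs are automatically $G^2$-edges. For $\{v_1,v_6\}$ to also be at $G$-distance at most $2$ one would need either $u_1=v_6$ (in which case $v_1v_6\in E(G)$; see below), $u_1\in\{v_3,v_5\}$ (excluded because it forces a $3$-cycle sharing an edge with the $4$-cycle $v_1v_2v_5v_4$, contradicting Lemma~\ref{C3-C6}(a)), or $u_1=u_6$ (excluded because $v_1u_1v_6v_3v_2v_1$ would be a $5$-cycle); the symmetric argument handles $\{v_3,v_4\}$. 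Hence generically $G^2[V(H_1)]=K_6\setminus\{v_1v_6,v_3v_4\}$. Next, a direct cubicity count yields $|L'(v_1)|\ge 3$, since the external $G^2$-neighbors of $v_1$ are at most $u_1$, the two $G$-neighbors of $u_1$ other than $v_1$, and $u_4$; by symmetry the same bound holds for $v_3,v_4,v_6$. Meanwhile $|L'(v_2)|,|L'(v_5)|\ge 5$ because $v_2$ and $v_5$ each have only the two external $G^2$-neighbors $\{u_1,u_3\}$ and $\{u_4,u_6\}$, respectively.

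Finally I would $L'$-color $K_6\setminus\{v_1v_6,v_3v_4\}$ by splitting on whether $L'(v_1)\cap L'(v_6)$ or $L'(v_3)\cap L'(v_4)$ is nonempty. If $L'(v_1)\cap L'(v_6)\ne\emptyset$ (the other case is symmetric), assign a common color to both $v_1$ and $v_6$; the residual problem is to $L''$-color $K_4\setminus\{v_3v_4\}$ with lists of sizes at least $4,2,2,4$ for $v_2,v_3,v_4,v_5$, which a short greedy argument handles. Otherwise both intersections are empty, so $|L'(v_1)\cup L'(v_6)|\ge 6$ and $|L'(v_3)\cup L'(v_4)|\ge 6$; Hall's condition for the six-list system is then immediate (the only delicate subset is $\{v_1,v_3,v_4,v_6\}$, covered by the first disjoint union), and Hall's theorem yields a system of distinct representatives, which properly colors $K_6$ and hence $K_6\setminus\{v_1v_6,v_3v_4\}$. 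In the edge case $u_1=v_6$ the vertices $v_1$ and $v_6$ have no external neighbors, so $|L'(v_1)|,|L'(v_6)|\ge 6$ and the extension is only easier. The main obstacle I anticipate is corralling all external-identification subcases in the structural step into a uniform statement; as a backup, the Combinatorial Nullstellensatz (Theorem~\ref{cnull}) applied to the graph polynomial of $K_6\setminus\{v_1v_6,v_3v_4\}$ with a symmetric target monomial such as $x_{v_1}^2x_{v_2}^3x_{v_3}^2x_{v_4}^2x_{v_5}^3x_{v_6}^1$ (degree $13=|E|$) would give a more formulaic, fully general extension argument.
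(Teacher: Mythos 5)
Your proposal is correct and follows essentially the same route as the paper: delete $V(H_1)$, derive the residual list sizes $3,5,3,3,5,3$, observe that the only possible extra $G^2$-adjacencies among $V(H_1)$ are between the two antipodal pairs $\{v_1,v_6\}$ and $\{v_3,v_4\}$ (common neighbors being excluded by the no-$5$-cycle condition and Lemma~\ref{C3-C6}(a)), and then split on whether those pairs' lists intersect. The only cosmetic differences are that you finish the disjoint-lists subcase with Hall's theorem where the paper makes an explicit color choice for $v_1,v_6$ followed by a greedy extension, and you absorb the case $v_1v_6\in E(G)$ into larger lists where the paper instead recolors only five of the six vertices; both variants are sound.
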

\begin{proof}
Suppose that
$G$ has $H_1$ as a subgraph, and denote $V(H_1) = \{v_1, v_2, v_3, v_4, v_5, v_6\}$ (see Figure \ref{H1fig}).
Let $L$ be a list assignment with lists of size 7 for each vertex in $G$.
We will show that $G^2$ has a proper coloring from the list $L$, which is a contradiction for the fact that $G$ is a counterexample to the theorem.
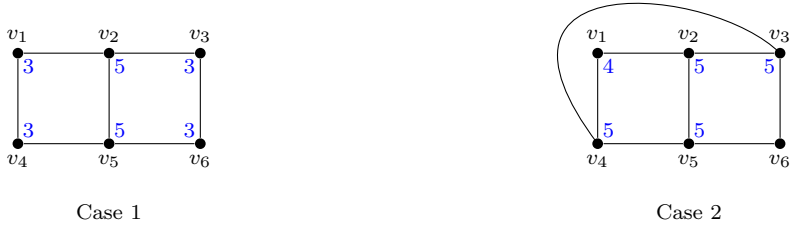
\begin{figure}[htbp]
  \begin{center}
 \begin{tikzpicture}[
    v2/.style={fill=black,minimum size=4pt,ellipse,inner sep=1pt},
    scale=0.6
]

    \node[v2] (H1_1) at (0, 2) {};
    \node[v2] (H1_2) at (2, 2) {};
    \node[v2] (H1_3) at (4, 2) {};

    \node[v2] (H1_4) at (0, 0) {};
    \node[v2] (H1_5) at (2, 0) {};
    \node[v2] (H1_6) at (4, 0) {};

    \draw (H1_1) -- (H1_2) -- (H1_3);
    \draw (H1_4) -- (H1_5) -- (H1_6);

    \draw (H1_1) -- (H1_4);
    \draw (H1_2) -- (H1_5);
    \draw (H1_3) -- (H1_6);
    \node[font=\scriptsize,above] at (H1_1){$v_1$};
     \node[font=\scriptsize,yshift=-5pt,xshift=4pt] at (H1_1){{\color{blue}3}};
    \node[font=\scriptsize,above] at (H1_2){$v_2$};
    \node[font=\scriptsize,yshift=-5pt,xshift=4pt] at (H1_2){{\color{blue}5}};
    \node[font=\scriptsize,above] at (H1_3){$v_3$};
     \node[font=\scriptsize,yshift=-5pt,xshift=-4pt] at (H1_3){{\color{blue}3}};
    \node[font=\scriptsize,below] at (H1_4){$v_4$};
    \node[font=\scriptsize,yshift=5pt,xshift=4pt] at (H1_4){{\color{blue}3}};
    \node[font=\scriptsize,below] at (H1_5){$v_5$};
     \node[font=\scriptsize,yshift=5pt,xshift=4pt] at (H1_5){{\color{blue}5}};
    \node[font=\scriptsize,below] at (H1_6){$v_6$};
\node[font=\scriptsize,yshift=5pt,xshift=-4pt] at (H1_6){{\color{blue}3}};

    \node[font=\scriptsize] at (2, -1.5) {Case 1};
\end{tikzpicture}\hspace{3cm}
 \begin{tikzpicture}[
    v2/.style={fill=black,minimum size=4pt,ellipse,inner sep=1pt},
    scale=0.6
]

    \node[v2] (H1_1) at (0, 2) {};
    \node[v2] (H1_2) at (2, 2) {};
    \node[v2] (H1_3) at (4, 2) {};

    \node[v2] (H1_4) at (0, 0) {};
    \node[v2] (H1_5) at (2, 0) {};
    \node[v2] (H1_6) at (4, 0) {};

    \draw (H1_1) -- (H1_2) -- (H1_3);
    \draw (H1_4) -- (H1_5) -- (H1_6);

    \draw (H1_1) -- (H1_4);
    \draw (H1_2) -- (H1_5);
    \draw (H1_3) -- (H1_6);
    \draw
  (0,0)
    .. controls (-3,4) and (2.5,3.5) ..
  (4,2);
  \node[font=\scriptsize,above] at (H1_1){$v_1$};
     \node[font=\scriptsize,yshift=-5pt,xshift=4pt] at (H1_1){{\color{blue}4}};
    \node[font=\scriptsize,above] at (H1_2){$v_2$};
    \node[font=\scriptsize,yshift=-5pt,xshift=4pt] at (H1_2){{\color{blue}5}};
    \node[font=\scriptsize,above] at (H1_3){$v_3$};
     \node[font=\scriptsize,yshift=-5pt,xshift=-4pt] at (H1_3){{\color{blue}5}};
    \node[font=\scriptsize,below] at (H1_4){$v_4$};
    \node[font=\scriptsize,yshift=5pt,xshift=4pt] at (H1_4){{\color{blue}5}};
    \node[font=\scriptsize,below] at (H1_5){$v_5$};
     \node[font=\scriptsize,yshift=5pt,xshift=4pt] at (H1_5){{\color{blue}5}};
    \node[font=\scriptsize,below] at (H1_6){$v_6$};

    \node[font=\scriptsize] at (2, -1.5) {Case 2};
\end{tikzpicture}
  \end{center}
  \caption{Cases 1 and 2 of $H_1$, respectively. The numbers at vertices are the number of available colors.} \label{H1fig}
\end{figure}

\medskip

\noindent {\bf Case 1:} $H_1^2$ is an induced subgraph of $G^2$.
\\
Let $G' = G - V(H_1)$.
Then $G'$ is also a subcubic planar graph and $|V(G')| < |V(G)|$.   Since $G$ is a minimal counterexample to Theorem \ref{main-thm},
the square of $G'$ has a proper coloring $\phi$ such that $\phi(v) \in L(v)$ for each vertex $v \in V(G')$.

Now, for each $v_i \in V(H_1)$, we define \[
L_{H_1}(v_i) = L(v_i) \setminus \{\phi(x) : xv_i \in E(G^2) \mbox{ and } x \notin V(H_1)\}.
\]
Then, we have the following (see Case 1 in Figure \ref{H1fig}).
$$
|L_{H_1}(v_i)| \geq
\begin{cases}
3 & i=1,3,4,6, \\
5 & i=2,5.
\end{cases}
$$
Now, we will show that
$H_1^2$ admits a proper coloring from the list $L_{H_1}$.

\medskip

\noindent {\bf Subcase 1.1:} $L_{H_1}(v_1) \cap L_{H_1}(v_6) \neq \emptyset$.

Color $v_1$ and $v_6$ by a color $c \in L_{H_1}(v_1) \cap L_{H_1}(v_6)$, and
greedily color $v_3, v_4, v_2, v_5$ in order.
Then $H_1^2$ admits a proper coloring from its list.  This gives an $L$-coloring for $G^2$.

\medskip

\noindent  {\bf Subcase 1.2:} $L_{H_1}(v_1) \cap L_{H_1}(v_6) = \emptyset$.

Note that in this case
$|L_{H_1}(v_1) \cup L_{H_1}(v_6)| \geq 6$ and $|L_{H_1}(v_5)| \geq 5$.  So,
we can color $v_1$ by $c_1 \in L_{H_1}(v_1)$ and $v_6$ by $c_6 \in L_{H_1}(v_6)$
so that $|L_{H_1}(v_5) \setminus \{c_1, c_6 \}| \geq 5$.
Then greedily color $v_3, v_4, v_2, v_5$ in order.
Then $H_1^2$ admits a proper coloring from its list $L_{H_1}$.  This gives an $L$-coloring for $G^2$.

\medskip
\noindent {\bf Case 2:} $H_1^2$ is not an induced subgraph in $G^2$.
\\
In this case, we need to consider the case when $v_1$ and $v_6$ are adjacent or $v_3$ and $v_4$ are adjacent in $G^2$. By symmetry, we may assume that $v_3$ and $v_4$ are adjacent in $G^2$.
Since $G$ has no $5$-cycle, $v_3$ and $v_4$ cannot have a common neighbor in $G$,
and hence $v_3$ and $v_4$ are adjacent in $G$.
By the planarity,
$v_1$ and $v_6$ are not adjacent in $G^2$.

Let $G' = G - \{v_1, v_2, v_3, v_4, v_5\}$.
Then $G'$ is also a subcubic planar graph and $|V(G')| < |V(G)|$.   Since $G$ is a minimal counterexample to Theorem \ref{main-thm},
the square of $G'$ has a proper coloring $\phi$ such that $\phi(v) \in L(v)$ for each vertex $v \in V(G')$.
Then $v_1, v_2, v_3, v_4, v_5$ induce a $K_5$ and the number of available colors at vertices are like Case 2 in Figure \ref{H1fig}.  Then $v_1, v_2, v_3, v_4, v_5$ are colored properly from the list.
Thus, $G^2$ has an $L$-coloring.
This is a contradiction for the fact that $G$ is a counterexample.
So, $H_1$ does not appear in $G$.
\end{proof}

Next, we will prove that a $6$-cycle has no $2$-vertex.

\begin{lemma} (\cite[Lemma 5]{KL23}) \label{6-face}
$G$ has no 6-cycle which contains a 2-vertex.
\end{lemma}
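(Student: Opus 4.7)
The plan is to suppose, for contradiction, that $G$ contains a $6$-cycle $C=v_1v_2v_3v_4v_5v_6$ in which $v_1$ is a $2$-vertex; given an arbitrary $7$-list assignment $L$, I will exhibit an $L$-coloring of $G^2$, contradicting the minimality of $G$. Throughout, let $u_i$ denote the possible third neighbor of $v_i$ off the cycle when $v_i$ is a $3$-vertex.

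I would first consider $G'=G-v_1$. Since removing a vertex preserves subcubicity, planarity, and the absence of $5$-cycles, $G'$ is a valid smaller instance, so by the minimality of $G$ the square $(G')^2$ is $7$-choosable and admits an $L$-coloring $\phi$. Since $v_1$ has $G$-degree $2$, its $G^2$-neighborhood is contained in $\{v_2,v_6,v_3,v_5,u_2,u_6\}$, of cardinality at most $6$. Hence if $\phi$ happens to be a proper coloring of $G^2-v_1$ (not merely of $(G')^2$), then at most $6$ colors are forbidden at $v_1$ and the $7$-list $L(v_1)$ leaves at least one available color, extending $\phi$ to an $L$-coloring of $G^2$.

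The main obstacle is the discrepancy between $(G')^2$ and $G^2-v_1$: the edge $v_2v_6$ lies in $E(G^2)$ via the path $v_2v_1v_6$ but may fail to lie in $E((G')^2)$ when $d_{G'}(v_2,v_6)\ge 3$. In that case $\phi$ might assign $\phi(v_2)=\phi(v_6)$, so $\phi$ is not proper on $G^2-v_1$ and no extension at $v_1$ can succeed. I would split cases: if $d_{G'}(v_2,v_6)\le 2$ then $v_2v_6\in E((G')^2)$ forces $\phi(v_2)\ne\phi(v_6)$ automatically, and we are done. Otherwise, I would invoke the minimality hypothesis on a suitably chosen auxiliary graph $H$ whose square contains the edge $v_2v_6$, so that any coloring produced is already proper on $G^2-v_1$.

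The construction of $H$ is the technical crux: one must introduce the edge $v_2v_6$ without creating a $5$-cycle. Naively forming $G-v_1+v_2v_6$ fails because of the new $5$-cycle $v_2v_3v_4v_5v_6$, so the length-$4$ path must be broken first, for example by also deleting an interior cycle vertex such as $v_4$ before adding the edge; one must then verify that no other $5$-cycle is created (ruling out $4$-paths from $v_2$ to $v_6$ through off-cycle vertices of $G-\{v_1,v_4\}$) and that $H$ remains subcubic and planar. After applying minimality to $H$ and pulling back the coloring, the deleted cycle vertices together with $v_1$ are reinserted greedily in a carefully chosen order, exploiting the fact that each such vertex retains at most six colored $G^2$-neighbors against a $7$-list. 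Ensuring simultaneously that $H$ is $5$-cycle-free and that the list sizes suffice for the final extension is the main difficulty; the rest is a routine counting argument.
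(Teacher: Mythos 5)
Your overall strategy differs from the paper's: the paper deletes the entire $6$-cycle $C$, checks that $C^2$ is an induced subgraph of $G^2$ (opposite vertices of $C$ are non-adjacent because that would create two adjacent $4$-cycles, i.e.\ the forbidden $H_1$, and they have no common neighbor because that would create a $5$-cycle), and then invokes the list-coloring analysis of \cite[Lemma 5]{KL23} on the residual lists. You instead try to delete only the $2$-vertex (plus one interior cycle vertex) and add the edge $v_2v_6$. Unfortunately this route has two genuine gaps.

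First, the auxiliary graph $H=G-\{v_1,v_4\}+v_2v_6$ is not necessarily $5$-cycle-free, and you never rule this out. Concretely, let $u_5$ be the third neighbor of $v_5$ and suppose $u_5$ and $v_2$ have a common neighbor $x$ outside the cycle. Then $v_2v_6v_5u_5xv_2$ is a $5$-cycle of $H$, while in $G$ this configuration only produces $6$-cycles (e.g.\ $v_1v_2xu_5v_5v_6v_1$), so nothing in the hypotheses forbids it. So the minimality hypothesis cannot be applied to $H$ in general, and excluding such $4$-paths from $v_2$ to $v_6$ is not a formality but a substantive structural claim you would have to prove.

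Second, and more fatally, the greedy reinsertion step fails. The deleted interior vertex $v_4$ is (in the hard case) a $3$-vertex with third neighbor $u_4$, and its neighborhood in $G^2$ consists of up to nine vertices: $v_3$, $v_5$, $u_4$ at distance one, and $v_2$, $v_6$, $u_3$, $u_5$ and the two remaining neighbors of $u_4$ at distance two. None of these is $v_1$ (since $d_G(v_1,v_4)=3$ and they share no neighbor), so all of them lie in $H$ and are already colored when you try to color $v_4$. A list of size $7$ cannot survive up to nine forbidden colors, so the assertion that ``each such vertex retains at most six colored $G^2$-neighbors'' is false; it holds only for the $2$-vertex $v_1$ itself. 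This is exactly why the paper (following \cite{KL23}) deletes all six cycle vertices at once and runs a genuine list-coloring argument on the whole configuration rather than a one-vertex-at-a-time greedy extension.
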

\begin{proof}
Suppose that  $G$ has a $6$-cycle $C$ such that $V(C) = \{v_1, v_2, v_3, v_4, v_5, v_6\}$ and $v_6$ is a $2$-vertex (see Figure \ref{6-cycle-2-vertex}).
Note that for $1 \leq i \leq 3$, $v_i$ and $v_{i+3}$ are not adjacent in $G$ by Lemma \ref{reducible-H0}, and $v_i$ and $v_{i+3}$ have no common neighbor since $G$ has no $5$-cycle.
Thus $C^2$ is an induced subgraph of $G^2$.  But in this case, subgraph $C$ cannot appear in $G$
by \cite[Lemma 5]{KL23}.
So, $G$ has no 6-cycle which contains a 2-vertex.
\end{proof}

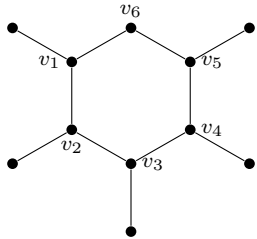
\begin{figure}[htbp]
  \begin{center}
 \begin{tikzpicture}[
  v2/.style={fill=black,minimum size=4pt,ellipse,inner sep=1pt},
  node distance=1.5cm,scale=0.45
]
 \node[v2] (F3_1) at (0, 0) {};
 \node[v2] (v1) at (-1.732, -1.0){};
    \node[v2] (F3_2) at (0, 2) {};
      \node[v2] (v2) at (-1.732, 3.0) {};

    \node[v2] (F3_4) at (1.732, 3) {};

    \node[v2] (F3_5) at (3.464, 2) {};
      \node[v2] (v5) at (5.196, 3.0) {};
    \node[v2] (F3_6) at (3.464, 0) {};
    \node[v2] (v6) at (5.196, -1.0){};
    \node[v2] (F3_7) at (1.732, -1) {};
 \node[v2] (v7) at (1.732, -3) {};

    \draw (F3_1) -- (F3_2);
    \draw (F3_1) -- (F3_7) -- (F3_6) -- (F3_5) -- (F3_4) -- (F3_2);
    \draw (F3_7) -- (v7);
     \draw (F3_5) -- (v5);
       \draw (F3_1) -- (v1);
     \draw (F3_2) -- (v2);
\draw (F3_6) -- (v6);
\node[font=\scriptsize,below] at (F3_1) {$v_2$};
\node[font=\scriptsize,left] at (F3_2) {$v_1$};
\node[font=\scriptsize,above] at (F3_4) {$v_6$};
\node[font=\scriptsize,right] at (F3_5) {$v_5$};
\node[font=\scriptsize,right] at (F3_6) {$v_4$};
\node[font=\scriptsize,right] at (F3_7) {$v_3$};
\end{tikzpicture}
  \end{center}
\caption{A 6-cycle $C$ which has a $2$-vertex $v_6$.} \label{6-cycle-2-vertex}
\end{figure}

Next, we show that if $G$ is a minimal counterexample, then $\delta(G) \geq 3$.
\begin{lemma} \label{no-2-vertex}
$G$ is a cubic planar graph.
\end{lemma}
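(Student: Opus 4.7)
The plan is to prove this by the standard minimality argument: assume for contradiction that $G$ has a vertex $v$ with $d_G(v) \le 2$, delete $v$, apply minimality of $G$ to color $(G-v)^2$, and then extend the coloring back to $v$ using that $v$ has few neighbors in $G^2$.

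First, I would fix an arbitrary list assignment $L$ with $|L(w)| = 7$ for every $w \in V(G)$ (whose existence as a bad instance is the point of $G$ being a counterexample). Next I would set $G' := G - v$ and observe that $G'$ is still a subcubic planar graph without 5-cycles, and has strictly fewer vertices than $G$. Since $G$ is a minimal counterexample to Theorem \ref{main-thm}, the square $(G')^2$ is $L$-colorable, and I would let $\phi$ be such a coloring.

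The final step is the extension to $v$. Any color in $L(v)$ that differs from $\phi(w)$ for every $G^2$-neighbor $w$ of $v$ (in $V(G')$) works. The $G^2$-neighbors of $v$ are its $G$-neighbors together with the $G$-neighbors of those neighbors, so
\[
d_{G^2}(v) \;\le\; d_G(v) \;+\; \sum_{u \in N_G(v)} \bigl(d_G(u) - 1\bigr) \;\le\; 2 + 2 \cdot 2 \;=\; 6.
\]
Hence there are at most $6$ forbidden colors, and since $|L(v)| = 7$ some color remains available. Using it to extend $\phi$ produces an $L$-coloring of $G^2$, contradicting that $G$ is a counterexample. The $d_G(v) \in \{0,1\}$ subcases yield even smaller bounds on $d_{G^2}(v)$, so they follow by the same argument.

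Since this lemma is invoked precisely to guarantee $\delta(G) = 3$ before the discharging and geometric reducibility work begins, I would not expect any real obstacle here: the argument is a one-step deletion, using nothing beyond the hereditary nature of the hypotheses (subcubic, planar, 5-cycle-free) under vertex deletion and the trivial bound on $d_{G^2}(v)$. If anything needs care, it is only the bookkeeping that $G - v$ continues to satisfy the theorem's premises and that minimality applies to it; both are immediate.
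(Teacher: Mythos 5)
There is a genuine gap in your extension step, and it is exactly the subtlety that drives the paper's more elaborate proof. Suppose $v$ is a $2$-vertex with $N_G(v)=\{x,y\}$. Then $x$ and $y$ are at distance $2$ in $G$ (through $v$), so $xy\in E(G^2)$ and any proper coloring of $G^2$ must give them distinct colors. But after deleting $v$, the vertices $x$ and $y$ may be far apart in $G'=G-v$, so they need not be adjacent in $(G')^2$, and the coloring $\phi$ you obtain from minimality may well set $\phi(x)=\phi(y)$. Choosing a color for $v$ avoiding its at most six $G^2$-neighbors does not repair this: the resulting coloring still violates the edge $xy$ of $G^2$. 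In general, a proper coloring of $(G-v)^2$ is not a proper coloring of $G^2$ restricted to $V(G)\setminus\{v\}$, because the square of a subgraph omits edges of the square that are realized only through the deleted vertex. (Your argument is fine for $d_G(v)\le 1$, where no such pair exists.)

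The paper handles this by suppressing the $2$-vertex rather than deleting it: it sets $H=G-w+xy$, so that $x$ and $y$ are adjacent in $H$ and hence forced to receive different colors in any $L$-coloring of $H^2$; the extension to $w$ then goes through as you describe. The price is that one must verify $H$ still satisfies the hypotheses of the theorem: adding $xy$ could create a $5$-cycle (if $w$ lay on a $6$-cycle) or a parallel edge/triangle issue (if $w$ lay on a $3$- or $4$-cycle). This is why the paper first shows that no $3$-, $4$-, or $6$-cycle of a minimal counterexample contains a $2$-vertex (the $6$-cycle case being Lemma \ref{6-face}, which is not trivial), before performing the suppression. Your proposal omits both the suppression and the structural preconditions that make it legitimate, so as written it does not establish the lemma.
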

\begin{proof}
It is easily checked that $G$ has no $1$-vertex. Also,
it is easily verified that a $3$-cycle has no 2-vertex, and a $4$-cycle has no $2$-vertex.
A $6$-cycle has no $2$-vertex by Lemma \ref{6-face}.
Thus, if a minimal counterexample $G$ of has a 2-vertex $w$ with $N_G(w) = \{x, y\}$, then there exist no cycle of length at most $6$ that passes through $w$.
We modify $G$ with $H = G - w + xy$.
Then,
we know that the resulting graph $H$ is still  a planar graph without 5-cycles.
Now since $|V(H)| < |V(G)|$,
$H^2$ has  an  $L$-coloring $\phi$ by induction hypothesis.
And then we can color $w$ by a color $c \in L(w)$
since $w$ has at most six neighbors in $G^2$.
Thus, $G^2$ has a proper $L$-coloring, which is a contradiction.  So, $G$ has no 2-vertex.  Thus $\delta(G) \geq 3$ and $G$ is a cubic planar graph since $\Delta(G) \leq 3$.
\end{proof}

Lemma \ref{no-2-vertex} shows the next lemma.

\begin{lemma} \label{reducible-H0_2}
$G$ has no two 4-faces which are adjacent.
\end{lemma}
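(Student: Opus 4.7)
My plan is to deduce this lemma directly from Lemma~\ref{reducible-H0}, which forbids the subgraph $H_1$ (two $4$-cycles sharing an edge, on six vertices) in $G$. The only thing to check is that any two adjacent $4$-faces of $G$ must jointly realize a copy of $H_1$.

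Suppose for contradiction that $f_1$ and $f_2$ are two $4$-faces sharing a common edge $e = u_1u_2$, with boundary cycles $u_1u_2u_3u_4u_1$ and $u_1u_2u_5u_6u_1$, respectively. Since $G$ is a simple cubic graph by Lemma~\ref{no-2-vertex}, the three neighbors of $u_1$ in $G$ are exactly $u_2$, $u_4$ and $u_6$, so in particular $u_4\neq u_6$; symmetrically, the three neighbors of $u_2$ are $u_1$, $u_3$ and $u_5$, giving $u_3\neq u_5$.

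The main step is then to rule out every other coincidence among $u_1,\dots,u_6$. Any identification within a single face boundary (such as $u_3=u_4$ or $u_3=u_1$) is impossible because $G$ is simple and the boundary of each $4$-face is a $4$-cycle. The only remaining candidates are the cross-face identifications $u_3=u_6$ and $u_4=u_5$; in either case, the identified vertex would be adjacent to both $u_1$ and $u_2$ in addition to its two boundary-neighbors in the other face, forcing degree at least $4$ and contradicting cubicness. Hence $u_1,\dots,u_6$ are six distinct vertices, and the seven edges supplied by the two face boundaries form precisely a copy of the subgraph $H_1$ in $G$, contradicting Lemma~\ref{reducible-H0}.

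The main subtlety is making sure that face boundaries are genuinely simple cycles (so that the labels $u_1,\dots,u_6$ with the listed adjacencies are well-defined), rather than closed walks that could occur if $G$ had a bridge or a cut vertex around $e$; this is easily handled using simpleness and cubicness of $G$, but it is the one step worth making explicit so that the short reduction above is unambiguous.
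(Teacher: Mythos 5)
Your proof is correct and takes essentially the same approach as the paper: reduce an adjacent pair of $4$-faces to a copy of the forbidden subgraph $H_1$ via Lemma~\ref{reducible-H0}, using cubicness from Lemma~\ref{no-2-vertex} to rule out degenerate identifications among the six boundary vertices. The paper phrases the degenerate case as ``two $4$-faces sharing two edges would force a $2$-vertex,'' which is just the contrapositive of your distinctness argument.
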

\begin{proof}
By Lemma \ref{reducible-H0}, $G$ has no two $4$-faces sharing exactly one edge.
If there are two $4$-faces sharing two edges,
then $G$ must have a vertex of degree $2$, contradicting  to  Lemma \ref{no-2-vertex}.
\end{proof}

Next, we prove that subgraph $F_2$ in Figure \ref{key configuration-C3} does not appear in $G$.

\begin{lemma} \label{reducible-F2}
The subgraph $F_2$ in Figure \ref{key configuration-C3} does not appear in $G$.
\end{lemma}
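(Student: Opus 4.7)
The plan is to suppose, for contradiction, that $G$ contains $F_2$ as a subgraph and derive a contradiction by extending a list coloring of $G-V(F_2)$ back to $V(F_2)$. I label the triangle of $F_2$ as $u_1u_2u_3$ and the $4$-cycle as $v_1v_2v_3v_4$ in cyclic order, with $u_1v_1$ the bridge edge. Since $G$ is cubic by Lemma~\ref{no-2-vertex}, both $u_1$ and $v_1$ have all their $G$-neighbors inside $V(F_2)$, while each of $u_2, u_3, v_2, v_3, v_4$ has exactly one neighbor outside $V(F_2)$.

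Let $G' = G-V(F_2)$. By minimality of $G$, there is an $L$-coloring $\phi$ of $(G')^2$. For each $v \in V(F_2)$, define the restricted list $L'(v) = L(v) \setminus \{\phi(x) : x \in V(G'),\ d_G(v,x) \leq 2\}$. A direct count using cubicity gives $|L'(u_1)|, |L'(v_1)| \geq 5$, while $|L'(w)| \geq 3$ for each of the other five vertices $w \in V(F_2)$. The square $F_2^2$ is a $K_3$ on $\{u_1,u_2,u_3\}$ together with a $K_4$ on $\{v_1,v_2,v_3,v_4\}$ and the four extra edges $u_1v_2, u_1v_4, u_2v_1, u_3v_1$ induced by the bridge. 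In particular, $v_1$ has degree $6$ in $F_2^2$ but only $|L'(v_1)| \geq 5$ available colors, so no naive greedy ordering succeeds.

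The key idea is to exploit non-adjacent pairs among the neighbors of $v_1$ in $F_2^2$: namely $(u_1,v_3)$ and $(u_i,v_j)$ for $i \in \{2,3\}$, $j \in \{2,3,4\}$. Giving such a pair a common color leaves $v_1$ facing at most five distinct colors on its neighborhood. I split into cases based on whether $L'(u_1) \cap L'(v_3) \neq \emptyset$. When this intersection is nonempty, assigning its element to both $u_1$ and $v_3$ reduces the problem to list-coloring two triangles glued at $v_1$ (on vertex sets $\{u_2,u_3,v_1\}$ and $\{v_1,v_2,v_4\}$) from lists of sizes $(2,2,4,2,2)$. Such an assignment is not always colorable (the pattern $\{1,2\},\{1,2\},\{1,2,3,4\},\{3,4\},\{3,4\}$ fails), so in the bad sub-configuration I would back up and instead identify a different non-adjacent pair $(u_i,v_j)$ whose restricted lists intersect, which is forced by the disjointness hidden in the bad pattern. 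When $L'(u_1) \cap L'(v_3) = \emptyset$, then $|L'(u_1) \cup L'(v_3)| \geq 8$, and this slack lets me pick colors for $u_1, v_3$ that together forbid at most four distinct elements of $L'(v_1)$ and reduce to a manageable sub-problem.

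Finally, I would treat the non-induced case in which some extra adjacencies in $G^2$ appear among vertices of $V(F_2)$ via an external vertex. The absence of $5$-cycles, Lemma~\ref{C3-C6}, and cubicity drastically restrict the possible shortcuts, and each one can be absorbed by deleting the identifying external vertex along with $V(F_2)$ before applying the same extension scheme. The main obstacle throughout is the tight two-triangles-at-$v_1$ sub-problem: the list sizes are genuinely too small for a universal colorability statement, so the proof requires a careful selection of which non-adjacent pair to identify first, and in the very tightest sub-case a fallback to the Combinatorial Nullstellensatz applied to $F_2^2$ with a carefully chosen exponent vector dominated by $(4,2,2,4,2,2,2)$.
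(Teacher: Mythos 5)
Your overall strategy (delete $V(F_2)$, extend an $L$-coloring of the rest, and create slack at the one deficient vertex by giving a common color to a non-adjacent pair of its $G^2$-neighbors) is the same as the paper's, but your execution has two concrete problems that leave the proof incomplete.

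First, your list-size count is wrong at the $4$-cycle vertex opposite the bridge (your $v_3$). In the worst case it has \emph{five} $G^2$-neighbors outside $V(F_2)$: its own outside neighbor, that neighbor's two further neighbors, and the outside neighbors of $v_2$ and $v_4$. So you are only guaranteed $|L'(v_3)|\geq 2$, not $3$. This matters both for your main scheme and for your Nullstellensatz fallback, where the exponent of $x_{v_3}$ would be capped at $1$; note also that the exponent vector must sum to $|E(F_2^2)|=14$, so ``dominated by $(4,2,2,4,2,2,2)$'' is not by itself meaningful, and no coefficient is actually verified to be nonzero.

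Second, and more seriously, your chosen pair $(u_1,v_3)$ leads to a dead end that you acknowledge but do not resolve: after identifying their colors you face two triangles glued at $v_1$ with lists of sizes $(2,2,4,2,2)$ (in fact $(2,2,4,2,1)$ or worse once the count above is corrected), and the pattern $\{1,2\},\{1,2\},\{1,2,3,4\},\{3,4\},\{3,4\}$ is genuinely uncolorable. The claim that in this bad case some other non-adjacent pair with intersecting lists is ``forced by the disjointness hidden in the bad pattern'' is not an argument; as written the proof does not go through. The paper avoids this entirely by identifying a \emph{different} pair: a non-bridge triangle vertex together with a $4$-cycle vertex adjacent to the bridge cycle vertex (your $u_2$ and $v_2$, say). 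These are at distance $3$, hence non-adjacent in $G^2$ (an adjacency or common neighbor would create a $5$-cycle or the forbidden $F_1$), and both lie in the $G^2$-neighborhood of the degree-$6$ vertex $v_1$ \emph{and} of the bridge triangle vertex $u_1$. One either gives them a common color, or, if their lists are disjoint (so their union has size $\geq 6$), chooses their colors so that $u_1$ loses at most one color; a single greedy order ending at the two list-size-$5$ vertices $v_1,u_1$ then finishes, with no Nullstellensatz and no further case analysis beyond ruling out extra $G^2$-adjacencies between $\{u_2,u_3\}$ and $\{v_2,v_3,v_4\}$. To repair your proof you should switch to this pair (or fully work out one of your fallbacks).
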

\begin{proof}
Suppose that $G$ has $F_2$ as a subgraph, where the distance between a $3$-face and a $4$-face is 1.
Let $H$ be the subgraph induced by $\{v_1, v_2, \ldots, v_{7}\}$ in $G$ (see Figure \ref{subgraph-M1}).
Let $G' = G - V(H)$.
Then $G'$ is also a subcubic planar graph and $|V(G')| < |V(G)|$.   Since $G$ is a minimal counterexample to Theorem \ref{main-thm},
the square of $G'$ has a proper coloring $\phi$ such that $\phi(v) \in L(v)$ for each vertex $v \in V(G')$.

Now, for each $v_i \in V(H)$, we define \[
L_{H}(v_i) = L(v_i) \setminus \{\phi(x) : xv_i \in E(G^2) \mbox{ and } x \notin V(H)\}.
\]
Then, we have the following (see Figure \ref{subgraph-M1}).
$$
|L_{H}(v_i)| \geq
\begin{cases}
2 & i=6,\\
3 & i=1, 2, 5, 7, \\
5 & i=3, 4.
\end{cases}
$$

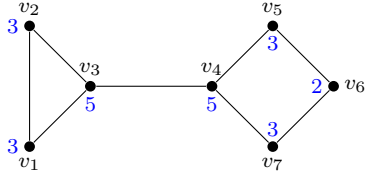
\begin{figure}[htbp]
  \begin{center}
\begin{tikzpicture}[
  v2/.style={fill=black,minimum size=4pt,ellipse,inner sep=1pt},
  node distance=1.5cm,scale=0.8]

 \node[v2] (F2_1) at (0, 1) {};
    \node[v2] (F2_2) at (-1, 2) {};
    \node[v2] (F2_3) at (-1, 0) {};

    \node[v2] (F2_4) at (2, 1) {};
    \node[v2] (F2_5) at (3, 2) {};
    \node[v2] (F2_6) at (4, 1) {};
    \node[v2] (F2_7) at (3, 0) {};

    \draw (F2_1) -- (F2_2) -- (F2_3) -- (F2_1);

    \draw (F2_4) -- (F2_5) -- (F2_6) -- (F2_7) -- (F2_4);

    \draw (F2_1) -- (F2_4);

    \node[font=\scriptsize,above] at (F2_1) {$v_3$};
    \node[font=\scriptsize,below] at (F2_1) {\color{blue}5};
    \node[font=\scriptsize,above] at (F2_4) {$v_4$};
    \node[font=\scriptsize,below] at (F2_4) {\color{blue}5};
    \node[font=\scriptsize,above] at (F2_2) {$v_2$};
    \node[font=\scriptsize,left] at (F2_2) {\color{blue}3};
     \node[font=\scriptsize,below] at (F2_3) {$v_1$};
    \node[font=\scriptsize,left] at (F2_3) {\color{blue}3};
    \node[font=\scriptsize,above] at (F2_5) {$v_5$};
    \node[font=\scriptsize,below] at (F2_5) {\color{blue}3};
    \node[font=\scriptsize,right] at (F2_6) {$v_6$};
    \node[font=\scriptsize,left] at (F2_6) {\color{blue}2};
      \node[font=\scriptsize,below] at (F2_7) {$v_7$};
    \node[font=\scriptsize,above] at (F2_7) {\color{blue}3};
\end{tikzpicture}
  \end{center}
  \caption{The distance between $3$-face and $4$-face is 1. The numbers at vertices are the number of available colors.} \label{subgraph-M1}
\end{figure}

If $v_1$ and $v_5$ are adjacent in $G^2$,
then $G$ has a $3$-cycle adjacent to a $4$-cycle or a $5$-cycle,
contradicting Lemma \ref{C3-C6} (a) or the assumption.
Thus, we can assume that $v_1$ and $v_5$ are not adjacent in $G^2$.
Similarly, we see that
$v_i$ and $v_j$ are not adjacent in $G^2$
for any $i \in \{1,2\}$ and $j \in \{5,6,7\}$.
\\

\noindent
Case 1: $L_{H}(v_1) \cap L_{H}(v_5) \neq \emptyset$.

Color $v_1$ and $v_5$ by a color $c \in L(v_1) \cap L_{H}(v_5)$, and then greedily color $v_6, v_7, v_4, v_2, v_3$ in order.

\medskip

\noindent
Case 2: $L_{H}(v_1) \cap L_{H}(v_5) = \emptyset$.

Since $|L_{H}(v_1) \cup L_{H}(v_5)| \geq 6$ and $|L_{H}(v_3)| \geq 5$, we can color $v_1$ by $c_1 \in L_{H}(v_1)$ and $v_5$ by $c_5 \in L_{H}(v_5)$ so that $|L_{H}(v_3) \setminus \{c_1, c_5\}| \geq 4$.  And then
 greedily color $v_6, v_7, v_4, v_2, v_3$ in order.

So, $H^2$ admits an $L$-coloring from the list $L_H(v)$.
Thus $G^2$ admits an $L$-coloring from the list $L(v)$, which is a contradition.
\end{proof}

Next lemma is for the property of subgraph $F_4$ in Figure \ref{key configuration-C3}.

\begin{lemma} \label{reducible-F4}
If a face $F$ is adjacent to a $3$-face and a $4$-face such that the distance between the $3$-face and the $4$-face is $2$, then the size of $F$ is at least 9.
\end{lemma}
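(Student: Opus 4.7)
The plan is to assume, for contradiction, that some face $F$ with $|F| \leq 8$ is adjacent to a $3$-face $T$ and to a $4$-face $Q$ with $d_G(T,Q) = 2$. Let $H$ be the subgraph of $G$ induced by $V(F) \cup V(T) \cup V(Q)$. Since $T$ contributes one extra vertex off $F$ and $Q$ contributes two, we have $|V(H)| \leq 8 + 1 + 2 = 11$.

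First, I will apply the minimality of $G$ to $G' = G - V(H)$, which is still a subcubic planar graph without $5$-cycles, to obtain an $L$-coloring $\phi$ of $(G')^2$. For each $v \in V(H)$, set
\[
L_H(v) \;=\; L(v) \setminus \{\phi(u) : u \in V(G') \text{ and } uv \in E(G^2)\},
\]
so that $|L_H(v)| \geq 7 - k_v$, where $k_v$ is the number of $G^2$-neighbors of $v$ lying outside $V(H)$. Since $G$ is cubic (Lemma \ref{no-2-vertex}), each $v$ has exactly three neighbors in $G$, hence $k_v$ is controlled by how many of $v$'s neighbors and distance-$2$ vertices lie inside $H$. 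I will enumerate $k_v$ vertex-by-vertex in the two subcases $|F| = 7$ and $|F| = 8$, using the figure for $F_4$ as the template and invoking the already-established reducible configurations ($F_1, F_2, F_3$, $H_1$, together with Lemma \ref{C3-C6}(c)) as well as the $5$-cycle-free hypothesis to rule out any hidden identifications or extra adjacencies among vertices of $H$.

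Next, I will extend $\phi$ to $V(H)$ by showing that $H^2$ is $L_H$-colorable. Where list sizes are comfortably larger than the $H^2$-degrees, I use a greedy ordering analogous to the argument in the proof of Lemma \ref{reducible-F2}: identify a vertex $v^*$ with the smallest list size, color it first (or do a case split on whether $L_H$ of two non-$G^2$-adjacent vertices intersects, as in Cases 1 and 2 of Lemma \ref{reducible-H0}), then greedily color the remaining vertices in an order that ends at a vertex of large list. Where greedy alone is insufficient, I will appeal to the Combinatorial Nullstellensatz (Theorem \ref{cnull}) applied to $P_{H^2}$, exhibiting a monomial $\prod_{v \in V(H)} x_v^{t_v}$ with $t_v \leq |L_H(v)| - 1$ whose coefficient is nonzero.

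The main obstacle will be two-fold. First, I must verify that the induced subgraph $(V(H))^2$ inside $G^2$ has exactly the edges forced by Figure \ref{key configuration-C3}; any extra $G^2$-edge between vertices of $H$ could only come from a short $G$-path identifying two vertices of $H$, and ruling these out requires carefully invoking the absence of $5$-cycles together with the already-proven reducible configurations. Second, the case $|F| = 8$ is the most delicate: $H$ has up to $11$ vertices and several of the boundary vertices lose colors, so the Nullstellensatz coefficient computation (or equivalently, the bookkeeping for the greedy/case-split argument) must be carried out carefully, possibly distinguishing subcases according to which edge of $F$ is shared with $T$ versus $Q$ on the two sides of the separating vertex.
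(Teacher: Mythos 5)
Your overall architecture is the same as the paper's: delete the vertex set of the configuration (the paper calls it $W_1$ when $F$ is a $7$-face, $W_2$ when $F$ is an $8$-face), color the rest by minimality, form the residual lists, split on whether the square of the configuration is induced in $G^2$, and rule out extra adjacencies or common neighbours by citing $5$-cycle-freeness and the previously established reducible configurations. That part of your plan is sound and matches the paper.

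The gap is in how you finish the induced case, which is the heart of the lemma. Take $F$ a $7$-face with $W_1^2$ induced. The residual list sizes are $5,4,5,4,5,3,3,5,3,3$ for $v_1,\dots,v_{10}$, while the degrees of these vertices in $W_1^2$ are $5,4,5,6,6,4,4,6,5,5$. So \emph{no} vertex of $W_1$ has a residual list strictly larger than its degree in $W_1^2$, and therefore no greedy ordering can terminate successfully on its own: whichever vertex you colour last may have all of its colours blocked. Your heuristic of ``colour the smallest-list vertex first and end at a vertex of large list'' does not get around this, and the trick you import from Lemma~\ref{reducible-H0} (giving two non-adjacent vertices a common colour) is not the device the paper uses here. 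The paper's proof hinges on a different idea: when $|L_{W_1}(v_2)|=4$ it reserves a colour $c_1\in L_{W_1}(v_1)\setminus L_{W_1}(v_2)$ (which exists because $|L_{W_1}(v_1)|\geq 5$), keeps $c_1$ off $v_{10}$ and $v_9$, colours $v_8,\dots,v_4$ greedily, and then uses the surviving asymmetry $L_{W_1}'(v_1)\neq L_{W_1}'(v_2)$ to colour the triangle $v_1,v_2,v_3$ from lists of size $2,2,2$. Your fallback of invoking the Combinatorial Nullstellensatz is not verified (no monomial with nonzero coefficient is exhibited, and the paper does not use it for this lemma), so as written the proposal does not contain the argument that actually closes the tight case; the same issue recurs, with one more vertex, when $F$ is an $8$-face.
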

\begin{proof}
Suppose that $G$ has $F_4$ as a subgraph with the size of the face $F$ is at most 8 (see Figure \ref{7cycle-C34-color}).   Then $F$ is either a $7$-face or $8$-face,
see subgraphs $W_1$ and $W_2$ in Figure \ref{7cycle-C34-color}, respectively.

\medskip

We have the following two cases.

\medskip
\noindent {\bf Case 1:} $F$ is a $7$-face.

Let ${W_1}$ be the subgraph induced by $\{v_1, v_2, \ldots, v_{10}\}$ in $G$.
Let $G' = G - V({W_1})$.
Then $G'$ is also a subcubic planar graph and $|V(G')| < |V(G)|$.   Since $G$ is a minimal counterexample to Theorem \ref{main-thm},
the square of $G'$ has a proper coloring $\phi$ such that $\phi(v) \in L(v)$ for each vertex $v \in V(G')$.

\medskip
\noindent
{\bf Subcase 1.1:} $W_1^2$ is an induced subgraph of $G^2$.

Now, for each $v_i \in V({W_1})$, we define \[
L_{W_1}(v_i) = L(v_i) \setminus \{\phi(x) : xv_i \in E(G^2) \mbox{ and } x \notin V({W_1})\}.
\]
Then, we have the following (see $W_1$ in Figure \ref{7cycle-C34-color}).
$$
|L_{W_1}(v_i)| \geq
\begin{cases}
3 & i=6, 7, 9, 10, \\
4 & i=2,4,\\
5 & i=1, 3, 5, 8.
\end{cases}
$$
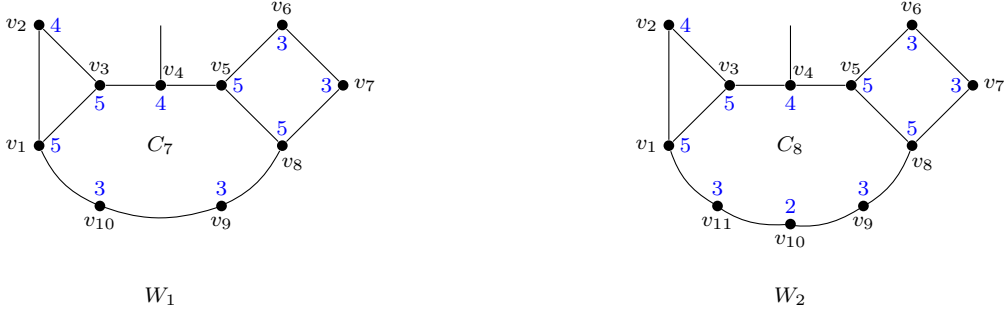
\begin{figure}[htbp]
  \begin{center}
\begin{tikzpicture}[
  v2/.style={fill=black,minimum size=4pt,ellipse,inner sep=1pt},invis/.style={circle,draw=none,fill=none,inner sep=0pt,minimum size=0pt},
  node distance=1.5cm,scale=0.8
]

 \node[v2] (F2_1) at (1, 1) {};
    \node[v2] (F2_2) at (-1, 2) {};
    \node[v2] (F2_3) at (-1, 0) {};

    \node[v2] (F2_4) at (2, 1) {};
    \node[v2] (F2_5) at (3, 2) {};
    \node[v2] (F2_6) at (4, 1) {};
    \node[v2] (F2_7) at (3, 0) {};
    \node[v2] (F2_8) at (0, 1) {};
    \node[invis] (F2_9) at (1, 2) {};
     \node[v2] (F2_10) at (-0, -1) {};
    \node[v2] (F2_11) at (2, -1) {};

    \draw (F2_8) -- (F2_2) -- (F2_3) -- (F2_8);

    \draw (F2_4) -- (F2_5) -- (F2_6) -- (F2_7) -- (F2_4);

    \draw (F2_1) -- (F2_4);
    \draw (F2_1) -- (F2_8);
    \draw (F2_1) -- (F2_9);
     \draw (F2_3) to [bend right=20]  (F2_10) to[bend right=20]  (F2_11)to[bend right=20](F2_7);

    \node[font=\scriptsize, xshift=5pt,yshift=5pt] at (F2_1) {$v_4$};
    \node[font=\scriptsize, below] at (F2_1) {\color{blue}4};
    \node[font=\scriptsize, left] at (F2_2) {$v_2$};
    \node[font=\scriptsize, right] at (F2_2) {\color{blue}4};
     \node[font=\scriptsize, left] at (F2_3) {$v_1$};
    \node[font=\scriptsize, right] at (F2_3) {\color{blue}5};
     \node[font=\scriptsize, above] at (F2_4) {$v_5$};
    \node[font=\scriptsize, right] at (F2_4) {\color{blue}5};
     \node[font=\scriptsize, above] at (F2_5) {$v_6$};
    \node[font=\scriptsize, below] at (F2_5) {\color{blue}3};
    \node[font=\scriptsize, right] at (F2_6) {$v_7$};
    \node[font=\scriptsize, left] at (F2_6) {\color{blue}3};
        \node[font=\scriptsize, xshift=4pt,yshift=-7pt] at (F2_7) {$v_8$};
    \node[font=\scriptsize, above] at (F2_7) {\color{blue}5};
\node[font=\scriptsize, above] at (F2_8) {$v_3$};
    \node[font=\scriptsize, below] at (F2_8) {\color{blue}5};
    \node[font=\scriptsize, below] at (F2_10) {$v_{10}$};
    \node[font=\scriptsize, above] at (F2_10) {\color{blue}3};
    \node[font=\scriptsize, below] at (F2_11) {$v_{9}$};
    \node[font=\scriptsize, above] at (F2_11) {\color{blue}3};
        \node[font=\scriptsize\bfseries] at (1,0) {$C_7$};
       \node[font=\scriptsize] at (1, -2.5) {$W_1$};
\end{tikzpicture}\hspace{3cm}
\begin{tikzpicture}[
  v2/.style={fill=black,minimum size=4pt,ellipse,inner sep=1pt},invis/.style={circle,draw=none,fill=none,inner sep=0pt,minimum size=0pt},
  node distance=1.5cm,scale=0.8
]

 \node[v2] (F2_1) at (1, 1) {};
    \node[v2] (F2_2) at (-1, 2) {};
    \node[v2] (F2_3) at (-1, 0) {};

    \node[v2] (F2_4) at (2, 1) {};
    \node[v2] (F2_5) at (3, 2) {};
    \node[v2] (F2_6) at (4, 1) {};
    \node[v2] (F2_7) at (3, 0) {};
    \node[v2] (F2_8) at (0, 1) {};
    \node[invis] (F2_9) at (1, 2) {};
     \node[v2] (F2_10) at (1, -1.3) {};
    \node[v2] (F2_11) at (2.2, -1) {};
    \node[v2] (F2_12) at (-0.2, -1) {};

    \draw (F2_8) -- (F2_2) -- (F2_3) -- (F2_8);

    \draw (F2_4) -- (F2_5) -- (F2_6) -- (F2_7) -- (F2_4);

    \draw (F2_1) -- (F2_4);
    \draw (F2_1) -- (F2_8);
    \draw (F2_1) -- (F2_9);
     \draw (F2_3) to [bend right=20]  (F2_12) to[bend right=20]  (F2_10)to[bend right=20](F2_11)to[bend right=20](F2_7);

    \node[font=\scriptsize, xshift=5pt,yshift=5pt] at (F2_1) {$v_4$};
    \node[font=\scriptsize, below] at (F2_1) {\color{blue}4};
    \node[font=\scriptsize, left] at (F2_2) {$v_2$};
    \node[font=\scriptsize, right] at (F2_2) {\color{blue}4};
     \node[font=\scriptsize, left] at (F2_3) {$v_1$};
    \node[font=\scriptsize, right] at (F2_3) {\color{blue}5};
     \node[font=\scriptsize, above] at (F2_4) {$v_5$};
    \node[font=\scriptsize, right] at (F2_4) {\color{blue}5};
     \node[font=\scriptsize, above] at (F2_5) {$v_6$};
    \node[font=\scriptsize, below] at (F2_5) {\color{blue}3};
    \node[font=\scriptsize, right] at (F2_6) {$v_7$};
    \node[font=\scriptsize, left] at (F2_6) {\color{blue}3};
        \node[font=\scriptsize, xshift=4pt,yshift=-7pt] at (F2_7) {$v_8$};
    \node[font=\scriptsize, above] at (F2_7) {\color{blue}5};
\node[font=\scriptsize, above] at (F2_8) {$v_3$};
    \node[font=\scriptsize, below] at (F2_8) {\color{blue}5};
    \node[font=\scriptsize, below] at (F2_10) {$v_{10}$};
    \node[font=\scriptsize, above] at (F2_10) {\color{blue}2};
    \node[font=\scriptsize, below] at (F2_11) {$v_{9}$};
    \node[font=\scriptsize, above] at (F2_11) {\color{blue}3};
    \node[font=\scriptsize, below] at (F2_12) {$v_{11}$};
    \node[font=\scriptsize, above] at (F2_12) {\color{blue}3};

       \node[font=\scriptsize] at (1, -2.5) {$W_2$};
   \node[font=\scriptsize\bfseries] at (1,0) {$C_8$};
\end{tikzpicture}
  \end{center}
  \caption{
Face $F$ is adjacent to a $3$-face and a $4$-face such that the distance between the $3$-face and the $4$-face is 2.
$W_1$ and $W_2$ are for the case when $F$ is a 7-face and a 8-face, respectively.
The numbers at vertices are the number of available colors.} \label{7cycle-C34-color}
 \end{figure}

If $|L_{W_1}(v_2)| \geq 5$, then  greedily color $v_{10}, v_9, v_8, v_7, v_6, v_5, v_4, v_3, v_1, v_2$ in order.  This coloring is possible since $|L_{W_1}(v_2)| \geq 5$ and $v_2$ has only four neighbors in $G^2$.

If $|L_{W_1}(v_2)| = 4$, then there exists a color $c_1 \in L_{W_1}(v_1) \setminus L_{W_1}(v_2)$ since $|L_{W_1}(v_1)| \geq 5$.  Color $v_{10}$ by a color $c_{10} \in L_{W_1}(v_{10}) \setminus \{c_1\}$, and then color $v_{9}$ by a color $c_{9} \in L_{W_1}(v_{9}) \setminus \{c_1, c_{10}\}$.  Next, greedily color $v_8, v_7, v_6, v_5, v_4$ in order. Let $\phi(v_i)$ be the color that is colored at $v_i$ for $i \in \{4, 5, 6, 7, 8\}$.  Let
\begin{eqnarray*}
&& L_{W_1}'(v_1)  =  L_{W_1}(v_1) \setminus \{c_{9}, c_{10}, \phi(v_4)\},
~~~L_{W_1}'(v_2) = L_{W_1}(v_2) \setminus \{c_{10}, \phi(v_4)\}, \\
&& L_{W_1}'(v_3)  =  L_{W_1}(v_3) \setminus \{c_{10}, \phi(v_4), \phi(v_5)\}.
\end{eqnarray*}
Then we have that $|L_{W_1}'(v_i)| \geq 2$ for $i \in \{1, 2, 3\}$.  Here if $\phi(v_4) \neq c_1$, then $c_1 \in L_{W_1}'(v_1) \setminus L_{W_1}'(v_2)$.  So, $L_{W_1}'(v_1) \neq L_{W_1}'(v_2)$.
Hence, we can color $v_1, v_2, v_3$ from the list $L_{W_1}'(v_i)$ for $i \in \{1, 2, 3\}$.

Next, if $\phi(v_4) = c_1$, then $|L_{W_1}'(v_2)| \geq 3$ since $c_1 \in L_{W_1}(v_1) \setminus L_{W_1}(v_2)$. So, we can color $v_1, v_2, v_3$ from the list
$L_{W_1}'(v_i)$ for $i \in \{1, 2, 3\}$.
Hence $W_1^2$ admits an $L$-coloring from the list $L_{W_1}(v)$.

\medskip

\noindent
{\bf Subcase 1.2:} $W_1^2$ is not an induced subgraph of $G^2$.

\noindent {\bf Simplifying cases:}
\begin{itemize}
\item
The vertices in each of the following pairs are nonadjacent since it makes a $5$-cycle:
$\{v_2,v_6\}$, $\{v_2,v_{9}\}$, $\{v_4,v_6\}$,
$\{v_4,v_9\}$, $\{v_4,v_{10}\}$, $\{v_6, v_{10}\}$, and $\{v_7, v_{9}\}$.

\item
The vertices in each of the following pairs are nonadjacent since it makes $F_1$
in Figure \ref{key configuration-C3},
which does not exist by Lemma \ref{C3-C6}(a):
$\{v_2,v_4\}$, and $\{v_2,v_{10}\}$.

\item
The vertices in each of the following pairs are nonadjacent since it makes $F_2$
in Figure \ref{key configuration-C3},
which does not exist by Lemma \ref{reducible-F2}:
$\{v_4,v_7\}$.

\item
The vertices in each of the following pairs are nonadjacent since it makes $F_3$
in Figure \ref{key configuration-C3},
which does not exist by Lemma \ref{C3-C6}(b):
$\{v_2,v_7\}$.

\item
The vertices in each of the following pairs are nonadjacent since it makes $H_1$
in Figure \ref{key configuration},
which does not exist by Lemma \ref{reducible-H0}:
$\{v_7,v_{10}\}$.

\item
The vertices in each of the following pairs have no common neighbor outside $W_1$,
since it makes a $5$-cycle:
$\{v_2,v_4\}$, $\{v_2,v_9\}$, $\{v_2,v_{10}\}$, $\{v_4,v_{7}\}$,
$\{v_4,v_{9}\}$, $\{v_4,v_{10}\}$, $\{v_6,v_{7}\}$, $\{v_6,v_{9}\}$,
and $\{v_7,v_{10}\}$.

\item
The vertices in each of the following pairs have no common neighbor outside $W_1$,
since it makes $F_2$ in Figure \ref{key configuration-C3},
which does not exist by Lemma \ref{reducible-F2}:
$\{v_9,v_{10}\}$.

\item
The vertices in each of the following pairs have no common neighbor outside $W_1$,
since it makes $F_3$ in Figure \ref{key configuration-C3},
which does not exist by Lemma \ref{C3-C6}(b):
$\{v_2,v_6\}$.

\item
The vertices in each of the following pairs have no common neighbor outside $W_1$,
since it makes $H_1$ in Figure \ref{key configuration},
which does not exist by Lemma \ref{reducible-H0}:
$\{v_4,v_6\}$, and $\{v_7,v_{9}\}$.

\end{itemize}

Considering these,
for edges in $G^2$ but not in $W_1^2$,
we only need to consider the following cases.

\medskip \noindent
{\bf Subcase 1.2.1:} $v_2$ and $v_7$ have a common neighbor outside $W_1$.

In this case, the number of available colors at vertices of $V(W_1)$ is Subcase 1.2.1 in Figure \ref{7cycle-C34-neighbor}.
If $|L_{W_1}(v_2)| \geq 6$, then greedily color $v_{10}, v_9, v_8, v_7, v_6, v_5, v_4, v_3, v_1, v_2$ in order as in Subcase 1.1.
Next, if $|L_{W_1}(v_2)| = 5$, then we color $v_7$ first, and then color  $v_{10}, v_{9}, v_8, v_6, v_5, v_4$ in order  by the same procedure as Subcase 1.1.
Then  we can show that
the vertices in $W_1$ can be colored from the list $L_{W_1}$
so that we obtain an $L$-coloring in $G^2$.

\medskip \noindent
{\bf Subcase 1.2.2:} $v_6$ and $v_{10}$ have a common neighbor outside $W_1$,
or $v_6$ and $v_{9}$ are adjacent in $G$.

In these case, we can follow the same procedure as Subcase 1.1.  Then can show that
the vertices in $W_1$ can be colored from the list $L_{W_1}$
so that we obtain an $L$-coloring in $G^2$.

This completes the proof of Case 1.
\\

\noindent {\bf Case 2:} $F$ is a $8$-face. \\
Let $W_2$ be the subgraph induced by $\{v_1, \ldots, v_{11}\}$ in $G$
(see Figure \ref{7cycle-C34-color}).

\medskip
\noindent
{\bf Subcase 2.1:} $W_2^2$ is an induced subgraph of $G^2$.

Similarly to Subcase 1.1,
the square of $G - V({W_2})$ has a proper coloring $\phi$ such that $\phi(v) \in L(v)$ for each vertex $v \in V(G) - V({W_2})$.
For each $v_i \in V({W_2})$, we define \[
L_{W_2}(v_i) = L(v_i) \setminus \{\phi(x) : xv_i \in E(G^2) \mbox{ and } x \notin V({W_2})\},
\]
and then we have the following (see $W_2$ in Figure \ref{7cycle-C34-color}).
$$
|L_{W_2}(v_i)| \geq
\begin{cases}
2 & i=10, \\
3 & i=6, 7, 9, 11, \\
4 & i=2,4, \\
5 & i=1, 3, 5, 8.
\end{cases}
$$

If $|L_{W_2}(v_2)| \geq 5$, then  greedily color $v_{11}, v_{10}, v_9, v_8, v_7, v_6, v_5, v_4, v_3, v_1, v_2$ in order.  This is possible since $|L_{W_2}(v_2)| \geq 5$ and $v_2$ has only four neighbors in $W_2^2$.

If $|L_{W_2}(v_2)| = 4$, then there exists a color $c_1 \in L_{W_2}(v_1) \setminus L_{W_2}(v_2)$ since $|L_{W_2}(v_1)| \geq 5$.  Color $v_{10}$ by a color $c_{10} \in L_{W_2}(v_{10}) \setminus \{c_1\}$, and then color $v_{11}$ by a color $c_{11} \in L_{W_2}(v_{11}) \setminus \{c_1, c_{10}\}$. Next, greedily color $v_9, v_8, v_7, v_6, v_5, v_4$ in order, and then color $v_1, v_2, v_3$
by the same argument as Case 1.1 using $c_1 \in L_{W_2}(v_1) \setminus L_{W_2}(v_2)$.
So, $W_2^2$ admits an $L$-coloring from the list $L_{W_2}(v)$.
\\

\begin{figure}[htbp]
  \begin{center}
\begin{tikzpicture}[
  v2/.style={fill=black,minimum size=4pt,ellipse,inner sep=1pt},invis/.style={circle,draw=none,fill=none,inner sep=0pt,minimum size=0pt},
  node distance=1.5cm,scale=0.8
]

 \node[v2] (F2_1) at (1, 1) {};
    \node[v2] (F2_2) at (-1, 2) {};
    \node[v2] (F2_3) at (-1, 0) {};
\node[v2] (F2_w) at (1.5, 3.2) {};
    \node[v2] (F2_4) at (2, 1) {};
    \node[v2] (F2_5) at (3, 2) {};
    \node[v2] (F2_6) at (4, 1) {};
    \node[v2] (F2_7) at (3, 0) {};
    \node[v2] (F2_8) at (0, 1) {};
    \node[invis] (F2_9) at (1, 2) {};
     \node[v2] (F2_10) at (-0, -1) {};
    \node[v2] (F2_11) at (2, -1) {};

    \draw (F2_8) -- (F2_2) -- (F2_3) -- (F2_8);

    \draw (F2_4) -- (F2_5) -- (F2_6) -- (F2_7) -- (F2_4);

    \draw (F2_1) -- (F2_4);
     \draw (F2_2) to[bend left=20] (F2_w)to[bend left=40](F2_6);
    \draw (F2_1) -- (F2_8);
    \draw (F2_1) -- (F2_9);
     \draw (F2_3) to [bend right=20]  (F2_10) to[bend right=20]  (F2_11)to[bend right=20](F2_7);

    \node[font=\scriptsize, xshift=5pt,yshift=5pt] at (F2_1) {$v_4$};
    \node[font=\scriptsize, below] at (F2_1) {\color{blue}4};
    \node[font=\scriptsize, left] at (F2_2) {$v_2$};
    \node[font=\scriptsize, right] at (F2_2) {\color{blue}5};
     \node[font=\scriptsize, left] at (F2_3) {$v_1$};
    \node[font=\scriptsize, right] at (F2_3) {\color{blue}5};
     \node[font=\scriptsize, above] at (F2_4) {$v_5$};
    \node[font=\scriptsize, right] at (F2_4) {\color{blue}5};
     \node[font=\scriptsize, above] at (F2_5) {$v_6$};
    \node[font=\scriptsize, below] at (F2_5) {\color{blue}3};
    \node[font=\scriptsize, right] at (F2_6) {$v_7$};
    \node[font=\scriptsize, left] at (F2_6) {\color{blue}4};
        \node[font=\scriptsize, xshift=4pt,yshift=-6pt] at (F2_7) {$v_8$};
    \node[font=\scriptsize, above] at (F2_7) {\color{blue}5};
\node[font=\scriptsize, above] at (F2_8) {$v_3$};
    \node[font=\scriptsize, below] at (F2_8) {\color{blue}5};
    \node[font=\scriptsize, below] at (F2_10) {$v_{10}$};
    \node[font=\scriptsize, above] at (F2_10) {\color{blue}3};
    \node[font=\scriptsize, below] at (F2_11) {$v_{9}$};
    \node[font=\scriptsize, above] at (F2_11) {\color{blue}3};
       \node[font=\scriptsize] at (1, -2.5) {$W_1$ in (i) in Subcase  1.2.1};
       \node[font=\scriptsize,above] at (F2_w) {$w$};
          \node[font=\scriptsize\bfseries] at (1,0) {$C_7$};
\end{tikzpicture}\hspace{3cm}
\begin{tikzpicture}[
  v2/.style={fill=black,minimum size=4pt,ellipse,inner sep=1pt},invis/.style={circle,draw=none,fill=none,inner sep=0pt,minimum size=0pt},
  node distance=1.5cm,scale=0.8
]

 \node[v2] (F2_1) at (1, 1) {};
    \node[v2] (F2_2) at (-1, 2) {};
    \node[v2] (F2_3) at (-1, 0) {};
\node[v2] (F2_w) at (1.5, 3.2) {};
    \node[v2] (F2_4) at (2, 1) {};
    \node[v2] (F2_5) at (3, 2) {};
    \node[v2] (F2_6) at (4, 1) {};
    \node[v2] (F2_7) at (3, 0) {};
    \node[v2] (F2_8) at (0, 1) {};
    \node[invis] (F2_9) at (1, 2) {};
     \node[v2] (F2_10) at (1, -1.3) {};
    \node[v2] (F2_11) at (2.2, -1) {};
    \node[v2] (F2_12) at (-0.2, -1) {};

    \draw (F2_8) -- (F2_2) -- (F2_3) -- (F2_8);

    \draw (F2_4) -- (F2_5) -- (F2_6) -- (F2_7) -- (F2_4);
\draw (F2_2) to[bend left=20] (F2_w)to[bend left=40](F2_6);
    \draw (F2_1) -- (F2_4);
    \draw (F2_1) -- (F2_8);
    \draw (F2_1) -- (F2_9);

     \draw (F2_3) to [bend right=20]  (F2_12) to[bend right=20]  (F2_10)to[bend right=20](F2_11)to[bend right=20](F2_7);

    \node[font=\scriptsize, xshift=5pt,yshift=5pt] at (F2_1) {$v_4$};
    \node[font=\scriptsize, below] at (F2_1) {\color{blue}4};
    \node[font=\scriptsize, left] at (F2_2) {$v_2$};
    \node[font=\scriptsize, right] at (F2_2) {\color{blue}5};
     \node[font=\scriptsize, left] at (F2_3) {$v_1$};
    \node[font=\scriptsize, right] at (F2_3) {\color{blue}5};
     \node[font=\scriptsize, above] at (F2_4) {$v_5$};
    \node[font=\scriptsize, right] at (F2_4) {\color{blue}5};
     \node[font=\scriptsize, above] at (F2_5) {$v_6$};
    \node[font=\scriptsize, below] at (F2_5) {\color{blue}3};
    \node[font=\scriptsize, right] at (F2_6) {$v_7$};
    \node[font=\scriptsize, left] at (F2_6) {\color{blue}4};
        \node[font=\scriptsize,xshift=4pt,yshift=-6pt] at (F2_7) {$v_8$};
    \node[font=\scriptsize, above] at (F2_7) {\color{blue}5};
\node[font=\scriptsize, above] at (F2_8) {$v_3$};
    \node[font=\scriptsize, below] at (F2_8) {\color{blue}5};
    \node[font=\scriptsize, below] at (F2_10) {$v_{10}$};
    \node[font=\scriptsize, above] at (F2_10) {\color{blue}2};
    \node[font=\scriptsize, below] at (F2_11) {$v_{9}$};
    \node[font=\scriptsize, above] at (F2_11) {\color{blue}3};
    \node[font=\scriptsize, below] at (F2_12) {$v_{11}$};
    \node[font=\scriptsize, above] at (F2_12) {\color{blue}3};
    \node[font=\scriptsize,above] at (F2_w) {$w$};
       \node[font=\scriptsize] at (1.5, -2.5) {$W_2$ in (i) in Subcase  2.2.1 };
   \node[font=\scriptsize\bfseries] at (1,0) {$C_8$};
\end{tikzpicture}
  \end{center}
  \caption{
Subcase 1.2 and Subcase 2.2.
The numbers at vertices are the number of available colors.} \label{7cycle-C34-neighbor}
\end{figure}

\noindent
{\bf Subcase 2.2:} $W_2^2$ is not an induced subgraph of $G^2$.

\noindent {\bf Simplifying cases:}
\begin{itemize}
\item
The vertices in each of the following pairs are nonadjacent since it makes a $5$-cycle:
$\{v_2,v_6\}$, $\{v_2,v_{9}\}$, $\{v_2,v_{10}\}$, $\{v_4,v_6\}$,
$\{v_4,v_{10}\}$, $\{v_4,v_{11}\}$, $\{v_6, v_{10}\}$, $\{v_7, v_{9}\}$, and $\{v_7, v_{11}\}$.

\item
The vertices in each of the following pairs are nonadjacent since it makes $F_1$
in Figure \ref{key configuration-C3},
which does not exist by Lemma \ref{C3-C6}(a):
$\{v_2,v_4\}$, and $\{v_2,v_{11}\}$.

\item
The vertices in each of the following pairs are nonadjacent since it makes $F_2$
in Figure \ref{key configuration-C3},
which does not exist by Lemma \ref{reducible-F2}:
$\{v_4,v_7\}$, and $\{v_9, v_{11}\}$.

\item
The vertices in each of the following pairs are nonadjacent since it makes $F_3$
in Figure \ref{key configuration-C3},
which does not exist by Lemma \ref{C3-C6}(b):
$\{v_2,v_7\}$, $\{v_4,v_9\}$, and $\{v_6,v_{11}\}$.

\item
The vertices in each of the following pairs are nonadjacent since it makes $H_1$
in Figure \ref{key configuration},
which does not exist by Lemma \ref{reducible-H0}:
$\{v_7,v_{10}\}$.

\item
The vertices in each of the following pairs have no common neighbor outside $W_2$,
since it makes a $5$-cycle:
$\{v_2,v_4\}$, $\{v_2,v_{10}\}$, $\{v_2,v_{11}\}$, $\{v_4,v_{7}\}$,
$\{v_4,v_{9}\}$, $\{v_4,v_{11}\}$, $\{v_6,v_{7}\}$, $\{v_6,v_{9}\}$,
and $\{v_7,v_{10}\}$.

\item
The vertices in each of the following pairs have no common neighbor outside $W_2$,
since it makes $F_2$ in Figure \ref{key configuration-C3},
which does not exist by Lemma \ref{reducible-F2}:
$\{v_9,v_{10}\}$, and $\{v_9,v_{11}\}$.

\item
The vertices in each of the following pairs have no common neighbor outside $W_2$,
since it makes $F_3$ in Figure \ref{key configuration-C3},
which does not exist by Lemma \ref{C3-C6}(b):
$\{v_2,v_6\}$, $\{v_2,v_9\}$, $\{v_4,v_{10}\}$.

\item
The vertices in each of the following pairs have no common neighbor outside $W_2$,
since it makes two $3$-cycles of distance $1$,
which does not exist by Lemma \ref{C3-C6}(c):
$\{v_{10},v_{11}\}$.

\item
The vertices in each of the following pairs have no common neighbor outside $W_2$,
since it makes $H_1$ in Figure \ref{key configuration},
which does not exist by Lemma \ref{reducible-H0}:
$\{v_4,v_6\}$, and $\{v_7,v_{9}\}$.

\end{itemize}

Considering these,
we only need to consider the following five cases (i)--(v):

\begin{enumerate}[(i)]
\item $v_2$ and $v_7$ have a common neighbor outside $W_2$ in $G$.

\item
$v_6$ and $v_{10}$ have a common neighbor outside $W_2$ in $G$.

\item
$v_6$ and $v_{11}$ have a common neighbor outside $W_2$ in $G$.

\item
$v_7$ and $v_{11}$ have a common neighbor outside $W_2$ in $G$.

\item
$v_6$ is adjacent to $v_{9}$ in $G$.
\end{enumerate}

\medskip \noindent
{\bf Subcase 2.2.1:} Case $(i)$: $v_2$ and $v_7$ have a common neighbor outside $W_2$ in $G$.

The number of available colors at vertices of $V(W_1)$ is
presented in Subcase 2.2.1 in Figure \ref{7cycle-C34-neighbor}.
We first color $v_{7}$ by a color $c_{7} \in L_{W_2}(v_{7})$
so that $|L_{W_2}(v_{9}) \setminus \{c_7\}| \geq 3$.
If $|L_{W_1}(v_2) \setminus \{c_7\}| \geq 5$, then  greedily color $v_{11}, v_{10}, v_9, v_8, v_6, v_5, v_4, v_3, v_1, v_2$ in order.
If $|L_{W_1}(v_2) \setminus \{c_7\}| = 4$, then choose a color $c_1 \in L_{W_1}(v_1) \setminus (L_{W_1}(v_2) \setminus \{c_7\})$,
and follow the same procedure as Subcase 2.1.
Then,
the vertices in $W_2$ can be colored from the list $L_{W_2}$
so that we obtain an $L$-coloring in $G^2$.  Note that it is possible that $c_1 = c_7$, but we can still follow the same procedure as Subcase 2.1 since we colored $v_7$ first.

\medskip \noindent
{\bf Subcase 2.2.2:} Case $(ii)$--$(v)$.

We follow the same arugment as Subcase 2.1.  Then, we can show that
the vertices in $W_2$ can be colored from the list $L_{W_2}$
so that we obtain an $L$-coloring in $G^2$.
\medskip

This completes the proof of Case 2.
Thus, in either case, $G^2$ admits an $L$-coloring, which is a contradition.
This completes the proof of Lemma \ref{reducible-F4}.
\end{proof}


\begin{corollary}
\label{cor-reducible-F4}
If a $7$-face is adjacent to a $3$-face,
then it is not adjacent to a $4$-face.
And if a $8$-face is adjacent to a $3$-face, then it cannot be adjacent to two $4$-faces.
\end{corollary}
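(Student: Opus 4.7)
The plan is to derive Corollary \ref{cor-reducible-F4} directly from the facts assembled just before the corollary: Lemma \ref{C3-C6}(a) (a $3$-cycle does not share an edge with a $4^-$-cycle), Lemma \ref{reducible-F2} (no configuration $F_2$), and Lemma \ref{reducible-F4} itself. The proof is combinatorial and proceeds by examining where a $4$-face can sit along the boundary of $F$ relative to the $3$-face.

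For the $7$-face claim, write the boundary of $F$ as $v_0 v_1 \cdots v_6 v_0$, and assume the $3$-face is attached along $e_0 = v_0 v_1$. Suppose for contradiction that a $4$-face is also attached to $F$, along some edge $e_k = v_k v_{k+1}$. By Lemma \ref{C3-C6}(a) the $3$-face and the $4$-face cannot share an edge, so $k \neq 0$. Using the reflective symmetry of $F$ about the midpoint of $e_0$, we may assume $k \in \{1,2,3\}$. I would then handle these three subcases: when $k = 1$ the $3$-face and the $4$-face share the vertex $v_1$; when $k = 2$ the edge $v_1 v_2$ of $F$ puts a vertex of the $3$-face at distance $1$ from a vertex of the $4$-face; and when $k = 3$ the path $v_1 v_2 v_3$ realises distance at most $2$. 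The first two subcases contradict Lemma \ref{reducible-F2}, and the third contradicts Lemma \ref{reducible-F4}, which requires the surrounding face to have size at least $9$.

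For the $8$-face claim the set-up is identical with boundary $v_0 v_1 \cdots v_7 v_0$. If a $4$-face is attached along $e_k$ then by symmetry we may assume $k \in \{1,2,3,4\}$; the cases $k = 1, 2$ are ruled out by Lemma \ref{reducible-F2} and the case $k = 3$ by Lemma \ref{reducible-F4}. Thus every $4$-face adjacent to $F$ must be attached to the unique edge $e_4$ antipodal to the $3$-face, whence at most one such $4$-face can exist.

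The only point requiring an extra sanity check is that the ``distance'' appearing in $F_2$ and $F_4$ is the graph distance in $G$ between the two small cycles, not merely the distance along the boundary of $F$. Any shortcut through $G$ would, however, force either a cycle of length at most $5$ or one of the already-excluded configurations $F_1, F_2, F_3$ or $H_1$, so the distance realised by the short arc of $\partial F$ is indeed the distance in $G$ in each relevant case; beyond this the argument is pure enumeration and I do not anticipate a real obstacle.
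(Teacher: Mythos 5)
Your proposal is correct and follows essentially the same route as the paper: both reduce the statement to observing that any $4$-face attached to a $7$-face (or a second $4$-face attached to an $8$-face) must lie at distance at most $2$ from the $3$-face along the boundary, and then invoke Lemma \ref{reducible-F2} for distance at most $1$ and Lemma \ref{reducible-F4} for distance $2$. Your explicit enumeration of attachment edges and the remark that a shortcut in $G$ could only decrease the distance (landing in an already-excluded configuration) are just more detailed versions of the paper's two-line argument.
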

\begin{proof}
If a $7$-face $C_7$ is adjacent to a $3$-face $f_1$ and a $4$-face $f_2$, then the distance between $f_1$ and $f_2$ is at most 2.
However, this contradicts Lemma \ref{reducible-F2} or \ref{reducible-F4}.
So, a $7$-face cannot be adjacent to both of  $3$-face and $4$-face.

If a $8$-face $C_8$ is adjacent to a $3$-face $f_3$ and two $4$-faces, then there exists a $4$-face $f_4$ such that the distance between $f_3$ and $f_4$ is at most 2.  Again,
this contradicts Lemma \ref{reducible-F2} or \ref{reducible-F4}.
So, if a $8$-face is adjacent to a $3$-face, then it cannot be adjacent to two $4$-faces.
\end{proof}

\subsection{Special configurations}
In this subsection, we prove three configurations which are of independent interest.

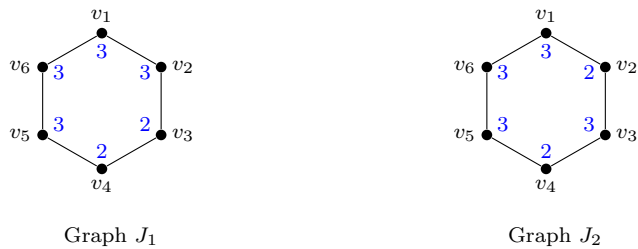
\begin{figure}[htbp]
  \begin{center}
\begin{tikzpicture}[
  v2/.style={fill=black,minimum size=4pt,ellipse,inner sep=1pt},
  node distance=1.5cm,scale=0.45
]
 \node[v2] (F3_1) at (0, 0) {};
    \node[v2] (F3_2) at (0, 2) {};
\node[v2] (F3_4) at (1.732, 3) {};
\node[v2] (F3_5) at (3.464, 2) {};
    \node[v2] (F3_6) at (3.464, 0) {};
    \node[v2] (F3_7) at (1.732, -1) {};

    \draw (F3_1) -- (F3_2);
    \draw (F3_1) -- (F3_7) -- (F3_6) -- (F3_5) -- (F3_4) -- (F3_2);

\node[font=\scriptsize,left] at (F3_1) {$v_5$};
\node[font=\scriptsize,xshift=6pt,yshift=4pt] at (F3_1) {\color{blue}3};
\node[font=\scriptsize,left] at (F3_2) {$v_6$};
\node[font=\scriptsize,xshift=6pt,yshift=-2pt] at (F3_2) {\color{blue}3};
\node[font=\scriptsize,above] at (F3_4) {$v_1$};
\node[font=\scriptsize,below] at (F3_4) {\color{blue}3};
\node[font=\scriptsize,right] at (F3_5) {$v_2$};
\node[font=\scriptsize,xshift=-6pt,yshift=-2pt] at (F3_5) {\color{blue}3};
\node[font=\scriptsize,right] at (F3_6) {$v_3$};
\node[font=\scriptsize,xshift=-6pt,yshift=4pt] at (F3_6) {\color{blue}2};
\node[font=\scriptsize,below] at (F3_7) {$v_4$};
\node[font=\scriptsize,above] at (F3_7) {\color{blue}2};
\node[font=\scriptsize] at (2,-3) { Graph $J_1$};
\end{tikzpicture}\hspace{3cm}
\begin{tikzpicture}[
  v2/.style={fill=black,minimum size=4pt,ellipse,inner sep=1pt},
  node distance=1.5cm,scale=0.45
]
 \node[v2] (F3_1) at (0, 0) {};
    \node[v2] (F3_2) at (0, 2) {};
\node[v2] (F3_4) at (1.732, 3) {};
\node[v2] (F3_5) at (3.464, 2) {};
    \node[v2] (F3_6) at (3.464, 0) {};
    \node[v2] (F3_7) at (1.732, -1) {};

    \draw (F3_1) -- (F3_2);
    \draw (F3_1) -- (F3_7) -- (F3_6) -- (F3_5) -- (F3_4) -- (F3_2);

\node[font=\scriptsize,left] at (F3_1) {$v_5$};
\node[font=\scriptsize,xshift=6pt,yshift=4pt] at (F3_1) {\color{blue}3};
\node[font=\scriptsize,left] at (F3_2) {$v_6$};
\node[font=\scriptsize,xshift=6pt,yshift=-2pt] at (F3_2) {\color{blue}3};
\node[font=\scriptsize,above] at (F3_4) {$v_1$};
\node[font=\scriptsize,below] at (F3_4) {\color{blue}3};
\node[font=\scriptsize,right] at (F3_5) {$v_2$};
\node[font=\scriptsize,xshift=-6pt,yshift=-2pt] at (F3_5) {\color{blue}2};
\node[font=\scriptsize,right] at (F3_6) {$v_3$};
\node[font=\scriptsize,xshift=-6pt,yshift=4pt] at (F3_6) {\color{blue}3};
\node[font=\scriptsize,below] at (F3_7) {$v_4$};
\node[font=\scriptsize,above] at (F3_7) {\color{blue}2};
\node[font=\scriptsize] at (2,-3) { Graph $J_2$};
\end{tikzpicture}
  \end{center}
\caption{
The size of list $L(v_3)$ and $L(v_4)$ are 2, and the other vertices have lists of size 3 in $J_1$.
The size of list $L(v_2)$ and $L(v_4)$ are 2, and the other vertices have lists of size 3 in $J_2$.
}\label{6-cycle-list}
\end{figure}

\begin{lemma} \label{cycle-six-original}
For a 6-cycle $J_1$ with $V(J_1) = \{v_1, v_2, v_3, v_4, v_5, v_6\}$,
if each vertex $v_i$ has a list $L(v_i)$ with $|L(v_i)| = 3$ for $i = 1, 2, 5, 6$, $|L(v_3)| = |L(v_4)|=2$ and $L(v_3) \neq L(v_4)$ (see Figure \ref{6-cycle-list}), then $J_1^2$ has a proper coloring from the list.
\end{lemma}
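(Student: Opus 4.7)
I observe that $J_1^2$ is isomorphic to the octahedron $K_{2,2,2}$, whose three parts are the antipodal pairs of the 6-cycle $J_1$: $A = \{v_1, v_4\}$, $B = \{v_2, v_5\}$, and $C = \{v_3, v_6\}$. In particular, the two small-list vertices $v_3$ and $v_4$ lie in different parts of $K_{2,2,2}$ and are adjacent in $J_1^2$. My plan is to precolor $v_3$ and $v_4$ first and then extend to the remaining four vertices $\{v_1, v_2, v_5, v_6\}$, which induce a diamond $K_4 - v_2 v_5$ in $J_1^2$ (with $v_1$ and $v_6$ as the two vertices adjacent to all three other vertices of the diamond).

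Since $L(v_3) \neq L(v_4)$ and both have size $2$, we have $|L(v_3) \cap L(v_4)| \leq 1$, so there exist at least three ordered pairs $(c_3, c_4) \in L(v_3) \times L(v_4)$ with $c_3 \neq c_4$. For any such pair, setting $\phi(v_3) = c_3$ and $\phi(v_4) = c_4$ produces reduced lists $L'_1 = L(v_1) \setminus \{c_3\}$ and $L'_6 = L(v_6) \setminus \{c_4\}$ of size at least $2$, together with $L'_j = L(v_j) \setminus \{c_3, c_4\}$ of size at least $1$ for $j \in \{2, 5\}$. The natural strategy to color the resulting diamond is to choose a common color $d \in L'_2 \cap L'_5$ when this intersection is nonempty, set $\phi(v_2) = \phi(v_5) = d$, and then pick distinct $\phi(v_1) \in L'_1 \setminus \{d\}$ and $\phi(v_6) \in L'_6 \setminus \{d\}$; when $L'_2 \cap L'_5 = \emptyset$, the singleton lists $L'_2, L'_5$ span two distinct colors, and I would try to place them so that at least one lies outside the tight side of $L'_1$ or $L'_6$.

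The main obstacle will be the edge case where, for a particular $(c_3, c_4)$, the reduced lists collapse into an uncolorable configuration---for instance, $L'_1 = L'_6 = \{\alpha, \beta\}$ with $L'_2 \cap L'_5 = \{\alpha\}$ and the residual $\{\beta\}$ simultaneously forced on both $v_1$ and $v_6$. To dispatch these, I plan to exploit the flexibility in choosing $(c_3, c_4)$: varying $c_3 \in L(v_3)$ alters $L'_1$ (and one element of $L'_2, L'_5$), while varying $c_4 \in L(v_4)$ alters $L'_6$ similarly. A short case analysis on whether $c_3 \in L(v_1)$ and $c_4 \in L(v_6)$ (the bad case forces $L(v_3) \subseteq L(v_1)$ and $L(v_4) \subseteq L(v_6)$, which together with $L(v_3) \neq L(v_4)$ still leaves slack), combined with the symmetry $v_3 \leftrightarrow v_4$, $v_1 \leftrightarrow v_6$, $v_2 \leftrightarrow v_5$ of the configuration, should show that at least one of the three or four candidate pairs $(c_3, c_4)$ escapes the obstruction and yields a colorable diamond.
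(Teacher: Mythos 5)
Your setup is correct and your reduction is sound: $J_1^2$ is indeed the octahedron with antipodal (non-adjacent) pairs $\{v_1,v_4\}$, $\{v_2,v_5\}$, $\{v_3,v_6\}$; the two small-list vertices $v_3,v_4$ are adjacent in $J_1^2$; since $L(v_3)\neq L(v_4)$ there are at least three admissible ordered pairs $(c_3,c_4)$ with $c_3\neq c_4$; and since any proper $L$-coloring of $J_1^2$ restricts to such a pair, the lemma is equivalent to the claim that at least one admissible pair extends to the diamond on $\{v_1,v_2,v_5,v_6\}$. This is a genuinely different route from the paper's proof, which instead cases on which antipodal pair has intersecting lists (coloring that pair with a common color and reducing to the $4$-cycle $v_2v_3v_5v_6$ in $J_1^2$) and invokes Hall's theorem when all three antipodal pairs have pairwise disjoint lists.

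The problem is that the entire content of the lemma now sits in the assertion that some admissible pair ``escapes the obstruction,'' and that is precisely the step you do not carry out. Moreover, the one obstruction you name does not exhaust the ways the diamond can fail, so the dichotomy you propose (on whether $c_3\in L(v_1)$ and $c_4\in L(v_6)$) cannot by itself close the argument. For example, a pair can fail with $c_3\notin L(v_1)$ and $c_4\notin L(v_6)$: if $L(v_1)=L(v_6)=\{4,5,6\}$ and the reduced lists of $v_2$ and $v_5$ are the disjoint singletons $\{4\}$ and $\{5\}$ inside that common set, then both hubs are forced onto the color $6$, even though both hub lists still have size $3$ (concretely, $L(v_3)=\{1,2\}$, $L(v_4)=\{1,3\}$, $L(v_2)=\{2,3,4\}$, $L(v_5)=\{2,3,5\}$, and the pair $(c_3,c_4)=(2,3)$ fails). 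There are further failure patterns when exactly one of $c_3\in L(v_1)$, $c_4\in L(v_6)$ holds, and when $|L(v_2)\setminus\{c_3,c_4\}|=2$. A complete proof along your lines must enumerate all failure modes of the diamond and show they cannot hold simultaneously for every admissible pair; the claim is true (so your approach can be completed), but until that enumeration is supplied the proof has a genuine gap, and the required analysis appears comparable in length to the paper's four-case argument rather than ``short.''
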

\begin{proof}
Let $J_1$ be a 6-cycle with $V(J_1) = \{v_1, v_2, v_3, v_4, v_5, v_6\}$ and
suppose that the list $L$ satisfies $|L(v_i)| = 3$ for $i = 1, 2, 5, 6$, $|L(v_3)| = |L(v_4)|=2$ and $L(v_3) \neq L(v_4)$.
We will show that $J_1^2$ has a proper coloring from the list.

\medskip
\noindent {\bf Case 1:} $L(v_1) \cap L(v_4) \neq \emptyset$.

Color $v_1$ and $v_4$ by a color $c \in L(v_1) \cap L(v_4)$.
Let $L'(v_i) = L(v_i) \setminus \{c\}$ for $i = 2, 3, 5, 6$. Then $|L'(v_2)| \geq 2$, $|L'(v_3)| \geq 1$, $|L'(v_5)| \geq 2$, and  $|L'(v_6)| \geq 2$.  In this case, if $|L'(v_3)| \geq 2$, then $v_2, v_3, v_5, v_6$  can be colored properly from the list since  $v_2, v_3, v_5, v_6$ form a 4-cycle in $J_1^2$ and every vertex has a list of size at least 2.  So, we can assume that
$|L'(v_3)| = 1$.
In this case, the only case when $v_2, v_3, v_5, v_6$  cannot be colored properly from the list is essentially as follows for some colors $1,2,3$:
\[
L'(v_2) = \{1, 2\}, L'(v_3) = \{1\}, L'(v_5) = \{1, 3\}, L'(v_6) = \{2, 3\}.
\]
So, the original list of $v_i$ for $i = 1, 2, 3, 4, 5, 6$ is as follows
for some colors $a,b$ and $d$.
\begin{eqnarray*}
& L(v_1) = \{a, b, c\},  & L(v_2) = \{1, 2, c\},  L(v_3) = \{1, c\}, \\
& L(v_4) = \{c, d\}, & L(v_5) = \{1, 3, c\},  L(v_6) = \{2, 3, c\}.
\end{eqnarray*}
Note that $d \neq 1$ since $L(v_3) \neq L(v_4)$.
By the symmetry between $a$ and $b$, we may assume that $a \neq 1$.
Then we can color $v_2, v_3, v_5, v_6$  properly from the list by $\phi$ as \[
\phi(v_1) = a, \phi(v_2) = 1, \phi(v_3) = c, \phi(v_4) = d, \phi(v_5) = 1, \phi(v_6) = c.
\]
This is a proper coloring since $a \neq 1$ and $d \neq 1$.

\medskip
\noindent {\bf Case 2:} $L(v_3) \cap L(v_6) \neq \emptyset$.

By the same augument, we can show that $J_1^2$ has a proper coloring from the list.
\\

\noindent {\bf Case 3:} $L(v_1) \cap L(v_4) = L(v_3) \cap L(v_6) = \emptyset$, and $L(v_2) \cap L(v_5) \neq \emptyset$.

Color $v_2$ and $v_5$ by a color $c \in L(v_2) \cap L(v_5)$.  Let $L'(v_i) = L(v_i) \setminus \{c\}$ for $i = 1, 3, 4, 6$. Then since  $L(v_1) \cap L(v_4) = \emptyset$ and
$L(v_3) \cap L(v_6) = \emptyset$, we have that $|L'(v_1)| = 3$ if $|L'(v_4)| = 1$ or
$|L'(v_1)| \geq 2$ if $|L'(v_4)| = 2$.
Similarly, $|L'(v_6)| = 3$ if $|L'(v_3)| = 1$ or $|L'(v_6)| \geq 2$ if $|L'(v_3)| = 2$.
So,
the sizes of $L'(v)$ are as follows.

\begin{itemize}
\item $|L'(v_1)| = 3$, $|L'(v_3)| = 1$, $|L'(v_4)| = 1$, $|L'(v_6)| = 3$, or
\item $|L'(v_1)| \geq 2$, $|L'(v_3)| = 1$, $|L'(v_4)| = 2$, $|L'(v_6)| = 3$, or \item $|L'(v_1)| = 3$, $|L'(v_3)| = 2$, $|L'(v_4)| = 1$, $|L'(v_6)| \geq 2$, or \item $|L'(v_1)| \geq 2$, $|L'(v_3)| = 2$, $|L'(v_4)| = 2$, $|L'(v_6)| \geq 2$.
\end{itemize}
At each case, $v_2, v_3, v_5, v_6$ are colorable from the list $L'$
since $v_2, v_3, v_5, v_6$ form a 4-cycle in $J_1^2$
and $L'(v_3) \neq L'(v_4)$ in the first case.  This completes the proof Case 3.
\\

\noindent {\bf Case 4:} $L(v_1) \cap L(v_4) = L(v_3) \cap L(v_6) = L(v_2) \cap L(v_5)= \emptyset$.

Let $X = \{v_1, v_2, v_3, v_4, v_5, v_6\}$, and
we define  a  bipartite graph $W$ as follows.
\begin{itemize}
\item $V(W) = X \cup Y$ where $Y = \bigcup_{v_i \in X} L(v_i)$.

\item For each  $v_i$, $v_i \alpha \in E(W)$ if $\alpha \in L(v_i)$.
\end{itemize}
Note that $|N_W(v_1) \cup N_W(v_4)| = |N_W(v_3) \cup N_W(v_6)| = 5$ and $|N_W(v_2) \cup N_W(v_5)| = 6$ since $L(v_j) \cap L(v_{j+3}) = \emptyset$ for each $j = 1, 2, 3$.  Thus, we can easily show that $|N_W(S)| \geq |S|$ for every $S \subset \{v_1, v_2, v_3, v_4, v_5, v_6\}$.  Hence by Hall's theorem,
the bipartite graph $W$ has a matching $M$ that contains all vertices of $\{v_1, v_2, v_3, v_4, v_5, v_6\}$.  So, we can choose six different colors $\{c_1, c_2, c_3, c_4, c_5, c_6\}$ such that $c_i \in L(v_i)$ for each $1 \leq i \leq 6$.  Hence $J_1^2$ can be colored properly from the list.
\end{proof}


Next we prove Lemma \ref{cycle-six-original-second} which will be used for the proof of the reducibility of $H_4$.

\begin{lemma} \label{cycle-six-original-second}
For a 6-cycle $J_2$ with $V(J_2) = \{v_1, v_2, v_3, v_4, v_5, v_6\}$, if each vertex $v_i$ has a list $L(v_i)$ with $|L(v_i)| = 3$ for $i = 1, 3, 5, 6$, $|L(v_2)| = |L(v_4)|=2$ and $L(v_2) \neq L(v_4)$ (see Figure \ref{6-cycle-list}), then $J_2^2$ has a proper coloring from the list.
\end{lemma}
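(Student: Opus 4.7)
My plan is to reduce Lemma \ref{cycle-six-original-second} directly to Lemma \ref{cycle-six-original} via a graph isomorphism. The key observation is that for any $6$-cycle $C$, its square $C^2$ is $4$-regular on $6$ vertices in which each vertex is non-adjacent to exactly its antipode on the cycle; hence both $J_1^2$ and $J_2^2$ are isomorphic to the octahedron $K_{2,2,2}$, whose three antipodal (i.e.\ non-adjacent) pairs in our labeling are $\{v_1,v_4\}$, $\{v_2,v_5\}$, and $\{v_3,v_6\}$.

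In Lemma \ref{cycle-six-original} the two size-$2$ list vertices $v_3,v_4$ lie in two different antipodal pairs and have distinct lists, while the remaining four vertices have size-$3$ lists. In the lemma at hand, the size-$2$ list vertices $v_2,v_4$ again lie in two different antipodal pairs (namely $\{v_2,v_5\}$ and $\{v_1,v_4\}$), have distinct lists, and the remaining vertices carry size-$3$ lists. Thus the two list-coloring problems are structurally identical on $K_{2,2,2}$.

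To make this precise, I would define the bijection $\sigma:V(J_2)\to V(J_1)$ by
\[
\sigma(v_1)=v_1,\quad \sigma(v_4)=v_4,\quad \sigma(v_2)=v_3,\quad \sigma(v_5)=v_6,\quad \sigma(v_3)=v_2,\quad \sigma(v_6)=v_5.
\]
Since $\sigma$ swaps the pair $\{v_2,v_5\}$ with $\{v_3,v_6\}$ and fixes $\{v_1,v_4\}$, it maps antipodal pairs of $J_2^2$ to antipodal pairs of $J_1^2$; as edges in the square of a $6$-cycle are exactly the non-antipodal pairs, $\sigma$ is an isomorphism $J_2^2\to J_1^2$. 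Moreover, it sends the size-$2$ positions $\{v_2,v_4\}$ in $J_2$ to the size-$2$ positions $\{v_3,v_4\}$ required by Lemma \ref{cycle-six-original}. Transporting the list $L$ via $L'(\sigma(v))=L(v)$ yields an assignment on $V(J_1)$ satisfying the hypothesis of Lemma \ref{cycle-six-original}, since $L(v_2)\neq L(v_4)$ becomes $L'(v_3)\neq L'(v_4)$. Applying Lemma \ref{cycle-six-original} gives a proper $L'$-coloring $\phi'$ of $J_1^2$, and then $\phi(v):=\phi'(\sigma(v))$ is a proper $L$-coloring of $J_2^2$. Since the argument is purely a relabeling, the only verification needed is that $\sigma$ preserves antipodal pairs; no new case analysis is required.
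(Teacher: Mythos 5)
Your proof is correct. The only thing it relies on is that Lemma \ref{cycle-six-original} is really a statement about the octahedron $K_{2,2,2}$ (the square of a $6$-cycle) together with the information of which antipodal classes contain the two size-$2$ lists; since in both lemmas one antipodal class consists of two size-$3$ vertices while the other two classes each contain one size-$2$ and one size-$3$ vertex, your permutation $\sigma$ (fixing $\{v_1,v_4\}$ pointwise and swapping the classes $\{v_2,v_5\}$ and $\{v_3,v_6\}$) is indeed an isomorphism $J_2^2\to J_1^2$ carrying the hypothesis of the second lemma onto that of the first, and the pull-back of a proper $L'$-coloring is a proper $L$-coloring. The paper instead re-runs the case analysis of Lemma \ref{cycle-six-original} by analogy: it handles $L(v_1)\cap L(v_4)\neq\emptyset$ or $L(v_2)\cap L(v_5)\neq\emptyset$ "as in Case 1," the case where only $L(v_3)\cap L(v_6)\neq\emptyset$ "as in Case 3," and the remaining case "as in Case 4" (Hall's theorem). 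The two proofs rest on the same underlying symmetry, but yours makes it explicit and eliminates any need to re-verify the cases, which is arguably cleaner and less error-prone; the paper's version avoids introducing the relabeling but leaves the reader to check that each case transfers. No gap in your argument.
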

\begin{proof}
Let $J_2$ be a 6-cycle with $V(J_2) = \{v_1, v_2, v_3, v_4, v_5, v_6\}$ and suppose that the list $L$ satisfies $|L(v_i)| = 3$ for $i = 1, 3, 5, 6$, $|L(v_2)| = |L(v_4)|=2$ and $L(v_2) \neq L(v_4)$.

Then we can easily check that $J_2^2$ has a proper coloring from the list if
$L(v_1) \cap L(v_4) \neq \emptyset$ or $L(v_2) \cap L(v_5) \neq \emptyset$ by the same arugment as Case 1 in the proof of Lemma \ref{cycle-six-original}.
And by the same argument as Case 3,
we can show the case $L(v_1) \cap L(v_4) = L(v_2) \cap L(v_5) = \emptyset$, and $L(v_3) \cap L(v_6) \neq \emptyset$. Then, we can show that $J_2^2$ has a proper coloring from the list by the same arugment as Case 4 in Lemma \ref{cycle-six-original}.
\end{proof}

Next, we will prove an important lemma which will be used in the proof for the reducibility of $H_3$.

\begin{lemma} \label{cycle-six}
For a 6-cycle $H$ with $V(H) = \{v_1, v_2, v_3, v_4, v_5, v_6\}$,
if each vertex $v_i$ has a list $L(v_i)$ with $|L(v_i)| = 3$ for $i = 1, 2, 3, 5, 6$ and $ |L(v_4)|=2$ (see Figure \ref{6-cycle-list-two}), then $H^2$ has a proper coloring from the list.
\end{lemma}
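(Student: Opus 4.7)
My plan is to reduce the statement directly to Lemma~\ref{cycle-six-original-second}. That lemma requires two vertices to have lists of size $2$ (namely $v_2$ and $v_4$, with $L(v_2)\ne L(v_4)$), whereas in the current setup only $v_4$ has a list of size $2$. So I will artificially shrink $L(v_2)$ from size $3$ to size $2$ in such a way that the hypothesis $L'(v_2) \neq L(v_4)$ is satisfied, and then invoke the earlier lemma as a black box.

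The key observation is a simple pigeonhole count: since $|L(v_2)| = 3$, the list $L(v_2)$ has exactly $\binom{3}{2} = 3$ subsets of size $2$, and at most one of them can coincide with the specific $2$-element set $L(v_4)$. (Indeed, $L(v_4)$ can equal a $2$-element subset of $L(v_2)$ only when $L(v_4) \subseteq L(v_2)$, and in that case there is a unique such subset.) Hence I can always choose a $2$-element subset $L'(v_2) \subseteq L(v_2)$ with $L'(v_2) \neq L(v_4)$.

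Setting $L'(v_i) = L(v_i)$ for every $i \neq 2$ then produces a list assignment $L'$ satisfying exactly the hypothesis of Lemma~\ref{cycle-six-original-second}: $|L'(v_i)| = 3$ for $i \in \{1,3,5,6\}$, $|L'(v_2)| = |L'(v_4)| = 2$, and $L'(v_2) \neq L'(v_4)$. That lemma then yields a proper coloring of $H^2$ from $L'$, which is automatically a proper coloring from $L$ since $L'(v) \subseteq L(v)$ for every vertex $v$. I do not expect any real obstacle here: once the pigeonhole count above is checked, the previous lemma does all of the remaining work, and no case analysis or Hall-type argument (as in the proof of Lemma~\ref{cycle-six-original}) needs to be revisited.
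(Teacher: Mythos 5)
Your proof is correct and follows essentially the same reduction as the paper: shrink one size-$3$ list to a size-$2$ list that differs from $L(v_4)$, then invoke a previously proved $6$-cycle lemma. The only (immaterial) difference is that the paper removes a color from $L(v_3)$ and applies Lemma~\ref{cycle-six-original}, whereas you remove a color from $L(v_2)$ and apply Lemma~\ref{cycle-six-original-second}; both target lemmas are already established at this point, so either choice works.
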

\begin{proof}
Since $|L(v_3)|  = 3$ and $|L(v_4)| = 2$, we can remove a color $c$ from $L(v_3)$ so that $L(v_3) \setminus \{c\} \neq L(v_4)$.  So, it holds
by Lemma \ref{cycle-six-original}.
\end{proof}

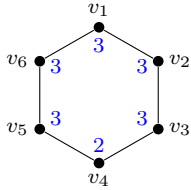
\begin{figure}[htbp]
  \begin{center}
\begin{tikzpicture}[
  v2/.style={fill=black,minimum size=4pt,ellipse,inner sep=1pt},
  node distance=1.5cm,scale=0.45
]
 \node[v2] (F3_1) at (0, 0) {};
    \node[v2] (F3_2) at (0, 2) {};
\node[v2] (F3_4) at (1.732, 3) {};
\node[v2] (F3_5) at (3.464, 2) {};
    \node[v2] (F3_6) at (3.464, 0) {};
    \node[v2] (F3_7) at (1.732, -1) {};

    \draw (F3_1) -- (F3_2);
    \draw (F3_1) -- (F3_7) -- (F3_6) -- (F3_5) -- (F3_4) -- (F3_2);

\node[font=\scriptsize,left] at (F3_1) {$v_5$};
\node[font=\scriptsize,xshift=6pt,yshift=4pt] at (F3_1) {\color{blue}3};
\node[font=\scriptsize,left] at (F3_2) {$v_6$};
\node[font=\scriptsize,xshift=6pt,yshift=-2pt] at (F3_2) {\color{blue}3};
\node[font=\scriptsize,above] at (F3_4) {$v_1$};
\node[font=\scriptsize,below] at (F3_4) {\color{blue}3};
\node[font=\scriptsize,right] at (F3_5) {$v_2$};
\node[font=\scriptsize,xshift=-6pt,yshift=-2pt] at (F3_5) {\color{blue}3};
\node[font=\scriptsize,right] at (F3_6) {$v_3$};
\node[font=\scriptsize,xshift=-6pt,yshift=4pt] at (F3_6) {\color{blue}3};
\node[font=\scriptsize,below] at (F3_7) {$v_4$};
\node[font=\scriptsize,above] at (F3_7) {\color{blue}2};
\end{tikzpicture}
\caption{The size of list $L(v_4)$ are 2, and the other vertices have list of size 3.} \label{6-cycle-list-two}
\end{center}
\end{figure}


\subsection{Subgraph $H_2$ is reducible}

In this subsection,  we will prove that subgraph $H_2$ cannot appear in a minimal couterexample to Theorem \ref{main-thm}.  First, we prove that subgraph $J_3$ and $J_4$ do not appear in $G$.
We consider the case when two 4-cycles share two edges.
In $J_3$, two 4-cycles are adjacent to a 6-face, and in $J_4$, two 4-cycles are adjacent to a 7-face (see Figure \ref{No-two-C4}).

\begin{figure}[htbp]
  \begin{center}
\begin{tikzpicture}[
    v2/.style={fill=black,minimum size=4pt,ellipse,inner sep=1pt},
    scale=0.4
]

    \node[v2] (H2_1) at (0, 0) {};
    \node[v2] (H2_2) at (0, 2) {};
    \node[v2] (H2_3) at (1.732, 3) {};
    \node[v2] (H2_4) at (3.464, 2) {};
    \node[v2] (H2_5) at (3.464, 0) {};
    \node[v2] (H2_6) at (1.732, -1) {};

    \node[v2] (H2_7) at (-2, 0) {};
    \node[v2] (H2_8) at (-2, 2) {};

    \draw (H2_1) -- (H2_2) -- (H2_3) -- (H2_4) -- (H2_5) -- (H2_6) -- (H2_1);

    \draw (H2_1) -- (H2_7) -- (H2_8) -- (H2_2);

    \draw
  (-2, 0)
    .. controls (-5,5.5) and (1,3)..
  (1.732, 3);

\node[font=\scriptsize,below] at (H2_7) { $v_4$};
\node[font=\scriptsize,xshift=5pt,yshift=5pt] at (H2_7) {\color{blue}6};
\node[font=\scriptsize,above] at (H2_8) { $v_1$};
\node[font=\scriptsize,xshift=5pt,yshift=-4pt] at (H2_8) {\color{blue}4};
\node[font=\scriptsize,below] at (H2_1) { $v_5$};
\node[font=\scriptsize,xshift=5pt,yshift=5pt] at (H2_1) {\color{blue}6};
\node[font=\scriptsize,above] at (H2_2) { $v_2$};
\node[font=\scriptsize,xshift=5pt,yshift=-4pt] at (H2_2) {\color{blue}6};
\node[font=\scriptsize,above] at (H2_3) { $v_3$};
\node[font=\scriptsize, below] at (H2_3) {\color{blue}6};
\node[font=\scriptsize,above] at (H2_4) { $v_6$};
\node[font=\scriptsize, xshift=-5pt,yshift=-4pt] at (H2_4) {\color{blue}3};
\node[font=\scriptsize,below] at (H2_5) { $v_7$};
\node[font=\scriptsize, xshift=-5pt,yshift=4pt] at (H2_5) {\color{blue}2};
\node[font=\scriptsize,below] at (H2_6) { $v_8$};
\node[font=\scriptsize, above] at (H2_6) {\color{blue}3};
  \node[font=\scriptsize]  at (1, -3) {Graph $J_3$};

\end{tikzpicture}\hspace{3cm}
\begin{tikzpicture}[
  v2/.style={fill=black,minimum size=4pt,ellipse,inner sep=1pt},
  node distance=1.5cm,scale=0.4]

      \node[v2] (H5_1) at (0, 0){};
      \node[v2] (H5_2) at (0,-2){};
      \node[v2] (H5_3) at (1,-3.5){};
      \node[v2] (H5_4) at (2.8,-3.5){};
      \node[v2] (H5_5) at (4,-2){};
      \node[v2] (H5_6) at (4, 0){};
      \node[v2] (H5_7) at (2, 1.2){};
      \node[v2] (H5_9) at (-2, 0){};
      \node[v2] (H5_10) at (-2,-2){};

   \draw (H5_1)--(H5_2)--(H5_3)--(H5_4)--(H5_5)--(H5_6)--(H5_7)--(H5_1);
   \draw (H5_1)--(H5_9)--(H5_10)--(H5_2);
      \draw
  (-2,-2)
    .. controls (-5,2) and (0,2)..
  (2, 1.2);

  \node[font=\scriptsize,above] at (H5_9) {$v_1$};
  \node[font=\scriptsize,xshift=5pt,yshift=-4pt] at (H5_9) {\color{blue}4};
  \node[font=\scriptsize,above] at (H5_1) {$v_2$};
  \node[font=\scriptsize,xshift=5pt,yshift=-4pt] at (H5_1) {\color{blue}6};
  \node[font=\scriptsize,above] at (H5_7) {$v_3$};
  \node[font=\scriptsize,below] at (H5_7) {\color{blue}6};
  \node[font=\scriptsize,above] at (H5_6) {$v_6$};
  \node[font=\scriptsize,xshift=-5pt,yshift=-4pt] at (H5_6) {\color{blue}3};

  \node[font=\scriptsize,below] at (H5_10) {$v_4$};
  \node[font=\scriptsize,xshift=5pt,yshift= 5pt] at (H5_10) {\color{blue}6};
  \node[font=\scriptsize,below=1pt, xshift=-3pt] at (H5_2) {$v_{5}$};
  \node[font=\scriptsize,xshift=5pt,yshift= 4pt] at (H5_2) {\color{blue}6};
  \node[font=\scriptsize,below] at (H5_3) {$v_9$};
  \node[font=\scriptsize, above] at (H5_3) {\color{blue}3};
  \node[font=\scriptsize,below] at (H5_4) {$v_{8}$};
  \node[font=\scriptsize,above] at (H5_4) {\color{blue}2};
  \node[font=\scriptsize,below=1pt, xshift=3pt] at (H5_5) {$v_{7}$};
  \node[font=\scriptsize,left] at (H5_5) {\color{blue}2};

  \node[font=\scriptsize]  at (2,-5.5) {Graph $J_4$};
  \end{tikzpicture}
  \end{center}
\caption{Graphs $J_3$ and $J_4$. The numbers at vertices are the number of available colors.} \label{No-two-C4}
\end{figure}
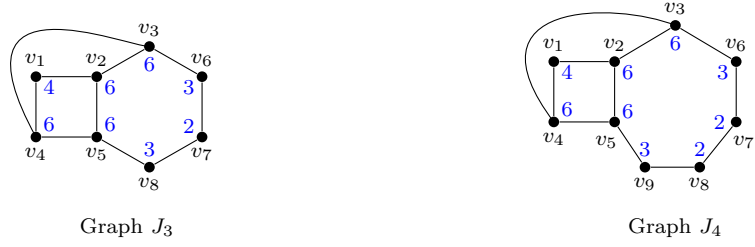

\begin{lemma} \label{C4-share-two-edge}
The subgraphs $J_3$ and $J_4$ in Figure \ref{No-two-C4}
do not appear in $G$.
\end{lemma}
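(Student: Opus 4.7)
The plan is to delete $V(J_3)$ from $G$ (or $V(J_4)$) and extend an $L$-coloring of the remainder to the deleted vertices, contradicting the assumption that $G$ is a counterexample. Setting $G' = G - V(J_3)$, since $G'$ is a subcubic planar graph without $5$-cycles on fewer vertices, by the minimality of $G$ there is a proper $L$-coloring $\phi$ of $(G')^2$. For each $v_i \in V(J_3)$ I would define the residual list $L_{J_3}(v_i) = L(v_i) \setminus \{\phi(x) : xv_i \in E(G^2),\ x \notin V(J_3)\}$. A careful count of the external $G^2$-neighbours of each $v_i$, using that $G$ is cubic (Lemma~\ref{no-2-vertex}), contains no $5$-cycle, and that Lemmas~\ref{C3-C6}, \ref{reducible-F2}, \ref{reducible-H0}, \ref{reducible-F4} rule out shared external neighbours for various pairs, yields the residual-list sizes advertised in Figure~\ref{No-two-C4}: $|L_{J_3}(v_1)| \ge 4$, $|L_{J_3}(v_i)| \ge 6$ for $i \in \{2,3,4,5\}$, $|L_{J_3}(v_6)|,|L_{J_3}(v_8)| \ge 3$, and $|L_{J_3}(v_7)| \ge 2$.

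The structural observation I would exploit is that in $(J_3)^2$ the set $\{v_1,\ldots,v_5\}$ induces a $K_5$ (each pair is at $J_3$-distance at most $2$ through the doubled path $v_4 v_1 v_2$), $\{v_6,v_7,v_8\}$ induces a triangle, and each of $v_2,v_4$ is $(J_3)^2$-adjacent to both $v_6$ and $v_8$ while $v_3$ is adjacent to $v_6,v_7$ and $v_5$ to $v_7,v_8$. The strategy is to colour the outer triangle first in the order $v_7,v_6,v_8$, which is possible because $|L_{J_3}(v_7)| = 2$, $|L_{J_3}(v_6)\setminus\{c_7\}| \ge 2$ and $|L_{J_3}(v_8)\setminus\{c_6,c_7\}| \ge 1$. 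This leaves each $v_i$ for $i\in\{2,3,4,5\}$ with a residual list $L'(v_i)$ of size at least $4$ and $v_1$ with its full list of size $4$. The $K_5$ step is the heart of the argument: by Hall's theorem, a $K_5$ with such lists is list-colourable provided $|L_{J_3}(v_1)\cup\bigcup_{i=2}^{5}L'(v_i)| \ge 5$. Since $|L_{J_3}(v_2)| \ge 6 > |L_{J_3}(v_1)| = 4$, the set $L_{J_3}(v_2)\setminus L_{J_3}(v_1)$ has at least two elements, and among the (at least four) valid outer colorings $(c_6,c_7,c_8)$ I can choose one that preserves at least one such element in $L'(v_2)$, yielding the required union of size $\ge 5$.

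The main obstacle is that $(J_3)^2$ need not be an induced subgraph of $G^2$: some pair $\{v_i,v_j\} \subseteq V(J_3)$ with $d_{J_3}(v_i,v_j) > 2$ could still be $G^2$-adjacent through an extra $G$-edge or a shared external neighbour. Following the strategy of Lemma~\ref{reducible-F4}, I would enumerate each such pair and show that any such extra adjacency would force either a $5$-cycle or one of the forbidden configurations $F_1$--$F_4$, $H_1$, or a pair of $3$-cycles at distance less than $3$ (ruled out by Lemmas~\ref{C3-C6}, \ref{reducible-F2}, \ref{reducible-F4}, \ref{reducible-H0}). The few surviving cases only slightly tighten the residual lists and are absorbed by minor modifications of the coloring order. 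The case of $J_4$ is parallel: in $(J_4)^2$ the outer set $\{v_6,v_7,v_8,v_9\}$ induces $K_4$ minus the edge $v_6 v_9$ with lists of sizes $(3,2,2,3)$, which is easily coloured by taking $v_7$ first; the inner $K_5$ on $\{v_1,\ldots,v_5\}$ is then handled exactly as above.
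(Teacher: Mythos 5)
Your proposal follows essentially the same route as the paper: delete the configuration, form the residual lists of the advertised sizes, colour the outer vertices $v_6,v_7,v_8$ (resp.\ $v_6,\dots,v_9$) first while reserving enough colours, and then finish the $K_5$ on $\{v_1,\dots,v_5\}$ via Hall's theorem using that the union of the five residual lists has size at least $5$ --- the paper secures this by arranging $L_{J_3}(v_1)\neq L_{J_3}(v_3)\setminus\{c_6,c_7\}$, you by preserving an element of $L_{J_3}(v_2)\setminus L_{J_3}(v_1)$, which is the same idea. The only divergence is that you raise (and only sketch) the case where $J_3^2$ is not an induced subgraph of $G^2$; the paper's own proof of this lemma does not treat that case either, so in substance your argument matches the published one.
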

\begin{proof}
Suppose that the subgraphs $J_3$ or $J_4$ in Figure \ref{No-two-C4}  appear in $G$.
Let $L$ be a list assignment with lists of size $7$ for each vertex in $G$.
We will show that $G^2$ has a proper coloring from the list $L$, which is a contradiction for the fact that $G$ is a counterexample to the theorem.

\medskip
\noindent {\bf Case 1:} The subgraph $J_3$.

Let $V(J_3) = \{v_1, v_2, v_3, v_4, v_5, v_6, v_7, v_8\}$ as in Figure \ref{No-two-C4}.
Let $G' = G - V(J_3)$.
Then $G'$ is also a subcubic planar graph and $|V(G')| < |V(G)|$.
Since $G$ is a minimal counterexample to Theorem \ref{main-thm},
the square of $G'$ has a proper coloring $\phi$ such that $\phi(v) \in L(v)$ for each vertex $v \in V(G')$.
Now, for each $v_i \in V(J_3)$, we define \[
L_{J_3}(v_i) = L(v_i) \setminus \{\phi(x) : xv_i \in E(G^2) \mbox{ and } x \notin V(J_3)\}.
\]
Then, we have the following (see the graph $J_3$ in Figure \ref{No-two-C4}).
$$
|L_{J_3}(v_i)| \geq
\begin{cases}
2 & i=7, \\
3 & i=6, 8, \\
4 & i=1, \\
6 & i=2,3, 4, 5.
\end{cases}
$$
Now, we will show that
$J_3^2$ admits a proper coloring from the list $L_{J_3}$.

Since $|L_{J_3}(v_6)| \geq 3$, $|L_{J_3}(v_7)| \geq 2$, $|L_{J_3}(v_8)| \geq 3$, we can color $v_6, v_7, v_8$ by some colors $c_6, c_7, c_8$, respectively, so that $L_{J_3}(v_1) \neq L_{J_3}(v_3) \setminus \{c_6, c_7\}$.
For each $v_i \in \{v_1, v_2, v_3, v_4, v_5\}$, let  $L_{J_3}'(v_i)$ the list of available colors at $v_i$ after coloring  $v_6, v_7, v_8$.  That is,
\begin{eqnarray*}
& & L_{J_3}'(v_1) = L_{J_3}(v_1), \ L_{J_3}'(v_2) = L_{J_3}(v_2) \setminus \{c_6, c_8\},  \
L_{J_3}'(v_3) = L_{J_3}(v_3) \setminus \{c_6, c_7\}, \\
& & L_{J_3}'(v_4) = L_{J_3}(v_4) \setminus \{c_6,  c_8\}, \
\text{and } L_{J_3}'(v_5) = L_{J_3}(v_5) \setminus \{c_7, c_8\}.
\end{eqnarray*}
Then $|L_{J_3}'(v_i)| \geq 4$ for each $v_i \in \{v_1, v_2, v_3, v_4, v_5\}$.
Note that $v_1, v_2, v_3, v_4, v_5$ form a $K_5$ in $J_3^2$ and
$L_{J_3}'(v_1) \neq L_{J_3}(v_3) \setminus \{c_6, c_7\} = L_{J_3}'(v_3)$.
So,  $ v_1, v_2, v_3, v_4, v_5 $ are colored properly from the list $L_{J_3}'$.
Thus $J_3^2$ has an $L_{J_3}$-coloring from the list, and so
$G^2$ has a proper coloring from the list $L$, which is a contradiction.
Hence the subgraph $J_3$ does not appear in $G$.

\medskip
\noindent {\bf Case 2:} The subgraph $J_4$.

Let $V(J_4) = \{v_1, v_2, v_3, v_4, v_5, v_6, v_7, v_8, v_9\}$ as in Figure \ref{No-two-C4}.
Let $G' = G -  V(J_4) $.
Then
the square of $G'$ has a proper coloring $\phi$ such that $\phi(v) \in L(v)$ for each vertex $v \in V(G')$.
Now, for each $v_i \in V(J_4)$, we define \[
L_{J_4}(v_i) = L(v_i) \setminus \{\phi(x) : xv_i \in E(G^2) \mbox{ and } x \notin V(J_4)\}.
\]
Then, we have the following (see the graph $J_4$ in Figure \ref{No-two-C4}).
$$
|L_{J_4}(v_i)| \geq
\begin{cases}
2 & i=7, 8,  \\
3 & i=6, 9, \\
4 & i=1, \\
6 & i=2,3, 4, 5.
\end{cases}
$$
Now, we  show that
$J_4^2$ admits a proper coloring from the list $L_{J_4}$.

Since $|L_{J_4}(v_6)| \geq 3$, $|L_{J_4}(v_7)| \geq 2$, $|L_{J_4}(v_8)| \geq 2$, $|L_{J_4}(v_9)| \geq 3$,  we can color $v_6, v_7, v_8, v_9$ by some colors $c_6, c_7, c_8, c_9$, respectively, so that $L_{J_4}(v_1) \neq L_{J_4}(v_3) \setminus \{c_6, c_7\}$.
\if0
For each $v_i \in \{v_1, v_2, v_3, v_4, v_5\}$,
 let  $L''(v_i)$ the list of available colors at $v_i$ after coloring  $v_6, v_7, v_8, v_9$ similar to Case 1.
Then $|L''(v_i)| \geq 4$ for each $v_i \in \{v_1, v_2, v_3, v_4, v_5\}$.
Note that $\{v_1, v_2, v_3, v_4, v_5\}$ forms $K_5$ in $J_4^2$ and
$L''(v_1) = L'(v_1) \neq L'(v_3) \setminus \{c_6, c_7\} = L''(v_3)$.
So,  $\{v_1, v_2, v_3, v_4, v_5\}$ are colored properly from the list $L''(v_i)$'s.
\fi
By the same way as in Case 1, we obtain that
$J_4^2$ has a proper coloring from the list $L_{J_4}$, and so
$G^2$ has a proper coloring from the list $L$, which is a contradiction.
Hence the subgraph $J_4$ does not appear in $G$.
\end{proof}

Now, we will prove that $H_2$ does not appear in $G$.

\begin{lemma} \label{reducible-H2}
The graph $H_2$ 
does not appear in $G$.
\end{lemma}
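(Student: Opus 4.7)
\medskip

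The plan is to mimic the deletion-and-extension strategy used for $H_1$ (Lemma \ref{reducible-H0}), $F_4$ (Lemma \ref{reducible-F4}), and $J_3, J_4$ (Lemma \ref{C4-share-two-edge}). Suppose $H_2 \subseteq G$; label the ten vertices so that $v_1 v_2 v_3 v_4 v_5 v_6$ is the central $6$-cycle, the left $4$-cycle is $v_1 v_7 v_8 v_2$ (sharing edge $v_1 v_2$), and the right $4$-cycle is $v_3 v_9 v_{10} v_4$ (sharing edge $v_3 v_4$); the single edge $v_2 v_3$ realizes the distance-$1$ condition between the two $4$-cycles. Form $G' = G - V(H_2)$; by minimality $G'^2$ admits an $L$-coloring $\phi$. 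For each $v_i \in V(H_2)$ set
\[
L_{H_2}(v_i) = L(v_i) \setminus \{\phi(x) : xv_i \in E(G^2),\ x \notin V(H_2)\}.
\]

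Since $G$ is cubic (Lemma \ref{no-2-vertex}), I first tally the sizes $|L_{H_2}(v_i)|$ by counting neighbors in $G^2$ lying outside $V(H_2)$. The four ``external'' vertices $v_7, v_8, v_9, v_{10}$ and the two ``corner'' vertices $v_5, v_6$ of the $6$-cycle each have their unique outside neighbor in $G$ contributing few forbidden colors, while the ``deep'' vertices $v_1, v_2, v_3, v_4$ have all three $G$-neighbors inside $V(H_2)$ and thus only distance-$2$ losses, yielding the largest lists. A routine count gives
\[
|L_{H_2}(v_i)| \ \geq\ 7 - (\text{number of $G^2$-neighbors of }v_i\text{ outside }V(H_2)),
\]
and the intent is to show these sizes suffice for an $L_{H_2}$-coloring of $H_2^2$.

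In the generic case where $H_2^2$ is an induced subgraph of $G^2$, I would color the outer vertices $v_7, v_8, v_9, v_{10}$ first (they have the tightest constraints) and then extend to the $6$-cycle. After fixing these four colors, the remaining problem on $v_1, v_2, v_3, v_4, v_5, v_6$ reduces to a list-coloring of a $6$-cycle's square with updated lists; here I expect to apply Lemma \ref{cycle-six-original} or Lemma \ref{cycle-six} directly, possibly after first choosing the color on $v_7$ (respectively $v_9$) to force $L'(v_1) \neq L'(v_3)$ as needed for the hypothesis. If at some step two lists fail the hypothesis, I branch on a common color trick analogous to Subcase~1.1/1.2 in the proof of Lemma \ref{reducible-H0}.

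The main obstacle, as in Lemma \ref{reducible-F4}, will be the non-induced case: I must rule out or separately handle every identification in $G^2$ not already present in $H_2^2$. Most such identifications are immediately forbidden by the $5$-cycle-free assumption, by Lemmas \ref{C3-C6}(a)(c), \ref{reducible-H0}, \ref{reducible-F2}, and by the recently established Lemma \ref{C4-share-two-edge} (which precisely rules out two $4$-cycles sharing a path around a $6$- or $7$-face, exactly the configurations that would arise from certain identifications here). After this ``simplifying cases'' pruning step, only a bounded number of genuine identification patterns remain, and each is handled by a minor modification of the generic coloring argument, typically by first choosing the color of the identified vertex and then verifying that the resulting reduced lists still satisfy the hypotheses of Lemma \ref{cycle-six-original} or allow a greedy completion.
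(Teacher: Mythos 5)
Your proposal follows essentially the same route as the paper's proof: delete $V(H_2)$, invoke minimality, tally the residual lists, color the four vertices of the two $4$-cycles that do not lie on the $6$-cycle first (choosing two of those colors so that the two $6$-cycle vertices left with $2$-element lists get \emph{distinct} lists), finish with Lemma \ref{cycle-six-original}/Lemma \ref{cycle-six}, and prune the non-induced identifications with exactly the lemmas you cite, after which one extra-edge case and three common-neighbor cases survive and are handled by reordering the same coloring. The only detail to correct in execution is that the critical unequal-list pair is the adjacent pair of $6$-cycle vertices farthest from the edge joining the two $4$-cycles (the paper's $v_7,v_8$), not the pair you name.
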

\begin{proof}
Suppose  to the contrary that $G$ has $H_2$ as a subgraph, and denote $V(H_2) = \{v_1, \ldots, v_{10}\}$ as in Figure \ref{H2fig}.
Let $L$ be a list assignment with lists of size 7 for each vertex in $G$.
We will show that $G^2$ has a  proper coloring from the list $L$, which is a contradiction for the fact that $G$ is a counterexample to the theorem.

Let $G' = G - V(H_2)$.
Then $G'$ is also a subcubic planar graph and $|V(G')| < |V(G)|$.
Since $G$ is a minimal counterexample to Theorem \ref{main-thm},
the square of $G'$ has a proper coloring $\phi$ such that $\phi(v) \in L(v)$ for each vertex $v \in V(H)$.
For each $v_i \in V(H_2)$, we define
\begin{equation*} \label{list-L}
L_{H_2}(v_i) = L(v_i) \setminus \{\phi(x) : xv_i \in E(G^2) \mbox{ and } x \notin V(H_2)\}.
\end{equation*}

We have the following three cases.
\medskip

\noindent {\bf Case 1:} $H_2^2$ is an induced subgraph of $G^2$. \\
\if0
Let  $H = G -\{v_7, v_8\}$.
Then $H$ is also a subcubic planar graph and $|V(H)| < |V(G)|$.   Since $G$ is a minimal counterexample to Theorem \ref{main-thm},
the square of $H$ has a proper coloring $\phi$ such that $\phi(v) \in L(v)$ for each vertex $v \in V(H)$.
\fi

\begin{figure}[htbp]
  \begin{center}
\begin{tikzpicture}[
    v2/.style={fill=black,minimum size=4pt,ellipse,inner sep=1pt},
    scale=0.4
]

    \node[v2] (H2_1) at (0, 0) {};
    \node[v2] (H2_2) at (0, 2) {};
    \node[v2] (H2_3) at (1.732, 3) {};
    \node[v2] (H2_4) at (3.464, 2) {};
    \node[v2] (H2_5) at (3.464, 0) {};
    \node[v2] (H2_6) at (1.732, -1) {};

    \node[v2] (H2_7) at (-2, 0) {};
    \node[v2] (H2_8) at (-2, 2) {};
    \node[v2] (H2_9) at ({1.732 + 1}, {3 + 1.732}) {};
    \node[v2] (H2_10) at ({3.464 + 1}, {2 + 1.732}) {};

    \draw (H2_1) -- (H2_2) -- (H2_3) -- (H2_4) -- (H2_5) -- (H2_6) -- (H2_1);

    \draw (H2_1) -- (H2_7) -- (H2_8) -- (H2_2);

    \draw (H2_3) -- (H2_9) -- (H2_10) -- (H2_4);

    \node[font=\scriptsize,below] at   (H2_7) {$v_5$};
     \node[font=\scriptsize,xshift=5pt,yshift=5pt]  at   (H2_7) {\color{blue}3};
     \node[font=\scriptsize,above] at   (H2_8) {$v_1$};
     \node[font=\scriptsize,xshift=5pt,yshift=-5pt]  at   (H2_8) {\color{blue}3};
          \node[font=\scriptsize,above] at   (H2_2) {$v_2$};
     \node[font=\scriptsize,xshift=5pt,yshift=-5pt]  at   (H2_2) {\color{blue}6};
          \node[font=\scriptsize,xshift=-7pt,yshift=2pt] at   (H2_3) {$v_3$};
     \node[font=\scriptsize,below]  at   (H2_3) {\color{blue}6};
          \node[font=\scriptsize,right] at   (H2_4) {$v_9$};
     \node[font=\scriptsize,xshift=-5pt,yshift=-5pt]  at   (H2_4) {\color{blue}5};
          \node[font=\scriptsize,right] at   (H2_5) {$v_8$};
     \node[font=\scriptsize,xshift=-5pt,yshift=2pt]  at   (H2_5) {\color{blue}3};
          \node[font=\scriptsize,below] at   (H2_6) {$v_7$};
     \node[font=\scriptsize,above]  at   (H2_6) {\color{blue}3};
          \node[font=\scriptsize,below] at   (H2_1) {$v_6$};
     \node[font=\scriptsize,xshift=5pt,yshift=5pt]  at   (H2_1) {\color{blue}5};
          \node[font=\scriptsize,above] at   (H2_9) {$v_4$};
     \node[font=\scriptsize,below]  at   (H2_9) {\color{blue}3};
           \node[font=\scriptsize,right] at   (H2_10) {$v_{10}$};
     \node[font=\scriptsize,xshift=-6pt,yshift=-2pt]  at   (H2_10) {\color{blue}3};

    \node[font=\scriptsize ] at (1.732, -3) {Graph $H_2$};

\end{tikzpicture}\hspace{3cm}
\begin{tikzpicture}[
    v2/.style={fill=black,minimum size=4pt,ellipse,inner sep=1pt},
    scale=0.4
]

    \node[v2] (H2_1) at (0, 0) {};
    \node[v2] (H2_2) at (0, 2) {};
    \node[v2] (H2_3) at (1.732, 3) {};
    \node[v2] (H2_4) at (3.464, 2) {};
    \node[v2] (H2_5) at (3.464, 0) {};
    \node[v2] (H2_6) at (1.732, -1) {};

    \node[v2] (H2_7) at (-2, 0) {};
    \node[v2] (H2_8) at (-2, 2) {};
    \node[v2] (H2_9) at ({1.732 + 1}, {3 + 1.732}) {};
    \node[v2] (H2_10) at ({3.464 + 1}, {2 + 1.732}) {};

    \draw (H2_1) -- (H2_2) -- (H2_3) -- (H2_4) -- (H2_5) -- (H2_6) -- (H2_1);

    \draw (H2_1) -- (H2_7) -- (H2_8) -- (H2_2);

    \draw (H2_3) -- (H2_9) -- (H2_10) -- (H2_4);

    \draw
  (-2, 0)
    .. controls (2, -4) and (7,-2) ..
  ({3.464 + 1}, {2 + 1.732});

     \node[font=\scriptsize,below] at   (H2_7) {$v_5$};
     \node[font=\scriptsize,xshift=5pt,yshift=5pt]  at   (H2_7) {\color{blue}6};
     \node[font=\scriptsize,above] at   (H2_8) {$v_1$};
     \node[font=\scriptsize,xshift=5pt,yshift=-5pt]  at   (H2_8) {\color{blue}4};
          \node[font=\scriptsize,above] at   (H2_2) {$v_2$};
     \node[font=\scriptsize,xshift=5pt,yshift=-5pt]  at   (H2_2) {\color{blue}6};
          \node[font=\scriptsize,xshift=-7pt,yshift=2pt] at   (H2_3) {$v_3$};
     \node[font=\scriptsize,below]  at   (H2_3) {\color{blue}6};
          \node[font=\scriptsize,right] at   (H2_4) {$v_9$};
     \node[font=\scriptsize,xshift=-5pt,yshift=-5pt]  at   (H2_4) {\color{blue}6};
          \node[font=\scriptsize,right] at   (H2_5) {$v_8$};
     \node[font=\scriptsize,xshift=-5pt,yshift=2pt]  at   (H2_5) {\color{blue}3};
          \node[font=\scriptsize,below] at   (H2_6) {$v_7$};
     \node[font=\scriptsize,above]  at   (H2_6) {\color{blue}3};
          \node[font=\scriptsize,below] at   (H2_1) {$v_6$};
     \node[font=\scriptsize,xshift=5pt,yshift=5pt]  at   (H2_1) {\color{blue}6};
          \node[font=\scriptsize,above] at   (H2_9) {$v_4$};
     \node[font=\scriptsize,below]  at   (H2_9) {\color{blue}4};
           \node[font=\scriptsize,right] at   (H2_10) {$v_{10}$};
     \node[font=\scriptsize,xshift=-6pt,yshift=-2pt]  at   (H2_10) {\color{blue}6};

    \node[font=\scriptsize] at (1.732, -3) {Case 2};

\end{tikzpicture}
  \end{center}
\caption{Graph $H_2$ and Case 2 of the proof of Lemma \ref{reducible-H2}. The numbers at vertices are the number of available colors.}
\label{H2fig}
\end{figure}
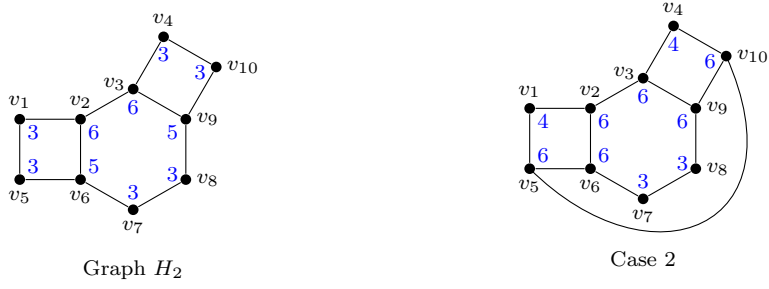

In this case, we have the following (see Figure \ref{H2fig}).
$$
|L_{H_2}(v_i)| \geq
\begin{cases}
3 & i=1,4,5,7,8,10, \\
5 & i=6,9, \\
6 & i=2,3.
\end{cases}
$$
Now, we  show that
$H_2^2$ admits a proper coloring from the list $L_{H_2}$.

First, we color $v_1, v_4, v_5, v_{10}$.  Let $L_{H_2}'(v_i)$ for $i = 2, 3, 6, 7, 8, 9$ be the color list of $v_i$ after coloring  $v_1, v_4, v_5, v_{10}$.
Then we have that $|L_{H_2}'(v_i) | \geq 3$ for $i = 2, 3, 6, 9$ and $|L_{H_2}'(v_j) | \geq 2$ for $j = 7, 8$.  Here note that since $|L_{H_2}(v_5)| \geq 3$ and $|L_{H_2}(v_{10})| \geq 3$, we can color $v_5$ and  $v_{10}$ so that $L_{H_2}'(v_7) \neq L_{H_2}'(v_{8})$.  So, we can complete the coloring of $v_2, v_3, v_6, v_7, v_8, v_9$ by Lemma \ref{cycle-six-original}. This completes the proof of Case 1.
\\


Note that for $X \subset V(G)$, $G[X]$ denotes the subgraph of $G$ induced by $X$.

\medskip

\noindent {\bf Case 2:} $H_2^2$ is not an induced subgraph of $G^2$ and
$E(G[V(H_2)]) - E(H_2) \neq \emptyset$.

\medskip
\noindent {\bf Simplifying cases:}
\begin{itemize}
\item  The vertices in each of the following pairs are nonadjacent
since it makes a 5-cycle:
$\{v_1, v_8\}$, $\{v_1, v_{10}\}$, $\{v_4, v_5\}$, $\{v_4, v_7\}$,
$\{v_5, v_7\}$, and $\{v_8, v_{10}\}$.

\item  The vertices in each of the following pairs are nonadjacent since it makes $H_1$,
which does not exist by Lemma \ref{reducible-H0}:
$\{v_1, v_4\}$, $\{v_5, v_8\}$, and $\{v_7, v_{10}\}$.

\item  The vertices in each of the following pairs are nonadjacent since
 it makes $J_3$, which does not exist by Lemma \ref{C4-share-two-edge}:
$\{v_1, v_7\}$, and $\{v_4, v_{8}\}$.
\end{itemize}
Thus we only need to consider the case when $v_5$ and $v_{10}$ are adjacent.
We have the following (see Case 2 in Figure \ref{H2fig}).
$$
|L_{H_2}(v_i)| \geq
\begin{cases}
3 & i=7, 8, \\
4 & i=1, 4,  \\
6 & i=2, 3, 5, 6, 9, 10.
\end{cases}
$$
Color $v_5$ by a color $c_5 \in L_{H_2}(v_5) \setminus L_{H_2}(v_7)$, and color $v_{10}$ by a color $c_{10} \in L_{H_2}(v_{10}) \setminus L_{H_2}(v_8)$ and $c_{5} \neq c_{10}$.  Next, color $v_1$ and $v_4$ greedily.
Let $L_{H_2}'(v_i)$ be the list of available colors at $v_i \in \{v_2, v_3, v_6, v_7, v_8, v_9\}$ after coloring $v_1, v_4, v_5, v_{10}$.  Then we have the following.
$$
|L_{H_2}'(v_i)| \geq 3 \mbox{ for } i = 2, 3, 6, 7, 8, 9.
$$
Since $ v_2, v_3, v_6, v_7, v_8, v_9 $ form  a 6-cycle,  $ v_2, v_3, v_6, v_7, v_8, v_9 $ can be colored properly from the list $L_{H_2}'$ by Lemma \ref{cycle-six}.  \\

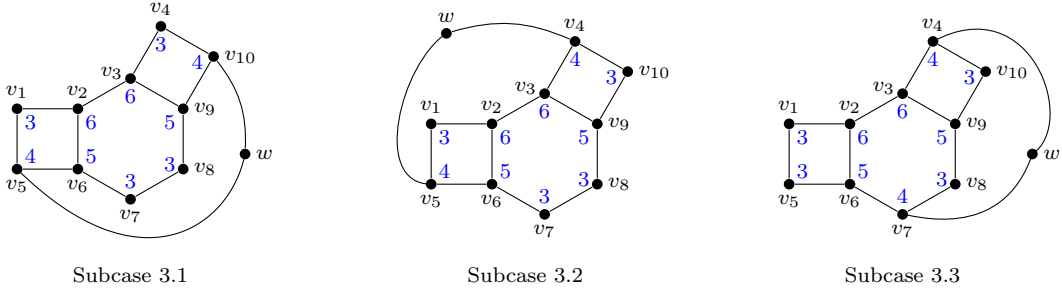
\begin{figure}[htbp]
  \begin{center}
\begin{tikzpicture}[
    v2/.style={fill=black,minimum size=4pt,ellipse,inner sep=1pt},
    scale=0.4
]

    \node[v2] (H2_1) at (0, 0) {};
    \node[v2] (H2_2) at (0, 2) {};
    \node[v2] (H2_3) at (1.732, 3) {};
    \node[v2] (H2_4) at (3.464, 2) {};
    \node[v2] (H2_5) at (3.464, 0) {};
    \node[v2] (H2_6) at (1.732, -1) {};

    \node[v2] (H2_7) at (-2, 0) {};
    \node[v2] (H2_8) at (-2, 2) {};
    \node[v2] (H2_9) at ({1.732 + 1}, {3 + 1.732}) {};
    \node[v2] (H2_10) at ({3.464 + 1}, {2 + 1.732}) {};
    \node[v2] (H2_11) at (5.5,0.5) {};

    \draw (H2_1) -- (H2_2) -- (H2_3) -- (H2_4) -- (H2_5) -- (H2_6) -- (H2_1);

    \draw (H2_1) -- (H2_7) -- (H2_8) -- (H2_2);

    \draw (H2_3) -- (H2_9) -- (H2_10) -- (H2_4);
    \draw (H2_11)to[bend right=20] (H2_10);

     \draw (-2, 0).. controls (2, -4) and (5,-2) ..(5.5,0.5);

    \node[font=\scriptsize,below] at   (H2_7) {$v_5$};
     \node[font=\scriptsize,xshift=5pt,yshift=5pt]  at   (H2_7) {\color{blue}4};
     \node[font=\scriptsize,above] at   (H2_8) {$v_1$};
     \node[font=\scriptsize,xshift=5pt,yshift=-5pt]  at   (H2_8) {\color{blue}3};
          \node[font=\scriptsize,above] at   (H2_2) {$v_2$};
     \node[font=\scriptsize,xshift=5pt,yshift=-5pt]  at   (H2_2) {\color{blue}6};
          \node[font=\scriptsize,xshift=-7pt,yshift=2pt] at   (H2_3) {$v_3$};
     \node[font=\scriptsize,below]  at   (H2_3) {\color{blue}6};
          \node[font=\scriptsize,right] at   (H2_4) {$v_9$};
     \node[font=\scriptsize,xshift=-5pt,yshift=-5pt]  at   (H2_4) {\color{blue}5};
          \node[font=\scriptsize,right] at   (H2_5) {$v_8$};
     \node[font=\scriptsize,xshift=-5pt,yshift=2pt]  at   (H2_5) {\color{blue}3};
          \node[font=\scriptsize,below] at   (H2_6) {$v_7$};
     \node[font=\scriptsize,above]  at   (H2_6) {\color{blue}3};
          \node[font=\scriptsize,below] at   (H2_1) {$v_6$};
     \node[font=\scriptsize,xshift=5pt,yshift=5pt]  at   (H2_1) {\color{blue}5};
          \node[font=\scriptsize,above] at   (H2_9) {$v_4$};
     \node[font=\scriptsize,below]  at   (H2_9) {\color{blue}3};
           \node[font=\scriptsize,right] at   (H2_10) {$v_{10}$};
     \node[font=\scriptsize,xshift=-6pt,yshift=-2pt]  at   (H2_10) {\color{blue}4};
            \node[font=\scriptsize,right] at   (H2_11) {$w$};

    \node[font=\scriptsize ] at (1.732, -3.5) {Subcase 3.1};

\end{tikzpicture}\hspace{1cm}
\begin{tikzpicture}[
    v2/.style={fill=black,minimum size=4pt,ellipse,inner sep=1pt},
    scale=0.4
]

    \node[v2] (H2_1) at (0, 0) {};
    \node[v2] (H2_2) at (0, 2) {};
    \node[v2] (H2_3) at (1.732, 3) {};
    \node[v2] (H2_4) at (3.464, 2) {};
    \node[v2] (H2_5) at (3.464, 0) {};
    \node[v2] (H2_6) at (1.732, -1) {};

    \node[v2] (H2_7) at (-2, 0) {};
    \node[v2] (H2_8) at (-2, 2) {};
    \node[v2] (H2_9) at ({1.732 + 1}, {3 + 1.732}) {};
    \node[v2] (H2_10) at ({3.464 + 1}, {2 + 1.732}) {};

 \node[v2] (H2_11) at (-1.5,5) {};
    \draw (H2_1) -- (H2_2) -- (H2_3) -- (H2_4) -- (H2_5) -- (H2_6) -- (H2_1);

    \draw (H2_1) -- (H2_7) -- (H2_8) -- (H2_2);

    \draw (H2_3) -- (H2_9) -- (H2_10) -- (H2_4);

     \draw (H2_11)to[bend left=20] (H2_9);

     \draw (-1.5,5).. controls (-3,4) and (-4,0) ..(-2, 0);

    \node[font=\scriptsize,below] at   (H2_7) {$v_5$};
     \node[font=\scriptsize,xshift=5pt,yshift=5pt]  at   (H2_7) {\color{blue}4};
     \node[font=\scriptsize,above] at   (H2_8) {$v_1$};
     \node[font=\scriptsize,xshift=5pt,yshift=-5pt]  at   (H2_8) {\color{blue}3};
          \node[font=\scriptsize,above] at   (H2_2) {$v_2$};
     \node[font=\scriptsize,xshift=5pt,yshift=-5pt]  at   (H2_2) {\color{blue}6};
          \node[font=\scriptsize,xshift=-7pt,yshift=2pt] at   (H2_3) {$v_3$};
     \node[font=\scriptsize,below]  at   (H2_3) {\color{blue}6};
          \node[font=\scriptsize,right] at   (H2_4) {$v_9$};
     \node[font=\scriptsize,xshift=-5pt,yshift=-5pt]  at   (H2_4) {\color{blue}5};
          \node[font=\scriptsize,right] at   (H2_5) {$v_8$};
     \node[font=\scriptsize,xshift=-5pt,yshift=2pt]  at   (H2_5) {\color{blue}3};
          \node[font=\scriptsize,below] at   (H2_6) {$v_7$};
     \node[font=\scriptsize,above]  at   (H2_6) {\color{blue}3};
          \node[font=\scriptsize,below] at   (H2_1) {$v_6$};
     \node[font=\scriptsize,xshift=5pt,yshift=5pt]  at   (H2_1) {\color{blue}5};
          \node[font=\scriptsize,above] at   (H2_9) {$v_4$};
     \node[font=\scriptsize,below]  at   (H2_9) {\color{blue}4};
           \node[font=\scriptsize,right] at   (H2_10) {$v_{10}$};
     \node[font=\scriptsize,xshift=-6pt,yshift=-2pt]  at   (H2_10) {\color{blue}3};
     \node[font=\scriptsize,above]  at   (H2_11) {$w$};

    \node[font=\scriptsize] at (1.1, -3) {Subcase 3.2};

\end{tikzpicture}\hspace{1cm}
\begin{tikzpicture}[
    v2/.style={fill=black,minimum size=4pt,ellipse,inner sep=1pt},
    scale=0.4
]

    \node[v2] (H2_1) at (0, 0) {};
    \node[v2] (H2_2) at (0, 2) {};
    \node[v2] (H2_3) at (1.732, 3) {};
    \node[v2] (H2_4) at (3.464, 2) {};
    \node[v2] (H2_5) at (3.464, 0) {};
    \node[v2] (H2_6) at (1.732, -1) {};

    \node[v2] (H2_7) at (-2, 0) {};
    \node[v2] (H2_8) at (-2, 2) {};
    \node[v2] (H2_9) at ({1.732 + 1}, {3 + 1.732}) {};
    \node[v2] (H2_10) at ({3.464 + 1}, {2 + 1.732}) {};
    \node[v2] (H2_11) at (6,1) {};

    \draw (H2_1) -- (H2_2) -- (H2_3) -- (H2_4) -- (H2_5) -- (H2_6) -- (H2_1);

    \draw (H2_1) -- (H2_7) -- (H2_8) -- (H2_2);

    \draw (H2_3) -- (H2_9) -- (H2_10) -- (H2_4);

\draw ({1.732 + 1}, {3 + 1.732}).. controls (6, 6.5) and (7.5,2) ..(6,1);
     \draw (1.732, -1).. controls (2, -1) and (5,-2) ..(6,1);

    \node[font=\scriptsize,below] at   (H2_7) {$v_5$};
     \node[font=\scriptsize,xshift=5pt,yshift=5pt]  at   (H2_7) {\color{blue}3};
     \node[font=\scriptsize,above] at   (H2_8) {$v_1$};
     \node[font=\scriptsize,xshift=5pt,yshift=-5pt]  at   (H2_8) {\color{blue}3};
          \node[font=\scriptsize,above] at   (H2_2) {$v_2$};
     \node[font=\scriptsize,xshift=5pt,yshift=-5pt]  at   (H2_2) {\color{blue}6};
          \node[font=\scriptsize,xshift=-7pt,yshift=2pt] at   (H2_3) {$v_3$};
     \node[font=\scriptsize,below]  at   (H2_3) {\color{blue}6};
          \node[font=\scriptsize,right] at   (H2_4) {$v_9$};
     \node[font=\scriptsize,xshift=-5pt,yshift=-5pt]  at   (H2_4) {\color{blue}5};
          \node[font=\scriptsize,right] at   (H2_5) {$v_8$};
     \node[font=\scriptsize,xshift=-5pt,yshift=2pt]  at   (H2_5) {\color{blue}3};
          \node[font=\scriptsize,below] at   (H2_6) {$v_7$};
     \node[font=\scriptsize,above]  at   (H2_6) {\color{blue}4};
          \node[font=\scriptsize,below] at   (H2_1) {$v_6$};
     \node[font=\scriptsize,xshift=5pt,yshift=5pt]  at   (H2_1) {\color{blue}5};
          \node[font=\scriptsize,above] at   (H2_9) {$v_4$};
     \node[font=\scriptsize,below]  at   (H2_9) {\color{blue}4};
           \node[font=\scriptsize,right] at   (H2_10) {$v_{10}$};
     \node[font=\scriptsize,xshift=-6pt,yshift=-2pt]  at   (H2_10) {\color{blue}3};
            \node[font=\scriptsize,right] at   (H2_11) {$w$};

    \node[font=\scriptsize ] at (1.732, -3) {Subcase 3.3};

\end{tikzpicture}
\end{center}
\caption{Subcases 3.1, 3.2, and 3.3 of the proof of Lemma \ref{reducible-H2}. The numbers at vertices are the number of available colors.} \label{H2-adjacent-nbr}
\end{figure}

\medskip
\noindent {\bf Case 3:} $H_2^2$ is not an induced subgraph of $G^2$ and
 $E(G[V(H_2)]) - E(H_2) =  \emptyset$.

\medskip
\noindent {\bf Simplifying cases:}
\begin{itemize}
\item
 The vertices in each of the following pairs do not have a common neighbor outside $H_2$,
since it makes a 5-cycle:
$\{v_1, v_4\}$, $\{v_1, v_5\}$, $\{v_1, v_7\}$, $\{v_4, v_8\}$, $\{v_4, v_{10}\}$, $\{v_5, v_8\}$, and $\{v_7, v_{10}\}$.

\item  The vertices in each of the following pairs do not have a common neighbor outside $H_2$,
since it makes the subgraph $F_3$ in Figure \ref{key configuration-C3},
which does not exist by Lemma \ref{C3-C6}(b):
$\{v_7,v_8\}$.

\item
The vertices in each of the following pairs do not have a common neighbor outside $H_2$,
since it makes $H_1$ in Figure \ref{key configuration},
which does not exist by Lemma \ref{reducible-H0}:
$\{v_5,v_7\}$, and $\{v_8,v_{10}\}$.
\end{itemize}

\noindent
Considering these,
we only need to deal with the following three subcases.

\medskip
\noindent {\bf Subcase 3.1:} $v_5$ and $v_{10}$ have a common neighbor $w$,
where $w \notin V(H_2)$ (see Subcase 3.1 in Figure \ref{H2-adjacent-nbr}).

Note that
$w$ is not adjacent to any vertex in $V(H_2) \setminus \{v_5, v_{10}\}$ by the argument of  the Simplifying cases.
The number of available colors at vertices of $V(H_2)$ is presented in Subcase 3.1 of Figure \ref{H2-adjacent-nbr}.
Color $v_5$ by a color in $L_{H_2}(v_5) \setminus L_{H_2}(v_7)$ and  greedily color $v_{10}, v_1, v_4$ in order.
Let $L_{H_2}'(v_i)$ be the list of available colors at $v_i \in \{v_2, v_3, v_6, v_7, v_8, v_9\}$ after coloring $v_1, v_4, v_5, v_{10}$. Then we have the following.
$$
|L_{H_2}'(v_i)| \geq 3 \mbox{ for } i = 2, 3, 6, 7, 9, \mbox{ and } |L'(v_8)| \geq 2.
$$
Since $ v_2, v_3, v_6, v_7, v_8, v_9 $ form  a 6-cycle,  $ v_2, v_3, v_6, v_7, v_8, v_9 $ can be colored properly from the list $L_{H_2}'$ by Lemma \ref{cycle-six}.
\\

\noindent {\bf Subcase 3.2:}
$v_4$ and $v_{5}$ (or $v_1$ and $v_{10}$ by symmetry)
have a common neighbor $w$,
where $w \notin V(H_2)$.

Note that
$w$ is not adjacent to any vertex in $V(H_2) \setminus \{v_4, v_5\}$ by the argument of he Simplifying cases.
Then the number of available colors at vertices of $V(H_2)$ is presented in Subcase 3.2 of Figure \ref{H2-adjacent-nbr}.
In this case, we follow the same argument as Subcase 3.1.
Color $v_5$ by a color in $L_{H_2}(v_5) \setminus L_{H_2}(v_7)$ and  greedily color $v_{10}, v_1, v_4$ in order.
Let $L_{H_2}'(v_i)$ be the list of available colors at $v_i \in \{v_2, v_3, v_6, v_7, v_8, v_9\}$ after coloring $v_1, v_4, v_5, v_{10}$.  Then we have the following.
$$
|L_{H_2}'(v_i)| \geq 3 \mbox{ for } i = 2, 3, 6, 7, 9, \mbox{ and } |L_{H_2}'(v_8)| \geq 2.
$$
Since $ v_2, v_3, v_6, v_7, v_8, v_9 $ form a 6-cycle,  $ v_2, v_3, v_6, v_7, v_8, v_9$ can be colored properly from the list $L_{H_2}'$ by Lemma \ref{cycle-six}.
\\

\noindent {\bf Subcase 3.3:} $v_4$ and $v_{7}$
(or $v_1$ and $v_{8}$ by symmetry)
have a common neighbor $w$,
where $w \notin V(H_2)$.

Note that
$w$ is not adjacent to any vertex in $V(H_2) \setminus \{v_4, v_7\}$ by the argument of he Simplifying cases.
Then the number of available colors at vertices of $V(H_2)$ is presented in Subcase 3.3 of Figure \ref{H2-adjacent-nbr}.
In this case, first we color $v_1$ and $v_5$ greedily.
We may assume that $c_5$ is the color assigned at $v_5$.
Next, color $v_4$ by a color $c_4$ so that $|L_{H_2}(v_7) \setminus\{c_4, c_5\}| \geq 3$. This is possible since $|L_{H_2}(v_4)|  \geq 4$.
And then color $v_{10}$.
Let $L_{H_2}'(v_i)$ be the list of available colors at $v_i \in \{v_2, v_3, v_6, v_7, v_8, v_9\}$ after coloring $v_1, v_4, v_5, v_{10}$.  Then we have the following.
$$
|L_{H_2}'(v_i)| \geq 3 \mbox{ for } i = 2, 3, 6, 7, 9, \mbox{ and } |L_{H_2}'(v_8)| \geq 2.
$$
Since $\{v_2, v_3, v_6, v_7, v_8, v_9\}$ form a 6-cycle,  $\{v_2, v_3, v_6, v_7, v_8, v_9\}$ can be colored properly from the list $L_{H_2}'$ by Lemma \ref{cycle-six}.

\medskip
So,
in either case,
the vertices in $H_2$ can be colored from the list $L_{H_2}$
so that we obtain an $L$-coloring in $G^2$.
This is a contradiction for the fact that $G$ is a counterexample.  So, $G$ has no $H_2$.
\end{proof}

\subsection{Subgraph $H_3$ is reducible}

In this subsection, we will prove that  graph $H_3$ in Figure \ref{key configuration} does not appear in a minimal counterexample.

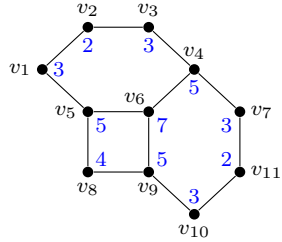
\begin{figure}[htbp]
  \begin{center}

\begin{tikzpicture}[
    v2/.style={fill=black,minimum size=4pt,ellipse,inner sep=1pt},
    scale=0.4
]
    \node[v2] (H3_1) at (0, 0) {};
    \node[v2] (H3_2) at (0, 2) {};
    \node[v2] (H3_3) at (1.5, 3.4) {};
    \node[v2] (H3_4) at (3, 2) {};
    \node[v2] (H3_5) at (3, 0) {};
    \node[v2] (H3_6) at (1.5, -1.4) {};
    \node[v2] (H3_7) at (-2, 0) {};
    \node[v2] (H3_8) at (-2, 2) {};
    \node[v2] (H3_9)  at (0, 4.8) {};
    \node[v2] (H3_10) at (-2,4.8) {};
    \node[v2] (H3_11) at (-3.5, 3.4){};

\draw(H3_1)--(H3_2)--(H3_3)--(H3_4)--(H3_5)--(H3_6)--(H3_1);
\draw (H3_2)--(H3_8)--(H3_7)--(H3_1);
        \draw (H3_3) -- (H3_9) -- (H3_10) -- (H3_11) -- (H3_8);

    \node[font=\scriptsize,below] at (H3_1) {$v_9$};
\node[font=\scriptsize,xshift=5pt,yshift= 5pt] at (H3_1) { \color{blue}5};
     \node[font=\scriptsize,xshift=-5pt,yshift=5pt] at (H3_2) {$v_6$};
\node[font=\scriptsize,xshift=5pt,yshift=-5pt] at (H3_2) { \color{blue}7};
     \node[font=\scriptsize,above] at (H3_3) {$v_4$};
\node[font=\scriptsize,below] at (H3_3) { \color{blue}5};
     \node[font=\scriptsize,right] at (H3_4) {$v_7$};
\node[font=\scriptsize,xshift=-5pt,yshift=-5pt] at (H3_4) { \color{blue}3};
     \node[font=\scriptsize,right] at (H3_5) {$v_{11}$};
\node[font=\scriptsize,xshift=-5pt,yshift= 5pt] at (H3_5) { \color{blue}2};
     \node[font=\scriptsize,below] at (H3_6) {$v_{10}$};
\node[font=\scriptsize,above] at (H3_6) { \color{blue}3};
     \node[font=\scriptsize,below] at (H3_7) {$v_8$};
\node[font=\scriptsize,xshift=5pt,yshift=5pt] at (H3_7) { \color{blue}4};
     \node[font=\scriptsize,left] at (H3_8) {$v_5$};
\node[font=\scriptsize,xshift=5pt,yshift=-5pt] at (H3_8) { \color{blue}5};
     \node[font=\scriptsize,above] at (H3_9) {$v_3$};
\node[font=\scriptsize,below] at (H3_9) { \color{blue}3};
     \node[font=\scriptsize,above] at (H3_10) {$v_2$};
\node[font=\scriptsize,below] at (H3_10) { \color{blue}2};
     \node[font=\scriptsize,left] at (H3_11) {$v_1$};
\node[font=\scriptsize,right] at (H3_11) { \color{blue}3};
\end{tikzpicture}
\end{center}
\caption{Graph $H_3$. The numbers at vertices are the number of available colors.}
\label{H3fig}
\end{figure}

\begin{lemma} \label{reducible-H3}
The graph $H_3$
does not appear in $G$.
\end{lemma}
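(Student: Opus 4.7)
The plan is to follow the template of Lemma \ref{reducible-H2}. Suppose $H_3 \subseteq G$ with vertices labelled as in Figure \ref{H3fig}. By the minimality of $G$ applied to $G' := G - V(H_3)$, which is still a subcubic planar graph without $5$-cycles and of strictly smaller order, the square $(G')^2$ admits a proper $L$-colouring $\phi$. For each $v_i \in V(H_3)$, set
\[
L_{H_3}(v_i) = L(v_i) \setminus \{\phi(x) : xv_i \in E(G^2),\ x \notin V(H_3)\}.
\]
A direct count of the $G^2$-neighbours of each $v_i$ lying outside $V(H_3)$, using $\Delta(G) \le 3$, yields the lower bounds shown in Figure \ref{H3fig}: $|L_{H_3}(v_6)| \ge 7$, $|L_{H_3}(v_4)|, |L_{H_3}(v_5)|, |L_{H_3}(v_9)| \ge 5$, $|L_{H_3}(v_8)| \ge 4$, $|L_{H_3}(v_1)|, |L_{H_3}(v_3)|, |L_{H_3}(v_7)|, |L_{H_3}(v_{10})| \ge 3$, and $|L_{H_3}(v_2)|, |L_{H_3}(v_{11})| \ge 2$. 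It suffices to properly $L_{H_3}$-colour $G^2[V(H_3)]$, contradicting the choice of $G$.

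First suppose $H_3^2$ is an induced subgraph of $G^2$. Colour the two size-$2$ vertices $v_2$ and $v_{11}$ first (they are at distance more than $2$ in $H_3$, hence independent in $H_3^2$), then greedily colour $v_1, v_3, v_7, v_{10}, v_8$ in turn; at each step the number of already-coloured $H_3^2$-neighbours is strictly less than the current list size, so the greedy extension is possible. The choices of $v_2$ and $v_{11}$ can be made so that the residual lists on the remaining vertices $v_4, v_5, v_6, v_9$ of the central $4$-cycle, each bounded below by $3$ except possibly one of size $2$, satisfy the hypotheses of Lemma \ref{cycle-six} or Lemma \ref{cycle-six-original}, viewing the residual configuration as a short cycle of restricted lists; this closes the colouring.

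Now suppose $H_3^2$ is not an induced subgraph of $G^2$. Then some pair of vertices of $V(H_3)$ is joined by an extra edge in $G^2$, arising either from a direct $G$-edge between two non-$H_3$-adjacent vertices of $V(H_3)$ or from a common neighbour outside $V(H_3)$. A systematic enumeration in the style of the ``Simplifying cases'' of Lemmas \ref{reducible-F4} and \ref{reducible-H2} shows that each such pair is forbidden by one of the following already-established results: the no-$5$-cycle hypothesis, Lemma \ref{C3-C6}(a)--(c) (ruling out $F_1$, $F_3$, and close $3$-cycles), Lemmas \ref{reducible-F2} and \ref{reducible-F4} (ruling out $F_2$, $F_4$), Lemma \ref{reducible-H0} (ruling out $H_1$), Lemma \ref{reducible-H2} (ruling out $H_2$), and Lemma \ref{C4-share-two-edge} (ruling out $J_3$, $J_4$). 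Only a short list of pair-configurations survives; in each surviving subcase an ``outside'' $G^2$-neighbour of some $v_i$ becomes identified with another $v_j$, so one of the list-size bounds above increases by at least one, absorbing the extra $H_3^2$-constraint, and the colouring strategy of Case~1 still goes through after a small reordering.

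The principal technical burden is the exhaustive bookkeeping in Case~2: with $11$ vertices in $H_3$ there are many candidate pairs to check against the eight reducibility lemmas, and each must be matched to the correct forbidden configuration. The reflective symmetry of $H_3$ about the central vertex $v_6$ (swapping the two $6$-cycles) halves the work, and the bulk of the pairs fall to the no-$5$-cycle hypothesis or to Lemma \ref{C3-C6}; nonetheless, producing a self-contained ``Simplifying cases'' list and verifying the residual colouring in every surviving subcase via Lemma \ref{cycle-six} is the most laborious part of the argument.
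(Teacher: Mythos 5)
There is a genuine gap in your Case~1, and it propagates to the rest of the argument. After colouring $v_2, v_{11}$ and then greedily $v_1, v_3, v_7, v_{10}, v_8$, the four uncoloured vertices are $v_4, v_5, v_6, v_9$; but every pair among these is at distance at most $2$ in $H_3$ (through $v_6$ or along the central $6$-cycle), so they induce a clique $K_4$ in $H_3^2$, not a ``central $4$-cycle,'' and neither Lemma \ref{cycle-six} nor Lemma \ref{cycle-six-original} (which are statements about squares of $6$-cycles) applies to them. Worse, their residual list sizes after your greedy phase are only $\ge 1, 2, 2, 2$ respectively ($v_4$ has four already-coloured $H_3^2$-neighbours $v_2,v_3,v_7,v_{11}$ against a list of size $5$), which is not enough to colour a $K_4$: e.g.\ $L(v_4)=\{a\}$, $L(v_5)=L(v_6)=L(v_9)=\{b,c\}$ is a non-colourable instance. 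The configuration is genuinely tight --- $\sum_i(|L_{H_3}(v_i)|-1)=31$ against $|E(H_3^2)|=30$ --- which is precisely why the paper abandons greedy/Hall-type arguments here and instead applies the Combinatorial Nullstellensatz to the full $30$-edge graph polynomial of $H_3^2$, exhibiting (by computer) a nonzero coefficient of $x_1^2x_2^1x_3^2x_4^4x_5^4x_6^5x_7^2x_8^3x_9^4x_{10}^2x_{11}^1$ with every exponent below the corresponding list size.

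The same gap infects your Cases~2 and~3. The surviving configurations are not empty: the paper shows that the extra adjacency $v_2v_{10}$ (or $v_1v_{11}$) can occur, and five common-neighbour subcases survive the simplification. The paper handles the adjacency case by a carefully ordered partial colouring that reduces to Lemma \ref{cycle-six} applied to the \emph{genuine} $6$-cycle $v_4v_6v_7v_9v_{10}v_{11}$ of $G$, and handles each of the five common-neighbour subcases by a separate Combinatorial Nullstellensatz computation on $(x_i-x_j)P_{H_3^2}(\bm{x})$. Your assertion that ``the colouring strategy of Case~1 still goes through'' cannot be salvaged as stated, since that strategy already fails in the induced case. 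To repair the proof you would either need to find an explicit greedy ordering whose back-degrees all stay below the list sizes (not obviously possible given the one unit of slack) or adopt the paper's algebraic route.
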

\begin{proof}
Suppose that $G$ has $H_3$ as a subgraph, and denote $V(H_3) = \{v_1, \ldots, v_{11}\}$ (Figure \ref{H3fig}).
Let $L$ be a list assignment with lists of size 7 for each vertex in $G$.
We will show that $G^2$ has a proper coloring from the list $L$, which is a contradiction for the fact that $G$ is a counterexample to the theorem.

Let $G' = G - V(H_3)$.
Then $G'$ is also a subcubic planar graph and $|V(G')| < |V(G)|$.
Since $G$ is a minimal counterexample to Theorem \ref{main-thm},
the square of $G'$ has a proper coloring $\phi$ such that $\phi(v) \in L(v)$ for each vertex $v \in V(H)$.
For each $v_i \in V(H_3)$, we define \[
L_{H_3}(v_i) = L(v_i) \setminus \{\phi(x) : xv_i \in E(G^2) \mbox{ and } x \notin V(H_3)\}.
\]
We have the following three cases.

\medskip
\noindent {\bf Case 1:} $H_3^2$ is an induced subgraph of $G^2$.

In this case, we have the following (see Figure \ref{H3fig}).
$$
|L_{H_3}(v_i)| \geq
\begin{cases}
2 & i=2,11, \\
3 & i=1,3,7,10, \\
4 & i=8, \\
5 & i=4,5,9, \\
7 & i=6.
\end{cases}
$$
Now, we  show that
$H_3^2$ admits an $L$-coloring from the list $L_{H_3}$.
Observe that $H_3^2$ has 30 edges.  And the graph polynomial for $H_3^2$ is as follows.
\begin{eqnarray*} \label{poly-H3}
P_{H_3^2}(\bm{x})
&=&
(x_1-x_2)(x_1-x_3)(x_1-x_5)(x_1-x_6)(x_1-x_8)(x_2-x_3)(x_2-x_4)(x_2-x_5)
\\
&&
(x_3-x_4)(x_3-x_6)(x_3-x_7)(x_4-x_5)(x_4-x_6)(x_4-x_7)(x_4-x_9)(x_4-x_{11})
\\
&&
(x_5-x_6)(x_5-x_8)(x_5-x_9)(x_6-x_7)(x_6-x_8)(x_6-x_9)(x_6-x_{10})(x_7-x_{10})
\\
&&
(x_7-x_{11})(x_8-x_9)(x_8-x_{10})(x_9-x_{10})(x_9-x_{11})(x_{10}-x_{11}).
\end{eqnarray*}
By the calculation using Mathematica,
the coefficient of
$x_1^2x_2^1x_3^2x_4^4x_5^4x_6^5x_7^2x_8^3x_9^4x_{10}^2x_{11}^1$ is 2, which
is nonzero.
Thus,
by Theorem \ref{cnull},
$H_3^2$ admits an $L$-coloring from its list.
This gives an $L$-coloring for $G^2$.  This is a contradiction for the fact that $G$ is a counterexample.
\\

\noindent {\bf Case 2:} $H_3^2$ is not an induced subgraph of $G^2$ and
 $E(G[V(H_3)]) - E(H_3) \neq \emptyset$.

\medskip
\noindent {\bf Simplifying cases:}
\begin{itemize}
\item  The vertices in each of the following pairs are nonadjacent,
since it makes a 5-cycle:
$\{v_1, v_3\}$, $\{v_1, v_{7}\}$, $\{v_1, v_8\}$, $\{v_1, v_{10}\}$,
$\{v_2, v_{11}\}$, $\{v_3, v_{8}\}$, $\{v_3, v_{10}\}$, $\{v_7, v_{8}\}$,
$\{v_7, v_{10}\}$,
and $\{v_8, v_{10}\}$.

\item The vertices in each of the following pairs are nonadjacent since it makes $H_1$,
which does not exist by Lemma \ref{reducible-H0}:
$\{v_2, v_8\}$, and $\{v_8, v_{11}\}$.

\item $v_3$ and $v_{7}$ cannot be adjacent in $G$ since
 it makes $F_3$, which does not exist by Lemma \ref{C3-C6} (b).

\item $v_3$ and $v_{11}$ cannot be adjacent in $G$ since
 it makes $H_2$,
consisting of the 4-cycles $v_3v_4v_7v_{11}v_3$ and $v_5v_6v_9v_8v_5$, and the 6-cycle $v_1v_2v_3v_4v_6v_5v_1$ (see the case $v_3v_{11} \in E(G)$ in Figure \ref{H3-adjacent}),
a contradiction to Lemma \ref{reducible-H2}.
(In fact, it corresponds to Subcase 3.2 in the proof of Lemma \ref{reducible-H2}, where $v_{10}$ corresponds to the common neighbor $w$.)  So, if $v_3$ and $v_{11}$ are adjacent, then it is reducible by Lemma \ref{reducible-H2}.

Symmetrically,
$v_2$ and $v_7$ cannot be adjacent in $G$.
\end{itemize}

Thus,
either
$v_1$ and $v_{11}$ are adjacent
or
$v_2$ and $v_{10}$ are adjacent.
By symmetry, we only need to consider the latter case.
We have the following
(see the case $v_2v_{10} \in E(G)$ in Figure \ref{H3-adjacent}).
$$
|L_{H_3}(v_i)| \geq
\begin{cases}
3 & i=7, 11, \\
4 & i=1, 3, 8,  \\
5 & i=2, 4,5, \\
6 & i=9, 10, \\
7 & i=6.
\end{cases}
$$
We first color $v_3$ by a color $c_3 \in L_{H_3}(v_3)$
so that $|L_{H_3}(v_{7}) \setminus \{c_3\}| \geq 3$,
and color $v_2$ by a color $c_2 \in L_{H_3}(v_2)\setminus \{c_3\}$
so that $|L_{H_3}(v_{11}) \setminus \{c_2\}| \geq 3$.
Next,
color $v_5$ by a color $c_5 \in L_{H_3}(v_5) \setminus \{c_2\}$
so that $|L_{H_3}(v_4) \setminus \{c_2, c_3, c_5\}| \geq 3$,
and then greedily color $v_1$ and $v_8$ in order.
Let $L_{H_3}'(v_i)$ be the list of available colors
for $i \in \{4,6,7,9,10,11\}$
after coloring $v_1, v_2, v_3, v_5, v_8$.
We have the following.
$$
|L_{H_3}'(v_i)| \geq
\begin{cases}
3 & i=4, 6, 7, 9, 11,  \\
2& i= 10.
\end{cases}
$$
Note that $ v_4, v_6, v_7, v_9, v_{10}, v_{11}$ form  a 6-cycle.  So, $v_4, v_6, v_7, v_9, v_{10}$, and $v_{11}$ can be colored properly in $H_3^2$ from the list by Lemma \ref{cycle-six}.

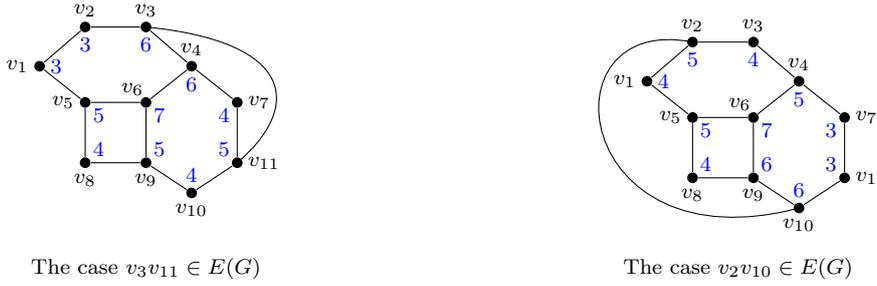
\begin{figure}[htbp]
  \begin{center}

\begin{tikzpicture}[
    v2/.style={fill=black,minimum size=4pt,ellipse,inner sep=1pt},
    scale=0.4
]
    \node[v2] (H3_1) at (0, 0) {};
    \node[v2] (H3_2) at (0, 2) {};
    \node[v2] (H3_3) at (1.5, 3.2) {};
    \node[v2] (H3_4) at (3, 2) {};
    \node[v2] (H3_5) at (3, 0) {};
    \node[v2] (H3_6) at (1.5, -1) {};
    \node[v2] (H3_7) at (-2, 0) {};
    \node[v2] (H3_8) at (-2, 2) {};
    \node[v2] (H3_9)  at (0, 4.5) {};
    \node[v2] (H3_10) at (-2,4.5) {};
    \node[v2] (H3_11) at (-3.5, 3.2){};

\draw(H3_1)--(H3_2)--(H3_3)--(H3_4)--(H3_5)--(H3_6)--(H3_1);
\draw (H3_2)--(H3_8)--(H3_7)--(H3_1);
        \draw (H3_3) -- (H3_9) -- (H3_10) -- (H3_11) -- (H3_8);

        \draw(0, 4.5) .. controls (5,4) and (5.1,1.8) .. (3, 0);

    \node[font=\scriptsize,below] at (H3_1) {$v_9$};
\node[font=\scriptsize,xshift=5pt,yshift= 5pt] at (H3_1) { \color{blue}5};
     \node[font=\scriptsize,xshift=-5pt,yshift=5pt] at (H3_2) {$v_6$};
\node[font=\scriptsize,xshift=5pt,yshift=-5pt] at (H3_2) { \color{blue}7};
     \node[font=\scriptsize,above] at (H3_3) {$v_4$};
\node[font=\scriptsize,below] at (H3_3) { \color{blue}6};
     \node[font=\scriptsize,right] at (H3_4) {$v_7$};
\node[font=\scriptsize,xshift=-5pt,yshift=-5pt] at (H3_4) { \color{blue}4};
     \node[font=\scriptsize,right] at (H3_5) {$v_{11}$};
\node[font=\scriptsize,xshift=-5pt,yshift= 5pt] at (H3_5) { \color{blue}5};
     \node[font=\scriptsize,below] at (H3_6) {$v_{10}$};
\node[font=\scriptsize,above] at (H3_6) { \color{blue}4};
     \node[font=\scriptsize,below] at (H3_7) {$v_8$};
\node[font=\scriptsize,xshift=5pt,yshift=5pt] at (H3_7) { \color{blue}4};
     \node[font=\scriptsize,left] at (H3_8) {$v_5$};
\node[font=\scriptsize,xshift=5pt,yshift=-5pt] at (H3_8) { \color{blue}5};
     \node[font=\scriptsize,above] at (H3_9) {$v_3$};
\node[font=\scriptsize,below] at (H3_9) { \color{blue}6};
     \node[font=\scriptsize,above] at (H3_10) {$v_2$};
\node[font=\scriptsize,below] at (H3_10) { \color{blue}3};
     \node[font=\scriptsize,left] at (H3_11) {$v_1$};
\node[font=\scriptsize,right] at (H3_11) { \color{blue}3};
\node[font=\scriptsize] at (-0,-3.5) {The case $v_3v_{11} \in E(G)$};
\end{tikzpicture}\hspace{3cm}
\begin{tikzpicture}[
    v2/.style={fill=black,minimum size=4pt,ellipse,inner sep=1pt},
    scale=0.4
]
    \node[v2] (H3_1) at (0, 0) {};
    \node[v2] (H3_2) at (0, 2) {};
    \node[v2] (H3_3) at (1.5, 3.2) {};
    \node[v2] (H3_4) at (3, 2) {};
    \node[v2] (H3_5) at (3, 0) {};
    \node[v2] (H3_6) at (1.5, -1) {};
    \node[v2] (H3_7) at (-2, 0) {};
    \node[v2] (H3_8) at (-2, 2) {};
    \node[v2] (H3_9)  at (0, 4.5) {};
    \node[v2] (H3_10) at (-2,4.5) {};
    \node[v2] (H3_11) at (-3.5, 3.2){};

\draw(H3_1)--(H3_2)--(H3_3)--(H3_4)--(H3_5)--(H3_6)--(H3_1);
\draw (H3_2)--(H3_8)--(H3_7)--(H3_1);
        \draw (H3_3) -- (H3_9) -- (H3_10) -- (H3_11) -- (H3_8);
\draw (1.5, -1) ..controls (-6,-3) and (-7,5.5).. (-2,4.5);

    \node[font=\scriptsize,below] at (H3_1) {$v_9$};
\node[font=\scriptsize,xshift=5pt,yshift= 5pt] at (H3_1) { \color{blue}6};
     \node[font=\scriptsize,xshift=-5pt,yshift=5pt] at (H3_2) {$v_6$};
\node[font=\scriptsize,xshift=5pt,yshift=-5pt] at (H3_2) { \color{blue}7};
     \node[font=\scriptsize,above] at (H3_3) {$v_4$};
\node[font=\scriptsize,below] at (H3_3) { \color{blue}5};
     \node[font=\scriptsize,right] at (H3_4) {$v_7$};
\node[font=\scriptsize,xshift=-5pt,yshift=-5pt] at (H3_4) { \color{blue}3};
     \node[font=\scriptsize,right] at (H3_5) {$v_{11}$};
\node[font=\scriptsize,xshift=-5pt,yshift= 5pt] at (H3_5) { \color{blue}3};
     \node[font=\scriptsize,below] at (H3_6) {$v_{10}$};
\node[font=\scriptsize,above] at (H3_6) { \color{blue}6};
     \node[font=\scriptsize,below] at (H3_7) {$v_8$};
\node[font=\scriptsize,xshift=5pt,yshift=5pt] at (H3_7) { \color{blue}4};
     \node[font=\scriptsize,left] at (H3_8) {$v_5$};
\node[font=\scriptsize,xshift=5pt,yshift=-5pt] at (H3_8) { \color{blue}5};
     \node[font=\scriptsize,above] at (H3_9) {$v_3$};
\node[font=\scriptsize,below] at (H3_9) { \color{blue}4};
     \node[font=\scriptsize,above] at (H3_10) {$v_2$};
\node[font=\scriptsize,below] at (H3_10) { \color{blue}5};
     \node[font=\scriptsize,left] at (H3_11) {$v_1$};
\node[font=\scriptsize,right] at (H3_11) { \color{blue}4};
\node[font=\scriptsize] at (-0.5,-3) {The case $v_2v_{10} \in E(G)$};
\end{tikzpicture}
\end{center}
\caption{Two cases in Case 2 of the proof of Lemma \ref{reducible-H3}.
The numbers at vertices are the number of available colors.} \label{H3-adjacent}
\end{figure}

\medskip

\noindent {\bf Case 3:} $H_3^2$ is not an induced subgraph of $G^2$ and
 $E(G[V(H_3)]) - E(H_3) = \emptyset$.
 
\medskip

\noindent {\bf Simplifying cases:}
\begin{itemize}
\item
The vertices in each of the following pairs do not have a common neighbor outside $H_3$,
since it makes a $5$-cycle:
$\{v_2,v_7\}$, $\{v_2,v_8\}$, $\{v_3,v_{11}\}$,
and $\{v_8,v_{11}\}$.

\item
The vertices in each of the following pairs do not have a common neighbor outside $H_3$,
since it makes $F_3$ in Figure \ref{key configuration-C3},
which does not exist by Lemma \ref{reducible-H0}:
$\{v_1, v_2\}$, $\{v_2, v_3\}$, $\{v_7, v_{11}\}$, and $\{v_{10}, v_{11}\}$.

\item
The vertices in each of the following pairs do not have a common neighbor outside $H_3$,
since it makes $H_1$ in Figure \ref{key configuration},
which does not exist by Lemma \ref{reducible-H0}:
$\{v_1,v_8\}$, and $\{v_8,v_{10}\}$.

\item
The vertices in each of the following pairs do not have a common neighbor outside $H_3$,
since it makes $H_2$ in Figure \ref{key configuration},
which does not exist by Lemma \ref{reducible-H2}:
$\{v_3,v_7\}$.

\end{itemize}

Note that $v_1$ and $v_3$ are adjacent in $H_3^2$ through $v_2$,
and hence we do not need to cosider the case $v_1$ and $v_3$ have a common neighbor outside $H_3$.
Similarly, we do not need to cosider the case $v_7$ and $v_{11}$ have a common neighbor outside $H_3$.
Thus, by symmetry,
we only need to consider the following  five subcases in Case 3.

\begin{itemize}
\item[]
\textbf{Subcase 3.1:} $v_1$ and $v_{10}$ have a common neighbor outside $H_3$ in $G$.

\item[]
\textbf{Subcase 3.2:} $v_2$ and $v_{10}$ ($v_1$ and $v_{11}$ by symmetry)
have a common neighbor outside $H_3$ in $G$.

\item[]
\textbf{Subcase 3.3:} $v_3$ and $v_{10}$ ($v_1$ and $v_{7}$ by symmetry)
have a common neighbor outside $H_3$ in $G$.

\item[]
\textbf{Subcase 3.4:} $v_7$ and $v_{8}$ ($v_3$ and $v_{8}$ by symmetry)
have a common neighbor outside $H_3$ in $G$.

\item[]
\textbf{Subcase 3.5:} $v_2$ and $v_{11}$ have a common neighbor outside $H_3$ in $G$.
\end{itemize}

In either case, we follow the same procedure as Case 1,
by using the Combinatorial Nullstellensatz.
Since the argument is repeated in each subcase, we will just write the graph polynomial
and indicated a monomial whose coefficient is not zero.
Recall that $P_{H_3^2}(\bm{x})$ is the graph polynomial for $H_3^2$ when $H_3$ is an induced subgraph in Case 1.
\medskip

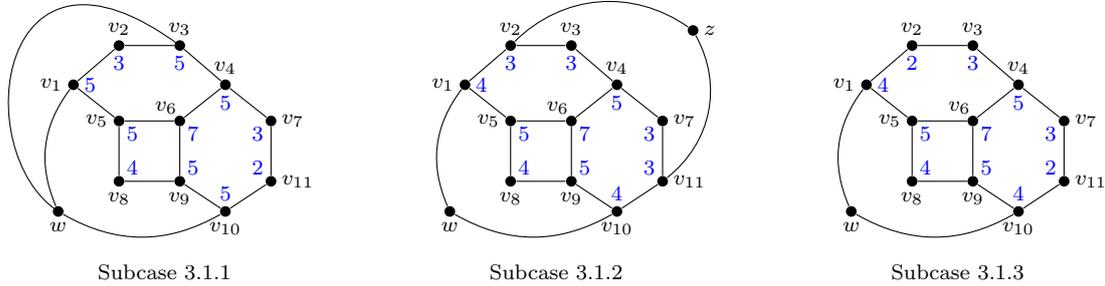
\begin{figure}[htbp]
  \begin{center}

\begin{tikzpicture}[
    v2/.style={fill=black,minimum size=4pt,ellipse,inner sep=1pt},
    scale=0.4
]
    \node[v2] (H3_1) at (0, 0) {};
    \node[v2] (H3_2) at (0, 2) {};
    \node[v2] (H3_3) at (1.5, 3.2) {};
    \node[v2] (H3_4) at (3, 2) {};
    \node[v2] (H3_5) at (3, 0) {};
    \node[v2] (H3_6) at (1.5, -1) {};
    \node[v2] (H3_7) at (-2, 0) {};
    \node[v2] (H3_8) at (-2, 2) {};
    \node[v2] (H3_9)  at (0, 4.5) {};
    \node[v2] (H3_10) at (-2,4.5) {};
    \node[v2] (H3_11) at (-3.5, 3.2){};
 \node[v2] (H3_12) at (-4, -1){};

\draw(H3_1)--(H3_2)--(H3_3)--(H3_4)--(H3_5)--(H3_6)--(H3_1);
\draw (H3_2)--(H3_8)--(H3_7)--(H3_1);
        \draw (H3_3) -- (H3_9) -- (H3_10) -- (H3_11) -- (H3_8);
        \draw (H3_6)to[bend left=30](H3_12);
\draw(H3_12)to[bend left=30](H3_11);
\draw(0, 4.5).. controls (-6,9) and (-7,1).. (-4, -1);

    \node[font=\scriptsize,below] at (H3_1) {$v_9$};
\node[font=\scriptsize,xshift=5pt,yshift= 5pt] at (H3_1) { \color{blue}5};
     \node[font=\scriptsize,xshift=-5pt,yshift=5pt] at (H3_2) {$v_6$};
\node[font=\scriptsize,xshift=5pt,yshift=-5pt] at (H3_2) { \color{blue}7};
     \node[font=\scriptsize,above] at (H3_3) {$v_4$};
\node[font=\scriptsize,below] at (H3_3) { \color{blue}5};
     \node[font=\scriptsize,right] at (H3_4) {$v_7$};
\node[font=\scriptsize,xshift=-5pt,yshift=-5pt] at (H3_4) { \color{blue}3};
     \node[font=\scriptsize,right] at (H3_5) {$v_{11}$};
\node[font=\scriptsize,xshift=-5pt,yshift= 5pt] at (H3_5) { \color{blue}2};
     \node[font=\scriptsize,below] at (H3_6) {$v_{10}$};
\node[font=\scriptsize,above] at (H3_6) { \color{blue}5};
     \node[font=\scriptsize,below] at (H3_7) {$v_8$};
\node[font=\scriptsize,xshift=5pt,yshift=5pt] at (H3_7) { \color{blue}4};
     \node[font=\scriptsize,left] at (H3_8) {$v_5$};
\node[font=\scriptsize,xshift=5pt,yshift=-5pt] at (H3_8) { \color{blue}5};
     \node[font=\scriptsize,above] at (H3_9) {$v_3$};
\node[font=\scriptsize,below] at (H3_9) { \color{blue}5};
     \node[font=\scriptsize,above] at (H3_10) {$v_2$};
\node[font=\scriptsize,below] at (H3_10) { \color{blue}3};
     \node[font=\scriptsize,left] at (H3_11) {$v_1$};
\node[font=\scriptsize,right] at (H3_11) { \color{blue}5};
\node[font=\scriptsize,below] at (H3_12) {$w$};
\node[font=\scriptsize] at (-0.5,-3) {Subcase 3.1.1};
\end{tikzpicture} \hspace{1cm}
\begin{tikzpicture}[
    v2/.style={fill=black,minimum size=4pt,ellipse,inner sep=1pt},
    scale=0.4
]
    \node[v2] (H3_1) at (0, 0) {};
    \node[v2] (H3_2) at (0, 2) {};
    \node[v2] (H3_3) at (1.5, 3.2) {};
    \node[v2] (H3_4) at (3, 2) {};
    \node[v2] (H3_5) at (3, 0) {};
    \node[v2] (H3_6) at (1.5, -1) {};
    \node[v2] (H3_7) at (-2, 0) {};
    \node[v2] (H3_8) at (-2, 2) {};
    \node[v2] (H3_9)  at (0, 4.5) {};
    \node[v2] (H3_10) at (-2,4.5) {};
    \node[v2] (H3_11) at (-3.5, 3.2){};
 \node[v2] (H3_12) at (-4, -1){};
  \node[v2] (H3_13) at (4,5){};

\draw(H3_1)--(H3_2)--(H3_3)--(H3_4)--(H3_5)--(H3_6)--(H3_1);
\draw (H3_2)--(H3_8)--(H3_7)--(H3_1);
        \draw (H3_3) -- (H3_9) -- (H3_10) -- (H3_11) -- (H3_8);
        \draw (H3_6)to[bend left=30](H3_12);
\draw(H3_12)to[bend left=30](H3_11);
\draw(H3_13)to[bend left=40](H3_5);
\draw(H3_10)to[bend left=40](H3_13);

    \node[font=\scriptsize,below] at (H3_1) {$v_9$};
\node[font=\scriptsize,xshift=5pt,yshift= 5pt] at (H3_1) { \color{blue}5};
     \node[font=\scriptsize,xshift=-5pt,yshift=5pt] at (H3_2) {$v_6$};
\node[font=\scriptsize,xshift=5pt,yshift=-5pt] at (H3_2) { \color{blue}7};
     \node[font=\scriptsize,above] at (H3_3) {$v_4$};
\node[font=\scriptsize,below] at (H3_3) { \color{blue}5};
     \node[font=\scriptsize,right] at (H3_4) {$v_7$};
\node[font=\scriptsize,xshift=-5pt,yshift=-5pt] at (H3_4) { \color{blue}3};
     \node[font=\scriptsize,right] at (H3_5) {$v_{11}$};
\node[font=\scriptsize,xshift=-5pt,yshift= 5pt] at (H3_5) { \color{blue}3};
     \node[font=\scriptsize,below] at (H3_6) {$v_{10}$};
\node[font=\scriptsize,above] at (H3_6) { \color{blue}4};
     \node[font=\scriptsize,below] at (H3_7) {$v_8$};
\node[font=\scriptsize,xshift=5pt,yshift=5pt] at (H3_7) { \color{blue}4};
     \node[font=\scriptsize,left] at (H3_8) {$v_5$};
\node[font=\scriptsize,xshift=5pt,yshift=-5pt] at (H3_8) { \color{blue}5};
     \node[font=\scriptsize,above] at (H3_9) {$v_3$};
\node[font=\scriptsize,below] at (H3_9) { \color{blue}3};
     \node[font=\scriptsize,above] at (H3_10) {$v_2$};
\node[font=\scriptsize,below] at (H3_10) { \color{blue}3};
     \node[font=\scriptsize,left] at (H3_11) {$v_1$};
\node[font=\scriptsize,right] at (H3_11) { \color{blue}4};
\node[font=\scriptsize,below] at (H3_12) {$w$};
\node[font=\scriptsize,right] at (H3_13) {$z$};
\node[font=\scriptsize] at (-0.5,-3) {Subcase 3.1.2};
\end{tikzpicture} \hspace{1cm}
\begin{tikzpicture}[
    v2/.style={fill=black,minimum size=4pt,ellipse,inner sep=1pt},
    scale=0.4
]
    \node[v2] (H3_1) at (0, 0) {};
    \node[v2] (H3_2) at (0, 2) {};
    \node[v2] (H3_3) at (1.5, 3.2) {};
    \node[v2] (H3_4) at (3, 2) {};
    \node[v2] (H3_5) at (3, 0) {};
    \node[v2] (H3_6) at (1.5, -1) {};
    \node[v2] (H3_7) at (-2, 0) {};
    \node[v2] (H3_8) at (-2, 2) {};
    \node[v2] (H3_9)  at (0, 4.5) {};
    \node[v2] (H3_10) at (-2,4.5) {};
    \node[v2] (H3_11) at (-3.5, 3.2){};
 \node[v2] (H3_12) at (-4, -1){};

\draw(H3_1)--(H3_2)--(H3_3)--(H3_4)--(H3_5)--(H3_6)--(H3_1);
\draw (H3_2)--(H3_8)--(H3_7)--(H3_1);
        \draw (H3_3) -- (H3_9) -- (H3_10) -- (H3_11) -- (H3_8);
        \draw (H3_6)to[bend left=30](H3_12);
\draw(H3_12)to[bend left=30](H3_11);

    \node[font=\scriptsize,below] at (H3_1) {$v_9$};
\node[font=\scriptsize,xshift=5pt,yshift= 5pt] at (H3_1) { \color{blue}5};
     \node[font=\scriptsize,xshift=-5pt,yshift=5pt] at (H3_2) {$v_6$};
\node[font=\scriptsize,xshift=5pt,yshift=-5pt] at (H3_2) { \color{blue}7};
     \node[font=\scriptsize,above] at (H3_3) {$v_4$};
\node[font=\scriptsize,below] at (H3_3) { \color{blue}5};
     \node[font=\scriptsize,right] at (H3_4) {$v_7$};
\node[font=\scriptsize,xshift=-5pt,yshift=-5pt] at (H3_4) { \color{blue}3};
     \node[font=\scriptsize,right] at (H3_5) {$v_{11}$};
\node[font=\scriptsize,xshift=-5pt,yshift= 5pt] at (H3_5) { \color{blue}2};
     \node[font=\scriptsize,below] at (H3_6) {$v_{10}$};
\node[font=\scriptsize,above] at (H3_6) { \color{blue}4};
     \node[font=\scriptsize,below] at (H3_7) {$v_8$};
\node[font=\scriptsize,xshift=5pt,yshift=5pt] at (H3_7) { \color{blue}4};
     \node[font=\scriptsize,left] at (H3_8) {$v_5$};
\node[font=\scriptsize,xshift=5pt,yshift=-5pt] at (H3_8) { \color{blue}5};
     \node[font=\scriptsize,above] at (H3_9) {$v_3$};
\node[font=\scriptsize,below] at (H3_9) { \color{blue}3};
     \node[font=\scriptsize,above] at (H3_10) {$v_2$};
\node[font=\scriptsize,below] at (H3_10) { \color{blue}2};
     \node[font=\scriptsize,left] at (H3_11) {$v_1$};
\node[font=\scriptsize,right] at (H3_11) { \color{blue}4};
\node[font=\scriptsize,below] at (H3_12) {$w$};
\node[font=\scriptsize] at (-0.5,-3) {Subcase 3.1.3};
\end{tikzpicture}
\end{center}
\caption{Subcases 3.1.1, 3.1.2, and 3.1.3 of the proof of Lemma \ref{reducible-H3}. The numbers at vertices are the number of available colors.}
\label{H3-adjacent-one}
\end{figure}

\medskip
\noindent {\bf Subcase 3.1:} $v_1$ and $v_{10}$ have a common neighbor $w$,
where $w \notin V(H_3)$.

In this subcase,
we further divide the proof into some cases.
\medskip

\noindent {\bf Subcase 3.1.1:} $w$ is adjacent to $v_3$ or $v_7$.

By symmetry, we may assume $w$ is adjacent to $v_3$.
In this subcase, the number of available colors at vertices of $V(H_3)$ is Subcase 3.1.1 in Figure \ref{H3-adjacent-one}.  The graph polynomial for this subcase is
\[
f(\bm{x}) = (x_1 - x_{10})(x_3 - x_{10}) P_{H_3^2}(\bm{x}).
\]
By the calculation using Mathematica,
we see that
the coefficient of
$x_1^3x_2^1x_3^3x_4^4x_5^4x_6^5x_7^2x_8^3x_9^4x_{10}^2x_{11}^1$ is 3, which
is nonzero.

\medskip
\noindent {\bf Subcase 3.1.2:}
$v_2$ and $v_{11}$ have a common neighbor $z$,
where $z \notin V(H_3)$.

The number of available colors at vertices of $V(H_3)$ is Subcase 3.1.2 in Figure \ref{H3-adjacent-one}.  The graph polynomial for this subcase is
\[
f(\bm{x'}) = (x_1 - x_{10})(x_2 - x_{11}) P_{H_3^2}(\bm{x}).
\]
By the calculation using Mathematica,
we see that
the coefficient of
$x_1^3x_2^2x_3^2x_4^4x_5^4x_6^5x_7^2x_8^3x_9^4x_{10}^2x_{11}^1$ is $2$, which
is nonzero.

\medskip
\noindent {\bf Subcase 3.1.3:}
$w$ is adjacent to neither $v_3$ nor $v_7$,
and $v_2$ and $v_{11}$ do not have a common neighbor outside $H_3$.

The number of available colors at vertices of $V(H_3)$ is Subcase 3.1.3 in Figure \ref{H3-adjacent-one}.  The graph polynomial for this subcase is
\[
f(\bm{x'}) = (x_1 - x_{10}) P_{H_3^2}(\bm{x}).
\]
By the calculation using Mathematica,
we see that
the coefficient of
$x_1^3x_2^1x_3^2x_4^4x_5^4x_6^5x_7^2x_8^3x_9^4x_{10}^2x_{11}^1$ is 2, which
is nonzero.

\begin{figure}[htbp]
  \begin{center}

\begin{tikzpicture}[
    v2/.style={fill=black,minimum size=4pt,ellipse,inner sep=1pt},
    scale=0.34
]
    \node[v2] (H3_1) at (0, 0) {};
    \node[v2] (H3_2) at (0, 2) {};
    \node[v2] (H3_3) at (1.5, 3.2) {};
    \node[v2] (H3_4) at (3, 2) {};
    \node[v2] (H3_5) at (3, 0) {};
    \node[v2] (H3_6) at (1.5, -1) {};
    \node[v2] (H3_7) at (-2, 0) {};
    \node[v2] (H3_8) at (-2, 2) {};
    \node[v2] (H3_9)  at (0, 4.5) {};
    \node[v2] (H3_10) at (-2,4.5) {};
    \node[v2] (H3_11) at (-3.5, 3.2){};
 \node[v2] (H3_12) at (-4, -1){};

\draw(H3_1)--(H3_2)--(H3_3)--(H3_4)--(H3_5)--(H3_6)--(H3_1);
\draw (H3_2)--(H3_8)--(H3_7)--(H3_1);
        \draw (H3_3) -- (H3_9) -- (H3_10) -- (H3_11) -- (H3_8);
        \draw (H3_6)to[bend left=30](H3_12);

\draw(-2,4.5).. controls (-6,6) and (-7,1).. (-4, -1);

    \node[font=\scriptsize,below] at (H3_1) {$v_9$};
\node[font=\scriptsize,xshift=5pt,yshift= 5pt] at (H3_1) { \color{blue}5};
     \node[font=\scriptsize,xshift=-5pt,yshift=5pt] at (H3_2) {$v_6$};
\node[font=\scriptsize,xshift=5pt,yshift=-5pt] at (H3_2) { \color{blue}7};
     \node[font=\scriptsize,above] at (H3_3) {$v_4$};
\node[font=\scriptsize,below] at (H3_3) { \color{blue}5};
     \node[font=\scriptsize,right] at (H3_4) {$v_7$};
\node[font=\scriptsize,xshift=-5pt,yshift=-5pt] at (H3_4) { \color{blue}3};
     \node[font=\scriptsize,right] at (H3_5) {$v_{11}$};
\node[font=\scriptsize,xshift=-5pt,yshift= 5pt] at (H3_5) { \color{blue}2};
     \node[font=\scriptsize,below] at (H3_6) {$v_{10}$};
\node[font=\scriptsize,above] at (H3_6) { \color{blue}4};
     \node[font=\scriptsize,below] at (H3_7) {$v_8$};
\node[font=\scriptsize,xshift=5pt,yshift=5pt] at (H3_7) { \color{blue}4};
     \node[font=\scriptsize,left] at (H3_8) {$v_5$};
\node[font=\scriptsize,xshift=5pt,yshift=-5pt] at (H3_8) { \color{blue}5};
     \node[font=\scriptsize,above] at (H3_9) {$v_3$};
\node[font=\scriptsize,below] at (H3_9) { \color{blue}3};
     \node[font=\scriptsize,above] at (H3_10) {$v_2$};
\node[font=\scriptsize,below] at (H3_10) { \color{blue}3};
     \node[font=\scriptsize,left] at (H3_11) {$v_1$};
\node[font=\scriptsize,right] at (H3_11) { \color{blue}3};
\node[font=\scriptsize,below] at (H3_12) {$w$};
\node[font=\scriptsize] at (-0.5,-3) {Subcase 3.2};
\end{tikzpicture}\begin{tikzpicture}[
    v2/.style={fill=black,minimum size=4pt,ellipse,inner sep=1pt},
    scale=0.34
]
    \node[v2] (H3_1) at (0, 0) {};
    \node[v2] (H3_2) at (0, 2) {};
    \node[v2] (H3_3) at (1.5, 3.2) {};
    \node[v2] (H3_4) at (3, 2) {};
    \node[v2] (H3_5) at (3, 0) {};
    \node[v2] (H3_6) at (1.5, -1) {};
    \node[v2] (H3_7) at (-2, 0) {};
    \node[v2] (H3_8) at (-2, 2) {};
    \node[v2] (H3_9)  at (0, 4.5) {};
    \node[v2] (H3_10) at (-2,4.5) {};
    \node[v2] (H3_11) at (-3.5, 3.2){};

  \node[v2] (H3_13) at (4,4){};

\draw(H3_1)--(H3_2)--(H3_3)--(H3_4)--(H3_5)--(H3_6)--(H3_1);
\draw (H3_2)--(H3_8)--(H3_7)--(H3_1);
        \draw (H3_3) -- (H3_9) -- (H3_10) -- (H3_11) -- (H3_8);

\draw(0, 4.5)to[bend left=30] (4,4);
\draw(4,4) ..controls (5,3) and (7,-2)..(1.5, -1);

    \node[font=\scriptsize,below] at (H3_1) {$v_9$};
\node[font=\scriptsize,xshift=5pt,yshift= 5pt] at (H3_1) { \color{blue}5};
     \node[font=\scriptsize,xshift=-5pt,yshift=5pt] at (H3_2) {$v_6$};
\node[font=\scriptsize,xshift=5pt,yshift=-5pt] at (H3_2) { \color{blue}7};
     \node[font=\scriptsize,above] at (H3_3) {$v_4$};
\node[font=\scriptsize,below] at (H3_3) { \color{blue}5};
     \node[font=\scriptsize,right] at (H3_4) {$v_7$};
\node[font=\scriptsize,xshift=-5pt,yshift=-5pt] at (H3_4) { \color{blue}3};
     \node[font=\scriptsize,right] at (H3_5) {$v_{11}$};
\node[font=\scriptsize,xshift=-5pt,yshift= 5pt] at (H3_5) { \color{blue}2};
     \node[font=\scriptsize,below] at (H3_6) {$v_{10}$};
\node[font=\scriptsize,above] at (H3_6) { \color{blue}4};
     \node[font=\scriptsize,below] at (H3_7) {$v_8$};
\node[font=\scriptsize,xshift=5pt,yshift=5pt] at (H3_7) { \color{blue}4};
     \node[font=\scriptsize,left] at (H3_8) {$v_5$};
\node[font=\scriptsize,xshift=5pt,yshift=-5pt] at (H3_8) { \color{blue}5};
     \node[font=\scriptsize,above] at (H3_9) {$v_3$};
\node[font=\scriptsize,below] at (H3_9) { \color{blue}4};
     \node[font=\scriptsize,above] at (H3_10) {$v_2$};
\node[font=\scriptsize,below] at (H3_10) { \color{blue}2};
     \node[font=\scriptsize,left] at (H3_11) {$v_1$};
\node[font=\scriptsize,right] at (H3_11) { \color{blue}3};

\node[font=\scriptsize,right] at (H3_13) {$w$};
\node[font=\scriptsize] at (0.5,-3) {Subcase 3.3};
\end{tikzpicture}\begin{tikzpicture}[
    v2/.style={fill=black,minimum size=4pt,ellipse,inner sep=1pt},
    scale=0.34
]
    \node[v2] (H3_1) at (0, 0) {};
    \node[v2] (H3_2) at (0, 2) {};
    \node[v2] (H3_3) at (1.5, 3.2) {};
    \node[v2] (H3_4) at (3, 2) {};
    \node[v2] (H3_5) at (3, 0) {};
    \node[v2] (H3_6) at (1.5, -1) {};
    \node[v2] (H3_7) at (-2, 0) {};
    \node[v2] (H3_8) at (-2, 2) {};
    \node[v2] (H3_9)  at (0, 4.5) {};
    \node[v2] (H3_10) at (-2,4.5) {};
    \node[v2] (H3_11) at (-3.5, 3.2){};
 \node[v2] (H3_13) at (5, -1.5){};

\draw(H3_1)--(H3_2)--(H3_3)--(H3_4)--(H3_5)--(H3_6)--(H3_1);
\draw (H3_2)--(H3_8)--(H3_7)--(H3_1);
        \draw (H3_3) -- (H3_9) -- (H3_10) -- (H3_11) -- (H3_8);
        \draw(H3_4)to [bend left=30] (H3_13);
\draw(H3_7)to [bend right=40] (H3_13);
    \node[font=\scriptsize,below] at (H3_1) {$v_9$};
\node[font=\scriptsize,xshift=5pt,yshift= 5pt] at (H3_1) { \color{blue}5};
     \node[font=\scriptsize,xshift=-5pt,yshift=5pt] at (H3_2) {$v_6$};
\node[font=\scriptsize,xshift=5pt,yshift=-5pt] at (H3_2) { \color{blue}7};
     \node[font=\scriptsize,above] at (H3_3) {$v_4$};
\node[font=\scriptsize,below] at (H3_3) { \color{blue}5};
     \node[font=\scriptsize,right] at (H3_4) {$v_7$};
\node[font=\scriptsize,xshift=-5pt,yshift=-5pt] at (H3_4) { \color{blue}4};
     \node[font=\scriptsize,right] at (H3_5) {$v_{11}$};
\node[font=\scriptsize,xshift=-5pt,yshift= 5pt] at (H3_5) { \color{blue}2};
     \node[font=\scriptsize,below] at (H3_6) {$v_{10}$};
\node[font=\scriptsize,above] at (H3_6) { \color{blue}3};
     \node[font=\scriptsize,below] at (H3_7) {$v_8$};
\node[font=\scriptsize,xshift=5pt,yshift=5pt] at (H3_7) { \color{blue}5};
     \node[font=\scriptsize,left] at (H3_8) {$v_5$};
\node[font=\scriptsize,xshift=5pt,yshift=-5pt] at (H3_8) { \color{blue}5};
     \node[font=\scriptsize,above] at (H3_9) {$v_3$};
\node[font=\scriptsize,below] at (H3_9) { \color{blue}3};
     \node[font=\scriptsize,above] at (H3_10) {$v_2$};
\node[font=\scriptsize,below] at (H3_10) { \color{blue}2};
     \node[font=\scriptsize,left] at (H3_11) {$v_1$};
\node[font=\scriptsize,right] at (H3_11) { \color{blue}3};
\node[font=\scriptsize,below] at (H3_13) {$w$};
\node[font=\scriptsize] at (0.5,-3) {Subcase 3.4};
\end{tikzpicture}\begin{tikzpicture}[
    v2/.style={fill=black,minimum size=4pt,ellipse,inner sep=1pt},
    scale=0.34
]
    \node[v2] (H3_1) at (0, 0) {};
    \node[v2] (H3_2) at (0, 2) {};
    \node[v2] (H3_3) at (1.5, 3.2) {};
    \node[v2] (H3_4) at (3, 2) {};
    \node[v2] (H3_5) at (3, 0) {};
    \node[v2] (H3_6) at (1.5, -1) {};
    \node[v2] (H3_7) at (-2, 0) {};
    \node[v2] (H3_8) at (-2, 2) {};
    \node[v2] (H3_9)  at (0, 4.5) {};
    \node[v2] (H3_10) at (-2,4.5) {};
    \node[v2] (H3_11) at (-3.5, 3.2){};

  \node[v2] (H3_13) at (4,5){};

\draw(H3_1)--(H3_2)--(H3_3)--(H3_4)--(H3_5)--(H3_6)--(H3_1);
\draw (H3_2)--(H3_8)--(H3_7)--(H3_1);
        \draw (H3_3) -- (H3_9) -- (H3_10) -- (H3_11) -- (H3_8);

\draw(H3_13)to[bend left=40](H3_5);
\draw(H3_10)to[bend left=40](H3_13);

    \node[font=\scriptsize,below] at (H3_1) {$v_9$};
\node[font=\scriptsize,xshift=5pt,yshift= 5pt] at (H3_1) { \color{blue}5};
     \node[font=\scriptsize,xshift=-5pt,yshift=5pt] at (H3_2) {$v_6$};
\node[font=\scriptsize,xshift=5pt,yshift=-5pt] at (H3_2) { \color{blue}7};
     \node[font=\scriptsize,above] at (H3_3) {$v_4$};
\node[font=\scriptsize,below] at (H3_3) { \color{blue}5};
     \node[font=\scriptsize,right] at (H3_4) {$v_7$};
\node[font=\scriptsize,xshift=-5pt,yshift=-5pt] at (H3_4) { \color{blue}3};
     \node[font=\scriptsize,right] at (H3_5) {$v_{11}$};
\node[font=\scriptsize,xshift=-5pt,yshift= 5pt] at (H3_5) { \color{blue}3};
     \node[font=\scriptsize,below] at (H3_6) {$v_{10}$};
\node[font=\scriptsize,above] at (H3_6) { \color{blue}3};
     \node[font=\scriptsize,below] at (H3_7) {$v_8$};
\node[font=\scriptsize,xshift=5pt,yshift=5pt] at (H3_7) { \color{blue}4};
     \node[font=\scriptsize,left] at (H3_8) {$v_5$};
\node[font=\scriptsize,xshift=5pt,yshift=-5pt] at (H3_8) { \color{blue}5};
     \node[font=\scriptsize,above] at (H3_9) {$v_3$};
\node[font=\scriptsize,below] at (H3_9) { \color{blue}3};
     \node[font=\scriptsize,above] at (H3_10) {$v_2$};
\node[font=\scriptsize,below] at (H3_10) { \color{blue}3};
     \node[font=\scriptsize,left] at (H3_11) {$v_1$};
\node[font=\scriptsize,right] at (H3_11) { \color{blue}3};

\node[font=\scriptsize,right] at (H3_13) {$w$};
\node[font=\scriptsize] at (0.5,-3) {Subcase 3.5};
\end{tikzpicture}
\end{center}
\caption{Subcases 3.2, 3.3, 3.4 and 3.5 of the proof of Lemma \ref{reducible-H3}. The numbers at vertices are the number of available colors.} \label{H3-adjacent-two}
\end{figure}
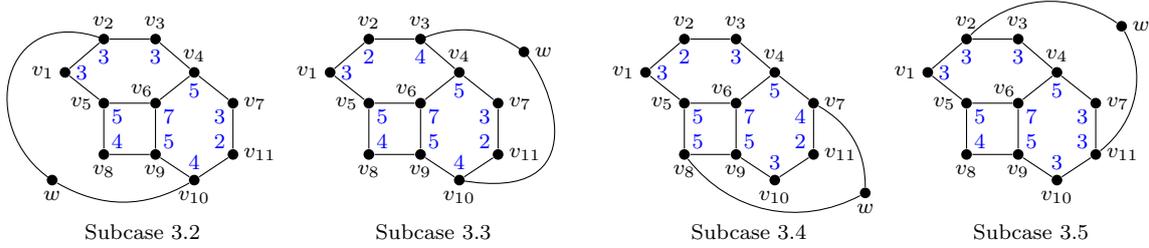

\medskip
\noindent {\bf Subcase 3.2:} $v_2$ and $v_{10}$ have a common neighbor $w$,
where $w \notin V(H_3)$.

The number of available colors at vertices of $V(H_3)$ is Subcase 3.2 in Figure \ref{H3-adjacent-two}.  The graph polynomial for this subcase is
\[
f(\bm{x'}) = (x_2 - x_{10}) P_{H_3^2}(\bm{x}).
\]
By the calculation using Mathematica,
we see that
the coefficient of
$x_1^2x_2^1x_3^2x_4^4x_5^4x_6^5x_7^2x_8^3x_9^4x_{10}^3x_{11}^1$ is $-4$, which
is nonzero.
So, it is reducible.

\medskip
\noindent {\bf Subcase 3.3:} $v_3$ and $v_{10}$ have a common neighbor $w$,
where $w \notin V(H_3)$,
and $w$ is not adjacent to $v_1$.

The number of available colors at vertices of $V(H_3)$ is Subcase 3.3 in Figure \ref{H3-adjacent-two}.  The graph polynomial for this subcase is
\[
f(\bm{x'}) = (x_3 - x_{10}) P_{H_3^2}(\bm{x}).
\]
By the calculation using Mathematica,
we see that
the coefficient of
$x_1^2x_2^1x_3^3x_4^4x_5^4x_6^5x_7^2x_8^3x_9^4x_{10}^2x_{11}^1$ is $2$, which
is nonzero.

\medskip
\noindent {\bf Subcase 3.4:} $v_7$ and $v_{8}$ have a common neighbor $w$,
where $w \notin V(H_3)$.

The number of available colors at vertices of $V(H_3)$ is Subcase 3.4 in Figure \ref{H3-adjacent-two}.  The graph polynomial for this subcase is
\[
f(\bm{x'}) = (x_7 - x_{8}) P_{H_3^2}(\bm{x}).
\]
By the calculation using Mathematica,
we see that
the coefficient of
$x_1^2x_2^1x_3^2x_4^4x_5^4x_6^5x_7^2x_8^4x_9^4x_{10}^2x_{11}^1$ is $-3$, which
is nonzero.


\medskip
\noindent {\bf Subcase 3.5:} $v_2$ and $v_{11}$ have a common neighbor $w$,
where $w \notin V(H_3)$,
and $v_1$ and $v_{10}$ have no common neighbor outside $H_3$.

The number of available colors at vertices of $V(H_3)$ is Subcase 3.5 in Figure \ref{H3-adjacent-two}.  The graph polynomial for this subcase is
\[
f(\bm{x'}) = (x_2 - x_{11}) P_{H_3^2}(\bm{x}).
\]
By the calculation using Mathematica,
we see that
the coefficient of
$x_1^2x_2^2x_3^2x_4^4x_5^4x_6^5x_7^2x_8^3x_9^4x_{10}^2x_{11}^1$ is $2$, which
is nonzero.
\medskip

Thus,
in either case,
it follows from Theorem \ref{cnull} that
the vertices in $H_3$ can be colored from the list $L_{H_3}$
so that we obtain an $L$-coloring in $G^2$.
This is a contradiction for the fact that $G$ is a counterexample.  So, $G$ has no $H_3$.  This completes the proof of Lemma \ref{reducible-H3}.
\end{proof}


\subsection{Subgraph $H_4$ is reducible}
In this subsection, we will prove that
$H_4$ in Figure \ref{key configuration}
does not appear in a minimal counterexample.  Before we prove Lemma \ref{reducible-H4}, we will prove the following lemma which is used in the proof of Lemma \ref{reducible-H4}.

\begin{lemma} \label{H2-type-two-reducible}
The graph $J_5$ in Figure \ref{H2-type-two} does not appear in $G$.
\end{lemma}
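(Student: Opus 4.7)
The plan is to follow the same template as the proof of Lemma~\ref{reducible-H2}. First I would assume for contradiction that $G$ contains $J_5$ as a subgraph, label $V(J_5)=\{v_1,\ldots,v_k\}$ according to Figure~\ref{H2-type-two}, and set $G'=G-V(J_5)$. Since $|V(G')|<|V(G)|$, the minimality of $G$ yields a proper $L$-coloring $\phi$ of $(G')^2$. For each $v_i\in V(J_5)$ I would then form the residual list
\[
L_{J_5}(v_i) \;=\; L(v_i)\setminus\{\phi(x) : xv_i\in E(G^2),\; x\notin V(J_5)\},
\]
and record the lower bound on $|L_{J_5}(v_i)|$ determined by how many neighbors in $G^2$ each $v_i$ has outside $V(J_5)$. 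The target is to show that $J_5^2$ can always be colored from $L_{J_5}$, producing an $L$-coloring of $G^2$ and the desired contradiction.

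Next I would split into three cases analogous to those in Lemma~\ref{reducible-H2}: (i) $J_5^2$ is an induced subgraph of $G^2$; (ii) $J_5^2$ is not induced but all extra edges come from chords of $G[V(J_5)]$; and (iii) the extra edges come from pairs $v_i,v_j\in V(J_5)$ sharing a common neighbor outside $V(J_5)$. In each case I would update the residual list sizes, since a shared outside neighbor or a chord affects which outside color can be forbidden. Before doing case work I would run through the \emph{simplifying cases} by excluding every chord or common-outside-neighbor that would create a 5-cycle, a $3$-cycle too close to another $3$-cycle (Lemma~\ref{C3-C6}(c)), or one of the already-reduced subgraphs $F_1$--$F_4$ and $H_1$--$H_3$ (Lemmas~\ref{C3-C6}(a)(b), \ref{reducible-F2}, \ref{reducible-F4}, \ref{reducible-H0}, \ref{reducible-H2}, \ref{reducible-H3}, as well as \ref{C4-share-two-edge}). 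This should cut the case analysis down to a small, manageable list of surviving configurations.

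For each surviving configuration my strategy is to color a well-chosen subset of $V(J_5)$ first--typically a 4-cycle of $J_5$ together with one or two ``bridging'' vertices--so that the remaining uncolored vertices induce a 6-cycle in $J_5^2$ whose residual lists have size at least $3$ everywhere, possibly with two non-adjacent vertices (the $v_2$-- and $v_4$-- positions along the 6-cycle) dropping to size $2$. The coloring of such a 6-cycle is then handled by Lemma~\ref{cycle-six-original-second} (rather than Lemma~\ref{cycle-six-original} as in the proof of Lemma~\ref{reducible-H2}, which is why the subscript pattern differs and explains the role of this lemma as a helper for Lemma~\ref{reducible-H4}). Greedy order and the freedom in the initial color choices must be arranged so that the two size-$2$ lists land in the correct positions on the 6-cycle.

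The main obstacle I expect is the bookkeeping that guarantees the two tight vertices end up at positions $v_2$ and $v_4$ of the 6-cycle after the initial partial coloring, so that Lemma~\ref{cycle-six-original-second}, and not its cousin Lemma~\ref{cycle-six-original}, is the one that applies. If in some corner subcase the straightforward greedy/6-cycle argument fails--for instance, because a shared outside neighbor pushes the residual sizes too low--the fallback is the Combinatorial Nullstellensatz, where I would write down $P_{J_5^2}(\bm{x})$ (modified by the appropriate extra factors $(x_i-x_j)$ for edges of $G^2$ not in $J_5^2$) and exhibit a monomial whose coefficient is nonzero via a Mathematica computation, following the scheme used in Cases~1 and~3 of Lemma~\ref{reducible-H3}.
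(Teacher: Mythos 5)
Your overall template matches the paper's: delete $V(J_5)$, use minimality to color the rest, form the residual lists $L_{J_5}$, color the four ``outer'' vertices $v_1,v_5,v_6,v_{10}$ of the two attached $4$-cycles first, and finish the central $6$-cycle $v_2v_3v_4v_9v_8v_7$ with a $6$-cycle helper lemma. Two points of your plan, however, diverge from what actually works. First, after coloring $v_1,v_5,v_6,v_{10}$ the residual lists on the central $6$-cycle are at least $3$ everywhere except at the single vertex $v_8$ (which starts at $4$ and loses colors to both $v_6$ and $v_{10}$), so the lemma that applies is Lemma~\ref{cycle-six} (one list of size $2$), not Lemma~\ref{cycle-six-original-second} (two lists of size $2$ at distance two); your stated reason for preferring the latter -- that $J_5$ is what it was designed for -- is also wrong, since the paper introduces Lemma~\ref{cycle-six-original-second} for the reducibility of $H_4$. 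This is a bookkeeping error rather than a fatal one, but it means the positional constraint you worry about in your fourth paragraph is not the real issue.

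The genuine gap is that your plan does not isolate the one extra $G^2$-adjacency that actually breaks the greedy-plus-$6$-cycle scheme, namely $v_1\sim_{G^2}v_5$. The whole point of coloring $v_1$ and $v_5$ first is either to give them a common color or, when $L_{J_5}(v_1)\cap L_{J_5}(v_5)=\emptyset$, to exploit $|L_{J_5}(v_1)\cup L_{J_5}(v_5)|\ge 6>|L_{J_5}(v_3)|$ to choose $c_1,c_5$ saving a color on $v_3$; both options require $v_1$ and $v_5$ to be independent in $G^2$. Since $v_1v_5\in E(G)$ would force a $5$-cycle, the dangerous case is a common neighbor $v_{11}\notin V(J_5)$, and the paper must treat it separately: if $v_{11}\in\{v_6,v_{10}\}$ the residual lists improve enough to color $v_1$ while preserving four colors on $v_3$, and otherwise one \emph{uncolors} $v_{11}$, enlarges the configuration to $V(J_5)\cup\{v_{11}\}$, and recolors $v_{11}$ greedily along with $v_5,v_6,v_{10}$. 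Your proposal's generic ``simplifying cases will prune the list'' does not surface this case, and your fallback to the Combinatorial Nullstellensatz is not needed anywhere in the paper's argument. Without the uncoloring device (or some substitute), the case of a common neighbor of $v_1$ and $v_5$ is exactly where your plan, as written, would stall.
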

\begin{proof}
Suppose that $G$ has $J_5$ as a subgraph.
We denote $V(J_5) = \{v_1, \ldots, v_{10}\}$ as in Figure \ref{H2-type-two}.
Let $L$ be a list assignment with lists of size 7 for each vertex in $G$.
We will show that $G^2$ has a proper coloring from the list $L$, which is a contradiction for the fact that $G$ is a counterexample to the theorem.

 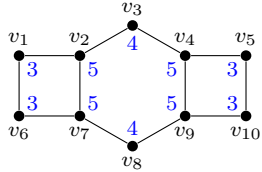
\begin{figure}[htbp]
  \begin{center}

\begin{tikzpicture}[
    v2/.style={fill=black,minimum size=4pt,ellipse,inner sep=1pt},
    scale=0.4
]

    \node[v2] (H2_1) at (0, 0) {};
    \node[v2] (H2_2) at (0, 2) {};
    \node[v2] (H2_3) at (1.732, 3) {};
    \node[v2] (H2_4) at (3.464, 2) {};
    \node[v2] (H2_5) at (3.464, 0) {};
    \node[v2] (H2_6) at (1.732, -1) {};

    \node[v2] (H2_7) at (-2, 0) {};
    \node[v2] (H2_8) at (-2, 2) {};
     \node[v2] (H2_9) at (5.464, 2) {};
    \node[v2] (H2_10) at (5.464, 0) {};

    \draw (H2_1) -- (H2_2) -- (H2_3) -- (H2_4) -- (H2_5) -- (H2_6) -- (H2_1);

    \draw (H2_1) -- (H2_7) -- (H2_8) -- (H2_2);
    \draw (H2_4) -- (H2_9) -- (H2_10) -- (H2_5);

    \node[font=\scriptsize,above] at (H2_8) {$v_1$};
\node[font=\scriptsize,xshift=5pt,yshift=-5pt] at (H2_8) {\color{blue}3};
 \node[font=\scriptsize,above] at (H2_2) {$v_2$};
\node[font=\scriptsize,xshift=5pt,yshift=-5pt] at (H2_2) {\color{blue}5};
 \node[font=\scriptsize,above] at (H2_3) {$v_3$};
\node[font=\scriptsize,below] at (H2_3) {\color{blue}4};
 \node[font=\scriptsize,above] at (H2_4) {$v_4$};
\node[font=\scriptsize,xshift=-5pt,yshift=-5pt] at (H2_4) {\color{blue}5};
 \node[font=\scriptsize,above] at (H2_9) {$v_5$};
\node[font=\scriptsize,xshift=-5pt,yshift=-5pt] at (H2_9) {\color{blue}3};
 \node[font=\scriptsize,below] at (H2_7) {$v_6$};
\node[font=\scriptsize,xshift=5pt,yshift=5pt] at (H2_7) {\color{blue}3};
\node[font=\scriptsize,below] at (H2_1) {$v_7$};
\node[font=\scriptsize,xshift=5pt,yshift=5pt] at (H2_1) {\color{blue}5};

 \node[font=\scriptsize,below] at (H2_6) {$v_8$};
\node[font=\scriptsize,above] at (H2_6) {\color{blue}4};
 \node[font=\scriptsize,below] at (H2_5) {$v_9$};
\node[font=\scriptsize,xshift=-5pt,yshift=5pt] at (H2_5) {\color{blue}5};

 \node[font=\scriptsize,below] at (H2_10) {$v_{10}$};
\node[font=\scriptsize,xshift=-5pt,yshift=5pt] at (H2_10) {\color{blue}3};

\end{tikzpicture}
  \caption{Graph $J_5$. The numbers at vertices are the number of available colors.}
\label{H2-type-two}
\end{center}
\end{figure}

Let $G' = G - V({J_5})$.
Then $G'$ is also a subcubic planar graph and $|V(G')| < |V(G)|$.   Since $G$ is a minimal counterexample to Theorem \ref{main-thm},
the square of $G'$ has a proper coloring $\phi$ such that $\phi(v) \in L(v)$ for each vertex $v \in V(G')$.
For each $v_i \in V(J_5)$, we define
\begin{equation*} \label{list-L}
L_{J_5}(v_i) = L(v_i) \setminus \{\phi(x) : xv_i \in E(G^2) \mbox{ and } x \notin V(J_5)\}.
\end{equation*}
Then, we have the following (see Figure \ref{H2-type-two}).
$$
|L_{J_5}(v_i)| \geq
\begin{cases}
3 & i=1,5,6,10, \\
4 & i=3,8, \\
5 & i=2,4,7,9.
\end{cases}
$$

We now consider the following two cases.

\medskip
\noindent {\bf Case 1:}
$v_1$ and $v_5$ are not adjacent in $G^2$. \medskip

If $L_{J_5}(v_1) \cap L_{J_5}(v_5) \neq \emptyset$, then color $v_1$ and $v_5$ by a color $c \in L_{J_5}(v_1) \cap L_{J_5}(v_5)$.  If $L_{J_5}(v_1) \cap L_{J_5}(v_5) = \emptyset$, then we can color $v_1$ by a color $c_1$ and $v_5$ by a color $c_5$ so that $|L_{J_5}(v_3) \setminus \{c_1, c_5\}| \geq 3$. And then, color $v_6$ and $v_{10}$ greedily.
Let $L_{J_5}'(v_i)$ for $i = 2, 3, 4, 7, 8, 9$ be the color list after coloring $v_1, v_5, v_6, v_{10}$.  Then
\[
|L_{J_5}'(v_i)| \geq 3 \mbox{ for } i = 2, 3, 4, 7, 9, \mbox{ and } |L_{J_5}'(v_8)| \geq 2.
\]
Then, $v_2, v_3, v_4, v_7, v_8, v_9$ are colorable from the list by Lemma \ref{cycle-six}.

\medskip
\noindent {\bf Case 2:}
$v_1$ and $v_5$ are adjacent in $G^2$. \medskip

Note that $v_1$ and $v_5$ cannot be adjacent in $G$ since it makes a 5-cycle.
So, we just need to consider the case when $v_1$ and $v_5$ have a common neighbor $v_{11}$ outside $J_5$.

Suppose first $v_{11} = v_6$ or $v_{10}$, say $v_{11} = v_{10}$ by symmetry.
In this case, we have the following.
$$
|L_{J_5}(v_i)| \geq
\begin{cases}
4 & i=3, 5, 6, 8,\\
5 & i=4, 7, \\
6 & i=1, 2, 9, 10.
\end{cases}
$$

Since $|L_{J_5}(v_1)| \geq 6$ and $|L_{J_5}(v_3)| \geq 4$, there exists a color $c_1 \in L_{J_5}(v_1)$ such that $|L_{J_5}(v_3) \setminus \{c_1\}| \geq 4$.  Color $v_1$ by $c_1$, and then greedily color $v_5, v_6, v_{10}$ in order.
Let $L'(v_i)$ for $i = 2, 3, 4, 7, 8, 9$ be the color list after coloring $v_1, v_5, v_6, v_{10}$.  We have
\[
|L'(v_i)| \geq 3 \mbox{ for } i = 2, 3, 4, 7, 9, \mbox{ and } |L'(v_8)| \geq 2.
\]
Then, $v_2, v_3, v_4, v_7, v_8, v_9$ are colorable from the list by Lemma \ref{cycle-six}.
\medskip

Suppose next $v_{11} \neq v_6, v_{10}$.
In this case, we uncolor $v_{11}$,
and we define for each $v_i \in V(J_5) \cup \{v_{11}\}$,
\begin{equation*} \label{list-L}
L_{J_5}'(v_i) = L(v_i) \setminus \{\phi(x) : xv_i \in E(G^2) \mbox{ and } x \notin V(J_5) \cup \{v_{11}\}\}.
\end{equation*}
Then, we have the following:
$$
|L_{J_5}'(v_i)| \geq
\begin{cases}
4 & i=3,6, 8, 10, 11,\\
5 & i=1, 5, 7, 9, \\
6 & i=2, 4.
\end{cases}
$$

Since $|L_{J_5}(v_1)| \geq 5$ and $|L_{J_5}(v_3)| \geq 4$, there exists a color $c_1 \in L_{J_5}(v_1)$ such that $|L_{J_5}(v_3) \setminus \{c_1\}| \geq 4$.  Color $v_1$ by $c_1$, and then greedily color $v_{11}, v_5, v_6, v_{10}$ in order. Let $L_{J_5}''(v_i)$ for $i = 2, 3, 4, 7, 8, 9$ be the color list after coloring $v_1, v_5, v_6, v_{10}, v_{11}$.  Then
\[
|L_{J_5}''(v_i)| \geq 3 \mbox{ for } i = 2, 3, 4, 7, 9, \mbox{ and } |L_{J_5}''(v_8)| \geq 2.
\]
Then, $v_2, v_3, v_4, v_7, v_8, v_9$ are colorable from the list $L_{J_5}''$ by Lemma \ref{cycle-six}.
\end{proof}

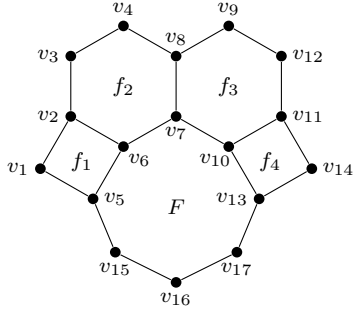
\begin{figure}[htbp]
  \begin{center}

\begin{tikzpicture}[
    v2/.style={fill=black,minimum size=4pt,ellipse,inner sep=1pt},
    scale=0.4
]
\node[v2] (H4_1) at (0, 0){};
    \node[v2] (H4_2) at (0, 2){};
    \node[v2] (H4_3) at (-1.732,-1){};
 \node[v2] (H4_4) at (-2*1.732,0){};
 \node[v2] (H4_5) at (-2*1.732,2){};
 \node[v2] (H4_6) at (-1.732,3){};
 \node[v2] (H4_7) at (1.732,-1){};
 \node[v2] (H4_8) at (2*1.732,0){};
 \node[v2] (H4_9) at (2*1.732,2){};
 \node[v2] (H4_10) at (1.732,3){};
 \node[v2] (H4_11) at (-4.464,-1.732){};
 \node[v2] (H4_12) at (-2.732,-2.732){};
 \node[v2] (H4_13) at (4.464,-1.732){};
 \node[v2] (H4_14) at (2.732,-2.732){};
 \node[v2] (H4_15) at (-2,-4.5){};
 \node[v2] (H4_16) at (0,-5.5){};
 \node[v2] (H4_17) at (2,-4.5){};

   \draw (H4_1) -- (H4_2) -- (H4_6) -- (H4_5) -- (H4_4) -- (H4_3)-- (H4_1);
   \draw (H4_2) -- (H4_10) -- (H4_9) -- (H4_8) -- (H4_7) -- (H4_1);
   \draw (H4_4) -- (H4_11) -- (H4_12) -- (H4_3);
   \draw (H4_8) -- (H4_13) -- (H4_14) -- (H4_7);
   \draw (H4_12) -- (H4_15) -- (H4_16) -- (H4_17) -- (H4_14);

    \node[font=\scriptsize] at (-0, -3) {$F$};
    \node[font=\scriptsize] at (1.732, 1) {$f_3$};
    \node[font=\scriptsize] at (-1.732, 1) {$f_2$};
 \node[font=\scriptsize] at (3.1, -1.5) {$f_4$};
 \node[font=\scriptsize] at (-3.1, -1.5) {$f_1$};

 \node[font=\scriptsize,left] at(H4_11) {$v_1$};
  \node[font=\scriptsize,left] at(H4_4) {$v_2$};
  \node[font=\scriptsize,left] at(H4_5) {$v_3$};
  \node[font=\scriptsize,above] at(H4_6) {$v_4$};
   \node[font=\scriptsize,right] at(H4_12) {$v_5$};
  \node[font=\scriptsize,xshift=6pt,yshift=-5pt] at(H4_3) {$v_6$};
  \node[font=\scriptsize,below] at(H4_1) {$v_7$};
  \node[font=\scriptsize,above] at(H4_2) {$v_8$};
   \node[font=\scriptsize,above] at(H4_10) {$v_9$};
  \node[font=\scriptsize,xshift=-5pt,yshift=-5pt] at(H4_7) {$v_{10}$};
  \node[font=\scriptsize,right] at(H4_8) {$v_{11}$};
  \node[font=\scriptsize,right] at(H4_9) {$v_{12}$};
   \node[font=\scriptsize,left] at(H4_14) {$v_{13}$};
  \node[font=\scriptsize,right] at(H4_13) {$v_{14}$};
  \node[font=\scriptsize,below] at(H4_15) {$v_{15}$};
  \node[font=\scriptsize,below] at(H4_16) {$v_{16}$};
    \node[font=\scriptsize,below] at(H4_17) {$v_{17}$};
\end{tikzpicture}
\end{center}
\caption{Graph $H_4$.  A face $F$ is adjacent to four faces $f_1, f_2, f_3, f_4$ in order.  $F$ is a 8-face, $f_1$ and $f_4$ are 4-faces and $f_2$ and $f_3$ are 6-faces.}
\label{H4fig-on-face}
\end{figure}

Now, we will prove that the subgraph $H_4$ in Figure \ref{H4fig-on-face} does not appear in $G$.
In Figure \ref{H4fig-on-face}, $F$ is a face of length 8 and is adjacent to four faces $f_1, f_2, f_3, f_4$ in order, where $f_1$ and $f_2$ are 4-faces and $f_2$ and $f_3$ are 6-faces.

\begin{lemma} \label{reducible-H4}
The graph $H_4$ in Figure \ref{H4fig-on-face} does not appear in $G$.
\end{lemma}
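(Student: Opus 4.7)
The plan is to follow the template established in the proofs of Lemmas \ref{reducible-H2} and \ref{reducible-H3}. Suppose for contradiction that $G$ contains $H_4$ as a subgraph with $V(H_4) = \{v_1, \ldots, v_{17}\}$ as labeled in Figure \ref{H4fig-on-face}. Let $L$ be a list assignment with $|L(v)| = 7$ for every $v \in V(G)$. Setting $G' = G - V(H_4)$, the minimality of $G$ yields a proper $L$-coloring $\phi$ of $(G')^2$, and for each $v_i \in V(H_4)$ we define
\[
L_{H_4}(v_i) = L(v_i) \setminus \{\phi(x) : xv_i \in E(G^2) \text{ and } x \notin V(H_4)\}.
\]
A case-by-case count (based on how many of the three $G$-neighbors of $v_i$ lie outside $V(H_4)$) gives explicit lower bounds on $|L_{H_4}(v_i)|$. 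The interior vertices $v_3, v_4, v_8, v_9, v_{12}, v_{16}$ have $|L_{H_4}(v_i)| \ge 7$ since their two graph-distance-$2$ mates in $G$ lie in $V(H_4)$; the corner vertices on the big face have the smallest lists.

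The first main step is Case 1, where $H_4^2$ is an induced subgraph of $G^2$. Here I would color the four $4$-face corners $v_1, v_5, v_{14}, v_{17}$ and the two apexes $v_{15}, v_{16}$ of the large face $F$ first (each has a sizeable residual list), choosing these colors carefully so that, after they are fixed, the two 6-cycles $v_6 v_2 v_3 v_4 v_8 v_7$ and $v_7 v_8 v_9 v_{12} v_{11} v_{10}$ each satisfy the list-size hypothesis of Lemma \ref{cycle-six-original-second} (with the two short lists placed at the vertices $v_6, v_2$ and $v_{10}, v_{11}$ that lie on the small 4-faces). If the direct greedy reduction fails, I fall back on the Combinatorial Nullstellensatz applied to $P_{H_4^2}(\bm{x})$, extracting via Mathematica a monomial whose exponents fit in the list sizes and whose coefficient is nonzero, in complete analogy with Case 1 in the proof of Lemma \ref{reducible-H3}.

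For Cases 2 and 3 (when $H_4^2$ is not an induced subgraph of $G^2$), I would enumerate the pairs $(v_i, v_j) \subseteq V(H_4)$ that could contribute an extra edge of $G^2$ either through an edge of $G$ inside $V(H_4)$ (Case 2) or through a common neighbor outside $V(H_4)$ (Case 3). The crucial simplification is that the following reducible configurations already established eliminate most pairs: absence of $5$-cycles in $G$; the nonexistence of $F_1, F_2, F_3, F_4$ by Lemmas \ref{C3-C6}, \ref{reducible-F2} and \ref{reducible-F4}; the nonexistence of $H_1$ by Lemma \ref{reducible-H0}; of $H_2$ by Lemma \ref{reducible-H2}; of $H_3$ by Lemma \ref{reducible-H3}; and critically the nonexistence of $J_5$ by Lemma \ref{H2-type-two-reducible}, which rules out any configuration in which a third 4-face becomes adjacent to one of the 6-faces $f_2$ or $f_3$. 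After this pruning only a short list of essentially different subcases survives (typically short connections between $v_5$ and $v_7$, between $v_{10}$ and $v_{13}$, between boundary vertices of $F$, or via a common neighbor of $v_{15}$ and $v_{17}$, etc.). Each surviving subcase is handled by the same recipe as Case 1: either greedy plus Lemma \ref{cycle-six-original-second}, or Combinatorial Nullstellensatz applied to $P_{H_4^2}(\bm{x}) \prod (x_i - x_j)$ where the product ranges over the extra adjacencies.

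The main obstacle is the combinatorial explosion of subcases in Case 3, together with the bookkeeping needed to verify which pairs are truly excluded by previous reducible configurations. The large face $F$ has eight vertices on its boundary and produces many potential short-circuits through outside neighbors; the systematic use of Lemma \ref{H2-type-two-reducible} (which is precisely the new ingredient proved immediately before this lemma) together with Corollary \ref{cor-reducible-F4} is what keeps this analysis finite. Once each surviving subcase is resolved by a valid coloring of $H_4^2$ extending $\phi$, we obtain an $L$-coloring of $G^2$, contradicting the counterexample hypothesis and completing the proof.
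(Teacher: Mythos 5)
Your proposal is a plausible template but it has concrete errors and leaves the hard part undone. First, the list-size bookkeeping is wrong: in $H_4$ the only vertices all of whose distance-$\le 2$ neighbourhood in $G$ lies inside $V(H_4)$ are $v_6$, $v_7$ and $v_{10}$ (the paper's count gives $|L_{H_4}(v_i)|\ge 7$ exactly for $i=6,7,10$, while $v_3,v_4,v_9,v_{12}$ are degree-$2$ vertices of $H_4$ with residual lists of size only $3$, and $|L_{H_4}(v_{16})|\ge 2$). Since your coloring strategy is built on the premise that $v_3,v_4,v_8,v_9,v_{12},v_{16}$ retain full lists, the greedy step as described does not get off the ground. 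Second, your plan to finish by applying Lemma \ref{cycle-six-original-second} separately to the two $6$-cycles $v_6v_2v_3v_4v_8v_7$ and $v_7v_8v_9v_{12}v_{11}v_{10}$ cannot work: these cycles share the edge $v_7v_8$, and in $G^2$ their vertex sets are heavily interlinked (e.g.\ $v_4\sim v_9$ via $v_8$, $v_6\sim v_{10}$ via $v_7$), so the two applications are not independent. The paper instead colors the entire right-hand side (including $v_9,v_{11},v_{12},v_{13},v_{14}$ and the path $v_{15}v_{16}v_{17}$) explicitly, in a carefully chosen order that preserves the hypothesis $L(v_4)\ne L(v_7)\setminus\{c_5\}$, and applies the $6$-cycle lemma only once, to the remaining left $6$-cycle $v_2v_3v_4v_6v_7v_8$.

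Beyond these two errors, the bulk of the paper's proof is structural work you have not anticipated. The paper does not run the case analysis on $H_4^2$ at all: it introduces the $14$-vertex subgraph $J_6=H_4-\{v_{15},v_{16},v_{17}\}$, proves Remark \ref{restrict-case} restricting which vertices $v_5$ and $v_{13}$ can be joined to (using the fact that the face $F$ has length exactly $8$), and splits into cases according to whether $J_6^2$ is induced. Even in the induced case there is a nontrivial Subcase 1.2 in which $v_3$ is within distance $2$ of one of $v_{15},v_{16},v_{17}$ (possible because the $8$-face wraps around), which forces a different coloring order or an uncoloring argument; your proposal does not foresee this. Case 3 then requires eleven explicitly enumerated subcases, each resolved by a tailored greedy order plus Lemma \ref{cycle-six-original-second}; the Combinatorial Nullstellensatz is never used for $H_4$ (and invoking it on a $17$-vertex configuration as a ``fallback'' is not a substitute for the argument). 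As written, the proposal identifies the right ingredients ($J_5$, the prior reducible configurations, the $6$-cycle lemmas) but does not constitute a proof.
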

\begin{proof}
Suppose that $G$ has $H_4$ as a subgraph,
and denote $V(H_4) = \{v_1, v_2, \ldots, v_{17}\}$ as in Figure \ref{H4fig-on-face}.
In addition,
we denote the subgraph  contained in $H_4$ induced by $\{v_1, v_2, \ldots, v_{14}\}$ by $J_6$.  That is,
$V(J_6) = \{v_1, \ldots, v_{14}\}$ (see Figure \ref{H4fig}).
Note that $J_6$ is a subgraph of $H_4$.

Let $L$ be a list assignment with lists of size 7 for each vertex in $G$.
We will show that $G^2$ has  a proper coloring from the list $L$, which is a contradiction for the fact that $G$ is a counterexample to the theorem. \\

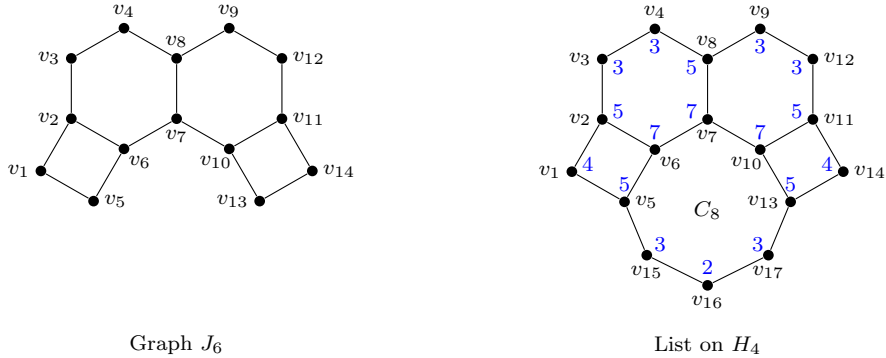
\begin{figure}[htbp]
  \begin{center}

\begin{tikzpicture}[
    v2/.style={fill=black,minimum size=4pt,ellipse,inner sep=1pt},
    scale=0.4
]
\node[v2] (H4_1) at (0, 0){};
    \node[v2] (H4_2) at (0, 2){};
    \node[v2] (H4_3) at (-1.732,-1){};
 \node[v2] (H4_4) at (-2*1.732,0){};
 \node[v2] (H4_5) at (-2*1.732,2){};
 \node[v2] (H4_6) at (-1.732,3){};
 \node[v2] (H4_7) at (1.732,-1){};
 \node[v2] (H4_8) at (2*1.732,0){};
 \node[v2] (H4_9) at (2*1.732,2){};
 \node[v2] (H4_10) at (1.732,3){};
 \node[v2] (H4_11) at (-4.464,-1.732){};
 \node[v2] (H4_12) at (-2.732,-2.732){};
 \node[v2] (H4_13) at (4.464,-1.732){};
 \node[v2] (H4_14) at (2.732,-2.732){};

   \draw (H4_1) -- (H4_2) -- (H4_6) -- (H4_5) -- (H4_4) -- (H4_3)-- (H4_1);
   \draw (H4_2) -- (H4_10) -- (H4_9) -- (H4_8) -- (H4_7) -- (H4_1);
   \draw (H4_4) -- (H4_11) -- (H4_12) -- (H4_3);
   \draw (H4_8) -- (H4_13) -- (H4_14) -- (H4_7);

    \node[font=\scriptsize] at (0, -7.5) { Graph $J_6$};

 \node[font=\scriptsize,left] at(H4_11) {$v_1$};
  \node[font=\scriptsize,left] at(H4_4) {$v_2$};
  \node[font=\scriptsize,left] at(H4_5) {$v_3$};
  \node[font=\scriptsize,above] at(H4_6) {$v_4$};
   \node[font=\scriptsize,right] at(H4_12) {$v_5$};
  \node[font=\scriptsize,xshift=6pt,yshift=-5pt] at(H4_3) {$v_6$};
  \node[font=\scriptsize,below] at(H4_1) {$v_7$};
  \node[font=\scriptsize,above] at(H4_2) {$v_8$};
   \node[font=\scriptsize,above] at(H4_10) {$v_9$};
  \node[font=\scriptsize,xshift=-5pt,yshift=-5pt] at(H4_7) {$v_{10}$};
  \node[font=\scriptsize,right] at(H4_8) {$v_{11}$};
  \node[font=\scriptsize,right] at(H4_9) {$v_{12}$};
   \node[font=\scriptsize,left] at(H4_14) {$v_{13}$};
  \node[font=\scriptsize,right] at(H4_13) {$v_{14}$};
\end{tikzpicture}\hspace{2cm}
\begin{tikzpicture}[
    v2/.style={fill=black,minimum size=4pt,ellipse,inner sep=1pt},
    scale=0.4
]
\node[v2] (H4_1) at (0, 0){};
    \node[v2] (H4_2) at (0, 2){};
    \node[v2] (H4_3) at (-1.732,-1){};
 \node[v2] (H4_4) at (-2*1.732,0){};
 \node[v2] (H4_5) at (-2*1.732,2){};
 \node[v2] (H4_6) at (-1.732,3){};
 \node[v2] (H4_7) at (1.732,-1){};
 \node[v2] (H4_8) at (2*1.732,0){};
 \node[v2] (H4_9) at (2*1.732,2){};
 \node[v2] (H4_10) at (1.732,3){};
 \node[v2] (H4_11) at (-4.464,-1.732){};
 \node[v2] (H4_12) at (-2.732,-2.732){};
 \node[v2] (H4_13) at (4.464,-1.732){};
 \node[v2] (H4_14) at (2.732,-2.732){};
 \node[v2] (H4_15) at (-2,-4.5){};
 \node[v2] (H4_16) at (0,-5.5){};
 \node[v2] (H4_17) at (2,-4.5){};

   \draw (H4_1) -- (H4_2) -- (H4_6) -- (H4_5) -- (H4_4) -- (H4_3)-- (H4_1);
   \draw (H4_2) -- (H4_10) -- (H4_9) -- (H4_8) -- (H4_7) -- (H4_1);
   \draw (H4_4) -- (H4_11) -- (H4_12) -- (H4_3);
   \draw (H4_8) -- (H4_13) -- (H4_14) -- (H4_7);
   \draw (H4_12) -- (H4_15) -- (H4_16) -- (H4_17) -- (H4_14);

    \node[font=\scriptsize] at (-0, -3) {$C_8$};

 \node[font=\scriptsize,left] at(H4_11) {$v_1$};
 \node[font=\scriptsize,xshift=6pt,yshift=3pt] at(H4_11) {\color{blue}4};
  \node[font=\scriptsize,left] at(H4_4) {$v_2$};
   \node[font=\scriptsize,xshift=6pt,yshift=3pt] at(H4_4) {\color{blue}5};
  \node[font=\scriptsize,left] at(H4_5) {$v_3$};
   \node[font=\scriptsize,xshift=6pt,yshift=-3pt] at(H4_5) {\color{blue}3};
  \node[font=\scriptsize,above] at(H4_6) {$v_4$};
   \node[font=\scriptsize,below] at(H4_6) {\color{blue}3};
   \node[font=\scriptsize,right] at(H4_12) {$v_5$};
    \node[font=\scriptsize,above] at(H4_12) {\color{blue}5};
  \node[font=\scriptsize,xshift=6pt,yshift=-5pt] at(H4_3) {$v_6$};
   \node[font=\scriptsize,above] at(H4_3) {\color{blue}7};
  \node[font=\scriptsize,below] at(H4_1) {$v_7$};
   \node[font=\scriptsize,xshift=-6pt,yshift=3pt] at(H4_1) {\color{blue}7};
  \node[font=\scriptsize,above] at(H4_2) {$v_8$};
   \node[font=\scriptsize,xshift=-6pt,yshift=-3pt] at(H4_2) {\color{blue}5};
   \node[font=\scriptsize,above] at(H4_10) {$v_9$};
    \node[font=\scriptsize,below] at(H4_10) {\color{blue}3};
  \node[font=\scriptsize,xshift=-5pt,yshift=-5pt] at(H4_7) {$v_{10}$};
   \node[font=\scriptsize,above] at(H4_7) {\color{blue}7};
  \node[font=\scriptsize,right] at(H4_8) {$v_{11}$};
   \node[font=\scriptsize,xshift=-6pt,yshift=3pt] at(H4_8) {\color{blue}5};
  \node[font=\scriptsize,right] at(H4_9) {$v_{12}$};
   \node[font=\scriptsize,xshift=-6pt,yshift=-3pt] at(H4_9) {\color{blue}3};
   \node[font=\scriptsize,left] at(H4_14) {$v_{13}$};
    \node[font=\scriptsize,above] at(H4_14) {\color{blue}5};
  \node[font=\scriptsize,right] at(H4_13) {$v_{14}$};
   \node[font=\scriptsize,xshift=-6pt,yshift=3pt] at(H4_13) {\color{blue}4};
  \node[font=\scriptsize,below] at(H4_15) {$v_{15}$};
   \node[font=\scriptsize,xshift=5pt,yshift=4pt] at(H4_15) {\color{blue}3};
  \node[font=\scriptsize,below] at(H4_16) {$v_{16}$};
     \node[font=\scriptsize,above] at(H4_16) {\color{blue}2};
    \node[font=\scriptsize,below] at(H4_17) {$v_{17}$};
       \node[font=\scriptsize,xshift=-4pt,yshift=4pt] at(H4_17) {\color{blue}3};
     \node[font=\scriptsize] at (0, -7.5) {List on $H_4$};
\end{tikzpicture}
\end{center}
\caption{Coloring $J_6^2$. The numbers at vertices are the number of available colors.} \label{H4fig}
\end{figure}

\noindent {\bf Case 1:} $J_6^2$ is an induced subgraph of $G^2$. \\

Let $G' = G - V(H_4)$.
Then $G'$ is also a subcubic planar graph and $|V(G')| < |V(G)|$.
Since $G$ is a minimal counterexample to Theorem \ref{main-thm},
the square of $G'$ has a proper coloring $\phi$ such that $\phi(v) \in L(v)$ for each vertex $v \in V(H)$.
For each $v_i \in V(H_4)$, we define \[
L_{H_4}(v_i) = L(v_i) \setminus \{\phi(x) : xv_i \in E(G^2) \mbox{ and } x \notin V(H_4)\}.
\]
Then, we have the following (see Figure \ref{H4fig}).
$$
|L_{H_4}(v_i)| \geq
\begin{cases}
2 & i=16, \\
3 & i=3,4,9, 12, 15, 17, \\
4 & i=1, 14, \\
5 & i=2, 5, 8, 11, 13, \\
7 & i=6, 7, 10.
\end{cases}
$$
We divide Case 1 into two subcases depending on
whether $v_3$ is adjacent to a vertex in $\{ v_{15}, v_{16}, v_{17} \}$ in $G^2$.

\medskip

\noindent {\bf Subcase 1.1:}
$v_3$ is adjacent to none of $v_{15}, v_{16}, v_{17}$ in $G^2$.

Now, we   show that
the vertices in $H_4$ can be colored from the list $L_{H_4}$
so that we obtain an $L$-coloring in $G^2$
by the following three steps.

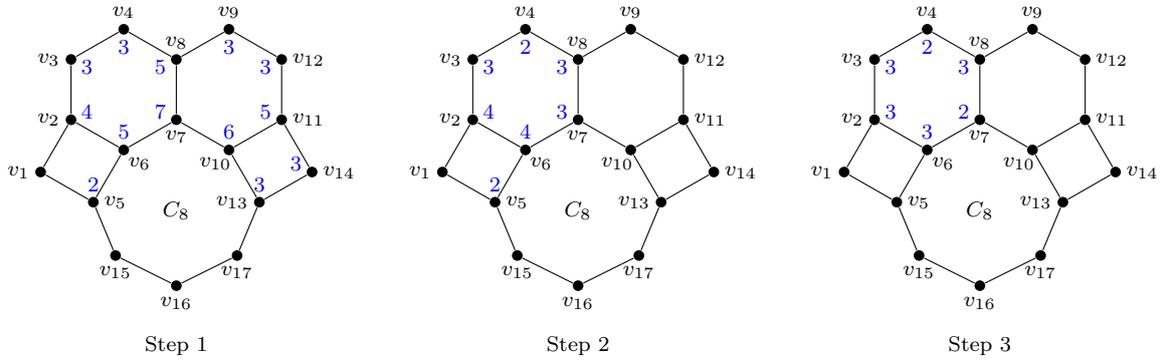
\begin{figure}[htbp]
  \begin{center}

\begin{tikzpicture}[
    v2/.style={fill=black,minimum size=4pt,ellipse,inner sep=1pt},
    scale=0.4
]
\node[v2] (H4_1) at (0, 0){};
    \node[v2] (H4_2) at (0, 2){};
    \node[v2] (H4_3) at (-1.732,-1){};
 \node[v2] (H4_4) at (-2*1.732,0){};
 \node[v2] (H4_5) at (-2*1.732,2){};
 \node[v2] (H4_6) at (-1.732,3){};
 \node[v2] (H4_7) at (1.732,-1){};
 \node[v2] (H4_8) at (2*1.732,0){};
 \node[v2] (H4_9) at (2*1.732,2){};
 \node[v2] (H4_10) at (1.732,3){};
 \node[v2] (H4_11) at (-4.464,-1.732){};
 \node[v2] (H4_12) at (-2.732,-2.732){};
 \node[v2] (H4_13) at (4.464,-1.732){};
 \node[v2] (H4_14) at (2.732,-2.732){};
 \node[v2] (H4_15) at (-2,-4.5){};
 \node[v2] (H4_16) at (0,-5.5){};
 \node[v2] (H4_17) at (2,-4.5){};

   \draw (H4_1) -- (H4_2) -- (H4_6) -- (H4_5) -- (H4_4) -- (H4_3)-- (H4_1);
   \draw (H4_2) -- (H4_10) -- (H4_9) -- (H4_8) -- (H4_7) -- (H4_1);
   \draw (H4_4) -- (H4_11) -- (H4_12) -- (H4_3);
   \draw (H4_8) -- (H4_13) -- (H4_14) -- (H4_7);
   \draw (H4_12) -- (H4_15) -- (H4_16) -- (H4_17) -- (H4_14);

    \node[font=\scriptsize] at (-0, -3) {$C_8$};

 \node[font=\scriptsize,left] at(H4_11) {$v_1$};
  \node[font=\scriptsize,left] at(H4_4) {$v_2$};
   \node[font=\scriptsize,xshift=6pt,yshift=3pt] at(H4_4) {\color{blue}4};
  \node[font=\scriptsize,left] at(H4_5) {$v_3$};
   \node[font=\scriptsize,xshift=6pt,yshift=-3pt] at(H4_5) {\color{blue}3};
  \node[font=\scriptsize,above] at(H4_6) {$v_4$};
   \node[font=\scriptsize,below] at(H4_6) {\color{blue}3};
   \node[font=\scriptsize,right] at(H4_12) {$v_5$};
    \node[font=\scriptsize,above] at(H4_12) {\color{blue}2};
  \node[font=\scriptsize,xshift=6pt,yshift=-5pt] at(H4_3) {$v_6$};
   \node[font=\scriptsize,above] at(H4_3) {\color{blue}5};
  \node[font=\scriptsize,below] at(H4_1) {$v_7$};
   \node[font=\scriptsize,xshift=-6pt,yshift=3pt] at(H4_1) {\color{blue}7};
  \node[font=\scriptsize,above] at(H4_2) {$v_8$};
   \node[font=\scriptsize,xshift=-6pt,yshift=-3pt] at(H4_2) {\color{blue}5};
   \node[font=\scriptsize,above] at(H4_10) {$v_9$};
    \node[font=\scriptsize,below] at(H4_10) {\color{blue}3};
  \node[font=\scriptsize,xshift=-5pt,yshift=-5pt] at(H4_7) {$v_{10}$};
   \node[font=\scriptsize,above] at(H4_7) {\color{blue}6};
  \node[font=\scriptsize,right] at(H4_8) {$v_{11}$};
   \node[font=\scriptsize,xshift=-6pt,yshift=3pt] at(H4_8) {\color{blue}5};
  \node[font=\scriptsize,right] at(H4_9) {$v_{12}$};
   \node[font=\scriptsize,xshift=-6pt,yshift=-3pt] at(H4_9) {\color{blue}3};
   \node[font=\scriptsize,left] at(H4_14) {$v_{13}$};
    \node[font=\scriptsize,above] at(H4_14) {\color{blue}3};
  \node[font=\scriptsize,right] at(H4_13) {$v_{14}$};
   \node[font=\scriptsize,xshift=-6pt,yshift=3pt] at(H4_13) {\color{blue}3};
  \node[font=\scriptsize,below] at(H4_15) {$v_{15}$};
 \node[font=\scriptsize,below] at(H4_16) {$v_{16}$};
   \node[font=\scriptsize,below] at(H4_17) {$v_{17}$};
 \node[font=\scriptsize] at (0, -7.5) {  Step 1};
\end{tikzpicture}\hspace{0.3cm}
\begin{tikzpicture}[
    v2/.style={fill=black,minimum size=4pt,ellipse,inner sep=1pt},
    scale=0.4
]
\node[v2] (H4_1) at (0, 0){};
    \node[v2] (H4_2) at (0, 2){};
    \node[v2] (H4_3) at (-1.732,-1){};
 \node[v2] (H4_4) at (-2*1.732,0){};
 \node[v2] (H4_5) at (-2*1.732,2){};
 \node[v2] (H4_6) at (-1.732,3){};
 \node[v2] (H4_7) at (1.732,-1){};
 \node[v2] (H4_8) at (2*1.732,0){};
 \node[v2] (H4_9) at (2*1.732,2){};
 \node[v2] (H4_10) at (1.732,3){};
 \node[v2] (H4_11) at (-4.464,-1.732){};
 \node[v2] (H4_12) at (-2.732,-2.732){};
 \node[v2] (H4_13) at (4.464,-1.732){};
 \node[v2] (H4_14) at (2.732,-2.732){};
 \node[v2] (H4_15) at (-2,-4.5){};
 \node[v2] (H4_16) at (0,-5.5){};
 \node[v2] (H4_17) at (2,-4.5){};

   \draw (H4_1) -- (H4_2) -- (H4_6) -- (H4_5) -- (H4_4) -- (H4_3)-- (H4_1);
   \draw (H4_2) -- (H4_10) -- (H4_9) -- (H4_8) -- (H4_7) -- (H4_1);
   \draw (H4_4) -- (H4_11) -- (H4_12) -- (H4_3);
   \draw (H4_8) -- (H4_13) -- (H4_14) -- (H4_7);
   \draw (H4_12) -- (H4_15) -- (H4_16) -- (H4_17) -- (H4_14);

    \node[font=\scriptsize] at (-0, -3) {$C_8$};

 \node[font=\scriptsize,left] at(H4_11) {$v_1$};
  \node[font=\scriptsize,left] at(H4_4) {$v_2$};
   \node[font=\scriptsize,xshift=6pt,yshift=3pt] at(H4_4) {\color{blue}4};
  \node[font=\scriptsize,left] at(H4_5) {$v_3$};
   \node[font=\scriptsize,xshift=6pt,yshift=-3pt] at(H4_5) {\color{blue}3};
  \node[font=\scriptsize,above] at(H4_6) {$v_4$};
   \node[font=\scriptsize,below] at(H4_6) {\color{blue}2};
   \node[font=\scriptsize,right] at(H4_12) {$v_5$};
    \node[font=\scriptsize,above] at(H4_12) {\color{blue}2};
  \node[font=\scriptsize,xshift=6pt,yshift=-5pt] at(H4_3) {$v_6$};
   \node[font=\scriptsize,above] at(H4_3) {\color{blue}4};
  \node[font=\scriptsize,below] at(H4_1) {$v_7$};
   \node[font=\scriptsize,xshift=-6pt,yshift=3pt] at(H4_1) {\color{blue}3};
  \node[font=\scriptsize,above] at(H4_2) {$v_8$};
   \node[font=\scriptsize,xshift=-6pt,yshift=-3pt] at(H4_2) {\color{blue}3};
   \node[font=\scriptsize,above] at(H4_10) {$v_9$};
  \node[font=\scriptsize,xshift=-5pt,yshift=-5pt] at(H4_7) {$v_{10}$};
  \node[font=\scriptsize,right] at(H4_8) {$v_{11}$};
  \node[font=\scriptsize,right] at(H4_9) {$v_{12}$};
   \node[font=\scriptsize,left] at(H4_14) {$v_{13}$};
  \node[font=\scriptsize,right] at(H4_13) {$v_{14}$};
  \node[font=\scriptsize,below] at(H4_15) {$v_{15}$};
  \node[font=\scriptsize,below] at(H4_16) {$v_{16}$};
    \node[font=\scriptsize,below] at(H4_17) {$v_{17}$};
     \node[font=\scriptsize] at (0, -7.5) {  Step 2};
\end{tikzpicture}\hspace{0.3cm}
\begin{tikzpicture}[
    v2/.style={fill=black,minimum size=4pt,ellipse,inner sep=1pt},
    scale=0.4
]
\node[v2] (H4_1) at (0, 0){};
    \node[v2] (H4_2) at (0, 2){};
    \node[v2] (H4_3) at (-1.732,-1){};
 \node[v2] (H4_4) at (-2*1.732,0){};
 \node[v2] (H4_5) at (-2*1.732,2){};
 \node[v2] (H4_6) at (-1.732,3){};
 \node[v2] (H4_7) at (1.732,-1){};
 \node[v2] (H4_8) at (2*1.732,0){};
 \node[v2] (H4_9) at (2*1.732,2){};
 \node[v2] (H4_10) at (1.732,3){};
 \node[v2] (H4_11) at (-4.464,-1.732){};
 \node[v2] (H4_12) at (-2.732,-2.732){};
 \node[v2] (H4_13) at (4.464,-1.732){};
 \node[v2] (H4_14) at (2.732,-2.732){};
 \node[v2] (H4_15) at (-2,-4.5){};
 \node[v2] (H4_16) at (0,-5.5){};
 \node[v2] (H4_17) at (2,-4.5){};

   \draw (H4_1) -- (H4_2) -- (H4_6) -- (H4_5) -- (H4_4) -- (H4_3)-- (H4_1);
   \draw (H4_2) -- (H4_10) -- (H4_9) -- (H4_8) -- (H4_7) -- (H4_1);
   \draw (H4_4) -- (H4_11) -- (H4_12) -- (H4_3);
   \draw (H4_8) -- (H4_13) -- (H4_14) -- (H4_7);
   \draw (H4_12) -- (H4_15) -- (H4_16) -- (H4_17) -- (H4_14);

    \node[font=\scriptsize] at (-0, -3) {$C_8$};

 \node[font=\scriptsize,left] at(H4_11) {$v_1$};
  \node[font=\scriptsize,left] at(H4_4) {$v_2$};
   \node[font=\scriptsize,xshift=6pt,yshift=3pt] at(H4_4) {\color{blue}3};
  \node[font=\scriptsize,left] at(H4_5) {$v_3$};
   \node[font=\scriptsize,xshift=6pt,yshift=-3pt] at(H4_5) {\color{blue}3};
  \node[font=\scriptsize,above] at(H4_6) {$v_4$};
   \node[font=\scriptsize,below] at(H4_6) {\color{blue}2};
   \node[font=\scriptsize,right] at(H4_12) {$v_5$};
  \node[font=\scriptsize,xshift=6pt,yshift=-5pt] at(H4_3) {$v_6$};
   \node[font=\scriptsize,above] at(H4_3) {\color{blue}3};
  \node[font=\scriptsize,below] at(H4_1) {$v_7$};
   \node[font=\scriptsize,xshift=-6pt,yshift=3pt] at(H4_1) {\color{blue}2};
  \node[font=\scriptsize,above] at(H4_2) {$v_8$};
   \node[font=\scriptsize,xshift=-6pt,yshift=-3pt] at(H4_2) {\color{blue}3};
   \node[font=\scriptsize,above] at(H4_10) {$v_9$};
  \node[font=\scriptsize,xshift=-5pt,yshift=-5pt] at(H4_7) {$v_{10}$};
  \node[font=\scriptsize,right] at(H4_8) {$v_{11}$};
  \node[font=\scriptsize,right] at(H4_9) {$v_{12}$};
   \node[font=\scriptsize,left] at(H4_14) {$v_{13}$};
  \node[font=\scriptsize,right] at(H4_13) {$v_{14}$};
  \node[font=\scriptsize,below] at(H4_15) {$v_{15}$};
  \node[font=\scriptsize,below] at(H4_16) {$v_{16}$};
    \node[font=\scriptsize,below] at(H4_17) {$v_{17}$};
     \node[font=\scriptsize] at (0, -7.5) { Step 3};
\end{tikzpicture}
\end{center}
  \caption{Coloring $H_4^2$. The numbers at vertices are the number of available colors.} \label{H4fig-step-two}
\end{figure}

\noindent {\bf Step 1:} Since $|L_{H_4}(v_1)| \geq 4$ and $|L_{H_4}(v_3)| \geq 3$, there exists a color $c_1 \in L_{H_4}(v_1)$ such that $|L_{H_4}(v_3) \setminus \{c_1\}| \geq 3$.
Color $v_1$ by $c_1$, and greedily color $v_{15}, v_{16}, v_{17}$ in order.
Let $L_{H_4}'(v_i)$ be the list of available colors at $v_i \in V(H_4) \setminus \{v_1, v_{15}, v_{16}, v_{17}\}$ after Step 1,
where thier sizes are represented in Step 1 in Figure \ref{H4fig-step-two}.

\medskip
\noindent {\bf Step 2:}
Since $|L_{H_4}'(v_{10})| \geq 6$ and $|L_{H_4}'(v_8)| \geq 5$, there exists a color $c_{10} \in L_{H_4}'(v_{10})$ such that $|L_{H_4}'(v_8) \setminus \{c_{10}\}| \geq 5$.
Color $v_{10}$ by $c_{10}$, and greedily color $v_{13}, v_{14}, v_{12}, v_{11}, v_{9}$ in order.
Let $L_{H_4}''(v_i)$ be the list of available colors at $v_i \in \{v_2, v_{3}, v_{4}, v_{5}, v_6, v_7, v_8\}$ after Step 2,
where  their  sizes are represented in Step 2 in Figure \ref{H4fig-step-two}.

\medskip
\noindent {\bf Step 3:}
Since $|L_{H_4}''(v_5)| \geq 2$,
we can color $v_5$ by a color $c_5$ so that $L_{H_4}''(v_4) \neq L_{H_4}''(v_7) \setminus \{c_5\}$.

\medskip
Since $L_{H_4}''(v_4) \neq L_{H_4}''(v_7) \setminus \{c_5\}$,
$v_2, v_3, v_4, v_6, v_7, v_8$ are colorable from the list by Lemma \ref{cycle-six-original-second}.  So,
the vertices in $H_4$ can be colored from the list $L_{H_4}$
so that we obtain an $L$-coloring in $G^2$.
This completes the proof of Subcase 1.1.


\medskip \medskip

\noindent {\bf Subcase 1.2:}
$v_3$ is adjacent to a vertex in $\{v_{15}, v_{16}, v_{17}\}$ in $G^2$.

In this case,
$v_3$ or $v_4$ is adjacent to
a vertex in $\{v_{15}, v_{16}, v_{17}\}$ in $G$,
or $v_3$ has a common neighbor with a vertex in $\{v_{15}, v_{16}, v_{17}\}$ outside $H_4$ in $G$.

In Subcase 1.1, we color $v_1$ first with keeping the size of $L(v_3)$ at least 3 since $|L_{H_4}(v_1)| \geq 4$ and $|L_{H_4}(v_3)| \geq 3$. But,
in this case, we cannot start Step 1 in Case 1.1.  Hence we modify the procedure in this subcase.

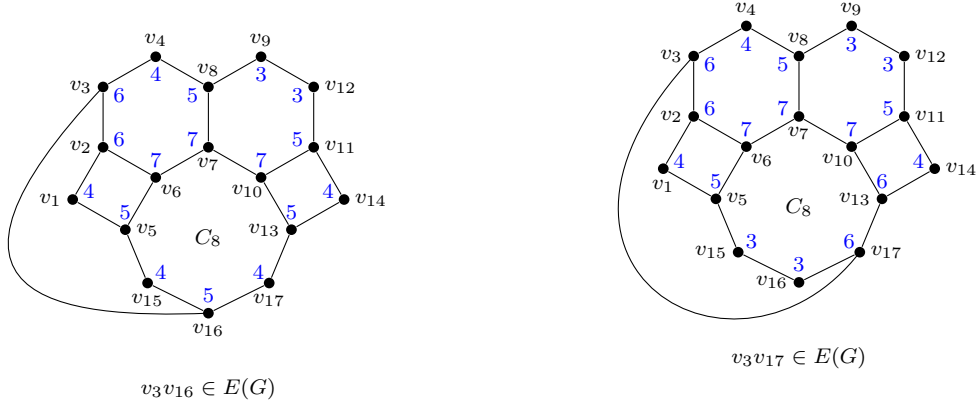
\begin{figure}[htbp]
  \begin{center}
\begin{tikzpicture}[
    v2/.style={fill=black,minimum size=4pt,ellipse,inner sep=1pt},
    scale=0.4
]
\node[v2] (H4_1) at (0, 0){};
    \node[v2] (H4_2) at (0, 2){};
    \node[v2] (H4_3) at (-1.732,-1){};
 \node[v2] (H4_4) at (-2*1.732,0){};
 \node[v2] (H4_5) at (-2*1.732,2){};
 \node[v2] (H4_6) at (-1.732,3){};
 \node[v2] (H4_7) at (1.732,-1){};
 \node[v2] (H4_8) at (2*1.732,0){};
 \node[v2] (H4_9) at (2*1.732,2){};
 \node[v2] (H4_10) at (1.732,3){};
 \node[v2] (H4_11) at (-4.464,-1.732){};
 \node[v2] (H4_12) at (-2.732,-2.732){};
 \node[v2] (H4_13) at (4.464,-1.732){};
 \node[v2] (H4_14) at (2.732,-2.732){};
 \node[v2] (H4_15) at (-2,-4.5){};
 \node[v2] (H4_16) at (0,-5.5){};
 \node[v2] (H4_17) at (2,-4.5){};

   \draw (H4_1) -- (H4_2) -- (H4_6) -- (H4_5) -- (H4_4) -- (H4_3)-- (H4_1);
   \draw (H4_2) -- (H4_10) -- (H4_9) -- (H4_8) -- (H4_7) -- (H4_1);
   \draw (H4_4) -- (H4_11) -- (H4_12) -- (H4_3);
   \draw (H4_8) -- (H4_13) -- (H4_14) -- (H4_7);
   \draw (H4_12) -- (H4_15) -- (H4_16) -- (H4_17) -- (H4_14);
     \draw (-2*1.732,2) ..controls (-10,-5)and (-5,-5.7) .. (0,-5.5);

    \node[font=\scriptsize] at (-0, -3) {$C_8$};

 \node[font=\scriptsize,left] at(H4_11) {$v_1$};
 \node[font=\scriptsize,xshift=6pt,yshift=3pt] at(H4_11) {\color{blue}4};
  \node[font=\scriptsize,left] at(H4_4) {$v_2$};
   \node[font=\scriptsize,xshift=6pt,yshift=3pt] at(H4_4) {\color{blue}6};
  \node[font=\scriptsize,left] at(H4_5) {$v_3$};
   \node[font=\scriptsize,xshift=6pt,yshift=-3pt] at(H4_5) {\color{blue}6};
  \node[font=\scriptsize,above] at(H4_6) {$v_4$};
   \node[font=\scriptsize,below] at(H4_6) {\color{blue}4};
   \node[font=\scriptsize,right] at(H4_12) {$v_5$};
    \node[font=\scriptsize,above] at(H4_12) {\color{blue}5};
  \node[font=\scriptsize,xshift=6pt,yshift=-5pt] at(H4_3) {$v_6$};
   \node[font=\scriptsize,above] at(H4_3) {\color{blue}7};
  \node[font=\scriptsize,below] at(H4_1) {$v_7$};
   \node[font=\scriptsize,xshift=-6pt,yshift=3pt] at(H4_1) {\color{blue}7};
  \node[font=\scriptsize,above] at(H4_2) {$v_8$};
   \node[font=\scriptsize,xshift=-6pt,yshift=-3pt] at(H4_2) {\color{blue}5};
   \node[font=\scriptsize,above] at(H4_10) {$v_9$};
    \node[font=\scriptsize,below] at(H4_10) {\color{blue}3};
  \node[font=\scriptsize,xshift=-5pt,yshift=-5pt] at(H4_7) {$v_{10}$};
   \node[font=\scriptsize,above] at(H4_7) {\color{blue}7};
  \node[font=\scriptsize,right] at(H4_8) {$v_{11}$};
   \node[font=\scriptsize,xshift=-6pt,yshift=3pt] at(H4_8) {\color{blue}5};
  \node[font=\scriptsize,right] at(H4_9) {$v_{12}$};
   \node[font=\scriptsize,xshift=-6pt,yshift=-3pt] at(H4_9) {\color{blue}3};
   \node[font=\scriptsize,left] at(H4_14) {$v_{13}$};
    \node[font=\scriptsize,above] at(H4_14) {\color{blue}5};
  \node[font=\scriptsize,right] at(H4_13) {$v_{14}$};
   \node[font=\scriptsize,xshift=-6pt,yshift=3pt] at(H4_13) {\color{blue}4};
  \node[font=\scriptsize,below] at(H4_15) {$v_{15}$};
   \node[font=\scriptsize,xshift=5pt,yshift=4pt] at(H4_15) {\color{blue}4};
  \node[font=\scriptsize,below] at(H4_16) {$v_{16}$};
     \node[font=\scriptsize,above] at(H4_16) {\color{blue}5};
    \node[font=\scriptsize,below] at(H4_17) {$v_{17}$};
       \node[font=\scriptsize,xshift=-4pt,yshift=4pt] at(H4_17) {\color{blue}4};
     \node[font=\scriptsize] at (0,-8) { $v_3v_{16}\in E(G)$};
\end{tikzpicture}
\hspace{1cm}
\begin{tikzpicture}[
    v2/.style={fill=black,minimum size=4pt,ellipse,inner sep=1pt},
    scale=0.4
]
\node[v2] (H4_1) at (0, 0){};
    \node[v2] (H4_2) at (0, 2){};
    \node[v2] (H4_3) at (-1.732,-1){};
 \node[v2] (H4_4) at (-2*1.732,0){};
 \node[v2] (H4_5) at (-2*1.732,2){};
 \node[v2] (H4_6) at (-1.732,3){};
 \node[v2] (H4_7) at (1.732,-1){};
 \node[v2] (H4_8) at (2*1.732,0){};
 \node[v2] (H4_9) at (2*1.732,2){};
 \node[v2] (H4_10) at (1.732,3){};
 \node[v2] (H4_11) at (-4.464,-1.732){};
 \node[v2] (H4_12) at (-2.732,-2.732){};
 \node[v2] (H4_13) at (4.464,-1.732){};
 \node[v2] (H4_14) at (2.732,-2.732){};
 \node[v2] (H4_15) at (-2,-4.5){};
 \node[v2] (H4_16) at (0,-5.5){};
 \node[v2] (H4_17) at (2,-4.5){};

   \draw (H4_1) -- (H4_2) -- (H4_6) -- (H4_5) -- (H4_4) -- (H4_3)-- (H4_1);
   \draw (H4_2) -- (H4_10) -- (H4_9) -- (H4_8) -- (H4_7) -- (H4_1);
   \draw (H4_4) -- (H4_11) -- (H4_12) -- (H4_3);
   \draw (H4_8) -- (H4_13) -- (H4_14) -- (H4_7);
   \draw (H4_12) -- (H4_15) -- (H4_16) -- (H4_17) -- (H4_14);
     \draw (-2*1.732,2) ..controls (-10,-5)and (-2,-9.7) .. (2,-4.5);

    \node[font=\scriptsize] at (-0, -3) {$C_8$};

 \node[font=\scriptsize,below] at(H4_11) {$v_1$};
 \node[font=\scriptsize,xshift=6pt,yshift=3pt] at(H4_11) {\color{blue}4};
  \node[font=\scriptsize,left] at(H4_4) {$v_2$};
   \node[font=\scriptsize,xshift=6pt,yshift=3pt] at(H4_4) {\color{blue}6};
  \node[font=\scriptsize,left] at(H4_5) {$v_3$};
   \node[font=\scriptsize,xshift=6pt,yshift=-3pt] at(H4_5) {\color{blue}6};
  \node[font=\scriptsize,above] at(H4_6) {$v_4$};
   \node[font=\scriptsize,below] at(H4_6) {\color{blue}4};
   \node[font=\scriptsize,right] at(H4_12) {$v_5$};
    \node[font=\scriptsize,above] at(H4_12) {\color{blue}5};
  \node[font=\scriptsize,xshift=6pt,yshift=-5pt] at(H4_3) {$v_6$};
   \node[font=\scriptsize,above] at(H4_3) {\color{blue}7};
  \node[font=\scriptsize,below] at(H4_1) {$v_7$};
   \node[font=\scriptsize,xshift=-6pt,yshift=3pt] at(H4_1) {\color{blue}7};
  \node[font=\scriptsize,above] at(H4_2) {$v_8$};
   \node[font=\scriptsize,xshift=-6pt,yshift=-3pt] at(H4_2) {\color{blue}5};
   \node[font=\scriptsize,above] at(H4_10) {$v_9$};
    \node[font=\scriptsize,below] at(H4_10) {\color{blue}3};
  \node[font=\scriptsize,xshift=-5pt,yshift=-5pt] at(H4_7) {$v_{10}$};
   \node[font=\scriptsize,above] at(H4_7) {\color{blue}7};
  \node[font=\scriptsize,right] at(H4_8) {$v_{11}$};
   \node[font=\scriptsize,xshift=-6pt,yshift=3pt] at(H4_8) {\color{blue}5};
  \node[font=\scriptsize,right] at(H4_9) {$v_{12}$};
   \node[font=\scriptsize,xshift=-6pt,yshift=-3pt] at(H4_9) {\color{blue}3};
   \node[font=\scriptsize,left] at(H4_14) {$v_{13}$};
    \node[font=\scriptsize,above] at(H4_14) {\color{blue}6};
  \node[font=\scriptsize,right] at(H4_13) {$v_{14}$};
   \node[font=\scriptsize,xshift=-6pt,yshift=3pt] at(H4_13) {\color{blue}4};
  \node[font=\scriptsize,left] at(H4_15) {$v_{15}$};
   \node[font=\scriptsize,xshift=5pt,yshift=4pt] at(H4_15) {\color{blue}3};
  \node[font=\scriptsize,left] at(H4_16) {$v_{16}$};
     \node[font=\scriptsize,above] at(H4_16) {\color{blue}3};
    \node[font=\scriptsize,right] at(H4_17) {$v_{17}$};
       \node[font=\scriptsize,xshift=-4pt,yshift=4pt] at(H4_17) {\color{blue}6};
     \node[font=\scriptsize] at (0,-8) { $v_3v_{17}\in E(G)$};
\end{tikzpicture}
\end{center}

\caption{Subcases 1.2.1. The numbers at vertices are the number of available colors.} \label{H4fig-bad-v3}
\end{figure}

\medskip

\noindent {\bf Subcase 1.2.1:} $v_3$ is adjacent to
a vertex in $\{v_{15}, v_{16}, v_{17}\}$ in $G$.

Since
$G$ has no $5$-cycle,
we only need to consider the case when $v_3$ is adjacent to $v_{16}$ or $v_{17}$.
In this case, the  number of available colors at vertices of $V(H_4)$ is
presented in Figure \ref{H4fig-bad-v3}.
By the planarity,
we see that $v_{12}$ is adjacent to none of $v_{15}, v_{16}, v_{17}$ in $G^2$,
and hence we can perform symmetrically the procedures in Subcase 1.1
using $v_{12}$ instead of $v_3$;
Color $v_{14}, v_{17}, v_{16}, v_{15}, v_6, v_5, v_1, v_3, v_2, v_4, v_{13}$ appropriately,
and then vertices $v_7, v_8, v_9, v_{10}, v_{11}, v_{12}$ are colorable
from the remaining lists by Lemma \ref{cycle-six-original-second}.
\\

\begin{figure}[htbp]
  \begin{center}
\begin{tikzpicture}[
    v2/.style={fill=black,minimum size=4pt,ellipse,inner sep=1pt},
    scale=0.4
]
\node[v2] (H4_1) at (0, 0){};
    \node[v2] (H4_2) at (0, 2){};
    \node[v2] (H4_3) at (-1.732,-1){};
 \node[v2] (H4_4) at (-2*1.732,0){};
 \node[v2] (H4_5) at (-2*1.732,2){};
 \node[v2] (H4_6) at (-1.732,3){};
 \node[v2] (H4_7) at (1.732,-1){};
 \node[v2] (H4_8) at (2*1.732,0){};
 \node[v2] (H4_9) at (2*1.732,2){};
 \node[v2] (H4_10) at (1.732,3){};
 \node[v2] (H4_11) at (-4.464,-1.732){};
 \node[v2] (H4_12) at (-2.732,-2.732){};
 \node[v2] (H4_13) at (4.464,-1.732){};
 \node[v2] (H4_14) at (2.732,-2.732){};
 \node[v2] (H4_15) at (-2,-4.5){};
 \node[v2] (H4_16) at (0,-5.5){};
 \node[v2] (H4_17) at (2,-4.5){};

   \draw (H4_1) -- (H4_2) -- (H4_6) -- (H4_5) -- (H4_4) -- (H4_3)-- (H4_1);
   \draw (H4_2) -- (H4_10) -- (H4_9) -- (H4_8) -- (H4_7) -- (H4_1);
   \draw (H4_4) -- (H4_11) -- (H4_12) -- (H4_3);
   \draw (H4_8) -- (H4_13) -- (H4_14) -- (H4_7);
   \draw (H4_12) -- (H4_15) -- (H4_16) -- (H4_17) -- (H4_14);
    \draw (-1.732,3) ..controls (-12,2)and (-4,-6) .. (-2,-4.5);

    \node[font=\scriptsize] at (-0, -3) {$C_8$};

 \node[font=\scriptsize,left] at(H4_11) {$v_1$};
 \node[font=\scriptsize,xshift=6pt,yshift=3pt] at(H4_11) {\color{blue}4};
  \node[font=\scriptsize,left] at(H4_4) {$v_2$};
   \node[font=\scriptsize,xshift=6pt,yshift=3pt] at(H4_4) {\color{blue}5};
  \node[font=\scriptsize,left] at(H4_5) {$v_3$};
   \node[font=\scriptsize,xshift=6pt,yshift=-3pt] at(H4_5) {\color{blue}4};
  \node[font=\scriptsize,above] at(H4_6) {$v_4$};
   \node[font=\scriptsize,below] at(H4_6) {\color{blue}6};
   \node[font=\scriptsize,right] at(H4_12) {$v_5$};
    \node[font=\scriptsize,above] at(H4_12) {\color{blue}6};
  \node[font=\scriptsize,xshift=6pt,yshift=-5pt] at(H4_3) {$v_6$};
   \node[font=\scriptsize,above] at(H4_3) {\color{blue}7};
  \node[font=\scriptsize,below] at(H4_1) {$v_7$};
   \node[font=\scriptsize,xshift=-6pt,yshift=3pt] at(H4_1) {\color{blue}7};
  \node[font=\scriptsize,above] at(H4_2) {$v_8$};
   \node[font=\scriptsize,xshift=-6pt,yshift=-3pt] at(H4_2) {\color{blue}6};
   \node[font=\scriptsize,above] at(H4_10) {$v_9$};
    \node[font=\scriptsize,below] at(H4_10) {\color{blue}3};
  \node[font=\scriptsize,xshift=-5pt,yshift=-5pt] at(H4_7) {$v_{10}$};
   \node[font=\scriptsize,above] at(H4_7) {\color{blue}7};
  \node[font=\scriptsize,right] at(H4_8) {$v_{11}$};
   \node[font=\scriptsize,xshift=-6pt,yshift=3pt] at(H4_8) {\color{blue}5};
  \node[font=\scriptsize,right] at(H4_9) {$v_{12}$};
   \node[font=\scriptsize,xshift=-6pt,yshift=-3pt] at(H4_9) {\color{blue}3};
   \node[font=\scriptsize,left] at(H4_14) {$v_{13}$};
    \node[font=\scriptsize,above] at(H4_14) {\color{blue}5};
  \node[font=\scriptsize,right] at(H4_13) {$v_{14}$};
   \node[font=\scriptsize,xshift=-6pt,yshift=3pt] at(H4_13) {\color{blue}4};
  \node[font=\scriptsize,below] at(H4_15) {$v_{15}$};
   \node[font=\scriptsize,xshift=5pt,yshift=4pt] at(H4_15) {\color{blue}6};
  \node[font=\scriptsize,below] at(H4_16) {$v_{16}$};
     \node[font=\scriptsize,above] at(H4_16) {\color{blue}3};
    \node[font=\scriptsize,below] at(H4_17) {$v_{17}$};
       \node[font=\scriptsize,xshift=-4pt,yshift=4pt] at(H4_17) {\color{blue}3};
     \node[font=\scriptsize] at (0, -7.5) {$v_4v_{15}\in E(G)$};
\end{tikzpicture}
\hspace{1cm}
\begin{tikzpicture}[
    v2/.style={fill=black,minimum size=4pt,ellipse,inner sep=1pt},
    scale=0.4
]
\node[v2] (H4_1) at (0, 0){};
    \node[v2] (H4_2) at (0, 2){};
    \node[v2] (H4_3) at (-1.732,-1){};
 \node[v2] (H4_4) at (-2*1.732,0){};
 \node[v2] (H4_5) at (-2*1.732,2){};
 \node[v2] (H4_6) at (-1.732,3){};
 \node[v2] (H4_7) at (1.732,-1){};
 \node[v2] (H4_8) at (2*1.732,0){};
 \node[v2] (H4_9) at (2*1.732,2){};
 \node[v2] (H4_10) at (1.732,3){};
 \node[v2] (H4_11) at (-4.464,-1.732){};
 \node[v2] (H4_12) at (-2.732,-2.732){};
 \node[v2] (H4_13) at (4.464,-1.732){};
 \node[v2] (H4_14) at (2.732,-2.732){};
 \node[v2] (H4_15) at (-2,-4.5){};
 \node[v2] (H4_16) at (0,-5.5){};
 \node[v2] (H4_17) at (2,-4.5){};

   \draw (H4_1) -- (H4_2) -- (H4_6) -- (H4_5) -- (H4_4) -- (H4_3)-- (H4_1);
   \draw (H4_2) -- (H4_10) -- (H4_9) -- (H4_8) -- (H4_7) -- (H4_1);
   \draw (H4_4) -- (H4_11) -- (H4_12) -- (H4_3);
   \draw (H4_8) -- (H4_13) -- (H4_14) -- (H4_7);
   \draw (H4_12) -- (H4_15) -- (H4_16) -- (H4_17) -- (H4_14);
    \draw (-1.732,3) ..controls (-12,2)and (-4,-7) .. (0,-5.5);

    \node[font=\scriptsize] at (-0, -3) {$C_8$};

 \node[font=\scriptsize,left] at(H4_11) {$v_1$};
 \node[font=\scriptsize,xshift=6pt,yshift=3pt] at(H4_11) {\color{blue}4};
  \node[font=\scriptsize,left] at(H4_4) {$v_2$};
   \node[font=\scriptsize,xshift=6pt,yshift=3pt] at(H4_4) {\color{blue}5};
  \node[font=\scriptsize,left] at(H4_5) {$v_3$};
   \node[font=\scriptsize,xshift=6pt,yshift=-3pt] at(H4_5) {\color{blue}4};
  \node[font=\scriptsize,above] at(H4_6) {$v_4$};
   \node[font=\scriptsize,below] at(H4_6) {\color{blue}6};
   \node[font=\scriptsize,right] at(H4_12) {$v_5$};
    \node[font=\scriptsize,above] at(H4_12) {\color{blue}5};
  \node[font=\scriptsize,xshift=6pt,yshift=-5pt] at(H4_3) {$v_6$};
   \node[font=\scriptsize,above] at(H4_3) {\color{blue}7};
  \node[font=\scriptsize,below] at(H4_1) {$v_7$};
   \node[font=\scriptsize,xshift=-6pt,yshift=3pt] at(H4_1) {\color{blue}7};
  \node[font=\scriptsize,above] at(H4_2) {$v_8$};
   \node[font=\scriptsize,xshift=-6pt,yshift=-3pt] at(H4_2) {\color{blue}6};
   \node[font=\scriptsize,above] at(H4_10) {$v_9$};
    \node[font=\scriptsize,below] at(H4_10) {\color{blue}3};
  \node[font=\scriptsize,xshift=-5pt,yshift=-5pt] at(H4_7) {$v_{10}$};
   \node[font=\scriptsize,above] at(H4_7) {\color{blue}7};
  \node[font=\scriptsize,right] at(H4_8) {$v_{11}$};
   \node[font=\scriptsize,xshift=-6pt,yshift=3pt] at(H4_8) {\color{blue}5};
  \node[font=\scriptsize,right] at(H4_9) {$v_{12}$};
   \node[font=\scriptsize,xshift=-6pt,yshift=-3pt] at(H4_9) {\color{blue}3};
   \node[font=\scriptsize,left] at(H4_14) {$v_{13}$};
    \node[font=\scriptsize,above] at(H4_14) {\color{blue}5};
  \node[font=\scriptsize,right] at(H4_13) {$v_{14}$};
   \node[font=\scriptsize,xshift=-6pt,yshift=3pt] at(H4_13) {\color{blue}4};
  \node[font=\scriptsize,below] at(H4_15) {$v_{15}$};
   \node[font=\scriptsize,xshift=5pt,yshift=4pt] at(H4_15) {\color{blue}4};
  \node[font=\scriptsize,below] at(H4_16) {$v_{16}$};
     \node[font=\scriptsize,above] at(H4_16) {\color{blue}5};
    \node[font=\scriptsize,below] at(H4_17) {$v_{17}$};
       \node[font=\scriptsize,xshift=-4pt,yshift=4pt] at(H4_17) {\color{blue}4};
     \node[font=\scriptsize] at (0, -7.5) {$v_4v_{16}\in E(G)$};
\end{tikzpicture}
\end{center}

\caption{Subcase 1.2.2. The numbers at vertices are the number of available colors.} \label{H4fig-bad-v3-two}
\end{figure}
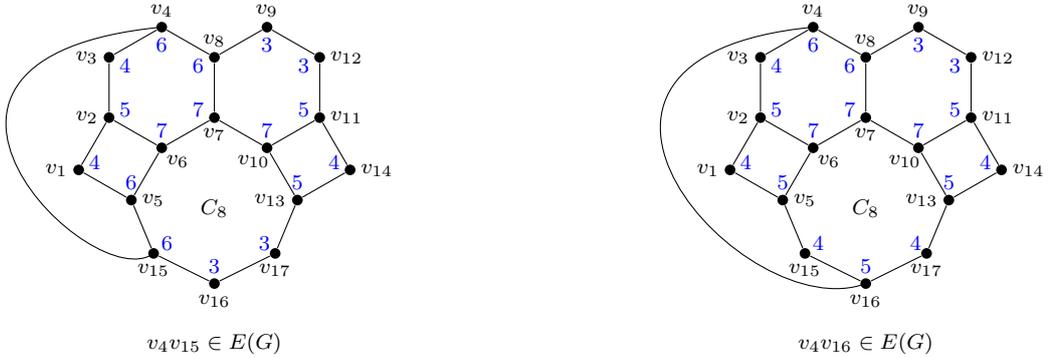


\noindent {\bf Subcase 1.2.2:} $v_4$ is adjacent to
a vertex in $\{v_{15}, v_{16}, v_{17}\}$ in $G$.

For the case when $v_4$ is adjacent to $v_{15}$ or $v_{16}$,
the number of available colors at vertices of $V(H_4)$ is
presented in Figure \ref{H4fig-bad-v3-two}.
At each case, the proof is the same style.  As an example, we provide the proof of the case
when $v_4v_{16} \in E(G)$.
We first color $v_{16}$ by a color $c_{16} \in L_{H_4}(v_{16})$
so that $|L_{H_4}(v_3) \setminus \{c_{16}\}| \geq 4$,
and then
color $v_1$, $v_{15}$, $v_{17}$, $v_{10}$, $v_{13}, v_{14}, v_{12}, v_{11}, v_9, v_5$
in order by the same procedure as in Subcase 1.1.
Then we can show that
the vertices $v_2, v_3, v_4, v_{6}, v_{7}, v_{8}$ are colorable
from the remaining lists by Lemma \ref{cycle-six-original-second}.

If $v_4$ is adjacent to $v_{17}$, then $v_{12}$ is adjacent to none of $v_{15}, v_{16}, v_{17}$ in $G^2$, and hence we can perform symmetrically the procedures in Subcase 1.1
using $v_{12}$ instead of $v_3$.

\begin{figure}[htbp]
  \begin{center}

\begin{tikzpicture}[
    v2/.style={fill=black,minimum size=4pt,ellipse,inner sep=1pt},
    scale=0.4
]
\node[v2] (H4_1) at (0, 0){};
    \node[v2] (H4_2) at (0, 2){};
    \node[v2] (H4_3) at (-1.732,-1){};
 \node[v2] (H4_4) at (-2*1.732,0){};
 \node[v2] (H4_5) at (-2*1.732,2){};
 \node[v2] (H4_6) at (-1.732,3){};
 \node[v2] (H4_7) at (1.732,-1){};
 \node[v2] (H4_8) at (2*1.732,0){};
 \node[v2] (H4_9) at (2*1.732,2){};
 \node[v2] (H4_10) at (1.732,3){};
 \node[v2] (H4_11) at (-4.464,-1.732){};
 \node[v2] (H4_12) at (-2.732,-2.732){};
 \node[v2] (H4_13) at (4.464,-1.732){};
 \node[v2] (H4_14) at (2.732,-2.732){};
 \node[v2] (H4_15) at (-2,-4.5){};
 \node[v2] (H4_16) at (0,-5.5){};
 \node[v2] (H4_17) at (2,-4.5){};
  \node[v2] (H4_18) at (-6,-3){};

   \draw (H4_1) -- (H4_2) -- (H4_6) -- (H4_5) -- (H4_4) -- (H4_3)-- (H4_1);
   \draw (H4_2) -- (H4_10) -- (H4_9) -- (H4_8) -- (H4_7) -- (H4_1);
   \draw (H4_4) -- (H4_11) -- (H4_12) -- (H4_3);
   \draw (H4_8) -- (H4_13) -- (H4_14) -- (H4_7);
   \draw (H4_12) -- (H4_15) -- (H4_16) -- (H4_17) -- (H4_14);

 \draw(H4_5)to[bend right=30](H4_18);
  \draw(H4_18)to[bend right=20](H4_15);

    \node[font=\scriptsize] at (-0, -3) {$C_8$};

 \node[font=\scriptsize,left] at(H4_11) {$v_1$};
 \node[font=\scriptsize,xshift=6pt,yshift=3pt] at(H4_11) {\color{blue}4};
  \node[font=\scriptsize,left] at(H4_4) {$v_2$};
   \node[font=\scriptsize,xshift=6pt,yshift=3pt] at(H4_4) {\color{blue}6};
  \node[font=\scriptsize,left] at(H4_5) {$v_3$};
   \node[font=\scriptsize,xshift=6pt,yshift=-3pt] at(H4_5) {\color{blue}5};
  \node[font=\scriptsize,above] at(H4_6) {$v_4$};
   \node[font=\scriptsize,below] at(H4_6) {\color{blue}4};
   \node[font=\scriptsize,right] at(H4_12) {$v_5$};
    \node[font=\scriptsize,above] at(H4_12) {\color{blue}6};
  \node[font=\scriptsize,xshift=6pt,yshift=-5pt] at(H4_3) {$v_6$};
   \node[font=\scriptsize,above] at(H4_3) {\color{blue}7};
  \node[font=\scriptsize,below] at(H4_1) {$v_7$};
   \node[font=\scriptsize,xshift=-6pt,yshift=3pt] at(H4_1) {\color{blue}7};
  \node[font=\scriptsize,above] at(H4_2) {$v_8$};
   \node[font=\scriptsize,xshift=-6pt,yshift=-3pt] at(H4_2) {\color{blue}5};
   \node[font=\scriptsize,above] at(H4_10) {$v_9$};
    \node[font=\scriptsize,below] at(H4_10) {\color{blue}3};
  \node[font=\scriptsize,xshift=-5pt,yshift=-5pt] at(H4_7) {$v_{10}$};
   \node[font=\scriptsize,above] at(H4_7) {\color{blue}7};
  \node[font=\scriptsize,right] at(H4_8) {$v_{11}$};
   \node[font=\scriptsize,xshift=-6pt,yshift=3pt] at(H4_8) {\color{blue}5};
  \node[font=\scriptsize,right] at(H4_9) {$v_{12}$};
   \node[font=\scriptsize,xshift=-6pt,yshift=-3pt] at(H4_9) {\color{blue}3};
   \node[font=\scriptsize,left] at(H4_14) {$v_{13}$};
    \node[font=\scriptsize,above] at(H4_14) {\color{blue}5};
  \node[font=\scriptsize,right] at(H4_13) {$v_{14}$};
   \node[font=\scriptsize,xshift=-6pt,yshift=3pt] at(H4_13) {\color{blue}4};
  \node[font=\scriptsize,below] at(H4_15) {$v_{15}$};
   \node[font=\scriptsize,xshift=5pt,yshift=4pt] at(H4_15) {\color{blue}5};
  \node[font=\scriptsize,below] at(H4_16) {$v_{16}$};
     \node[font=\scriptsize,above] at(H4_16) {\color{blue}3};
    \node[font=\scriptsize,below] at(H4_17) {$v_{17}$};
       \node[font=\scriptsize,xshift=-4pt,yshift=4pt] at(H4_17) {\color{blue}3};
          \node[font=\scriptsize,below] at(H4_18) {$w$};
       \node[font=\scriptsize,xshift=6pt,yshift=4pt] at(H4_18) {\color{blue}4};
     \node[font=\scriptsize] at (0,-8) { Step (i)};
\end{tikzpicture}\hspace{2cm}
\begin{tikzpicture}[
    v2/.style={fill=black,minimum size=4pt,ellipse,inner sep=1pt},
    scale=0.4
]
\node[v2] (H4_1) at (0, 0){};
    \node[v2] (H4_2) at (0, 2){};
    \node[v2] (H4_3) at (-1.732,-1){};
 \node[v2] (H4_4) at (-2*1.732,0){};
 \node[v2] (H4_5) at (-2*1.732,2){};
 \node[v2] (H4_6) at (-1.732,3){};
 \node[v2] (H4_7) at (1.732,-1){};
 \node[v2] (H4_8) at (2*1.732,0){};
 \node[v2] (H4_9) at (2*1.732,2){};
 \node[v2] (H4_10) at (1.732,3){};
 \node[v2] (H4_11) at (-4.464,-1.732){};
 \node[v2] (H4_12) at (-2.732,-2.732){};
 \node[v2] (H4_13) at (4.464,-1.732){};
 \node[v2] (H4_14) at (2.732,-2.732){};
 \node[v2] (H4_15) at (-2,-4.5){};
 \node[v2] (H4_16) at (0,-5.5){};
 \node[v2] (H4_17) at (2,-4.5){};
  \node[v2] (H4_18) at (-6,-3){};

   \draw (H4_1) -- (H4_2) -- (H4_6) -- (H4_5) -- (H4_4) -- (H4_3)-- (H4_1);
   \draw (H4_2) -- (H4_10) -- (H4_9) -- (H4_8) -- (H4_7) -- (H4_1);
   \draw (H4_4) -- (H4_11) -- (H4_12) -- (H4_3);
   \draw (H4_8) -- (H4_13) -- (H4_14) -- (H4_7);
   \draw (H4_12) -- (H4_15) -- (H4_16) -- (H4_17) -- (H4_14);

 \draw(H4_5)to[bend right=30](H4_18);
  \draw(H4_18)to[bend right=20](H4_15);

    \node[font=\scriptsize] at (-0, -3) {$C_8$};

 \node[font=\scriptsize,left] at(H4_11) {$v_1$};
  \node[font=\scriptsize,left] at(H4_4) {$v_2$};
   \node[font=\scriptsize,xshift=6pt,yshift=3pt] at(H4_4) {\color{blue}4};
  \node[font=\scriptsize,left] at(H4_5) {$v_3$};
   \node[font=\scriptsize,xshift=6pt,yshift=-3pt] at(H4_5) {\color{blue}4};
  \node[font=\scriptsize,above] at(H4_6) {$v_4$};
   \node[font=\scriptsize,below] at(H4_6) {\color{blue}3};
   \node[font=\scriptsize,right] at(H4_12) {$v_5$};
    \node[font=\scriptsize,above] at(H4_12) {\color{blue}4};
  \node[font=\scriptsize,xshift=6pt,yshift=-5pt] at(H4_3) {$v_6$};
   \node[font=\scriptsize,above] at(H4_3) {\color{blue}6};
  \node[font=\scriptsize,below] at(H4_1) {$v_7$};
   \node[font=\scriptsize,xshift=-6pt,yshift=3pt] at(H4_1) {\color{blue}7};
  \node[font=\scriptsize,above] at(H4_2) {$v_8$};
   \node[font=\scriptsize,xshift=-6pt,yshift=-3pt] at(H4_2) {\color{blue}5};
   \node[font=\scriptsize,above] at(H4_10) {$v_9$};
    \node[font=\scriptsize,below] at(H4_10) {\color{blue}3};
  \node[font=\scriptsize,xshift=-5pt,yshift=-5pt] at(H4_7) {$v_{10}$};
   \node[font=\scriptsize,above] at(H4_7) {\color{blue}7};
  \node[font=\scriptsize,right] at(H4_8) {$v_{11}$};
   \node[font=\scriptsize,xshift=-6pt,yshift=3pt] at(H4_8) {\color{blue}5};
  \node[font=\scriptsize,right] at(H4_9) {$v_{12}$};
   \node[font=\scriptsize,xshift=-6pt,yshift=-3pt] at(H4_9) {\color{blue}3};
   \node[font=\scriptsize,left] at(H4_14) {$v_{13}$};
    \node[font=\scriptsize,above] at(H4_14) {\color{blue}5};
  \node[font=\scriptsize,right] at(H4_13) {$v_{14}$};
   \node[font=\scriptsize,xshift=-6pt,yshift=3pt] at(H4_13) {\color{blue}4};
  \node[font=\scriptsize,below] at(H4_15) {$v_{15}$};
   \node[font=\scriptsize,xshift=5pt,yshift=4pt] at(H4_15) {\color{blue}3};
  \node[font=\scriptsize,below] at(H4_16) {$v_{16}$};
     \node[font=\scriptsize,above] at(H4_16) {\color{blue}2};
    \node[font=\scriptsize,below] at(H4_17) {$v_{17}$};
       \node[font=\scriptsize,xshift=-4pt,yshift=4pt] at(H4_17) {\color{blue}3};
          \node[font=\scriptsize,below] at(H4_18) {$w$};
     \node[font=\scriptsize] at (0,-8) { Step (ii)};
\end{tikzpicture}
\end{center}
\caption{Subcase 1.2.3. The numbers at vertices are the number of available colors.} \label{H4fig-step-two-bad-case}
\end{figure}
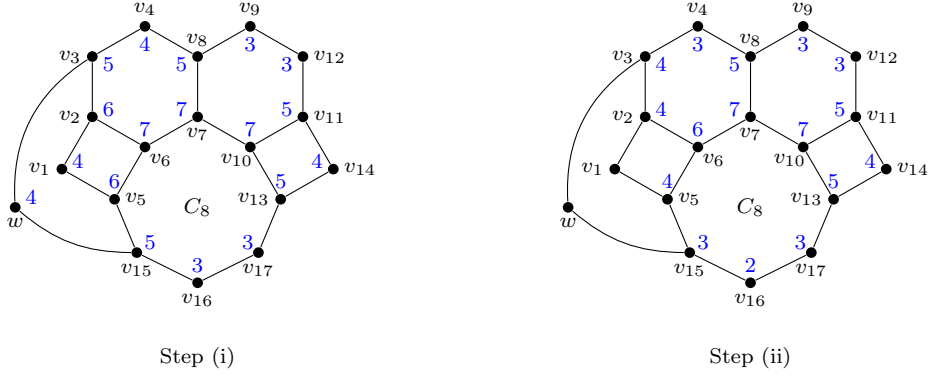

\medskip
\noindent {\bf Subcase 1.2.3:} $v_3$ has a common neighbor $w$
with a vertex in $\{v_{15}, v_{16}, v_{17}\}$ in $G$,
where $w \notin V(H_4)$.
\medskip

In this case, we know that the distance between $v_1$ and $w$ in $G$ is at least 3.  In fact, if
$v_1$ is adjacent to $w$, then $v_1, v_2, v_3, w$ form a 4-cycle,
which forms $H_1$ but contradicts Lemma \ref{reducible-H0}.
So $v_1$ and $w$ cannot be adjacent.  And if $v_1$ and $w$ have a common neighbor, then it makes a 5-cycle, a contradiction.

We uncolor $w$
and we define for each $v_i \in V(H_4) \cup \{w\}$,
\begin{equation*} \label{list-L}
L_{H_4}'(v_i) = L(v_i) \setminus \{\phi(x) : xv_i \in E(G^2) \mbox{ and } x \notin V(H_4) \cup \{w\}\}.
\end{equation*}

As an illustration, we consider the case when $v_3$ has a common neighbor with $v_{15}$,
but the other cases are similarly shown.
In this case, the size of color list is like Step (i) in Figure \ref{H4fig-step-two-bad-case}.

If $L_{H_4}'(v_1) \cap L_{H_4}'(w) \neq \emptyset$, then color $v_1$ and $w$ by a color $c_1 \in L_{H_4}'(v_1) \cap L_{H_4}'(w)$.  If $L_{H_4}'(v_1) \cap L_{H_4}'(w) = \emptyset$, then since $|L_{H_4}(v_1)' \cup L_{H_4}'(w)| \geq 8$ and $|L_{H_4}'(v_3)| \geq 5$, we can color $v_1$ by a color $c_1$ and $w$ by a color $c_w$ so that $|L_{H_4}'(v_3) \setminus \{c_1, c_w\}| \geq 4$.
In either case,
the sizes of list after coloring $v_1$ and $w$
are represented in Step (ii) in Figure \ref{H4fig-step-two-bad-case}.
And then, we follow the same procedure as Subcase 1.1.
We can show that
the vertices in $V(H_4) \cup \{w\}$ can be colored from the list $L_{H_4}'$
so that we obtain an $L$-coloring in $G^2$.

This completes the proof of Subcase 1.2. \\

Before we begin Cases 2 and 3,
we investigate some useful remark on
the structures
about the vertices $v_5$ and $v_{13}$ in $J_6$.

\begin{remark} \label{restrict-case} \rm
When  $J^2_6$  is not an induced subgraph in $G^2$,
the following holds.
\begin{enumerate}[(a)]
\item $v_{13}$ is adjacent to no vertex in $V(J_6)\setminus\{v_{10}, v_{14}\}$.
(By symmetry, $v_{5}$ is adjacent to no vertex in $V(J_6)\setminus\{v_{1}, v_{6}\}$.)
We can see this remark as follows.
\begin{enumerate}[(i)]
\item If $v_{13}$ is adjacent to $v_{1}$, then the face $F$ has length at least 9 since  $v_{5}$ has its third neighbor $v_{15}$ and $G$ has no 1-vertex (see Case (a) in Figure \ref{H4-remark-fig}).  So, $v_{13}$ is  not adjacent to $v_1$.
By the same argument, $v_{13}$ is not adjacent to $v_{3}$.

\item If $v_{13}$ is  adjacent to $v_4$ or $v_9$, then it makes a 5-cycle, which is forbidden.

\item If $v_{13}$ is adjacent to $v_{12}$, then it makes $J_3$,
which does not exist by Lemma \ref{C4-share-two-edge}.
 So $v_{13}$ is not adjacent to $v_{12}$.
\end{enumerate}

\begin{figure}[htbp]
  \begin{center}

\begin{tikzpicture}[
    v2/.style={fill=black,minimum size=4pt,ellipse,inner sep=1pt},invis/.style={
    draw=none, fill=none,minimum size=0pt, inner sep=0pt
},
    scale=0.4
]
\node[v2] (H4_1) at (0, 0){};
    \node[v2] (H4_2) at (0, 2){};
    \node[v2] (H4_3) at (-1.732,-1){};
 \node[v2] (H4_4) at (-2*1.732,0){};
 \node[v2] (H4_5) at (-2*1.732,2){};
 \node[v2] (H4_6) at (-1.732,3){};
 \node[v2] (H4_7) at (1.732,-1){};
 \node[v2] (H4_8) at (2*1.732,0){};
 \node[v2] (H4_9) at (2*1.732,2){};
 \node[v2] (H4_10) at (1.732,3){};
 \node[v2] (H4_11) at (-4.464,-1.732){};
 \node[v2] (H4_12) at (-2.732,-2.732){};
 \node[v2] (H4_13) at (4.464,-1.732){};
 \node[v2] (H4_14) at (2.732,-2.732){};

 \node[v2] (H4_15) at (-1.7,-3.5){};

   \draw (H4_1) -- (H4_2) -- (H4_6) -- (H4_5) -- (H4_4) -- (H4_3)-- (H4_1);
   \draw (H4_2) -- (H4_10) -- (H4_9) -- (H4_8) -- (H4_7) -- (H4_1);
   \draw (H4_4) -- (H4_11) -- (H4_12) -- (H4_3);
   \draw (H4_8) -- (H4_13) -- (H4_14) -- (H4_7);
   \draw(H4_14)to[bend left=90] (H4_11);
  \draw (H4_12) -- (H4_15);
    \node[font=\scriptsize] at (-0, -2.4) {$F$};

 \node[font=\scriptsize,left] at(H4_11) {$v_1$};
  \node[font=\scriptsize,left] at(H4_4) {$v_2$};
  \node[font=\scriptsize,left] at(H4_5) {$v_3$};
  \node[font=\scriptsize,above] at(H4_6) {$v_4$};
   \node[font=\scriptsize,right] at(H4_12) {$v_5$};
  \node[font=\scriptsize,xshift=6pt,yshift=-5pt] at(H4_3) {$v_6$};
  \node[font=\scriptsize,below] at(H4_1) {$v_7$};
  \node[font=\scriptsize,above] at(H4_2) {$v_8$};
   \node[font=\scriptsize,above] at(H4_10) {$v_9$};
  \node[font=\scriptsize,xshift=-5pt,yshift=-5pt] at(H4_7) {$v_{10}$};
  \node[font=\scriptsize,right] at(H4_8) {$v_{11}$};
  \node[font=\scriptsize,right] at(H4_9) {$v_{12}$};
   \node[font=\scriptsize,left] at(H4_14) {$v_{13}$};
  \node[font=\scriptsize,right] at(H4_13) {$v_{14}$};

   \node[font=\scriptsize,right] at(H4_15) {$v_{15}$};
     \node[font=\scriptsize] at (0,-8) { Case (a)};
\end{tikzpicture}\hspace{2cm}
\begin{tikzpicture}[
    v2/.style={fill=black,minimum size=4pt,ellipse,inner sep=1pt},
    scale=0.4
]
\node[v2] (H4_1) at (0, 0){};
    \node[v2] (H4_2) at (0, 2){};
    \node[v2] (H4_3) at (-1.732,-1){};
 \node[v2] (H4_4) at (-2*1.732,0){};
 \node[v2] (H4_5) at (-2*1.732,2){};
 \node[v2] (H4_6) at (-1.732,3){};
 \node[v2] (H4_7) at (1.732,-1){};
 \node[v2] (H4_8) at (2*1.732,0){};
 \node[v2] (H4_9) at (2*1.732,2){};
 \node[v2] (H4_10) at (1.732,3){};
 \node[v2] (H4_11) at (-4.464,-1.732){};
 \node[v2] (H4_12) at (-2.732,-2.732){};
 \node[v2] (H4_13) at (4.464,-1.732){};
 \node[v2] (H4_14) at (2.732,-2.732){};
 \node[v2] (H4_15) at (0,-5.5){};

 \node[v2] (H4_16) at (-0.911, -4.577){};
 \node[v2] (H4_17) at (-1.821, -3.655){};

   \draw (H4_1) -- (H4_2) -- (H4_6) -- (H4_5) -- (H4_4) -- (H4_3)-- (H4_1);
   \draw (H4_2) -- (H4_10) -- (H4_9) -- (H4_8) -- (H4_7) -- (H4_1);
   \draw (H4_4) -- (H4_11) -- (H4_12) -- (H4_3);
   \draw (H4_8) -- (H4_13) -- (H4_14) -- (H4_7);
    \draw (H4_12) -- (H4_17) -- (H4_16) -- (H4_15);

\draw(H4_14)to[bend left=20] (H4_15);
\draw(H4_11)to[bend right=30] (H4_15);
    \node[font=\scriptsize] at (-0, -2.4) {$F$};

 \node[font=\scriptsize,left] at(H4_11) {$v_1$};
  \node[font=\scriptsize,left] at(H4_4) {$v_2$};
  \node[font=\scriptsize,left] at(H4_5) {$v_3$};
  \node[font=\scriptsize,above] at(H4_6) {$v_4$};
   \node[font=\scriptsize,right] at(H4_12) {$v_5$};
  \node[font=\scriptsize,xshift=6pt,yshift=-5pt] at(H4_3) {$v_6$};
  \node[font=\scriptsize,below] at(H4_1) {$v_7$};
  \node[font=\scriptsize,above] at(H4_2) {$v_8$};
   \node[font=\scriptsize,above] at(H4_10) {$v_9$};
  \node[font=\scriptsize,xshift=-5pt,yshift=-5pt] at(H4_7) {$v_{10}$};
  \node[font=\scriptsize,right] at(H4_8) {$v_{11}$};
  \node[font=\scriptsize,right] at(H4_9) {$v_{12}$};
   \node[font=\scriptsize,left] at(H4_14) {$v_{13}$};
  \node[font=\scriptsize,right] at(H4_13) {$v_{14}$};
  \node[font=\scriptsize,below] at(H4_15) {$v_{17}$};
  \node[font=\scriptsize,right] at(H4_16) {$v_{16}$};
    \node[font=\scriptsize,right] at(H4_17) {$v_{15}$};

     \node[font=\scriptsize] at (0,-8) { Case (b)};
\end{tikzpicture}
\end{center}
\caption{$v_1$ and $v_{13}$ are adjacent in Case (a), $v_1$ and $v_{13}$ have a common neighbor $w$ in Case (b).} \label{H4-remark-fig}
\end{figure}
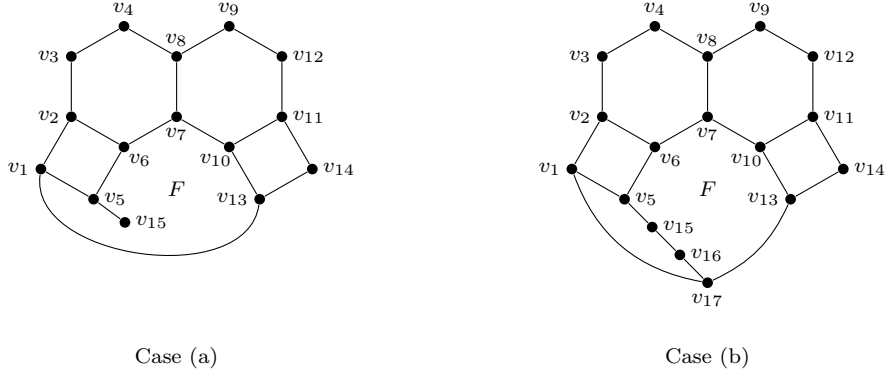

\item $v_1$ and $v_{13}$ cannot have a common neighbor outside $J_6$.
(By symmetry $v_5$ and $v_{14}$ cannot have a common neighbor outside $J_6$.)
We can see this remark as follows.

If $v_1$ and $v_{13}$ have a common neighbor outside $J_6$,
that has to be $v_{17}$,
then there must be a path $v_{17} v_{16} v_{15} v_5$
since the face $F$ has length 8 (see Case (b) in Figure \ref{H4-remark-fig}).
Then
$v_1v_5v_{15} v_{16} v_{17} v_1$
forms a 5-cycle, which is forbidden.  So,  $v_1$ and $v_{13}$ cannot have a common neighbor outside $J_6$.
\end{enumerate}
\end{remark}

\noindent {\bf Case 2:} $J_6^2$ is not an induced subgraph of $G^2$
and  $E(G[V(J_6)]) - E(J_6) \neq \emptyset$.

Let $G' = G - V(J_6)$.
Then $G'$ is also a subcubic planar graph and $|V(G')| < |V(G)|$.
Since $G$ is a minimal counterexample to Theorem \ref{main-thm},
the square of $G'$ has a proper coloring $\phi$ such that $\phi(v) \in L(v)$ for each vertex $v \in V(G')$.
For each $v_i \in V(J_6)$, we define \[
L_{J_6}(v_i) = L(v_i) \setminus \{\phi(x) : xv_i \in E(G^2) \mbox{ and } x \notin V(J_6)\}.
\]
Before we proceed with Case 2,  we simplify cases.


\medskip

\noindent {\bf Simplifying cases:}
\begin{itemize}
\item
As in Remark \ref{restrict-case}(a),
$v_{13}$ is adjacent to no vertex in $V(J_6)\setminus\{v_{10}, v_{14}\}$,
and  $v_{5}$ is adjacent to no vertex in $V(J_6)\setminus\{v_{1}, v_{6}\}$.

\item The vertices in each of the following pairs are nonadjacent
since it makes a 5-cycle:
$\{v_1, v_3\}$, $\{v_3, v_{12}\}$, and $\{v_{12}, v_{14}\}$.

\item The vertices in each of the following pairs are nonadjacent since it makes $F_3$,
which does not exist by Lemma \ref{C3-C6}(b):
$\{v_{4}, v_{9}\}$.

\item The vertices in each of the following pairs are nonadjacent since it makes $H_1$,
which does not exist by Lemma \ref{reducible-H0}:
$\{v_1, v_4\}$, and $\{v_9, v_{14}\}$.

\item $v_3$ and $v_{9}$ cannot be adjacent in $G$ since it makes $J_5$,
consisting of the 4-cycles $v_3v_4v_8v_{9}v_3$ and $v_{10}v_{11}v_{14}v_{13}v_{10}$,
and the 6-cycle $v_7v_8v_9v_{12}v_{11}v_{10}v_7$,
a contradiction to Lemma \ref{H2-type-two-reducible}.

Symmetrically,
$v_4$ and $v_{12}$ cannot be adjacent in $G$.
\end{itemize}


\begin{figure}[htbp]
  \begin{center}
\begin{minipage}{0.24\textwidth}
\centering
 \raisebox{2ex}{
 \begin{tikzpicture}[
    v2/.style={fill=black,minimum size=4pt,ellipse,inner sep=1pt},invis/.style={
    draw=none, fill=none,minimum size=0pt, inner sep=0pt
},
    scale=0.28]
\node[v2] (H4_1) at (0, 0){};
    \node[v2] (H4_2) at (0, 2){};
    \node[v2] (H4_3) at (-1.732,-1){};
 \node[v2] (H4_4) at (-2*1.732,0){};
 \node[v2] (H4_5) at (-2*1.732,2){};
 \node[v2] (H4_6) at (-1.732,3){};
 \node[v2] (H4_7) at (1.732,-1){};
 \node[v2] (H4_8) at (2*1.732,0){};
 \node[v2] (H4_9) at (2*1.732,2){};
 \node[v2] (H4_10) at (1.732,3){};
 \node[v2] (H4_11) at (-4.464,-1.732){};
 \node[v2] (H4_12) at (-2.732,-2.732){};
 \node[v2] (H4_13) at (4.464,-1.732){};
 \node[v2] (H4_14) at (2.732,-2.732){};
\node[v2] (H4_15) at (0,4.6){};

   \draw (H4_1) -- (H4_2) -- (H4_6) -- (H4_5) -- (H4_4) -- (H4_3)-- (H4_1);
   \draw (H4_2) -- (H4_10) -- (H4_9) -- (H4_8) -- (H4_7) -- (H4_1);
   \draw (H4_4) -- (H4_11) -- (H4_12) -- (H4_3);
   \draw (H4_8) -- (H4_13) -- (H4_14) -- (H4_7);
   \draw(H4_11)to[bend right=65] (H4_13);
\draw (H4_5)to[bend left=45] (H4_15);
 \draw (H4_15)to[bend left=40] (H4_9);
 \node[font=\scriptsize,left] at(H4_11) {$v_1$};
 \node[font=\scriptsize,xshift=6pt,yshift=3pt] at(H4_11) {\color{blue}6};
  \node[font=\scriptsize,left] at(H4_4) {$v_2$};
   \node[font=\scriptsize,xshift=6pt,yshift=3pt] at(H4_4) {\color{blue}6};
  \node[font=\scriptsize,left] at(H4_5) {$v_3$};
  \node[font=\scriptsize,xshift=6pt,yshift=-3pt] at(H4_5) {\color{blue}4};
  \node[font=\scriptsize,above] at(H4_6) {$v_4$};
   \node[font=\scriptsize,below] at(H4_6) {\color{blue}3};
   \node[font=\scriptsize,right] at(H4_12) {$v_5$};
    \node[font=\scriptsize,above] at(H4_12) {\color{blue}4};
  \node[font=\scriptsize,xshift=6pt,yshift=-5pt] at(H4_3) {$v_6$};
   \node[font=\scriptsize,above] at(H4_3) {\color{blue}6};
  \node[font=\scriptsize,below] at(H4_1) {$v_7$};
   \node[font=\scriptsize,xshift=-6pt,yshift=3pt] at(H4_1) {\color{blue}7};
  \node[font=\scriptsize,above] at(H4_2) {$v_8$};
   \node[font=\scriptsize,xshift=-6pt,yshift=-3pt] at(H4_2) {\color{blue}5};
   \node[font=\scriptsize,above] at(H4_10) {$v_9$};
    \node[font=\scriptsize,below] at(H4_10) {\color{blue}3};
  \node[font=\scriptsize,xshift=-5pt,yshift=-5pt] at(H4_7) {$v_{10}$};
   \node[font=\scriptsize,above] at(H4_7) {\color{blue}6};
  \node[font=\scriptsize,right] at(H4_8) {$v_{11}$};
   \node[font=\scriptsize,xshift=-6pt,yshift=3pt] at(H4_8) {\color{blue}6};
  \node[font=\scriptsize,right] at(H4_9) {$v_{12}$};
   \node[font=\scriptsize,xshift=-6pt,yshift=-3pt] at(H4_9) {\color{blue}4};
   \node[font=\scriptsize,left] at(H4_14) {$v_{13}$};
    \node[font=\scriptsize,above] at(H4_14) {\color{blue}4};
  \node[font=\scriptsize,right] at(H4_13) {$v_{14}$};
   \node[font=\scriptsize,xshift=-6pt,yshift=3pt] at(H4_13) {\color{blue}6};
  \node[font=\scriptsize,above] at(H4_15) {$w$};

     \node[font=\scriptsize] at (0,-6.2) {(a) $v_1v_{14},v_3w,wv_{12}\in E(G)$};
\end{tikzpicture}}
\end{minipage}
\begin{minipage}{0.24\textwidth}
\centering
\begin{tikzpicture}[
    v2/.style={fill=black,minimum size=4pt,ellipse,inner sep=1pt},invis/.style={
    draw=none, fill=none,minimum size=0pt, inner sep=0pt
},
    scale=0.28]
\node[v2] (H4_1) at (0, 0){};
    \node[v2] (H4_2) at (0, 2){};
    \node[v2] (H4_3) at (-1.732,-1){};
 \node[v2] (H4_4) at (-2*1.732,0){};
 \node[v2] (H4_5) at (-2*1.732,2){};
 \node[v2] (H4_6) at (-1.732,3){};
 \node[v2] (H4_7) at (1.732,-1){};
 \node[v2] (H4_8) at (2*1.732,0){};
 \node[v2] (H4_9) at (2*1.732,2){};
 \node[v2] (H4_10) at (1.732,3){};
 \node[v2] (H4_11) at (-4.464,-1.732){};
 \node[v2] (H4_12) at (-2.732,-2.732){};
 \node[v2] (H4_13) at (4.464,-1.732){};
 \node[v2] (H4_14) at (2.732,-2.732){};

   \draw (H4_1) -- (H4_2) -- (H4_6) -- (H4_5) -- (H4_4) -- (H4_3)-- (H4_1);
   \draw (H4_2) -- (H4_10) -- (H4_9) -- (H4_8) -- (H4_7) -- (H4_1);
   \draw (H4_4) -- (H4_11) -- (H4_12) -- (H4_3);
   \draw (H4_8) -- (H4_13) -- (H4_14) -- (H4_7);
   \draw(H4_11)to[bend right=60] (H4_13);

 \node[font=\scriptsize,left] at(H4_11) {$v_1$};
 \node[font=\scriptsize,xshift=6pt,yshift=3pt] at(H4_11) {\color{blue}6};
  \node[font=\scriptsize,left] at(H4_4) {$v_2$};
   \node[font=\scriptsize,xshift=6pt,yshift=3pt] at(H4_4) {\color{blue}6};
  \node[font=\scriptsize,left] at(H4_5) {$v_3$};
  \node[font=\scriptsize,xshift=6pt,yshift=-3pt] at(H4_5) {\color{blue}3};
  \node[font=\scriptsize,above] at(H4_6) {$v_4$};
   \node[font=\scriptsize,below] at(H4_6) {\color{blue}3};
   \node[font=\scriptsize,right] at(H4_12) {$v_5$};
    \node[font=\scriptsize,above] at(H4_12) {\color{blue}4};
  \node[font=\scriptsize,xshift=6pt,yshift=-5pt] at(H4_3) {$v_6$};
   \node[font=\scriptsize,above] at(H4_3) {\color{blue}6};
  \node[font=\scriptsize,below] at(H4_1) {$v_7$};
   \node[font=\scriptsize,xshift=-6pt,yshift=3pt] at(H4_1) {\color{blue}7};
  \node[font=\scriptsize,above] at(H4_2) {$v_8$};
   \node[font=\scriptsize,xshift=-6pt,yshift=-3pt] at(H4_2) {\color{blue}5};
   \node[font=\scriptsize,above] at(H4_10) {$v_9$};
    \node[font=\scriptsize,below] at(H4_10) {\color{blue}3};
  \node[font=\scriptsize,xshift=-5pt,yshift=-5pt] at(H4_7) {$v_{10}$};
   \node[font=\scriptsize,above] at(H4_7) {\color{blue}6};
  \node[font=\scriptsize,right] at(H4_8) {$v_{11}$};
   \node[font=\scriptsize,xshift=-6pt,yshift=3pt] at(H4_8) {\color{blue}6};
  \node[font=\scriptsize,right] at(H4_9) {$v_{12}$};
   \node[font=\scriptsize,xshift=-6pt,yshift=-3pt] at(H4_9) {\color{blue}3};
   \node[font=\scriptsize,left] at(H4_14) {$v_{13}$};
    \node[font=\scriptsize,above] at(H4_14) {\color{blue}4};
  \node[font=\scriptsize,right] at(H4_13) {$v_{14}$};
   \node[font=\scriptsize,xshift=-6pt,yshift=3pt] at(H4_13) {\color{blue}6};

     \node[font=\scriptsize] at (0,-6.5) {(b) $v_1v_{14}\in E(G)$};
\end{tikzpicture}
\end{minipage}
\begin{minipage}{0.24\textwidth}
\hspace{-1cm}
 \raisebox{4ex}{
\begin{tikzpicture}[
    v2/.style={fill=black,minimum size=4pt,ellipse,inner sep=1pt},invis/.style={
    draw=none, fill=none,minimum size=0pt, inner sep=0pt
},
    scale=0.28
]
\node[v2] (H4_1) at (0, 0){};
    \node[v2] (H4_2) at (0, 2){};
    \node[v2] (H4_3) at (-1.732,-1){};
 \node[v2] (H4_4) at (-2*1.732,0){};
 \node[v2] (H4_5) at (-2*1.732,2){};
 \node[v2] (H4_6) at (-1.732,3){};
 \node[v2] (H4_7) at (1.732,-1){};
 \node[v2] (H4_8) at (2*1.732,0){};
 \node[v2] (H4_9) at (2*1.732,2){};
 \node[v2] (H4_10) at (1.732,3){};
 \node[v2] (H4_11) at (-4.464,-1.732){};
 \node[v2] (H4_12) at (-2.732,-2.732){};
 \node[v2] (H4_13) at (4.464,-1.732){};
 \node[v2] (H4_14) at (2.732,-2.732){};

   \draw (H4_1) -- (H4_2) -- (H4_6) -- (H4_5) -- (H4_4) -- (H4_3)-- (H4_1);
   \draw (H4_2) -- (H4_10) -- (H4_9) -- (H4_8) -- (H4_7) -- (H4_1);
   \draw (H4_4) -- (H4_11) -- (H4_12) -- (H4_3);
   \draw (H4_8) -- (H4_13) -- (H4_14) -- (H4_7);
   \draw(-4.464,-1.732) ..controls (-8.5,4)and (2,8).. (2*1.732,2);

 \node[font=\scriptsize,below] at(H4_11) {$v_1$};
 \node[font=\scriptsize,xshift=6pt,yshift=3pt] at(H4_11) {\color{blue}6};
  \node[font=\scriptsize,left] at(H4_4) {$v_2$};
   \node[font=\scriptsize,xshift=6pt,yshift=3pt] at(H4_4) {\color{blue}6};
  \node[font=\scriptsize,left] at(H4_5) {$v_3$};
  \node[font=\scriptsize,xshift=6pt,yshift=-3pt] at(H4_5) {\color{blue}3};
  \node[font=\scriptsize,above] at(H4_6) {$v_4$};
   \node[font=\scriptsize,below] at(H4_6) {\color{blue}3};
   \node[font=\scriptsize,right] at(H4_12) {$v_5$};
    \node[font=\scriptsize,above] at(H4_12) {\color{blue}4};
  \node[font=\scriptsize,xshift=6pt,yshift=-5pt] at(H4_3) {$v_6$};
   \node[font=\scriptsize,above] at(H4_3) {\color{blue}6};
  \node[font=\scriptsize,below] at(H4_1) {$v_7$};
   \node[font=\scriptsize,xshift=-6pt,yshift=3pt] at(H4_1) {\color{blue}7};
  \node[font=\scriptsize,above] at(H4_2) {$v_8$};
   \node[font=\scriptsize,xshift=-6pt,yshift=-3pt] at(H4_2) {\color{blue}5};
   \node[font=\scriptsize,above] at(H4_10) {$v_9$};
    \node[font=\scriptsize,below] at(H4_10) {\color{blue}4};
  \node[font=\scriptsize,xshift=-5pt,yshift=-5pt] at(H4_7) {$v_{10}$};
   \node[font=\scriptsize,above] at(H4_7) {\color{blue}6};
  \node[font=\scriptsize,right] at(H4_8) {$v_{11}$};
   \node[font=\scriptsize,xshift=-6pt,yshift=3pt] at(H4_8) {\color{blue}6};
  \node[font=\scriptsize,xshift=10pt,yshift=-1pt] at(H4_9) {$v_{12}$};
   \node[font=\scriptsize,xshift=-6pt,yshift=-3pt] at(H4_9) {\color{blue}6};
   \node[font=\scriptsize,left] at(H4_14) {$v_{13}$};
    \node[font=\scriptsize,above] at(H4_14) {\color{blue}3};
  \node[font=\scriptsize,right] at(H4_13) {$v_{14}$};
   \node[font=\scriptsize,xshift=-6pt,yshift=3pt] at(H4_13) {\color{blue}3};

     \node[font=\scriptsize] at (0,-5.8) {(c) $v_1v_{12}\in E(G)$};
\end{tikzpicture}}
\end{minipage}
\begin{minipage}{0.24\textwidth}
\hspace{-1cm}
 \raisebox{4ex}{
 \begin{tikzpicture}[
    v2/.style={fill=black,minimum size=4pt,ellipse,inner sep=1pt},invis/.style={
    draw=none, fill=none,minimum size=0pt, inner sep=0pt
},
    scale=0.28
]
\node[v2] (H4_1) at (0, 0){};
    \node[v2] (H4_2) at (0, 2){};
    \node[v2] (H4_3) at (-1.732,-1){};
 \node[v2] (H4_4) at (-2*1.732,0){};
 \node[v2] (H4_5) at (-2*1.732,2){};
 \node[v2] (H4_6) at (-1.732,3){};
 \node[v2] (H4_7) at (1.732,-1){};
 \node[v2] (H4_8) at (2*1.732,0){};
 \node[v2] (H4_9) at (2*1.732,2){};
 \node[v2] (H4_10) at (1.732,3){};
 \node[v2] (H4_11) at (-4.464,-1.732){};
 \node[v2] (H4_12) at (-2.732,-2.732){};
 \node[v2] (H4_13) at (4.464,-1.732){};
 \node[v2] (H4_14) at (2.732,-2.732){};

   \draw (H4_1) -- (H4_2) -- (H4_6) -- (H4_5) -- (H4_4) -- (H4_3)-- (H4_1);
   \draw (H4_2) -- (H4_10) -- (H4_9) -- (H4_8) -- (H4_7) -- (H4_1);
   \draw (H4_4) -- (H4_11) -- (H4_12) -- (H4_3);
   \draw (H4_8) -- (H4_13) -- (H4_14) -- (H4_7);
   \draw(-4.464,-1.732) ..controls (-8.5,4)and (2,8).. (1.732,3);

 \node[font=\scriptsize,left] at(H4_11) {$v_1$};
 \node[font=\scriptsize,xshift=6pt,yshift=3pt] at(H4_11) {\color{blue}6};
  \node[font=\scriptsize,left] at(H4_4) {$v_2$};
   \node[font=\scriptsize,xshift=6pt,yshift=3pt] at(H4_4) {\color{blue}6};
  \node[font=\scriptsize,left] at(H4_5) {$v_3$};
  \node[font=\scriptsize,xshift=6pt,yshift=-3pt] at(H4_5) {\color{blue}3};
  \node[font=\scriptsize, xshift=-6pt,yshift=5pt] at(H4_6) {$v_4$};
   \node[font=\scriptsize,below] at(H4_6) {\color{blue}3};
   \node[font=\scriptsize,right] at(H4_12) {$v_5$};
    \node[font=\scriptsize,above] at(H4_12) {\color{blue}4};
  \node[font=\scriptsize,xshift=6pt,yshift=-5pt] at(H4_3) {$v_6$};
   \node[font=\scriptsize,above] at(H4_3) {\color{blue}6};
  \node[font=\scriptsize,below] at(H4_1) {$v_7$};
   \node[font=\scriptsize,xshift=-6pt,yshift=3pt] at(H4_1) {\color{blue}7};
  \node[font=\scriptsize,above] at(H4_2) {$v_8$};
   \node[font=\scriptsize,xshift=-6pt,yshift=-3pt] at(H4_2) {\color{blue}6};
   \node[font=\scriptsize,right] at(H4_10) {$v_9$};
    \node[font=\scriptsize,below] at(H4_10) {\color{blue}6};
  \node[font=\scriptsize,xshift=-5pt,yshift=-5pt] at(H4_7) {$v_{10}$};
   \node[font=\scriptsize,above] at(H4_7) {\color{blue}6};
  \node[font=\scriptsize,right] at(H4_8) {$v_{11}$};
   \node[font=\scriptsize,xshift=-6pt,yshift=3pt] at(H4_8) {\color{blue}5};
  \node[font=\scriptsize,right] at(H4_9) {$v_{12}$};
   \node[font=\scriptsize,xshift=-6pt,yshift=-3pt] at(H4_9) {\color{blue}4};
   \node[font=\scriptsize,left] at(H4_14) {$v_{13}$};
    \node[font=\scriptsize,above] at(H4_14) {\color{blue}3};
  \node[font=\scriptsize,right] at(H4_13) {$v_{14}$};
   \node[font=\scriptsize,xshift=-6pt,yshift=3pt] at(H4_13) {\color{blue}3};
     \node[font=\scriptsize] at (0,-5.8) {(d) $v_1v_{9}\in E(G)$};
\end{tikzpicture}}
\end{minipage}
\end{center} 
\caption{Case 2 of the proof of Lemma \ref{reducible-H4}. The numbers at vertices are the number of available colors.} \label{H4-adjacent}
\end{figure}
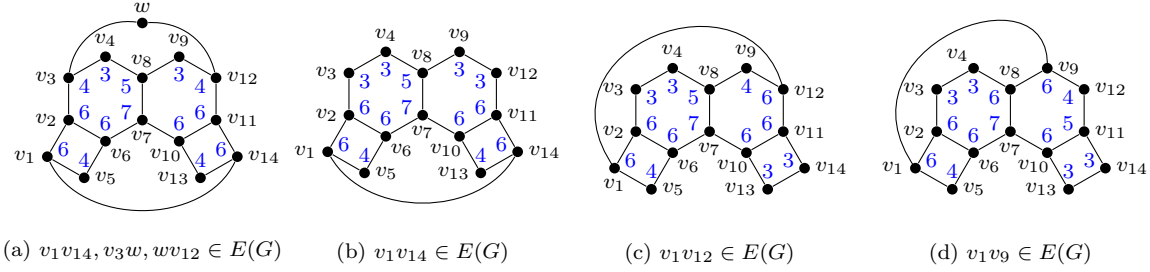

\medskip
Thus, by symmetry, we only need to consider the case where $v_1$ and $v_i$ are adjacent where $i \in \{9,12,14\}$.
The number of available colors at vertices of $V(J_6)$ of each case is as shown in Figure \ref{H4-adjacent}.

In all of these cases, we start by coloring $v_1$ by a color $c \in L_{J_6}(v_1) \setminus L_{J_6}(v_3)$,
color $v_{10},v_{13}, v_{14}, v_{12}, v_{11}, v_{9}, v_5$ in this order
following the same procedure as in Subcase 1.1, and then use Lemma \ref{cycle-six-original-second}.

\medskip
Thus, in Case 2,
the vertices in $V(J_6)$ can be colored from the list $L_{J_6}$
so that we obtain an $L$-coloring in $G^2$.
This is a contradiction for the fact that $G$ is a counterexample.
This completes the proof of Case 2. \\

\medskip

\noindent {\bf Case 3:} $J_6^2$ is not an induced subgraph of $G^2$ and
 $E(G[V(J_6)]) - E(J_6) = \emptyset$.

\medskip
\noindent {\bf Simplifying cases:}
\begin{itemize}
\item
As in Remark \ref{restrict-case}(b),
$v_{1}$ and $v_{13}$ cannot have a common neighbor outside $J_6$,
and
$v_{5}$ and $v_{14}$ cannot have a common neighbor outside $J_6$.

\item
The vertices in each of the following pairs do not have a common neighbor outside $J_6$,
since it makes a $5$-cycle:
$\{v_1,v_4\}$, $\{v_1,v_5\}$, $\{v_3,v_{5}\}$, $\{v_3,v_{9}\}$,
$\{v_4,v_{12}\}$,
$\{v_9,v_{14}\}$,
$\{v_{12},v_{13}\}$,
and $\{v_{13},v_{14}\}$.

\item
The vertices in each of the following pairs do not have a common neighbor outside $J_6$,
since it makes the subgraph $F_3$ in Figure \ref{key configuration-C3},
which does not exist by Lemma \ref{C3-C6}(b):
$\{v_3,v_4\}$, and $\{v_9,v_{12}\}$.

\item
The vertices in each of the following pairs do not have a common neighbor outside $J_6$,
since it makes $H_1$ in Figure \ref{key configuration},
which does not exist by Lemma \ref{reducible-H0}:
$\{v_1,v_3\}$, and $\{v_{12},v_{14}\}$.

\item
The vertices in each of the following pairs do not have a common neighbor outside $J_6$,
since it makes $J_5$,
which does not exist by Lemma \ref{H2-type-two-reducible}:
$\{v_4,v_9\}$.
\end{itemize}

So by Remark \ref{restrict-case} (b) and by symmetry, we only need to consider the following eleven subcases in Case 3.

\medskip
\noindent {\bf Subcase 3.1:} $v_1$ and $v_i$ have a common neighbor $w$,
where $i\in \{9,12,14\}$ and $w \notin V(J_6)$.
(Or by symmetry,
$v_{14}$ and $v_i$ have a common neighbor $w$,
where $i\in \{3,4\}$ and $w \notin V(J_6)$.)

Note that $w$ is not adjacent to any vertex in $V(J_6) \setminus \{v_1, v_{i}\}$ by the argument of Simplifying cases.

Let  $G' = G - V(J_6)$ and we follow the same procedure as in Case 2.
Then the number of available colors at vertices of $V(J_6)$ is represented
in Figure \ref{H4-nbr-one}.
In this case, we start by coloring $v_1$ by a color $c_1 \in L_{J_6}(v_1)$
so that $|L_{J_6}(v_3) \setminus \{v_1\}| \geq 3$, next
color $v_{10}, v_{13}, v_{14}, v_{12}, v_{11}, v_{9}, v_5$ in this order
following the same procedure as in Subcase 1.1, and then use Lemma \ref{cycle-six-original-second}.
Then the vertices in $J_6$ can be colored from the list $L_{J_6}$
so that we obtain an $L$-coloring in $G^2$.

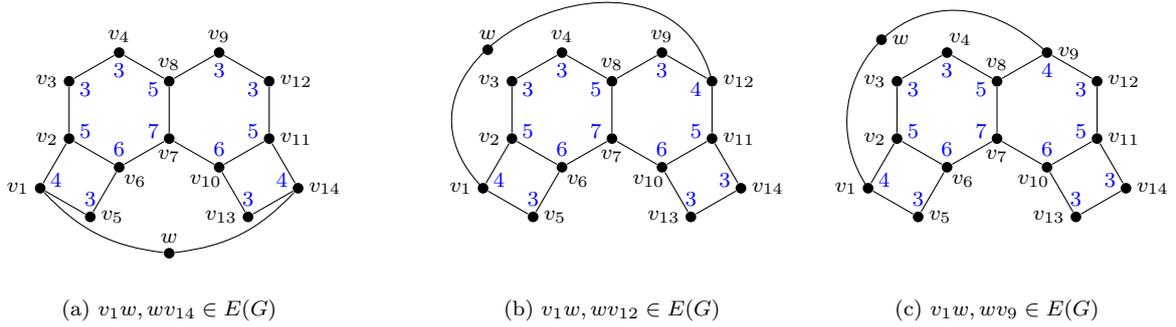
\begin{figure}[htbp]
  \begin{center}
\begin{tikzpicture}[
    v2/.style={fill=black,minimum size=4pt,ellipse,inner sep=1pt},invis/.style={
    draw=none, fill=none,minimum size=0pt, inner sep=0pt
},
    scale=0.38]
\node[v2] (H4_1) at (0, 0){};
    \node[v2] (H4_2) at (0, 2){};
    \node[v2] (H4_3) at (-1.732,-1){};
 \node[v2] (H4_4) at (-2*1.732,0){};
 \node[v2] (H4_5) at (-2*1.732,2){};
 \node[v2] (H4_6) at (-1.732,3){};
 \node[v2] (H4_7) at (1.732,-1){};
 \node[v2] (H4_8) at (2*1.732,0){};
 \node[v2] (H4_9) at (2*1.732,2){};
 \node[v2] (H4_10) at (1.732,3){};
 \node[v2] (H4_11) at (-4.464,-1.732){};
 \node[v2] (H4_12) at (-2.732,-2.732){};
 \node[v2] (H4_13) at (4.464,-1.732){};
 \node[v2] (H4_14) at (2.732,-2.732){};
  \node[v2] (H4_15) at (0,-4){};

   \draw (H4_1) -- (H4_2) -- (H4_6) -- (H4_5) -- (H4_4) -- (H4_3)-- (H4_1);
   \draw (H4_2) -- (H4_10) -- (H4_9) -- (H4_8) -- (H4_7) -- (H4_1);
   \draw (H4_4) -- (H4_11) -- (H4_12) -- (H4_3);
   \draw (H4_8) -- (H4_13) -- (H4_14) -- (H4_7);
   \draw(H4_11)to[bend right=20] (H4_15);
     \draw(H4_15)to[bend right=20] (H4_13);

 \node[font=\scriptsize,left] at(H4_11) {$v_1$};
 \node[font=\scriptsize,xshift=6pt,yshift=3pt] at(H4_11) {\color{blue}4};
  \node[font=\scriptsize,left] at(H4_4) {$v_2$};
   \node[font=\scriptsize,xshift=6pt,yshift=3pt] at(H4_4) {\color{blue}5};
  \node[font=\scriptsize,left] at(H4_5) {$v_3$};
  \node[font=\scriptsize,xshift=6pt,yshift=-3pt] at(H4_5) {\color{blue}3};
  \node[font=\scriptsize,above] at(H4_6) {$v_4$};
   \node[font=\scriptsize,below] at(H4_6) {\color{blue}3};
   \node[font=\scriptsize,right] at(H4_12) {$v_5$};
    \node[font=\scriptsize,above] at(H4_12) {\color{blue}3};
  \node[font=\scriptsize,xshift=6pt,yshift=-5pt] at(H4_3) {$v_6$};
   \node[font=\scriptsize,above] at(H4_3) {\color{blue}6};
  \node[font=\scriptsize,below] at(H4_1) {$v_7$};
   \node[font=\scriptsize,xshift=-6pt,yshift=3pt] at(H4_1) {\color{blue}7};
  \node[font=\scriptsize,above] at(H4_2) {$v_8$};
   \node[font=\scriptsize,xshift=-6pt,yshift=-3pt] at(H4_2) {\color{blue}5};
   \node[font=\scriptsize,above] at(H4_10) {$v_9$};
    \node[font=\scriptsize,below] at(H4_10) {\color{blue}3};
  \node[font=\scriptsize,xshift=-5pt,yshift=-5pt] at(H4_7) {$v_{10}$};
   \node[font=\scriptsize,above] at(H4_7) {\color{blue}6};
  \node[font=\scriptsize,right] at(H4_8) {$v_{11}$};
   \node[font=\scriptsize,xshift=-6pt,yshift=3pt] at(H4_8) {\color{blue}5};
  \node[font=\scriptsize,right] at(H4_9) {$v_{12}$};
   \node[font=\scriptsize,xshift=-6pt,yshift=-3pt] at(H4_9) {\color{blue}3};
   \node[font=\scriptsize,left] at(H4_14) {$v_{13}$};
    \node[font=\scriptsize,above] at(H4_14) {\color{blue}3};
  \node[font=\scriptsize,right] at(H4_13) {$v_{14}$};
   \node[font=\scriptsize,xshift=-6pt,yshift=3pt] at(H4_13) {\color{blue}4};
    \node[font=\scriptsize,above] at(H4_15) {$w$};

     \node[font=\scriptsize] at (0,-6) {(a) $v_1w,wv_{14}\in E(G)$};
\end{tikzpicture}\begin{tikzpicture}[
    v2/.style={fill=black,minimum size=4pt,ellipse,inner sep=1pt},invis/.style={
    draw=none, fill=none,minimum size=0pt, inner sep=0pt
},
    scale=0.38
]
\node[v2] (H4_1) at (0, 0){};
    \node[v2] (H4_2) at (0, 2){};
    \node[v2] (H4_3) at (-1.732,-1){};
 \node[v2] (H4_4) at (-2*1.732,0){};
 \node[v2] (H4_5) at (-2*1.732,2){};
 \node[v2] (H4_6) at (-1.732,3){};
 \node[v2] (H4_7) at (1.732,-1){};
 \node[v2] (H4_8) at (2*1.732,0){};
 \node[v2] (H4_9) at (2*1.732,2){};
 \node[v2] (H4_10) at (1.732,3){};
 \node[v2] (H4_11) at (-4.464,-1.732){};
 \node[v2] (H4_12) at (-2.732,-2.732){};
 \node[v2] (H4_13) at (4.464,-1.732){};
 \node[v2] (H4_14) at (2.732,-2.732){};
\node[v2] (H4_15) at (-4.3,3.1){};

   \draw (H4_1) -- (H4_2) -- (H4_6) -- (H4_5) -- (H4_4) -- (H4_3)-- (H4_1);
   \draw (H4_2) -- (H4_10) -- (H4_9) -- (H4_8) -- (H4_7) -- (H4_1);
   \draw (H4_4) -- (H4_11) -- (H4_12) -- (H4_3);
   \draw (H4_8) -- (H4_13) -- (H4_14) -- (H4_7);
   \draw(2*1.732,2) ..controls (2,8)and (-9,3).. (-4.464,-1.732);

 \node[font=\scriptsize,left] at(H4_11) {$v_1$};
 \node[font=\scriptsize,xshift=6pt,yshift=3pt] at(H4_11) {\color{blue}4};
  \node[font=\scriptsize,left] at(H4_4) {$v_2$};
   \node[font=\scriptsize,xshift=6pt,yshift=3pt] at(H4_4) {\color{blue}5};
  \node[font=\scriptsize,left] at(H4_5) {$v_3$};
  \node[font=\scriptsize,xshift=6pt,yshift=-3pt] at(H4_5) {\color{blue}3};
  \node[font=\scriptsize,above] at(H4_6) {$v_4$};
   \node[font=\scriptsize,below] at(H4_6) {\color{blue}3};
   \node[font=\scriptsize,right] at(H4_12) {$v_5$};
    \node[font=\scriptsize,above] at(H4_12) {\color{blue}3};
  \node[font=\scriptsize,xshift=6pt,yshift=-5pt] at(H4_3) {$v_6$};
   \node[font=\scriptsize,above] at(H4_3) {\color{blue}6};
  \node[font=\scriptsize,below] at(H4_1) {$v_7$};
   \node[font=\scriptsize,xshift=-6pt,yshift=3pt] at(H4_1) {\color{blue}7};
  \node[font=\scriptsize,above] at(H4_2) {$v_8$};
   \node[font=\scriptsize,xshift=-6pt,yshift=-3pt] at(H4_2) {\color{blue}5};
   \node[font=\scriptsize,above] at(H4_10) {$v_9$};
    \node[font=\scriptsize,below] at(H4_10) {\color{blue}3};
  \node[font=\scriptsize,xshift=-5pt,yshift=-5pt] at(H4_7) {$v_{10}$};
   \node[font=\scriptsize,above] at(H4_7) {\color{blue}6};
  \node[font=\scriptsize,right] at(H4_8) {$v_{11}$};
   \node[font=\scriptsize,xshift=-6pt,yshift=3pt] at(H4_8) {\color{blue}5};
  \node[font=\scriptsize,right] at(H4_9) {$v_{12}$};
   \node[font=\scriptsize,xshift=-6pt,yshift=-3pt] at(H4_9) {\color{blue}4};
   \node[font=\scriptsize,left] at(H4_14) {$v_{13}$};
    \node[font=\scriptsize,above] at(H4_14) {\color{blue}3};
  \node[font=\scriptsize,right] at(H4_13) {$v_{14}$};
   \node[font=\scriptsize,xshift=-6pt,yshift=3pt] at(H4_13) {\color{blue}3};
\node[font=\scriptsize,above] at(H4_15) {$w$};

     \node[font=\scriptsize] at (0,-6) {(b) $v_1w,wv_{12}\in E(G)$};
\end{tikzpicture}\begin{tikzpicture}[
    v2/.style={fill=black,minimum size=4pt,ellipse,inner sep=1pt},invis/.style={
    draw=none, fill=none,minimum size=0pt, inner sep=0pt
},
    scale=0.38
]
\node[v2] (H4_1) at (0, 0){};
    \node[v2] (H4_2) at (0, 2){};
    \node[v2] (H4_3) at (-1.732,-1){};
 \node[v2] (H4_4) at (-2*1.732,0){};
 \node[v2] (H4_5) at (-2*1.732,2){};
 \node[v2] (H4_6) at (-1.732,3){};
 \node[v2] (H4_7) at (1.732,-1){};
 \node[v2] (H4_8) at (2*1.732,0){};
 \node[v2] (H4_9) at (2*1.732,2){};
 \node[v2] (H4_10) at (1.732,3){};
 \node[v2] (H4_11) at (-4.464,-1.732){};
 \node[v2] (H4_12) at (-2.732,-2.732){};
 \node[v2] (H4_13) at (4.464,-1.732){};
 \node[v2] (H4_14) at (2.732,-2.732){};
\node[v2] (H4_15) at (-4,3.45){};

   \draw (H4_1) -- (H4_2) -- (H4_6) -- (H4_5) -- (H4_4) -- (H4_3)-- (H4_1);
   \draw (H4_2) -- (H4_10) -- (H4_9) -- (H4_8) -- (H4_7) -- (H4_1);
   \draw (H4_4) -- (H4_11) -- (H4_12) -- (H4_3);
   \draw (H4_8) -- (H4_13) -- (H4_14) -- (H4_7);
   \draw(1.732,3) ..controls (-2.5,7)and (-7,2).. (-4.464,-1.732);

 \node[font=\scriptsize,left] at(H4_11) {$v_1$};
 \node[font=\scriptsize,xshift=6pt,yshift=3pt] at(H4_11) {\color{blue}4};
  \node[font=\scriptsize,left] at(H4_4) {$v_2$};
   \node[font=\scriptsize,xshift=6pt,yshift=3pt] at(H4_4) {\color{blue}5};
  \node[font=\scriptsize,left] at(H4_5) {$v_3$};
  \node[font=\scriptsize,xshift=6pt,yshift=-3pt] at(H4_5) {\color{blue}3};
  \node[font=\scriptsize, xshift=5pt,yshift=5pt] at(H4_6) {$v_4$};
   \node[font=\scriptsize,below] at(H4_6) {\color{blue}3};
   \node[font=\scriptsize,right] at(H4_12) {$v_5$};
    \node[font=\scriptsize,above] at(H4_12) {\color{blue}3};
  \node[font=\scriptsize,xshift=6pt,yshift=-5pt] at(H4_3) {$v_6$};
   \node[font=\scriptsize,above] at(H4_3) {\color{blue}6};
  \node[font=\scriptsize,below] at(H4_1) {$v_7$};
   \node[font=\scriptsize,xshift=-6pt,yshift=3pt] at(H4_1) {\color{blue}7};
  \node[font=\scriptsize,above] at(H4_2) {$v_8$};
   \node[font=\scriptsize,xshift=-6pt,yshift=-3pt] at(H4_2) {\color{blue}5};
   \node[font=\scriptsize,right] at(H4_10) {$v_9$};
    \node[font=\scriptsize,below] at(H4_10) {\color{blue}4};
  \node[font=\scriptsize,xshift=-5pt,yshift=-5pt] at(H4_7) {$v_{10}$};
   \node[font=\scriptsize,above] at(H4_7) {\color{blue}6};
  \node[font=\scriptsize,right] at(H4_8) {$v_{11}$};
   \node[font=\scriptsize,xshift=-6pt,yshift=3pt] at(H4_8) {\color{blue}5};
  \node[font=\scriptsize,right] at(H4_9) {$v_{12}$};
   \node[font=\scriptsize,xshift=-6pt,yshift=-3pt] at(H4_9) {\color{blue}3};
   \node[font=\scriptsize,left] at(H4_14) {$v_{13}$};
    \node[font=\scriptsize,above] at(H4_14) {\color{blue}3};
  \node[font=\scriptsize,right] at(H4_13) {$v_{14}$};
   \node[font=\scriptsize,xshift=-6pt,yshift=3pt] at(H4_13) {\color{blue}3};
 \node[font=\scriptsize,right] at(H4_15) {$w$};

     \node[font=\scriptsize] at (0,-6) {(c) $v_1w,wv_9\in E(G)$};
\end{tikzpicture}
\end{center}
\caption{Subcases 3.1 of the proof of Lemma \ref{reducible-H4}. The numbers at vertices are the number of available colors.} \label{H4-nbr-one}
\end{figure}

\begin{figure}[htbp]
  \begin{center}
  \begin{minipage}{0.24\textwidth}
\centering
\begin{tikzpicture}[
    v2/.style={fill=black,minimum size=4pt,ellipse,inner sep=1pt},
    scale=0.27
]
\node[v2] (H4_1) at (0, 0){};
    \node[v2] (H4_2) at (0, 2){};
    \node[v2] (H4_3) at (-1.732,-1){};
 \node[v2] (H4_4) at (-2*1.732,0){};
 \node[v2] (H4_5) at (-2*1.732,2){};
 \node[v2] (H4_6) at (-1.732,3){};
 \node[v2] (H4_7) at (1.732,-1){};
 \node[v2] (H4_8) at (2*1.732,0){};
 \node[v2] (H4_9) at (2*1.732,2){};
 \node[v2] (H4_10) at (1.732,3){};
 \node[v2] (H4_11) at (-4.464,-1.732){};
 \node[v2] (H4_12) at (-2.732,-2.732){};
 \node[v2] (H4_13) at (4.464,-1.732){};
 \node[v2] (H4_14) at (2.732,-2.732){};
 \node[v2] (H4_15) at (0,4.6){};

   \draw (H4_1) -- (H4_2) -- (H4_6) -- (H4_5) -- (H4_4) -- (H4_3)-- (H4_1);
   \draw (H4_2) -- (H4_10) -- (H4_9) -- (H4_8) -- (H4_7) -- (H4_1);
   \draw (H4_4) -- (H4_11) -- (H4_12) -- (H4_3);
   \draw (H4_8) -- (H4_13) -- (H4_14) -- (H4_7);
\draw (H4_5)to[bend left=40] (H4_15);
 \draw (H4_15)to[bend left=40] (H4_9);

 \node[font=\scriptsize,below] at(H4_11) {$v_1$};
 \node[font=\scriptsize,xshift=6pt,yshift=3pt] at(H4_11) {\color{blue}3};
  \node[font=\scriptsize,left] at(H4_4) {$v_2$};
   \node[font=\scriptsize,xshift=6pt,yshift=3pt] at(H4_4) {\color{blue}6};
  \node[font=\scriptsize,left] at(H4_5) {$v_3$};
   \node[font=\scriptsize,xshift=6pt,yshift=-3pt] at(H4_5) {\color{blue}5};
  \node[font=\scriptsize,above] at(H4_6) {$v_4$};
   \node[font=\scriptsize,below] at(H4_6) {\color{blue}4};
   \node[font=\scriptsize,right] at(H4_12) {$v_5$};
    \node[font=\scriptsize,above] at(H4_12) {\color{blue}3};
  \node[font=\scriptsize,xshift=6pt,yshift=-5pt] at(H4_3) {$v_6$};
   \node[font=\scriptsize,above] at(H4_3) {\color{blue}6};
  \node[font=\scriptsize,below] at(H4_1) {$v_7$};
   \node[font=\scriptsize,xshift=-6pt,yshift=3pt] at(H4_1) {\color{blue}7};
  \node[font=\scriptsize,above] at(H4_2) {$v_8$};
   \node[font=\scriptsize,xshift=-6pt,yshift=-3pt] at(H4_2) {\color{blue}5};
   \node[font=\scriptsize,above] at(H4_10) {$v_9$};
    \node[font=\scriptsize,below] at(H4_10) {\color{blue}4};
  \node[font=\scriptsize,xshift=-5pt,yshift=-5pt] at(H4_7) {$v_{10}$};
   \node[font=\scriptsize,above] at(H4_7) {\color{blue}6};
  \node[font=\scriptsize,right] at(H4_8) {$v_{11}$};
   \node[font=\scriptsize,xshift=-6pt,yshift=3pt] at(H4_8) {\color{blue}6};
  \node[font=\scriptsize,right] at(H4_9) {$v_{12}$};
   \node[font=\scriptsize,xshift=-6pt,yshift=-3pt] at(H4_9) {\color{blue}5};
   \node[font=\scriptsize,left] at(H4_14) {$v_{13}$};
    \node[font=\scriptsize,above] at(H4_14) {\color{blue}3};
  \node[font=\scriptsize,below] at(H4_13) {$v_{14}$};
   \node[font=\scriptsize,xshift=-6pt,yshift=3pt] at(H4_13) {\color{blue}3};
  \node[font=\scriptsize,above] at(H4_15) {$w$};
    \node[font=\scriptsize,below] at(H4_15) {\color{blue}4};

     \node[font=\scriptsize] at (-0.5, -7) {(a) $v_3w,wv_{12}\in E(G)$};
\end{tikzpicture}
\end{minipage}
\begin{minipage}{0.24\textwidth}
\centering
 \hspace{-0.8cm}\begin{tikzpicture}[
    v2/.style={fill=black,minimum size=4pt,ellipse,inner sep=1pt},
    scale=0.27
]
\node[v2] (H4_1) at (0, 0){};
    \node[v2] (H4_2) at (0, 2){};
    \node[v2] (H4_3) at (-1.732,-1){};
 \node[v2] (H4_4) at (-2*1.732,0){};
 \node[v2] (H4_5) at (-2*1.732,2){};
 \node[v2] (H4_6) at (-1.732,3){};
 \node[v2] (H4_7) at (1.732,-1){};
 \node[v2] (H4_8) at (2*1.732,0){};
 \node[v2] (H4_9) at (2*1.732,2){};
 \node[v2] (H4_10) at (1.732,3){};
 \node[v2] (H4_11) at (-4.464,-1.732){};
 \node[v2] (H4_12) at (-2.732,-2.732){};
 \node[v2] (H4_13) at (4.464,-1.732){};
 \node[v2] (H4_14) at (2.732,-2.732){};
 \node[v2] (H4_15) at (0,4.6){};

  \node[v2] (H4_w) at (0,-4){};

   \draw (H4_1) -- (H4_2) -- (H4_6) -- (H4_5) -- (H4_4) -- (H4_3)-- (H4_1);
   \draw (H4_2) -- (H4_10) -- (H4_9) -- (H4_8) -- (H4_7) -- (H4_1);
   \draw (H4_4) -- (H4_11) -- (H4_12) -- (H4_3);
   \draw (H4_8) -- (H4_13) -- (H4_14) -- (H4_7);
  \draw (H4_5)to[bend left=40] (H4_15);
 \draw (H4_15)to[bend left=40] (H4_9);

   \draw (H4_11)to[bend left=-30] (H4_w);
 \draw (H4_w)to[bend left=-30] (H4_13);

 \node[font=\scriptsize,below] at(H4_11) {$v_1$};
 \node[font=\scriptsize,xshift=6pt,yshift=3pt] at(H4_11) {\color{blue}4};
  \node[font=\scriptsize,left] at(H4_4) {$v_2$};
   \node[font=\scriptsize,xshift=6pt,yshift=3pt] at(H4_4) {\color{blue}6};
  \node[font=\scriptsize,left] at(H4_5) {$v_3$};
   \node[font=\scriptsize,xshift=6pt,yshift=-3pt] at(H4_5) {\color{blue}5};
  \node[font=\scriptsize,above] at(H4_6) {$v_4$};
   \node[font=\scriptsize,below] at(H4_6) {\color{blue}4};
   \node[font=\scriptsize,right] at(H4_12) {$v_5$};
    \node[font=\scriptsize,above] at(H4_12) {\color{blue}3};
  \node[font=\scriptsize,xshift=6pt,yshift=-5pt] at(H4_3) {$v_6$};
   \node[font=\scriptsize,above] at(H4_3) {\color{blue}6};
  \node[font=\scriptsize,below] at(H4_1) {$v_7$};
   \node[font=\scriptsize,xshift=-6pt,yshift=3pt] at(H4_1) {\color{blue}7};
  \node[font=\scriptsize,above] at(H4_2) {$v_8$};
   \node[font=\scriptsize,xshift=-6pt,yshift=-3pt] at(H4_2) {\color{blue}5};
   \node[font=\scriptsize,above] at(H4_10) {$v_9$};
    \node[font=\scriptsize,below] at(H4_10) {\color{blue}4};
  \node[font=\scriptsize,xshift=-5pt,yshift=-5pt] at(H4_7) {$v_{10}$};
   \node[font=\scriptsize,above] at(H4_7) {\color{blue}6};
  \node[font=\scriptsize,right] at(H4_8) {$v_{11}$};
   \node[font=\scriptsize,xshift=-6pt,yshift=3pt] at(H4_8) {\color{blue}6};
  \node[font=\scriptsize,right] at(H4_9) {$v_{12}$};
   \node[font=\scriptsize,xshift=-6pt,yshift=-3pt] at(H4_9) {\color{blue}5};
   \node[font=\scriptsize,left] at(H4_14) {$v_{13}$};
    \node[font=\scriptsize,above] at(H4_14) {\color{blue}3};
  \node[font=\scriptsize,right] at(H4_13) {$v_{14}$};
   \node[font=\scriptsize,xshift=-6pt,yshift=3pt] at(H4_13) {\color{blue}4};
  \node[font=\scriptsize,above] at(H4_15) {$w$};
    \node[font=\scriptsize,above] at(H4_w) {$z$};
       \node[font=\scriptsize,below] at(H4_15) {\color{blue}4};

     \node[font=\scriptsize] at (0, -7) {(b) $v_3w,wv_{12},v_1z,zv_{14}\in E(G)$};
\end{tikzpicture}
\end{minipage}
\begin{minipage}{0.24\textwidth}
\centering
 \hspace{-1cm}
 \raisebox{-3.8cm}{
 \begin{tikzpicture}[
    v2/.style={fill=black,minimum size=4pt,ellipse,inner sep=1pt},
    scale=0.27
]
\node[v2] (H4_1) at (0, 0){};
    \node[v2] (H4_2) at (0, 2){};
    \node[v2] (H4_3) at (-1.732,-1){};
 \node[v2] (H4_4) at (-2*1.732,0){};
 \node[v2] (H4_5) at (-2*1.732,2){};
 \node[v2] (H4_6) at (-1.732,3){};
 \node[v2] (H4_7) at (1.732,-1){};
 \node[v2] (H4_8) at (2*1.732,0){};
 \node[v2] (H4_9) at (2*1.732,2){};
 \node[v2] (H4_10) at (1.732,3){};
 \node[v2] (H4_11) at (-4.464,-1.732){};
 \node[v2] (H4_12) at (-2.732,-2.732){};
 \node[v2] (H4_13) at (4.464,-1.732){};
 \node[v2] (H4_14) at (2.732,-2.732){};
 \node[v2] (H4_15) at (-5,-4){};

   \draw (H4_1) -- (H4_2) -- (H4_6) -- (H4_5) -- (H4_4) -- (H4_3)-- (H4_1);
   \draw (H4_2) -- (H4_10) -- (H4_9) -- (H4_8) -- (H4_7) -- (H4_1);
   \draw (H4_4) -- (H4_11) -- (H4_12) -- (H4_3);
   \draw (H4_8) -- (H4_13) -- (H4_14) -- (H4_7);
  \draw (H4_5)to[bend right=45] (H4_15);
 \draw (H4_15)to[bend right=35] (H4_14);

 \node[font=\scriptsize,below] at(H4_11) {$v_1$};
 \node[font=\scriptsize,xshift=6pt,yshift=3pt] at(H4_11) {\color{blue}3};
  \node[font=\scriptsize,left] at(H4_4) {$v_2$};
   \node[font=\scriptsize,xshift=6pt,yshift=3pt] at(H4_4) {\color{blue}6};
  \node[font=\scriptsize,left] at(H4_5) {$v_3$};
   \node[font=\scriptsize,xshift=6pt,yshift=-3pt] at(H4_5) {\color{blue}5};
  \node[font=\scriptsize,above] at(H4_6) {$v_4$};
   \node[font=\scriptsize,below] at(H4_6) {\color{blue}4};
   \node[font=\scriptsize,right] at(H4_12) {$v_5$};
    \node[font=\scriptsize,above] at(H4_12) {\color{blue}3};
  \node[font=\scriptsize,xshift=6pt,yshift=-5pt] at(H4_3) {$v_6$};
   \node[font=\scriptsize,above] at(H4_3) {\color{blue}6};
  \node[font=\scriptsize,below] at(H4_1) {$v_7$};
   \node[font=\scriptsize,xshift=-6pt,yshift=3pt] at(H4_1) {\color{blue}7};
  \node[font=\scriptsize,above] at(H4_2) {$v_8$};
   \node[font=\scriptsize,xshift=-6pt,yshift=-3pt] at(H4_2) {\color{blue}5};
   \node[font=\scriptsize,above] at(H4_10) {$v_9$};
    \node[font=\scriptsize,below] at(H4_10) {\color{blue}3};
  \node[font=\scriptsize,xshift=-5pt,yshift=-5pt] at(H4_7) {$v_{10}$};
   \node[font=\scriptsize,above] at(H4_7) {\color{blue}7};
  \node[font=\scriptsize,right] at(H4_8) {$v_{11}$};
   \node[font=\scriptsize,xshift=-6pt,yshift=3pt] at(H4_8) {\color{blue}5};
  \node[font=\scriptsize,right] at(H4_9) {$v_{12}$};
   \node[font=\scriptsize,xshift=-6pt,yshift=-3pt] at(H4_9) {\color{blue}3};
   \node[font=\scriptsize,left] at(H4_14) {$v_{13}$};
    \node[font=\scriptsize,above] at(H4_14) {\color{blue}5};
  \node[font=\scriptsize,below] at(H4_13) {$v_{14}$};
   \node[font=\scriptsize,xshift=-6pt,yshift=3pt] at(H4_13) {\color{blue}4};
  \node[font=\scriptsize,below] at(H4_15) {$w$};
     \node[font=\scriptsize,above] at(H4_15) {\color{blue}4};

     \node[font=\scriptsize] at (0, -7) {(c) $v_3w,wv_{13}\in E(G)$};
\end{tikzpicture}}
\end{minipage}
\begin{minipage}{0.25\textwidth}
\centering
 \hspace{-1.1cm}\raisebox{2ex}{
 \begin{tikzpicture}[
    v2/.style={fill=black,minimum size=4pt,ellipse,inner sep=1pt},
    scale=0.27
]
\node[v2] (H4_1) at (0, 0){};
    \node[v2] (H4_2) at (0, 2){};
    \node[v2] (H4_3) at (-1.732,-1){};
 \node[v2] (H4_4) at (-2*1.732,0){};
 \node[v2] (H4_5) at (-2*1.732,2){};
 \node[v2] (H4_6) at (-1.732,3){};
 \node[v2] (H4_7) at (1.732,-1){};
 \node[v2] (H4_8) at (2*1.732,0){};
 \node[v2] (H4_9) at (2*1.732,2){};
 \node[v2] (H4_10) at (1.732,3){};
 \node[v2] (H4_11) at (-4.464,-1.732){};
 \node[v2] (H4_12) at (-2.732,-2.732){};
 \node[v2] (H4_13) at (4.464,-1.732){};
 \node[v2] (H4_14) at (2.732,-2.732){};
 \node[v2] (H4_15) at (-5,-4){};
\node[v2] (H4_q) at (4,3.35){};

   \draw (H4_1) -- (H4_2) -- (H4_6) -- (H4_5) -- (H4_4) -- (H4_3)-- (H4_1);
   \draw (H4_2) -- (H4_10) -- (H4_9) -- (H4_8) -- (H4_7) -- (H4_1);
   \draw (H4_4) -- (H4_11) -- (H4_12) -- (H4_3);
   \draw (H4_8) -- (H4_13) -- (H4_14) -- (H4_7);
  \draw (H4_5)to[bend right=45] (H4_15);
 \draw (H4_15)to[bend right=35] (H4_14);

   \draw(-1.732,3) ..controls (-1,7)and (10,2).. (4.464,-1.732);

 \node[font=\scriptsize,below] at(H4_11) {$v_1$};
 \node[font=\scriptsize,xshift=6pt,yshift=3pt] at(H4_11) {\color{blue}3};
  \node[font=\scriptsize,left] at(H4_4) {$v_2$};
   \node[font=\scriptsize,xshift=6pt,yshift=3pt] at(H4_4) {\color{blue}6};
  \node[font=\scriptsize,left] at(H4_5) {$v_3$};
   \node[font=\scriptsize,xshift=6pt,yshift=-3pt] at(H4_5) {\color{blue}5};
  \node[font=\scriptsize, above, left] at(H4_6) {$v_4$};
   \node[font=\scriptsize,below] at(H4_6) {\color{blue}5};
   \node[font=\scriptsize,right] at(H4_12) {$v_5$};
    \node[font=\scriptsize,above] at(H4_12) {\color{blue}3};
  \node[font=\scriptsize,xshift=6pt,yshift=-5pt] at(H4_3) {$v_6$};
   \node[font=\scriptsize,above] at(H4_3) {\color{blue}6};
  \node[font=\scriptsize,below] at(H4_1) {$v_7$};
   \node[font=\scriptsize,xshift=-6pt,yshift=3pt] at(H4_1) {\color{blue}7};
  \node[font=\scriptsize,above] at(H4_2) {$v_8$};
   \node[font=\scriptsize,xshift=-6pt,yshift=-3pt] at(H4_2) {\color{blue}5};
   \node[font=\scriptsize,above] at(H4_10) {$v_9$};
    \node[font=\scriptsize,below] at(H4_10) {\color{blue}3};
  \node[font=\scriptsize,xshift=-5pt,yshift=-5pt] at(H4_7) {$v_{10}$};
   \node[font=\scriptsize,above] at(H4_7) {\color{blue}7};
  \node[font=\scriptsize,right] at(H4_8) {$v_{11}$};
   \node[font=\scriptsize,xshift=-6pt,yshift=3pt] at(H4_8) {\color{blue}5};
  \node[font=\scriptsize,right] at(H4_9) {$v_{12}$};
   \node[font=\scriptsize,xshift=-6pt,yshift=-3pt] at(H4_9) {\color{blue}3};
   \node[font=\scriptsize,left] at(H4_14) {$v_{13}$};
    \node[font=\scriptsize,above] at(H4_14) {\color{blue}5};
  \node[font=\scriptsize,right] at(H4_13) {$v_{14}$};
   \node[font=\scriptsize,xshift=-6pt,yshift=3pt] at(H4_13) {\color{blue}5};
  \node[font=\scriptsize,below] at(H4_15) {$w$};
 \node[font=\scriptsize,right] at(H4_q) {$z$};
    \node[font=\scriptsize,above] at(H4_15) {\color{blue}4};

     \node[font=\scriptsize] at (0, -7.1) {(d) $v_3w,wv_{13},v_4z,zv_{14}\in E(G)$};
\end{tikzpicture}}
\end{minipage}
\end{center}
\caption{Subcases 3.2  of the proof of Lemma \ref{reducible-H4}. The numbers at vertices are the number of available colors.} \label{H4-nbr-two}
\end{figure}
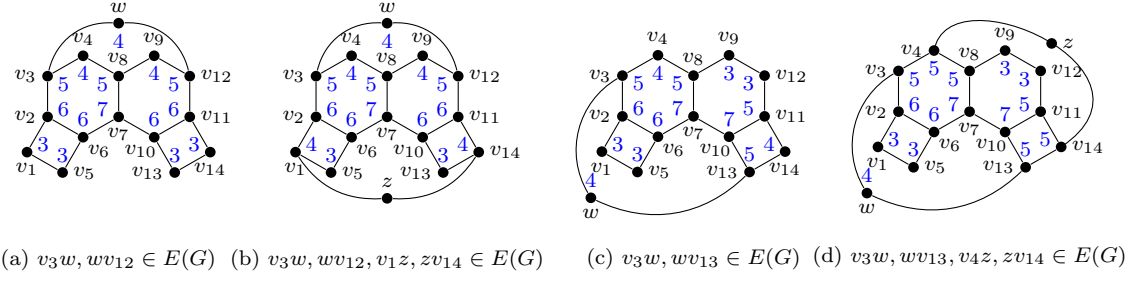

\medskip
\noindent {\bf Subcase 3.2:} $v_3$ and $v_i$ have a common neighbor $w$,
 where $i\in \{12,13\}$ and $w \notin V(J_6)$.
(Or by symmetry,
$v_5$ and $v_{12}$ have a common neighbor $w$, where $w \notin V(J_6)$.)

Note that one may consider the case when $v_3$ and $v_{14}$ have a common neighbor.  But, this is the case of Subcase 3.1 (b). So, we do not need to consider this case. 

Then there are 4 cases needed to consider as shown in Figure~\ref{H4-nbr-two}.
Note that for each case in Figure~\ref{H4-nbr-two}, we have that $w$ is not adjacent to any vertex in $V(J_6) \setminus \{v_3, v_{i}\}$ by the argument of Simplifying cases.

Let $G'  = G - (V(J_6) \cup \{w\})$.
Then $G' $ is also a subcubic planar graph and $|V(G' )| < |V(G)|$.
Since $G$ is a minimal counterexample to Theorem \ref{main-thm},
the square of $G' $ has a proper coloring $\phi$ such that $\phi(v) \in L(v)$ for each vertex $v \in V(H)$.
For each  $v \in V(J_6) \cup \{w\}$ , we define \[
L_{J_6}'(v) = L(v) \setminus \{\phi(x) : xv \in E(G^2) \mbox{ and } x \notin V(J_6)\cup \{w\}\}.
\]
Then the number of available colors at vertices of $V(J_6)\cup \{w\}$ is Subcase 3.2 in Figure \ref{H4-nbr-two} for each case, respectively.

Note that $d_G(v_1, w) \geq 3$ and
$|L_{J_6}'(v_1)| \geq 3$, $|L_{J_6}'(v_3)| \geq 5$, and $|L_{J_6}'(w)| \geq 4$.
So, we can color $v_1$ and $w$ with color $c_1$ and $c_w$, respectively,
so that $|L_{J_6}'(v_3) \setminus \{c_1,c_w\}| \geq 3$. Next, color $v_{10}, v_{13}, v_{14}, v_{12}, v_{11}, v_{9}, v_5$ in this order
following the same procedure as in Subcase 1.1, and then use Lemma \ref{cycle-six-original-second}.
Then
the vertices in $V(J_6) \cup \{w\}$ can be colored from the list $L_{J_6}'$
so that we obtain an $L$-coloring in $G^2$.

\begin{figure}[htbp]
  \begin{center}
  \begin{minipage}{0.33\textwidth}
\centering
  \begin{tikzpicture}[
    v2/.style={fill=black,minimum size=4pt,ellipse,inner sep=1pt},
    scale=0.36
]
\node[v2] (H4_1) at (0, 0){};
    \node[v2] (H4_2) at (0, 2){};
    \node[v2] (H4_3) at (-1.732,-1){};
 \node[v2] (H4_4) at (-2*1.732,0){};
 \node[v2] (H4_5) at (-2*1.732,2){};
 \node[v2] (H4_6) at (-1.732,3){};
 \node[v2] (H4_7) at (1.732,-1){};
 \node[v2] (H4_8) at (2*1.732,0){};
 \node[v2] (H4_9) at (2*1.732,2){};
 \node[v2] (H4_10) at (1.732,3){};
 \node[v2] (H4_11) at (-4.464,-1.732){};
 \node[v2] (H4_12) at (-2.732,-2.732){};
 \node[v2] (H4_13) at (4.464,-1.732){};
 \node[v2] (H4_14) at (2.732,-2.732){};
 \node[v2] (H4_17) at (-6,0.5){};
   \draw (H4_1) -- (H4_2) -- (H4_6) -- (H4_5) -- (H4_4) -- (H4_3)-- (H4_1);
   \draw (H4_2) -- (H4_10) -- (H4_9) -- (H4_8) -- (H4_7) -- (H4_1);
   \draw (H4_4) -- (H4_11) -- (H4_12) -- (H4_3);
   \draw (H4_8) -- (H4_13) -- (H4_14) -- (H4_7);
 \draw (H4_6)to[bend right=45] (H4_17);
 \draw (H4_17)to[bend right=85] (H4_14);
 \node[font=\scriptsize,xshift=-4pt,yshift=5pt] at(H4_11) {$v_1$};
 \node[font=\scriptsize,xshift=6pt,yshift=3pt] at(H4_11) {\color{blue}3};
  \node[font=\scriptsize,left] at(H4_4) {$v_2$};
   \node[font=\scriptsize,xshift=6pt,yshift=3pt] at(H4_4) {\color{blue}5};
  \node[font=\scriptsize,left] at(H4_5) {$v_3$};
   \node[font=\scriptsize,xshift=6pt,yshift=-3pt] at(H4_5) {\color{blue}4};
  \node[font=\scriptsize,above] at(H4_6) {$v_4$};
   \node[font=\scriptsize,below] at(H4_6) {\color{blue}5};
   \node[font=\scriptsize,right] at(H4_12) {$v_5$};
    \node[font=\scriptsize,above] at(H4_12) {\color{blue}3};
  \node[font=\scriptsize,xshift=6pt,yshift=-5pt] at(H4_3) {$v_6$};
   \node[font=\scriptsize,above] at(H4_3) {\color{blue}6};
  \node[font=\scriptsize,below] at(H4_1) {$v_7$};
   \node[font=\scriptsize,xshift=-6pt,yshift=3pt] at(H4_1) {\color{blue}7};
  \node[font=\scriptsize,above] at(H4_2) {$v_8$};
   \node[font=\scriptsize,xshift=-6pt,yshift=-3pt] at(H4_2) {\color{blue}6};
   \node[font=\scriptsize,above] at(H4_10) {$v_9$};
    \node[font=\scriptsize,below] at(H4_10) {\color{blue}3};
  \node[font=\scriptsize,xshift=-5pt,yshift=-5pt] at(H4_7) {$v_{10}$};
   \node[font=\scriptsize,above] at(H4_7) {\color{blue}7};
  \node[font=\scriptsize,right] at(H4_8) {$v_{11}$};
   \node[font=\scriptsize,xshift=-6pt,yshift=3pt] at(H4_8) {\color{blue}5};
  \node[font=\scriptsize,right] at(H4_9) {$v_{12}$};
   \node[font=\scriptsize,xshift=-6pt,yshift=-3pt] at(H4_9) {\color{blue}3};
   \node[font=\scriptsize,left] at(H4_14) {$v_{13}$};
    \node[font=\scriptsize,above] at(H4_14) {\color{blue}5};
  \node[font=\scriptsize,below] at(H4_13) {$v_{14}$};
   \node[font=\scriptsize,xshift=-6pt,yshift=3pt] at(H4_13) {\color{blue}4};
  \node[font=\scriptsize,right] at(H4_17) {$w$};
   \node[font=\scriptsize,left] at(H4_17) {\color{blue}4};
     \node[font=\scriptsize] at (0, -6.5) {(a) $v_4w,wv_{13}\in E(G)$};
\end{tikzpicture}
\end{minipage}
\begin{minipage}{0.31\textwidth}
\centering\begin{tikzpicture}[
    v2/.style={fill=black,minimum size=4pt,ellipse,inner sep=1pt},
    scale=0.36
]
\node[v2] (H4_1) at (0, 0){};
    \node[v2] (H4_2) at (0, 2){};
    \node[v2] (H4_3) at (-1.732,-1){};
 \node[v2] (H4_4) at (-2*1.732,0){};
 \node[v2] (H4_5) at (-2*1.732,2){};
 \node[v2] (H4_6) at (-1.732,3){};
 \node[v2] (H4_7) at (1.732,-1){};
 \node[v2] (H4_8) at (2*1.732,0){};
 \node[v2] (H4_9) at (2*1.732,2){};
 \node[v2] (H4_10) at (1.732,3){};
 \node[v2] (H4_11) at (-4.464,-1.732){};
 \node[v2] (H4_12) at (-2.732,-2.732){};
 \node[v2] (H4_13) at (4.464,-1.732){};
 \node[v2] (H4_14) at (2.732,-2.732){};
 \node[v2] (H4_15) at (0,-4){};
 \node[v2] (H4_16) at (-3,-4.5){};
 \node[v2] (H4_17) at (-6,-0.5){};
   \draw (H4_1) -- (H4_2) -- (H4_6) -- (H4_5) -- (H4_4) -- (H4_3)-- (H4_1);
   \draw (H4_2) -- (H4_10) -- (H4_9) -- (H4_8) -- (H4_7) -- (H4_1);
   \draw (H4_4) -- (H4_11) -- (H4_12) -- (H4_3);
   \draw (H4_8) -- (H4_13) -- (H4_14) -- (H4_7);

  \draw (H4_15)to[bend right=15] (H4_14);
 \draw (H4_15)to[bend left=15] (H4_16);

 \draw (H4_12)to[bend left=55] (H4_17);
 \draw (H4_17)to[bend left=75] (H4_6);
  \draw (H4_17)to[bend right=55] (H4_16);

 \node[font=\scriptsize,below] at(H4_11) {$v_1$};
 \node[font=\scriptsize,xshift=6pt,yshift=3pt] at(H4_11) {\color{blue}4};
  \node[font=\scriptsize,left] at(H4_4) {$v_2$};
   \node[font=\scriptsize,xshift=6pt,yshift=3pt] at(H4_4) {\color{blue}5};
  \node[font=\scriptsize,left] at(H4_5) {$v_3$};
   \node[font=\scriptsize,xshift=6pt,yshift=-3pt] at(H4_5) {\color{blue}4};
  \node[font=\scriptsize,above] at(H4_6) {$v_4$};
   \node[font=\scriptsize,below] at(H4_6) {\color{blue}6};
   \node[font=\scriptsize,right] at(H4_12) {$v_5$};
    \node[font=\scriptsize,above] at(H4_12) {\color{blue}6};
  \node[font=\scriptsize,xshift=6pt,yshift=-5pt] at(H4_3) {$v_6$};
   \node[font=\scriptsize,above] at(H4_3) {\color{blue}7};
  \node[font=\scriptsize,below] at(H4_1) {$v_7$};
   \node[font=\scriptsize,xshift=-6pt,yshift=3pt] at(H4_1) {\color{blue}7};
  \node[font=\scriptsize,above] at(H4_2) {$v_8$};
   \node[font=\scriptsize,xshift=-6pt,yshift=-3pt] at(H4_2) {\color{blue}6};
   \node[font=\scriptsize,above] at(H4_10) {$v_9$};
    \node[font=\scriptsize,below] at(H4_10) {\color{blue}3};
  \node[font=\scriptsize,xshift=-5pt,yshift=-5pt] at(H4_7) {$v_{10}$};
   \node[font=\scriptsize,above] at(H4_7) {\color{blue}7};
  \node[font=\scriptsize,right] at(H4_8) {$v_{11}$};
   \node[font=\scriptsize,xshift=-6pt,yshift=3pt] at(H4_8) {\color{blue}5};
  \node[font=\scriptsize,right] at(H4_9) {$v_{12}$};
   \node[font=\scriptsize,xshift=-6pt,yshift=-3pt] at(H4_9) {\color{blue}3};
   \node[font=\scriptsize,left] at(H4_14) {$v_{13}$};
    \node[font=\scriptsize,above] at(H4_14) {\color{blue}5};
  \node[font=\scriptsize,xshift=6pt,yshift=5pt] at(H4_13) {$v_{14}$};
   \node[font=\scriptsize,xshift=-6pt,yshift=3pt] at(H4_13) {\color{blue}4};
  \node[font=\scriptsize,below] at(H4_15) {$ v_{17}$};
   \node[font=\scriptsize,above] at(H4_15) {\color{blue}3};
  \node[font=\scriptsize,below] at(H4_16) {$v_{16}$};
   \node[font=\scriptsize,above] at(H4_16) {\color{blue}3};
  \node[font=\scriptsize,right] at(H4_17) {$w$};
   \node[font=\scriptsize,left] at(H4_17) {\color{blue}6};
     \node[font=\scriptsize] at (0, -6.3) {(b) $v_4w,wv_{5}\in E(G)$};
\end{tikzpicture}\end{minipage}
\begin{minipage}{0.34\textwidth}
\centering\begin{tikzpicture}[
    v2/.style={fill=black,minimum size=4pt,ellipse,inner sep=1pt},
    scale=0.36
]
\node[v2] (H4_1) at (0, 0){};
    \node[v2] (H4_2) at (0, 2){};
    \node[v2] (H4_3) at (-1.732,-1){};
 \node[v2] (H4_4) at (-2*1.732,0){};
 \node[v2] (H4_5) at (-2*1.732,2){};
 \node[v2] (H4_6) at (-1.732,3){};
 \node[v2] (H4_7) at (1.732,-1){};
 \node[v2] (H4_8) at (2*1.732,0){};
 \node[v2] (H4_9) at (2*1.732,2){};
 \node[v2] (H4_10) at (1.732,3){};
 \node[v2] (H4_11) at (-4.464,-1.732){};
 \node[v2] (H4_12) at (-2.732,-2.732){};
 \node[v2] (H4_13) at (4.464,-1.732){};
 \node[v2] (H4_14) at (2.732,-2.732){};
 \node[v2] (H4_15) at (-6.5,-0){};
 \node[v2] (H4_16) at (0,-4.5){};
 \node[v2] (H4_17) at (6,1){};
   \draw (H4_1) -- (H4_2) -- (H4_6) -- (H4_5) -- (H4_4) -- (H4_3)-- (H4_1);
   \draw (H4_2) -- (H4_10) -- (H4_9) -- (H4_8) -- (H4_7) -- (H4_1);
   \draw (H4_4) -- (H4_11) -- (H4_12) -- (H4_3);
   \draw (H4_8) -- (H4_13) -- (H4_14) -- (H4_7);

  \draw (H4_12)to[bend left=35] (H4_15);
 \draw (H4_15)to[bend left=55] (H4_6);

 \draw (H4_17)to[bend right=35] (H4_10);
 \draw (H4_17)to[bend left=65] (H4_14);

  \draw (H4_17)to[bend left=76] (H4_16);
  \draw (H4_15)to[bend right=35] (H4_16);
 \node[font=\scriptsize,xshift=-6pt,yshift=1pt] at(H4_11) {$v_1$};
 \node[font=\scriptsize,xshift=6pt,yshift=3pt] at(H4_11) {\color{blue}4};
  \node[font=\scriptsize,left] at(H4_4) {$v_2$};
   \node[font=\scriptsize,xshift=6pt,yshift=3pt] at(H4_4) {\color{blue}5};
  \node[font=\scriptsize,left] at(H4_5) {$v_3$};
   \node[font=\scriptsize,xshift=6pt,yshift=-3pt] at(H4_5) {\color{blue}4};
  \node[font=\scriptsize,above] at(H4_6) {$v_4$};
   \node[font=\scriptsize,below] at(H4_6) {\color{blue}6};
   \node[font=\scriptsize,right] at(H4_12) {$v_5$};
    \node[font=\scriptsize,above] at(H4_12) {\color{blue}6};
  \node[font=\scriptsize,xshift=6pt,yshift=-5pt] at(H4_3) {$v_6$};
   \node[font=\scriptsize,above] at(H4_3) {\color{blue}7};
  \node[font=\scriptsize,below] at(H4_1) {$v_7$};
   \node[font=\scriptsize,xshift=-6pt,yshift=3pt] at(H4_1) {\color{blue}7};
  \node[font=\scriptsize,above] at(H4_2) {$v_8$};
   \node[font=\scriptsize,xshift=-6pt,yshift=-3pt] at(H4_2) {\color{blue}7};
   \node[font=\scriptsize,above] at(H4_10) {$v_9$};
    \node[font=\scriptsize,below] at(H4_10) {\color{blue}6};
  \node[font=\scriptsize,xshift=-5pt,yshift=-5pt] at(H4_7) {$v_{10}$};
   \node[font=\scriptsize,above] at(H4_7) {\color{blue}7};
  \node[font=\scriptsize,right] at(H4_8) {$v_{11}$};
   \node[font=\scriptsize,xshift=-6pt,yshift=3pt] at(H4_8) {\color{blue}5};
  \node[font=\scriptsize,right] at(H4_9) {$v_{12}$};
   \node[font=\scriptsize,xshift=-6pt,yshift=-3pt] at(H4_9) {\color{blue}4};
   \node[font=\scriptsize,left] at(H4_14) {$v_{13}$};
    \node[font=\scriptsize,above] at(H4_14) {\color{blue}6};
  \node[font=\scriptsize,xshift=6pt,yshift=5pt] at(H4_13) {$v_{14}$};
   \node[font=\scriptsize,xshift=-6pt,yshift=3pt] at(H4_13) {\color{blue}4};
  \node[font=\scriptsize,left] at(H4_15) {$w$};
   \node[font=\scriptsize,right] at(H4_15) {\color{blue}6};
  \node[font=\scriptsize,below] at(H4_16) {$v_{16}$};
   \node[font=\scriptsize,above] at(H4_16) {\color{blue}4};
  \node[font=\scriptsize,right] at(H4_17) {$z$};
   \node[font=\scriptsize,left] at(H4_17) {\color{blue}6};
     \node[font=\scriptsize] at (0, -6) {(c) $v_4w,wv_5,v_9z,zv_{13}\in E(G)$};
\end{tikzpicture}
\end{minipage}
\end{center}
\caption{Subcases 3.3  of the proof of Lemma \ref{reducible-H4}. The numbers at vertices are the number of available colors.} \label{H4-nbr-three}
\end{figure}
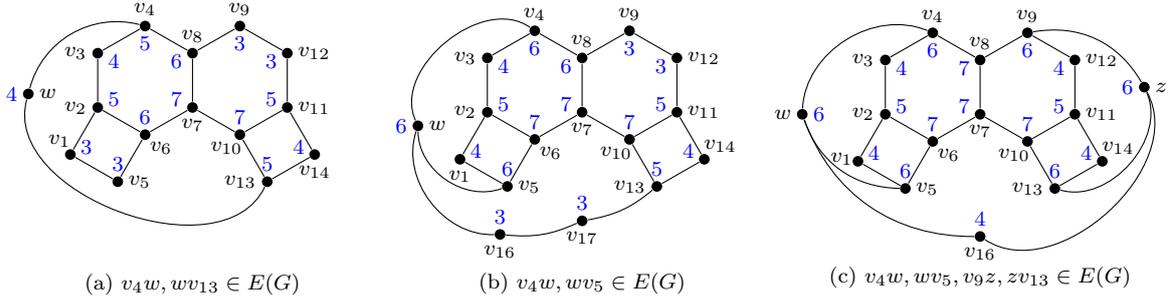

\medskip
\noindent {\bf Subcase 3.3:} $v_4$ and $v_{i}$ have a common neighbor $w$ where $i\in \{5,13\}$
and $w \notin V(J_6)$.
(Or by symmetry,
$v_5$ and $v_{9}$ have a common neighbor $w$,
where $w \notin V(J_6)$.)

Note that one may consider the case when $v_4$ and $v_{14}$ have a common neighbor.  But, this is the case of Subcase 3.1 (c). So, we do not need to consider this case. 

Recall that $v_5$ and $v_{13}$ lies on a 8-face. Then there are 3 cases needed to consider as shown in Figure~\ref{H4-nbr-three}. Furthermore, when $v_4w,wv_{13}\in E(G)$, it holds that  $w$ is not adjacent to any vertex in $V(J_6) \setminus \{v_4, v_{13}\}$ by the argument of Simplifying cases.
Recall further that
$\phi$ is a proper coloring of the square of $G' = G- V(J_6)$
such that $\phi(v) \in L(v)$ for each vertex $v \in V(G')$.
Then we uncolor some vertices as follows:
\begin{itemize}
\item If $v_4w,wv_{13}\in E(G)$, then uncolor $w$;
\item If $v_4w,wv_{5}\in E(G)$, the  uncolor vertices in $\{w,v_{16},v_{17}\}$;
\item If $v_4w,wv_5,v_9z,zv_{13}\in E(G)$, then uncolor vertices in $\{w,v_{16},z\}$.
\end{itemize}
And then follow the same procedure as in Case 2. Let $L_{J_6}'(v)$ be the list at $v$ after uncoloring vertices for each cases. Then the number of available colors at uncolored vertices of  each case is as shown in Figure \ref{H4-nbr-three}, respectively.

If $v_4w,wv_{13}\in E(G)$, then we have that $d_G(v_1, w) \geq 3$ or $d_G(v_1, w) = 2$.
So, we consider the following two subcases.

\medskip
\noindent {\bf Subcase 3.3.1:} $v_4w,wv_{13}\in E(G)$ and  $d_G(v_1, w) \geq 3$.

Color $v_1$ and $w$ with color $c_1$ and $c_w$, respectively, so that $|L_{J_6}'(v_3) \setminus \{c_1,c_w\}| \geq 3$. Color $v_{10}, v_{13}, v_{14}, v_{12}, v_{11}, v_{9}, v_5$ in this order following the same procedure as in Subcase 1.1. 

\medskip
\noindent {\bf Subcase 3.3.2:} $v_4w,wv_{13}\in E(G)$ and  $d_G(v_1, w) = 2$, or 
$v_4w,wv_{5}\in E(G)$,  or \\ $v_4w,wv_5,v_9z,zv_{13}\in E(G)$

If $v_4w,wv_{13}\in E(G)$ and $d_G(v_1, w) = 2$, then $|L_{J_6}'(w)| \geq 5$ and $|L_{J_6}'(v_1)| \geq 4$.
So, at each case of Subcases 3.3.2, we can color $w$ by a color $w_c \in L_{J_6}'(w)$ so that $|L_{J_6}'(v_3) \setminus \{w_c\}| \geq 4$
 since $|L_{J_6}'(w)| \geq 6$ and $|L_{J_6}'(v_3)| \geq 4$. Next, color $v_1$ and then follow  the same procedure as in Subcase 1.1.
Finally, by using  Lemma \ref{cycle-six-original-second}, we can show that each uncolored vertex admits an $L_{J_6}'$-coloring from the list.
This completes the proof of Lemma \ref{reducible-H4}.
\end{proof}


\subsection{7-face is adjacent to at most one 4-face} \label{subsection-H6}

In this subsection, we will show that a 7-face is adjacent to at most one 4-face.
If a 7-face is adjacent to two 4-faces, then by symmetry, we just need to consider  two cases; Figure \ref{subgraph-H6} and Figure \ref{subgraph-H7}.

\begin{figure}[htbp]
  \begin{center}
\begin{tikzpicture}[
  v2/.style={fill=black,minimum size=4pt,ellipse,inner sep=1pt},
  node distance=1.5cm,scale=0.4]

      \node[v2] (H5_1) at (0, 0){};
      \node[v2] (H5_2) at (0,-2){};
      \node[v2] (H5_3) at (1,-3.5){};
      \node[v2] (H5_4) at (2.8,-3.5){};
      \node[v2] (H5_5) at (4,-2){};
      \node[v2] (H5_6) at (4, 0){};
      \node[v2] (H5_7) at (2, 1.2){};
      \node[v2] (H5_9) at (-2, 0){};
      \node[v2] (H5_10) at (-2,-2){};
      \node[v2] (H5_11) at (6, 0){};
      \node[v2] (H5_12) at  (6,-2){};

   \draw (H5_1)--(H5_2)--(H5_3)--(H5_4)--(H5_5)--(H5_6)--(H5_7)--(H5_1);
   \draw (H5_1)--(H5_9)--(H5_10)--(H5_2);
   \draw (H5_5)--(H5_12)--(H5_11)--(H5_6);

  \node[font=\scriptsize,above] at (H5_9) {$v_1$};
  \node[font=\scriptsize,xshift=5pt,yshift=-5pt] at (H5_9) {\color{blue}3};
  \node[font=\scriptsize,above] at (H5_1) {$v_2$};
    \node[font=\scriptsize,xshift=5pt,yshift=-5pt] at (H5_1) {\color{blue}5};
  \node[font=\scriptsize,above] at (H5_7) {$v_3$};
    \node[font=\scriptsize,below] at (H5_7) {\color{blue}4};
  \node[font=\scriptsize,above] at (H5_6) {$v_4$};
    \node[font=\scriptsize,xshift=-5pt,yshift=-5pt] at (H5_6) {\color{blue}5};
  \node[font=\scriptsize,above] at (H5_11) {$v_5$};
    \node[font=\scriptsize,xshift=-5pt,yshift=-5pt] at (H5_11) {\color{blue}3};
  \node[font=\scriptsize,below] at (H5_10) {$v_6$};
    \node[font=\scriptsize,xshift=5pt,yshift=5pt] at (H5_10) {\color{blue}3};
  \node[font=\scriptsize,below=1pt, xshift=-3pt] at (H5_2) {$v_7$};
    \node[font=\scriptsize,xshift=5pt,yshift=5pt] at (H5_2) {\color{blue}5};
  \node[font=\scriptsize,below] at (H5_3) {$v_8$};
    \node[font=\scriptsize,xshift=2pt,yshift=6pt] at (H5_3) {\color{blue}3};
  \node[font=\scriptsize,below] at (H5_4) {$v_{9}$};
    \node[font=\scriptsize,xshift=-3pt,yshift=6pt] at (H5_4) {\color{blue}3};
  \node[font=\scriptsize,below=1pt, xshift=3pt] at (H5_5) {$v_{10}$};
  \node[font=\scriptsize,xshift=-5pt,yshift=5pt] at (H5_5) {\color{blue}5};
  \node[font=\scriptsize,below] at (H5_12) {$v_{11}$};
  \node[font=\scriptsize,xshift=-5pt,yshift=5pt] at (H5_12) {\color{blue}3};

  \end{tikzpicture}
\caption{Graph $H_5$. The numbers at vertices are the number of available colors.}
\label{subgraph-H6}
\end{center}
\end{figure}
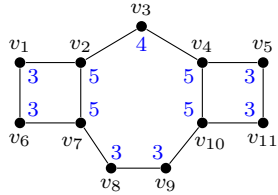


\begin{lemma} \label{reducible-H6}
The graph $H_5$ in Figure \ref{subgraph-H6} does not appear in $G$.
\end{lemma}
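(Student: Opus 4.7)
The plan is to follow the framework of Lemmas \ref{reducible-H3} and \ref{reducible-H4}. Suppose for contradiction that $G$ contains $H_5$ as a subgraph with $V(H_5) = \{v_1, \ldots, v_{11}\}$ labeled as in Figure \ref{subgraph-H6}, and let $L$ be a $7$-list assignment on $G$. Set $G' = G - V(H_5)$; by minimality, $(G')^2$ admits a proper $L$-coloring $\phi$. For each $v_i \in V(H_5)$, define the residual list
\[
L_{H_5}(v_i) = L(v_i) \setminus \{\phi(x) : xv_i \in E(G^2),\ x \notin V(H_5)\},
\]
so the sizes $|L_{H_5}(v_i)|$ match those displayed in Figure \ref{subgraph-H6}. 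The goal is to extend $\phi$ to an $L$-coloring of $G^2$ by finding an $L_{H_5}$-coloring of $H_5^2$, contradicting the fact that $G$ is a counterexample.

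I will split the analysis into the familiar three cases. In Case 1, when $H_5^2$ is an induced subgraph of $G^2$, the residual lists have the exact sizes $(3,5,4,5,3,3,5,3,3,5,3)$. I will write down the graph polynomial $P_{H_5^2}(\bm{x})$ and apply the Combinatorial Nullstellensatz (Theorem \ref{cnull}) exactly as in the proof of Case 1 of Lemma \ref{reducible-H3}: with a computer algebra system (Mathematica), one verifies that the coefficient of the monomial $\prod_{i=1}^{11} x_i^{|L_{H_5}(v_i)|-1}$ in $P_{H_5^2}$ is nonzero, which by Theorem \ref{cnull} yields the required coloring.

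In Case 2, when $H_5^2$ is not induced but some additional edge in $E(G[V(H_5)])\setminus E(H_5)$ exists, I will first use the already-established forbidden structures to prune the case list. Specifically, the absence of $5$-cycles, together with Lemmas \ref{C3-C6}, \ref{reducible-F2}, \ref{reducible-F4}, \ref{reducible-H0}, \ref{reducible-H0_2}, \ref{C4-share-two-edge}, \ref{reducible-H2}, \ref{reducible-H3}, \ref{reducible-H4}, and \ref{H2-type-two-reducible}, eliminates all but a small number of candidate extra edges (many of which are also ruled out by planarity). The left-right symmetry of $H_5$ exchanging the two pendant $4$-cycles further halves the cases. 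For each surviving configuration, multiply $P_{H_5^2}(\bm{x})$ by the factor $(x_i-x_j)$ corresponding to the new edge and check with Mathematica that a suitable monomial still has nonzero coefficient, concluding via Theorem \ref{cnull}. Case 3, when $H_5^2$ is not induced and the extra adjacencies in $G^2[V(H_5)]$ all come from common neighbors outside $V(H_5)$, is handled analogously: the forbidden configurations again drastically restrict the surviving pairs; for each, I will either uncolor the common neighbor $w$ and enlarge the polynomial (as in Subcase 1.2.3 of Lemma \ref{reducible-H4}) or, where planarity allows, handle it directly by the same Nullstellensatz computation.

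The main obstacle is the bookkeeping in Cases 2 and 3: enumerating the non-symmetric, non-forbidden possibilities for extra edges or common neighbors among the eleven vertices, and performing the Combinatorial Nullstellensatz verification for each augmented polynomial. Because several list sizes are only $3$, greedy coloring alone will generally fail, and the algebraic verification via Theorem \ref{cnull} is essential. A secondary difficulty is ensuring that the planarity of $G$ (together with the embedding of $H_5$ forced by the two prescribed $4$-faces adjacent to the $7$-face) is used correctly to rule out geometrically impossible pairs before the computational step.
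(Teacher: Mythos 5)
Your overall strategy is exactly the paper's: delete $V(H_5)$, color the rest by minimality, form the residual lists, split into the three cases (induced square / extra edge inside $V(H_5)$ / common neighbor outside), prune with the previously established reducible configurations and planarity, and finish each surviving case with the Combinatorial Nullstellensatz. In Case 2 the paper's pruning indeed leaves a single configuration up to symmetry ($v_5v_6\in E(G)$), and Case 3 leaves three subcases, each killed by a Nullstellensatz computation, so your plan is structurally faithful.

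There are, however, two concrete slips that would make the verification fail as literally written. First, in Case 1 you propose to check the coefficient of $\prod_{i=1}^{11} x_i^{|L_{H_5}(v_i)|-1}$; with list sizes $(3,5,4,5,3,3,5,3,3,5,3)$ this monomial has total degree $31$, while $P_{H_5^2}$ is homogeneous of degree $|E(H_5^2)|=28$, so that coefficient is identically zero. You must instead choose exponents $t_i\le |L_{H_5}(v_i)|-1$ with $\sum_i t_i=28$ (the paper uses $x_1^2x_2^4x_3^2x_4^4x_5^2x_6^2x_7^3x_8^2x_9^2x_{10}^3x_{11}^2$). Second, in Cases 2 and 3 a new edge of $G$ (or a new common neighbor) creates \emph{several} new edges of $G^2$ inside $V(H_5)$, not one: for $v_5v_6\in E(G)$ the paper must multiply by $(x_1-x_5)(x_4-x_6)(x_5-x_6)(x_5-x_7)(x_6-x_{11})$, since $v_5$ also becomes $G^2$-adjacent to the neighbors of $v_6$ and vice versa. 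Multiplying by only the single factor $(x_i-x_j)$ would certify a coloring that need not be proper in $G^2$. Both issues are routine to repair, and with those corrections your proposal coincides with the paper's proof.
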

\begin{proof}
Suppose that $G$ has $H_5$ as a subgraph, and denote $V(H_5) = \{v_1, \ldots, v_{11}\}$ as in Figure \ref{subgraph-H6}.
Let $L$ be a list assignment with lists of size 7 for each vertex in $G$.
We will show that $G^2$ has a proper coloring from the list $L$, which is a contradiction for the fact that $G$ is a counterexample to the theorem. \\

Let $G' = G - V(H_5)$.
Then $G'$ is also a subcubic planar graph and $|V(G')| < |V(G)|$.   Since $G$ is a minimal counterexample to Theorem \ref{main-thm},
the square of $G'$ has a proper coloring $\phi$ such that $\phi(v) \in L(v)$ for each vertex $v \in V(H)$.
For each $v_i \in V(H_5)$, we define \[
L_{H_5}(v_i) = L(v_i) \setminus \{\phi(x) : xv_i \in E(G^2) \mbox{ and } x \notin V(H_5)\}.
\]
We have the following three cases.
\medskip

\medskip
\noindent {\bf Case 1:} $H_5^2$ is an induced subgraph of $G^2$.

In this case, we have the following (see Figure \ref{subgraph-H6}).
$$
|L_{H_5}(v_i)| \geq
\begin{cases}
3 & i=1,5,6,8,9,11, \\
4 & i=3,\\
5 & i=2,4,7,10.
\end{cases}
$$
Now, we   show that
$H_5^2$ admits an $L$-coloring from the list $L_{H_5}(v_i)$.
Observe that $H_5^2$ has 28 edges.  And the graph polynomial for $H_5^2$ is as follows.
\begin{eqnarray*}
P_{H_5^2}(\bm{x})
&=&
(x_1-x_2)(x_1-x_3)(x_1-x_6)(x_1-x_7)(x_2-x_3)(x_2-x_4)(x_2-x_6)(x_2-x_7)(x_2-x_8)
\\
&&
  (x_3-x_4)(x_3-x_5)(x_3-x_7)(x_3-x_{10})(x_4-x_5)(x_4-x_9)(x_4-x_{10})(x_4-x_{11})
\\
&&
  (x_5-x_{10})(x_5-x_{11})(x_6-x_7)(x_6-x_8)(x_7-x_8)(x_7-x_9)(x_8-x_9)(x_8-x_{10})
\\
&&
  (x_9-x_{10})(x_9-x_{11})(x_{10}-x_{11})
\end{eqnarray*}
By the calculation using Mathematica,
we see that
the coefficient of $x_1^2x_2^4x_3^2x_4^4x_5^2x_6^2x_7^3x_8^2x_9^2x_{10}^3x_{11}^2$
is $-2$.
Thus,
by Theorem \ref{cnull},
$H_5^2$ admits an $L$-coloring from its list.  This gives an $L$-coloring for $G^2$.  This is a contradiction for the fact that $G$ is a counterexample.  So, $G$ has no $H_5$. \\



\medskip

\noindent {\bf Case 2:} $H_5^2$ is not an induced subgraph of $G^2$ and
 $E(G[V(H_5)]) - E(H_5) \neq \emptyset$.

\medskip  
\noindent {\bf Simplifying cases:}
\begin{itemize}
\item The vertices in each of the following pairs are nonadjacent
since it makes a 5-cycle:
$\{v_1, v_3\}$, $\{v_1, v_{5}\}$, $\{v_1, v_{9}\}$,
$\{v_3, v_5\}$, $\{v_3, v_8\}$, $\{v_3, v_9\}$,
$\{v_5, v_8\}$,
$\{v_6, v_8\}$, and $\{v_9, v_{11}\}$.

\item The vertices in each of the following pairs are nonadjacent since it makes $H_1$,
which does not exist by Lemma \ref{reducible-H0}:
$\{v_6, v_9\}$, and $\{v_8, v_{11}\}$.

\item The vertices in each of the following pairs are nonadjacent since it makes $H_2$,
which does not exist by Lemma \ref{reducible-F2}:
$\{v_6, v_{11}\}$.

\item  The vertices in each of the following pairs are nonadjacent since
 it makes $J_4$, which does not exist by Lemma \ref{C4-share-two-edge}:
$\{v_1, v_8\}$, $\{v_3, v_6\}$, $\{v_3, v_{11}\}$, and $\{v_5, v_{9}\}$.
\end{itemize}


Considering these,
we only need to consider the case
where $v_1$ and $v_{11}$ are adjacent in $G$ or $v_5$ and $v_6$ are adjacent in $G$.
By symmetry,
suppose that the latter holds.

\begin{figure}[htbp]
  \begin{center}
\begin{tikzpicture}[
  v2/.style={fill=black,minimum size=4pt,ellipse,inner sep=1pt},
  node distance=1.5cm,scale=0.4]

      \node[v2] (H5_1) at (0, 0){};
      \node[v2] (H5_2) at (0,-2){};
      \node[v2] (H5_3) at (1,-3.5){};
      \node[v2] (H5_4) at (2.8,-3.5){};
      \node[v2] (H5_5) at (4,-2){};
      \node[v2] (H5_6) at (4, 0){};
      \node[v2] (H5_7) at (2, 1.2){};
      \node[v2] (H5_9) at (-2, 0){};
      \node[v2] (H5_10) at (-2,-2){};
      \node[v2] (H5_11) at (6, 0){};
      \node[v2] (H5_12) at  (6,-2){};

   \draw (H5_1)--(H5_2)--(H5_3)--(H5_4)--(H5_5)--(H5_6)--(H5_7)--(H5_1);
   \draw (H5_1)--(H5_9)--(H5_10)--(H5_2);
   \draw (H5_5)--(H5_12)--(H5_11)--(H5_6);
   \draw (6, 0)..controls (10,-4) and (1,-8).. (-2,-2);

  \node[font=\scriptsize,above] at (H5_9) {$v_1$};
  \node[font=\scriptsize,xshift=5pt,yshift=-5pt] at (H5_9) {\color{blue}4};
  \node[font=\scriptsize,above] at (H5_1) {$v_2$};
    \node[font=\scriptsize,xshift=5pt,yshift=-5pt] at (H5_1) {\color{blue}5};
  \node[font=\scriptsize,above] at (H5_7) {$v_3$};
    \node[font=\scriptsize,below] at (H5_7) {\color{blue}4};
  \node[font=\scriptsize,above] at (H5_6) {$v_4$};
    \node[font=\scriptsize,xshift=-5pt,yshift=-5pt] at (H5_6) {\color{blue}6};
  \node[font=\scriptsize,above] at (H5_11) {$v_5$};
    \node[font=\scriptsize,xshift=-5pt,yshift=-5pt] at (H5_11) {\color{blue}6};
  \node[font=\scriptsize,left] at (H5_10) {$v_6$};
    \node[font=\scriptsize,xshift=5pt,yshift=5pt] at (H5_10) {\color{blue}6};
  \node[font=\scriptsize,below=1pt, xshift=-3pt] at (H5_2) {$v_7$};
    \node[font=\scriptsize,xshift=5pt,yshift=5pt] at (H5_2) {\color{blue}6};
  \node[font=\scriptsize,below] at (H5_3) {$v_8$};
    \node[font=\scriptsize,xshift=2pt,yshift=6pt] at (H5_3) {\color{blue}3};
  \node[font=\scriptsize,below] at (H5_4) {$v_{9}$};
    \node[font=\scriptsize,xshift=-3pt,yshift=6pt] at (H5_4) {\color{blue}3};
  \node[font=\scriptsize,below=1pt, xshift=3pt] at (H5_5) {$v_{10}$};
  \node[font=\scriptsize,xshift=-5pt,yshift=5pt] at (H5_5) {\color{blue}5};
  \node[font=\scriptsize,below] at (H5_12) {$v_{11}$};
  \node[font=\scriptsize,xshift=-5pt,yshift=5pt] at (H5_12) {\color{blue}4};
\end{tikzpicture}
\end{center}
  \caption{The case when $v_5$ and $v_6$ are adjacent in the proof of Lemma \ref{reducible-H6}.
The numbers at vertices are the number of available colors.} \label{H6-adjacent}
\end{figure}
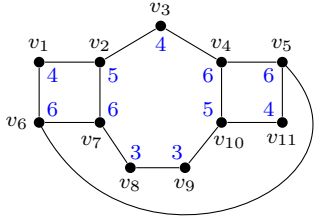

The graph polynomial for this subcase is
\[
f(\bm{x'}) = (x_1 - x_{5})(x_4 - x_{6})(x_5 - x_6)(x_5-x_7)(x_6 - x_{11})P_{H_5^2}(\bm{x}),
\]
where $P_{H_5^2}(\bm{x})$ is the graph polynomial for $H_5^2$ when $H_5$ is an induced subgraph in Case 1.
By the calculation using Mathematica,
the coefficient of
$x_1^3x_2^4x_3^2x_4^4x_5^2x_6^4x_7^3x_8^2x_9^2x_{10}^4x_{11}^3$
is $-5$, which is nonzero.
Thus in Case 2,
by Theorem \ref{cnull},
the vertices in $V(H_5)$ can be colored from the list $L_{H_5}$
so that we obtain an $L$-coloring in $G^2$.
This is a contradiction for the fact that $G$ is a counterexample.  \\

\bigskip
\noindent {\bf Case 3:} $H_5^2$ is not an induced subgraph of $G^2$ and
 $E(G[V(H_5)]) - E(H_5) = \emptyset$.

\medskip
\noindent {\bf Simplifying cases:}
\begin{itemize}
\item
The vertices in each of the following pairs do not have a common neighbor outside $H_5$,
since it makes a 5-cycle:
$\{v_1, v_6\}$, $\{v_1, v_8\}$, $\{v_3, v_6\}$, $\{v_3, v_8\}$,
$\{v_3, v_9\}$, $\{v_3, v_{11}\}$, $\{v_5, v_9\}$,
$\{v_5, v_{11}\}$, $\{v_6, v_9\}$, $\{v_8, v_{11}\}$.

\item
The vertices in each of the following pairs do not have a common neighbor outside $H_5$,
since it makes $F_2$,
which does not exist by Lemma \ref{reducible-F2}:
$\{v_8, v_9\}$.

\item
The vertices in each of the following pairs do not have a common neighbor outside $H_5$,
since it makes $H_1$,
which does not exist by Lemma \ref{reducible-H0}:
$\{v_1, v_3\}$, $\{v_3, v_5\}$, $\{v_6, v_8\}$, $\{v_9, v_{11}\}$.

\item
The vertices in each of the following pairs do not have a common neighbor outside $H_5$,
since it makes $J_5$,
which does not exist by Lemma \ref{H2-type-two-reducible}:
$\{v_1, v_5\}$.
\end{itemize}

Thus,
we only need to consider the following three subcases in Case 3.

\medskip

\noindent {\bf Subcase 3.1:} $v_6$ and $v_{11}$ have a common neighbor $w$,
where $w \notin V(H_5)$.
\medskip

Note that $w$ is not adjacent to any vertex $v_i \in V(H_5) \setminus \{v_6, v_{11}\}$ by the argument of Simplifying cases.
The number of available colors at vertices of $V(H_5)$ is presented in Subcase 3.1 in Figure \ref{H6-nbr-first}.  The graph polynomial for this subcase is
\[
f(\bm{x'}) = (x_6 - x_{11})P_{H_5^2}(\bm{x}),
\]
where $P_{H_5^2}(\bm{x})$ is the graph polynomial for $H_5^2$ when $H_5$ is an induced subgraph in Case 1.
By the calculation using Mathematica,
the coefficient of
$x_1^2x_2^4x_3^2x_4^4x_5^2x_6^2x_7^3x_8^2x_9^2x_{10}^3x_{11}^3$
is $1$, which is nonzero.

\begin{figure}[htbp]
  \begin{center}
\begin{tikzpicture}[
  v2/.style={fill=black,minimum size=4pt,ellipse,inner sep=1pt},
  node distance=1.5cm,scale=0.4]

      \node[v2] (H5_1) at (0, 0){};
      \node[v2] (H5_2) at (0,-2){};
      \node[v2] (H5_3) at (1,-3.5){};
      \node[v2] (H5_4) at (2.8,-3.5){};
      \node[v2] (H5_5) at (4,-2){};
      \node[v2] (H5_6) at (4, 0){};
      \node[v2] (H5_7) at (2, 1.2){};
      \node[v2] (H5_9) at (-2, 0){};
      \node[v2] (H5_10) at (-2,-2){};
      \node[v2] (H5_11) at (6, 0){};
      \node[v2] (H5_12) at  (6,-2){};
     \node[v2] (H5_8) at  (2,-5){};
   \draw (H5_1)--(H5_2)--(H5_3)--(H5_4)--(H5_5)--(H5_6)--(H5_7)--(H5_1);
   \draw (H5_1)--(H5_9)--(H5_10)--(H5_2);
   \draw (H5_5)--(H5_12)--(H5_11)--(H5_6);

    \draw (H5_10)to[bend right=20](H5_8);
     \draw (H5_8)to[bend right=20](H5_12);
  \node[font=\scriptsize,above] at (H5_9) {$v_1$};
  \node[font=\scriptsize,xshift=5pt,yshift=-5pt] at (H5_9) {\color{blue}3};
  \node[font=\scriptsize,above] at (H5_1) {$v_2$};
    \node[font=\scriptsize,xshift=5pt,yshift=-5pt] at (H5_1) {\color{blue}5};
  \node[font=\scriptsize,above] at (H5_7) {$v_3$};
    \node[font=\scriptsize,below] at (H5_7) {\color{blue}4};
  \node[font=\scriptsize,above] at (H5_6) {$v_4$};
    \node[font=\scriptsize,xshift=-5pt,yshift=-5pt] at (H5_6) {\color{blue}5};
  \node[font=\scriptsize,above] at (H5_11) {$v_5$};
    \node[font=\scriptsize,xshift=-5pt,yshift=-5pt] at (H5_11) {\color{blue}3};
  \node[font=\scriptsize,left] at (H5_10) {$v_6$};
    \node[font=\scriptsize,xshift=5pt,yshift=5pt] at (H5_10) {\color{blue}4};
  \node[font=\scriptsize,below=1pt, xshift=-3pt] at (H5_2) {$v_7$};
    \node[font=\scriptsize,xshift=5pt,yshift=5pt] at (H5_2) {\color{blue}5};
  \node[font=\scriptsize,below] at (H5_3) {$v_8$};
    \node[font=\scriptsize,xshift=2pt,yshift=6pt] at (H5_3) {\color{blue}3};
  \node[font=\scriptsize,below] at (H5_4) {$v_{9}$};
    \node[font=\scriptsize,xshift=-3pt,yshift=6pt] at (H5_4) {\color{blue}3};
  \node[font=\scriptsize,below=1pt, xshift=3pt] at (H5_5) {$v_{10}$};
  \node[font=\scriptsize,xshift=-5pt,yshift=5pt] at (H5_5) {\color{blue}5};
  \node[font=\scriptsize,right] at (H5_12) {$v_{11}$};
  \node[font=\scriptsize,xshift=-5pt,yshift=5pt] at (H5_12) {\color{blue}4};
\node[font=\scriptsize,below] at (H5_8) {$w$};
\node[font=\scriptsize] at (2,-7.5) {Subcase 3.1};

  \end{tikzpicture}\hspace{0.5cm}
\begin{tikzpicture}[
  v2/.style={fill=black,minimum size=4pt,ellipse,inner sep=1pt},
  node distance=1.5cm,scale=0.4]

      \node[v2] (H5_1) at (0, 0){};
      \node[v2] (H5_2) at (0,-2){};
      \node[v2] (H5_3) at (1,-3.5){};
      \node[v2] (H5_4) at (2.8,-3.5){};
      \node[v2] (H5_5) at (4,-2){};
      \node[v2] (H5_6) at (4, 0){};
      \node[v2] (H5_7) at (2, 1.2){};
      \node[v2] (H5_9) at (-2, 0){};
      \node[v2] (H5_10) at (-2,-2){};
      \node[v2] (H5_11) at (6, 0){};
      \node[v2] (H5_12) at  (6,-2){};
     \node[v2] (H5_8) at  (5,-5.2){};
   \draw (H5_1)--(H5_2)--(H5_3)--(H5_4)--(H5_5)--(H5_6)--(H5_7)--(H5_1);
   \draw (H5_1)--(H5_9)--(H5_10)--(H5_2);
   \draw (H5_5)--(H5_12)--(H5_11)--(H5_6);

    \draw (H5_10)to[bend right=27](H5_8);
     \draw (H5_8)to[bend right=60](H5_11);
  \node[font=\scriptsize,above] at (H5_9) {$v_1$};
  \node[font=\scriptsize,xshift=5pt,yshift=-5pt] at (H5_9) {\color{blue}3};
  \node[font=\scriptsize,above] at (H5_1) {$v_2$};
    \node[font=\scriptsize,xshift=5pt,yshift=-5pt] at (H5_1) {\color{blue}5};
  \node[font=\scriptsize,above] at (H5_7) {$v_3$};
    \node[font=\scriptsize,below] at (H5_7) {\color{blue}4};
  \node[font=\scriptsize,above] at (H5_6) {$v_4$};
    \node[font=\scriptsize,xshift=-5pt,yshift=-5pt] at (H5_6) {\color{blue}5};
  \node[font=\scriptsize,above] at (H5_11) {$v_5$};
    \node[font=\scriptsize,xshift=-5pt,yshift=-5pt] at (H5_11) {\color{blue}4};
  \node[font=\scriptsize,below] at (H5_10) {$v_6$};
    \node[font=\scriptsize,xshift=5pt,yshift=5pt] at (H5_10) {\color{blue}4};
  \node[font=\scriptsize,below=1pt, xshift=-3pt] at (H5_2) {$v_7$};
    \node[font=\scriptsize,xshift=5pt,yshift=5pt] at (H5_2) {\color{blue}5};
  \node[font=\scriptsize,below] at (H5_3) {$v_8$};
    \node[font=\scriptsize,xshift=2pt,yshift=6pt] at (H5_3) {\color{blue}3};
  \node[font=\scriptsize,below] at (H5_4) {$v_{9}$};
    \node[font=\scriptsize,xshift=-3pt,yshift=6pt] at (H5_4) {\color{blue}3};
  \node[font=\scriptsize,below=1pt, xshift=3pt] at (H5_5) {$v_{10}$};
  \node[font=\scriptsize,xshift=-5pt,yshift=5pt] at (H5_5) {\color{blue}5};
  \node[font=\scriptsize,below] at (H5_12) {$v_{11}$};
  \node[font=\scriptsize,xshift=-5pt,yshift=5pt] at (H5_12) {\color{blue}3};
\node[font=\scriptsize,below] at (H5_8) {$w$};
\node[font=\scriptsize] at (2,-7.5) {Subcase 3.2};

  \end{tikzpicture}\hspace{0.5cm}
  \begin{tikzpicture}[
  v2/.style={fill=black,minimum size=4pt,ellipse,inner sep=1pt},
  node distance=1.5cm,scale=0.4]

      \node[v2] (H5_1) at (0, 0){};
      \node[v2] (H5_2) at (0,-2){};
      \node[v2] (H5_3) at (1,-3.5){};
      \node[v2] (H5_4) at (2.8,-3.5){};
      \node[v2] (H5_5) at (4,-2){};
      \node[v2] (H5_6) at (4, 0){};
      \node[v2] (H5_7) at (2, 1.2){};
      \node[v2] (H5_9) at (-2, 0){};
      \node[v2] (H5_10) at (-2,-2){};
      \node[v2] (H5_11) at (6, 0){};
      \node[v2] (H5_12) at  (6,-2){};
     \node[v2] (H5_8) at  (7,-4){};
   \draw (H5_1)--(H5_2)--(H5_3)--(H5_4)--(H5_5)--(H5_6)--(H5_7)--(H5_1);
   \draw (H5_1)--(H5_9)--(H5_10)--(H5_2);
   \draw (H5_5)--(H5_12)--(H5_11)--(H5_6);

    \draw (H5_3)to[bend right=40](H5_8);
     \draw (H5_8)to[bend right=30](H5_11);
  \node[font=\scriptsize,above] at (H5_9) {$v_1$};
  \node[font=\scriptsize,xshift=5pt,yshift=-5pt] at (H5_9) {\color{blue}3};
  \node[font=\scriptsize,above] at (H5_1) {$v_2$};
    \node[font=\scriptsize,xshift=5pt,yshift=-5pt] at (H5_1) {\color{blue}5};
  \node[font=\scriptsize,above] at (H5_7) {$v_3$};
    \node[font=\scriptsize,below] at (H5_7) {\color{blue}4};
  \node[font=\scriptsize,above] at (H5_6) {$v_4$};
    \node[font=\scriptsize,xshift=-5pt,yshift=-5pt] at (H5_6) {\color{blue}5};
  \node[font=\scriptsize,above] at (H5_11) {$v_5$};
    \node[font=\scriptsize,xshift=-5pt,yshift=-5pt] at (H5_11) {\color{blue}4};
  \node[font=\scriptsize,below] at (H5_10) {$v_6$};
    \node[font=\scriptsize,xshift=5pt,yshift=5pt] at (H5_10) {\color{blue}3};
  \node[font=\scriptsize,below=1pt, xshift=-3pt] at (H5_2) {$v_7$};
    \node[font=\scriptsize,xshift=5pt,yshift=5pt] at (H5_2) {\color{blue}5};
  \node[font=\scriptsize,below] at (H5_3) {$v_8$};
    \node[font=\scriptsize,xshift=2pt,yshift=6pt] at (H5_3) {\color{blue}4};
  \node[font=\scriptsize,below] at (H5_4) {$v_{9}$};
    \node[font=\scriptsize,xshift=-3pt,yshift=6pt] at (H5_4) {\color{blue}3};
  \node[font=\scriptsize,below=1pt, xshift=3pt] at (H5_5) {$v_{10}$};
  \node[font=\scriptsize,xshift=-5pt,yshift=5pt] at (H5_5) {\color{blue}5};
  \node[font=\scriptsize,below] at (H5_12) {$v_{11}$};
  \node[font=\scriptsize,xshift=-5pt,yshift=5pt] at (H5_12) {\color{blue}3};
\node[font=\scriptsize,below] at (H5_8) {$w$};
\node[font=\scriptsize] at (2,-7.5) {Subcase 3.3};

  \end{tikzpicture}
  \end{center}
\caption{Subcase 3.1, 3.2, and 3.3 of the proof of Lemma \ref{reducible-H6}. The numbers at vertices are the number of available colors.} \label{H6-nbr-first}
\end{figure}
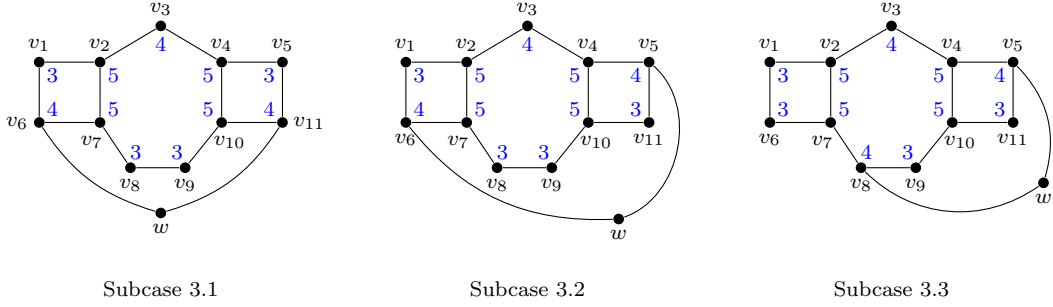

\medskip
\noindent {\bf Subcase 3.2:}
$v_5$ and $v_{6}$ (or $v_1$ and $v_{11}$ by symmetry)
have a common neighbor $w$,
where $w \notin V(H_5)$.
\medskip

Note that $w$ is not adjacent to any vertex $v_i \in V(H_5) \setminus \{v_5, v_6\}$ by the argument of Simplifying cases.
The number of available colors at vertices of $V(H_5)$ is presented in Subcase 3.2 in Figure \ref{H6-nbr-first}.  The graph polynomial for this subcase is
\[
f(\bm{x'}) = (x_5 - x_{6})P_{H_5^2}(\bm{x}).
\]
By the calculation using Mathematica,
the coefficient of
$x_1^2x_2^4x_3^2x_4^4x_5^3x_6^2x_7^3x_8^2x_9^2x_{10}^3x_{11}^2$
is $-1$, which is nonzero.

\medskip
\noindent {\bf Subcase 3.3:}
$v_5$ and $v_{8}$ (or $v_1$ and $v_{9}$ by symmetry)
have a common neighbor $w$,
where $w \notin V(H_5)$.
\medskip

Note that $w$ is not adjacent to any vertex $v_i \in V(H_5) \setminus \{v_5, v_8\}$ by the argument of Simplifying cases.
The number of available colors at vertices of $V(H_5)$ is presented in Subcase 3.3 in Figure \ref{H6-nbr-first}.  The graph polynomial for this subcase is
\[
f(\bm{x'}) = (x_5 - x_{8})P_{H_5^2}(\bm{x}).
\]
By the calculation using Mathematica,
we see that
the coefficient of
$x_1^2x_2^4x_3^2x_4^4x_5^3x_6^2x_7^3x_8^2x_9^2x_{10}^3x_{11}^2$
is $-3$, which is nonzero.

Thus in Case 3,
by Theorem \ref{cnull},
the vertices in $H_5$ can be colored from the list $L_{H_5}$
so that we obtain an $L$-coloring in $G^2$.
This gives an $L$-coloring for $G^2$.
This is a contradiction for the fact that $G$ is a counterexample.  This completes the proof of
Lemma \ref{reducible-H6}.
\end{proof}


Next we will prove a few properties which will be used in the proof of Lemma \ref{reducible-H7}.

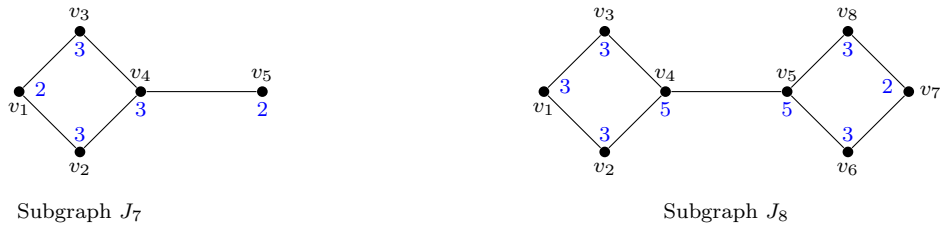
\begin{figure}[htbp]
  \begin{center}
  \begin{tikzpicture}[
  v2/.style={fill=black,minimum size=4pt,ellipse,inner sep=1pt},
  node distance=1.5cm,scale=0.8]

 \node[v2] (F2_1) at (0, 1) {};
    \node[v2] (F2_2) at (-1, 2) {};
    \node[v2] (F2_3) at (-1, 0) {};

    \node[v2] (F2_4) at (2, 1) {};

    \node[v2] (F2_8) at (-2,1) {};

    \draw (F2_1) -- (F2_2)  -- (F2_8)-- (F2_3) -- (F2_1);

    \draw (F2_1) -- (F2_4);

    \node[font=\scriptsize,above] at (F2_1) {$v_4$};
    \node[font=\scriptsize,below] at (F2_1) {\color{blue}3};
    \node[font=\scriptsize,above] at (F2_4) {$v_5$};
    \node[font=\scriptsize,below] at (F2_4) {\color{blue}2};
    \node[font=\scriptsize,above] at (F2_2) {$v_3$};
    \node[font=\scriptsize,below] at (F2_2) {\color{blue}3};
     \node[font=\scriptsize,below] at (F2_8) {$v_1$};
    \node[font=\scriptsize,xshift=8pt,yshift=1pt] at (F2_8) {\color{blue}2};
     \node[font=\scriptsize,below] at (F2_3) {$v_2$};
    \node[font=\scriptsize,above] at (F2_3) {\color{blue}3};
     \node[font=\scriptsize] at (-1,-1) { Subgraph $J_7$};
\end{tikzpicture}\hspace{3cm}
\begin{tikzpicture}[
  v2/.style={fill=black,minimum size=4pt,ellipse,inner sep=1pt},
  node distance=1.5cm,scale=0.8]

 \node[v2] (F2_1) at (0, 1) {};
    \node[v2] (F2_2) at (-1, 2) {};
    \node[v2] (F2_3) at (-1, 0) {};

    \node[v2] (F2_4) at (2, 1) {};
    \node[v2] (F2_5) at (3, 2) {};
    \node[v2] (F2_6) at (4, 1) {};
    \node[v2] (F2_7) at (3, 0) {};
    \node[v2] (F2_8) at (-2,1) {};

    \draw (F2_1) -- (F2_2)  -- (F2_8)-- (F2_3) -- (F2_1);

    \draw (F2_4) -- (F2_5) -- (F2_6) -- (F2_7) -- (F2_4);

    \draw (F2_1) -- (F2_4);

    \node[font=\scriptsize,above] at (F2_2) {$v_3$};
    \node[font=\scriptsize,below] at (F2_2) {\color{blue}3};
    \node[font=\scriptsize,above] at (F2_1) {$v_4$};
    \node[font=\scriptsize,below] at (F2_1) {\color{blue}5};
    \node[font=\scriptsize,below] at (F2_3) {$v_2$};
    \node[font=\scriptsize,above] at (F2_3) {\color{blue}3};
     \node[font=\scriptsize,below] at (F2_8) {$v_1$};
    \node[font=\scriptsize,xshift=8pt,yshift=2pt] at (F2_8) {\color{blue}3};
    \node[font=\scriptsize,above] at (F2_4) {$v_5$};
    \node[font=\scriptsize,below] at (F2_4) {\color{blue}5};
    \node[font=\scriptsize,below] at (F2_7) {$v_6$};
    \node[font=\scriptsize,above] at (F2_7) {\color{blue}3};
      \node[font=\scriptsize,right] at (F2_6) {$v_7$};
    \node[font=\scriptsize,xshift=-8pt,yshift=2pt] at (F2_6) {\color{blue}2};

      \node[font=\scriptsize,above] at (F2_5) {$v_8$};
    \node[font=\scriptsize,below] at (F2_5) {\color{blue}3};
      \node[font=\scriptsize] at (1,-1) { Subgraph $J_8$};
\end{tikzpicture}
\end{center}
\caption{Subgraphs $J_7$ and $J_8$.  The numbers at vertices are the number of available colors.}\label{4cycle-pendent}
\end{figure}

\begin{lemma} \label{lem-4cycle-pendent}
For a graph $J_7$ with $V(J_7) = \{v_1, v_2, v_3, v_4, v_5\}$ as in Figure \ref{4cycle-pendent},
suppose that each vertex $v_i$ has a list $L(v_i)$ with
$|L(v_i)|\geq 2$ for $i=1,5$,
$|L(v_i)| \geq 3$ for $i = 2, 3, 4$,
and $|L(v_2) \cup L(v_3) \cup L(v_4)| \geq 4$.
In addition, suppose further that
$|L(v_1)|\geq 3$,
or
$|V(v_1) \cup L(v_3)| \geq 5$.
Then $J_7^2$ has a proper coloring from the list.
\end{lemma}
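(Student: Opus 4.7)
My approach is first to identify $J_7^2$ explicitly. Since $v_4$ has neighbors $\{v_2, v_3, v_5\}$ in $J_7$ and $v_1$ has neighbors $\{v_2, v_3\}$, a direct distance check shows that every pair of distinct vertices of $J_7$ is at distance at most $2$ except $\{v_1, v_5\}$, which is at distance $3$. Hence $J_7^2 \cong K_5 - v_1 v_5$, so producing a proper $L$-coloring of $J_7^2$ is equivalent to choosing $c_i \in L(v_i)$ for $i = 1, \dots, 5$ with $c_i \neq c_j$ for every pair $\{i, j\} \neq \{1, 5\}$.

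I would then split on whether $L(v_1) \cap L(v_5)$ is empty. In the non-disjoint case, pick a common color $c$, set $c_1 = c_5 = c$, and observe that the remaining task is to color the triangle on $\{v_2, v_3, v_4\}$ in $J_7^2$ from the residual lists $L(v_i) \setminus \{c\}$. Each residual list has size at least $2$ and their union has size at least $|L(v_2) \cup L(v_3) \cup L(v_4)| - 1 \geq 3$. Hall's marriage theorem then yields a system of distinct representatives, providing the desired $c_2, c_3, c_4$.

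In the disjoint case, I would aim for five pairwise distinct colors $c_1, \dots, c_5$ by verifying Hall's condition $|N(S)| \geq |S|$ for the bipartite vertex-color incidence graph. Singletons, pairs, and triples are immediate from the bounds $|L(v_i)| \geq 2$ together with $|L(v_j)| \geq 3$ for $j \in \{2, 3, 4\}$. For $|S| = 4$, either $S \supseteq \{v_2, v_3, v_4\}$, so $|N(S)| \geq 4$ by the union hypothesis, or $S \supseteq \{v_1, v_5\}$, so $|N(S)| \geq |L(v_1)| + |L(v_5)| \geq 4$ by disjointness.

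The main obstacle is the $|S| = 5$ Hall inequality, and this is precisely where the dichotomy in the hypothesis is needed: if $|L(v_1)| \geq 3$ then disjointness forces $|L(v_1) \cup L(v_5)| \geq 5$, and otherwise the alternative assumption $|L(v_1) \cup L(v_3)| \geq 5$ supplies the bound directly. Once this final Hall inequality is cleared, Hall's theorem delivers an SDR, which is automatically a proper $L$-coloring of $J_7^2 = K_5 - v_1 v_5$.
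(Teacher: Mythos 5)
Your proof is correct and follows essentially the same route as the paper: color $v_1,v_5$ with a common color and finish the triangle $\{v_2,v_3,v_4\}$ by an SDR when the lists intersect, and otherwise verify Hall's condition on the vertex--color incidence graph (using the disjointness of $L(v_1)$ and $L(v_5)$ together with the dichotomy hypothesis for the full set) to obtain five distinct colors. Your explicit identification of $J_7^2$ as $K_5 - v_1v_5$ and the subset-by-subset check of Hall's condition merely spell out details the paper leaves to the reader.
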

\begin{proof}
We consider two cases.

\medskip
\noindent {\bf Case 1:} $L(v_1) \cap L(v_5) \neq \emptyset$. \\
Color $v_1$ and $v_5$ by a color $c \in L(v_1) \cap L(v_5)$.  For $i = 2, 3, 4$, let $L' (v_i) = L(v_i) \setminus \{c\}$.  Then $|L'(v_i)| \geq 2$ for $i = 2, 3, 4$, and $|L'(v_2) \cup L'(v_3) \cup L'(v_4)| \geq 3$ since $|L(v_2) \cup L(v_3) \cup L(v_4)| \geq 4$. So, $v_2$, $v_3$, $v_4$ are colorable from the list.

\medskip
\noindent {\bf Case 2:} $L(v_1) \cap L(v_5) = \emptyset$. \\
Let $X = \{v_1, v_2, v_3, v_4, v_5\}$, and we
 define a bipartite graph $W$ as follows.
\begin{itemize}
\item $V(W) = X \cup Y$ where $Y = \bigcup_{v_i \in X} L(v_i)$.

\item For each  $v_i$, $v_i \alpha \in E(W)$ if $\alpha \in L(v_i)$.
\end{itemize}
Note that $|N_W(v_2) \cup N_W(v_3) \cup N_W(v_4) | \geq 4$,
and in addition
$|N_W(v_1)| \geq 3$
or
$|N_W(v_1) \cup N_W(v_3)| \geq 5$.
Thus, we can easily show that $|N_W(S)| \geq |S|$ for every $S \subset X$.
Hence by Hall's theorem,
the bipartite graph $W$ has a matching $M$ that contains all vertices of $\{v_1, v_2, v_3, v_4, v_5\}$.  Hence $J_7^2$ can be colored properly from the list.
\end{proof}

\begin{lemma} \label{lem-two-4cycle}
For a graph $J_8$ with $V(J_8) = \{v_1, v_2, v_3, v_4, v_5, v_6, v_7, v_8\}$ as in Figure \ref{4cycle-pendent},
if each vertex $v_i$ has a list $L(v_i)$ with $|L(v_i)| = 3$ for $i = 1, 2, 3, 6,8$, $|L(v_4)|=|L(v_5)|=5$, and $|L(v_7)|=2$,
then $J_8^2$ has a proper coloring from the list.
\end{lemma}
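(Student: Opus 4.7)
The squared graph $J_8^2$ decomposes as the union of two copies of $K_5-e$ sharing the edge $v_4v_5$: a \emph{left} copy $K_5 - v_1v_5$ on $\{v_1,v_2,v_3,v_4,v_5\}$, and a \emph{right} copy $K_5 - v_4v_7$ on $\{v_4,v_5,v_6,v_7,v_8\}$. Each copy is isomorphic to $J_7^2$ from Lemma~\ref{lem-4cycle-pendent}. In particular $v_4,v_5$ have degree $6$ in $J_8^2$, $v_2,v_3,v_6,v_8$ have degree $4$, and $v_1,v_7$ have degree $3$. Note that $\{v_1,v_2,v_3\}$ and $\{v_6,v_7,v_8\}$ are disjoint triangles in $J_8^2$, and their vertex sets are joined to the rest only through $v_4$ (for the left triangle) and through $v_4, v_5$ in the case of $v_6, v_8$, and through $v_5$ for the whole right triangle.

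My approach is to first colour the two high-degree vertices $v_4$ and $v_5$ using their large lists (of size $5$), and then extend to the two remaining triangles independently. I would pick an ordered pair $(c_4,c_5)\in L(v_4)\times L(v_5)$ with $c_4\ne c_5$ such that the residual lists $L'(v)=L(v)\setminus\{c_4,c_5\}$ (restricted to the neighbours of $v_4$ or $v_5$ in $J_8^2$) satisfy Hall's condition on each triangle as a $K_3$. For the left triangle the residual sizes are $\ge 2,\ge 1,\ge 1$ at $v_1,v_2,v_3$ respectively, so Hall reduces to the two conditions $|L'(v_2)\cup L'(v_3)|\ge 2$ and $|L'(v_1)\cup L'(v_2)\cup L'(v_3)|\ge 3$; the analogous statement holds on the right, using that $|L(v_7)|=2$ guarantees $|L'(v_7)|\ge 1$. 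Once these Hall conditions hold, each triangle can be list-coloured by a standard SDR (System of Distinct Representatives) argument, finishing the proof.

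The heart of the argument is to find a single $(c_4,c_5)$ that simultaneously defeats the left and right bad configurations. Each ``bad'' pair $(c_4,c_5)$ on the left forces $\{c_4,c_5\}$ to lie in a small prescribed subset determined by $L(v_1),L(v_2),L(v_3)$ (for example, $\{c_4,c_5\}$ must coincide with $L(v_2)\cap L(v_3)$ when the latter has size $2$, or be a $2$-subset of $L(v_2)=L(v_3)$ when it has size $3$); the count of such bad pairs is bounded, and similarly on the right. Since the number of admissible ordered pairs with $c_4\ne c_5$ is at least $|L(v_4)|\cdot|L(v_5)|-|L(v_4)\cap L(v_5)|\ge 20$, a direct counting argument produces an admissible pair avoiding all obstructions.

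\textbf{The main obstacle} is the most symmetric degenerate case, in which $L(v_2)=L(v_3)$ and $L(v_6)=L(v_8)$ are $3$-sets positioned so that the left and right bad pair sets jointly threaten to exhaust $L(v_4)\times L(v_5)$. I would handle this by a short direct case analysis: one exploits that $L(v_7)$ has size exactly $2$ (so on the right triangle, a good choice of $c_5\notin L(v_7)$ trivialises the Hall condition there), and if even this does not break the symmetry, one pre-colours $v_1$ with a carefully chosen $c_1\in L(v_1)$ and then applies Lemma~\ref{lem-4cycle-pendent} to the right copy $K_5 - v_4v_7$ to finish.
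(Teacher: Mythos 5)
Your decomposition of $J_8^2$ is correct, and your route is genuinely different from the paper's: the paper first greedily colours $v_6,v_7,v_8$ (recolouring $v_6$ if the three left lists threaten to collapse to a common $3$-set), which reduces everything to a single application of Lemma~\ref{lem-4cycle-pendent} on $\{v_1,\dots,v_5\}$; you instead colour the two degree-$6$ vertices $v_4,v_5$ first and then treat the two remaining triangles as independent $K_3$'s via Hall's condition. Your order of colouring is structurally appealing, but it pushes all the difficulty into choosing $(c_4,c_5)$, whereas the paper's order leaves a vertex of large residual list ($v_4$) available to absorb conflicts.

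The gap is in how you propose to choose $(c_4,c_5)$. A union bound of the form ``at least $20$ admissible pairs versus a bounded number of bad pairs'' does not close the argument: in the degenerate case $L(v_1)=L(v_2)=L(v_3)=S$ every $c_4\in S$ is bad on the left regardless of $c_5$ (up to $15$ of the $\le 25$ ordered pairs), and symmetrically when $L(v_6)=L(v_8)\supseteq L(v_7)$ every $c_5\in L(v_6)$ is bad on the right (again up to $15$ pairs, plus a further column $c_4=t_0$ where $\{t_0\}=L(v_6)\setminus L(v_7)$). The two bad sets together can cover all but one or two ordered pairs, so you must argue structurally --- namely that whenever a bad set is large it constrains only one of the two coordinates, leaving $c_4\in L(v_4)\setminus(S\cup\{t_0\})$ and $c_5\in L(v_5)\setminus L(v_6)$ nonempty --- rather than just count. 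Moreover, your fallback for the hardest case is broken as stated: pre-colouring $v_1$ and then applying Lemma~\ref{lem-4cycle-pendent} to the right copy $K_5-v_4v_7$ leaves $v_2$ and $v_3$ uncoloured, and both are adjacent to $v_4$ and $v_5$ in $J_8^2$, so the right copy cannot be finished independently of them. The strategy is salvageable, but the degenerate cases need the explicit one-coordinate analysis above (or simply the paper's reordering) to be a complete proof.
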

\begin{proof}
In subgraph $J_8$ in Figure \ref{4cycle-pendent}, we first greedily color $v_6, v_7, v_8$ by colors $c_6, v_7, v_8$, respectively.  If $L(v_2) = L(v_3) = L(v_4) \setminus \{c_6, c_8\}$, then we recolor $v_6$ by a color $c \in L(v_6) \setminus \{c_6, c_8\}$.  Then $L(v_2) \neq L(v_4) \setminus \{c, c_8\}$.  This satisfies the condition of $J_7$ as in Lemma \ref{lem-4cycle-pendent}, so $J_8^2$ can be colored properly from the list.
\end{proof}

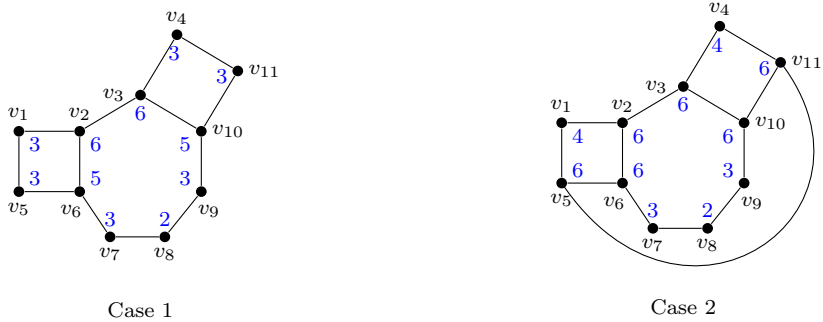
\begin{figure}[htbp]
  \begin{center}
  \begin{tikzpicture}[
  v2/.style={fill=black,minimum size=4pt,ellipse,inner sep=1pt},
  node distance=1.5cm,scale=0.4]

      \node[v2] (H6_1) at (0, 0){};
      \node[v2] (H6_2) at (0,-2){};
      \node[v2] (H6_3) at (1,-3.5){};
      \node[v2] (H6_4) at (2.8,-3.5){};
      \node[v2] (H6_5) at (4,-2){};
      \node[v2] (H6_6) at (4, 0){};
      \node[v2] (H6_7) at (2, 1.2){};
      \node[v2] (H6_9) at (-2, 0){};
      \node[v2] (H6_10) at (-2,-2){};
      \node[v2] (H6_11) at  (3.2,3.2){};
      \node[v2] (H6_12) at  (5.2,2){};

   \draw (H6_1)--(H6_2)--(H6_3)--(H6_4)--(H6_5)--(H6_6)--(H6_7)--(H6_1);
   \draw (H6_1)--(H6_9)--(H6_10)--(H6_2);
   \draw (H6_6)--(H6_12)--(H6_11)--(H6_7);

  \node[font=\scriptsize,above] at (H6_9) {$v_1$};
  \node[font=\scriptsize,xshift=6pt,yshift=-5pt] at (H6_9) {\color{blue}3};
  \node[font=\scriptsize,above] at (H6_1) {$v_2$};
    \node[font=\scriptsize,xshift=6pt,yshift=-5pt] at (H6_1) {\color{blue}6};
  \node[font=\scriptsize,xshift=-10pt] at (H6_7) {$v_3$};
    \node[font=\scriptsize,below] at (H6_7) {\color{blue}6};
 \node[font=\scriptsize,above] at (H6_11) {$v_4$};
     \node[font=\scriptsize,xshift=-1pt,yshift=-7pt] at (H6_11) {\color{blue}3};
  \node[font=\scriptsize,below] at (H6_10) {$v_5$};
      \node[font=\scriptsize,xshift=6pt,yshift=5pt] at (H6_10) {\color{blue}3};
  \node[font=\scriptsize,below=1pt, xshift=-3pt] at (H6_2) {$v_6$};
      \node[font=\scriptsize,xshift=6pt,yshift=5pt] at (H6_2) {\color{blue}5};
  \node[font=\scriptsize,below] at (H6_3) {$v_7$};
      \node[font=\scriptsize,above] at (H6_3) {\color{blue}3};
  \node[font=\scriptsize,below] at (H6_4) {$v_{8}$};
      \node[font=\scriptsize,above] at (H6_4) {\color{blue}2};
  \node[font=\scriptsize,below=1pt, xshift=3pt] at (H6_5) {$v_{9}$};
      \node[font=\scriptsize,xshift=-6pt,yshift=5pt] at (H6_5) {\color{blue}3};
   \node[font=\scriptsize,right] at (H6_6) {$v_{10}$};
       \node[font=\scriptsize,xshift=-6pt,yshift=-5pt] at (H6_6) {\color{blue}5};
  \node[font=\scriptsize,right] at (H6_12) {$v_{11}$};
      \node[font=\scriptsize,xshift=-6pt,yshift=-2pt] at (H6_12) {\color{blue}3};

  \node[font=\scriptsize] at (2,-5.9) {Case 1};

\end{tikzpicture}\hspace{3cm}
\raisebox{-0.5cm}{
  \begin{tikzpicture}[
  v2/.style={fill=black,minimum size=4pt,ellipse,inner sep=1pt},
  node distance=1.5cm,scale=0.4]

      \node[v2] (H6_1) at (0, 0){};
      \node[v2] (H6_2) at (0,-2){};
      \node[v2] (H6_3) at (1,-3.5){};
      \node[v2] (H6_4) at (2.8,-3.5){};
      \node[v2] (H6_5) at (4,-2){};
      \node[v2] (H6_6) at (4, 0){};
      \node[v2] (H6_7) at (2, 1.2){};
      \node[v2] (H6_9) at (-2, 0){};
      \node[v2] (H6_10) at (-2,-2){};
      \node[v2] (H6_11) at  (3.2,3.2){};
      \node[v2] (H6_12) at  (5.2,2){};

   \draw (H6_1)--(H6_2)--(H6_3)--(H6_4)--(H6_5)--(H6_6)--(H6_7)--(H6_1);
   \draw (H6_1)--(H6_9)--(H6_10)--(H6_2);
   \draw (H6_6)--(H6_12)--(H6_11)--(H6_7);
   \draw(-2,-2)..controls (2,-8)and (9,-3)..(5.2,2);

  \node[font=\scriptsize,above] at (H6_9) {$v_1$};
  \node[font=\scriptsize,xshift=6pt,yshift=-5pt] at (H6_9) {\color{blue}4};
  \node[font=\scriptsize,above] at (H6_1) {$v_2$};
    \node[font=\scriptsize,xshift=6pt,yshift=-5pt] at (H6_1) {\color{blue}6};
  \node[font=\scriptsize,xshift=-10pt] at (H6_7) {$v_3$};
    \node[font=\scriptsize,below] at (H6_7) {\color{blue}6};
 \node[font=\scriptsize,above] at (H6_11) {$v_4$};
     \node[font=\scriptsize,xshift=-1pt,yshift=-7pt] at (H6_11) {\color{blue}4};
  \node[font=\scriptsize,below] at (H6_10) {$v_5$};
      \node[font=\scriptsize,xshift=6pt,yshift=5pt] at (H6_10) {\color{blue}6};
  \node[font=\scriptsize,below=1pt, xshift=-3pt] at (H6_2) {$v_6$};
      \node[font=\scriptsize,xshift=6pt,yshift=5pt] at (H6_2) {\color{blue}6};
  \node[font=\scriptsize,below] at (H6_3) {$v_7$};
      \node[font=\scriptsize,above] at (H6_3) {\color{blue}3};
  \node[font=\scriptsize,below] at (H6_4) {$v_{8}$};
      \node[font=\scriptsize,above] at (H6_4) {\color{blue}2};
  \node[font=\scriptsize,below=1pt, xshift=3pt] at (H6_5) {$v_{9}$};
      \node[font=\scriptsize,xshift=-6pt,yshift=5pt] at (H6_5) {\color{blue}3};
   \node[font=\scriptsize,right] at (H6_6) {$v_{10}$};
       \node[font=\scriptsize,xshift=-6pt,yshift=-5pt] at (H6_6) {\color{blue}6};
  \node[font=\scriptsize,right] at (H6_12) {$v_{11}$};
      \node[font=\scriptsize,xshift=-6pt,yshift=-2pt] at (H6_12) {\color{blue}6};

  \node[font=\scriptsize] at (2,-6.1) {Case 2};

\end{tikzpicture}}
\end{center}
\caption{Graph $H_6$. The numbers at vertices are the number of available colors.} \label{subgraph-H7}
\end{figure}
\begin{lemma} \label{reducible-H7}
The graph $H_6$ in Figure \ref{subgraph-H7} does not appear in $G$.
\end{lemma}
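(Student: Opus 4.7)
The plan is to mirror the proof of Lemma \ref{reducible-H6} almost step by step, since $H_6$ is structurally another ``$7$-face adjacent to two $4$-faces'' configuration, just in a different position. Suppose for contradiction that $G$ contains $H_6$ with vertices labelled $v_1,\dots,v_{11}$ as in Figure \ref{subgraph-H7}, fix a $7$-list assignment $L$, let $\phi$ be an $L$-coloring of $(G-V(H_6))^2$ guaranteed by minimality, and define
\[
L_{H_6}(v_i) = L(v_i)\setminus\{\phi(x):xv_i\in E(G^2),\ x\notin V(H_6)\},
\]
so $|L_{H_6}(v_i)|$ is at least the number indicated in Figure \ref{subgraph-H7}. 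The goal is to extend $\phi$ to $V(H_6)$, contradicting the fact that $G$ is a counterexample. I will split into the same three cases: (1) $H_6^2$ is an induced subgraph of $G^2$; (2) $H_6^2$ is not induced but some extra edge of $G$ lies inside $V(H_6)$; (3) some pair of non-adjacent vertices in $H_6^2$ share a neighbor outside $V(H_6)$.

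For Case 1 I would write down the graph polynomial $P_{H_6^2}(\bm x)$ as a product over all edges of $H_6^2$ and then, via Mathematica, find a monomial $\prod x_i^{t_i}$ with $t_i+1\le |L_{H_6}(v_i)|$ whose coefficient in $P_{H_6^2}$ is nonzero; Theorem \ref{cnull} then produces the desired coloring. For Cases 2 and 3 the same calculation is performed with $P_{H_6^2}$ replaced by $(x_i-x_j)P_{H_6^2}(\bm x)$, where $v_iv_j$ is the extra edge or the extra distance-$2$ relation; the available-color counts at $v_i,v_j$ increase accordingly. Before that I will prune Cases 2 and 3 with a block of ``Simplifying cases'' that excludes most pairs, exactly as in Lemma \ref{reducible-H6}: pairs forbidden by the no-$5$-cycle hypothesis, by Lemma \ref{C3-C6} (no adjacent $3$-cycles or $3$-cycle with $6$-cycle), by Lemma \ref{reducible-F2} (subgraph $F_2$), by Lemma \ref{reducible-H0} (subgraph $H_1$), by Lemma \ref{C4-share-two-edge} (subgraph $J_4$), and by Lemma \ref{H2-type-two-reducible} (subgraph $J_5$). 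After this pruning I expect only a short list of surviving configurations, each handled by a single Nullstellensatz check with one additional linear factor $(x_i-x_j)$.

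The main obstacle will be the bookkeeping in Cases 2 and 3. Because $H_6$ has the two $4$-faces on the same side of the $7$-face (unlike $H_5$), the planar geometry is different and the list of unavoidable pairs will not be literally the same as in Lemma \ref{reducible-H6}; each candidate pair must be checked both for planarity consistency and for whether it forces a forbidden reducible configuration. The second practical obstacle is the Nullstellensatz verification: for each surviving subcase I will need to identify an appropriate exponent vector $(t_1,\dots,t_{11})$ satisfying $\sum t_i=\deg f$ with $t_i<|L_{H_6}(v_i)|$, and compute the corresponding coefficient in the expansion of $f(\bm x)=\prod_{(i,j)\in \text{extra}}(x_i-x_j)\,P_{H_6^2}(\bm x)$. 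I expect this verification to succeed in each surviving subcase, just as in the analogous proof for $H_5$, completing the proof of Lemma \ref{reducible-H7}.
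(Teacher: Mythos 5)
Your plan diverges substantially from what the paper actually does for $H_6$. The paper does \emph{not} reuse the Nullstellensatz machinery of Lemma \ref{reducible-H6} here: instead it proves two auxiliary list-coloring lemmas (Lemma \ref{lem-4cycle-pendent} on the subgraph $J_7$ and Lemma \ref{lem-two-4cycle} on $J_8$, both via Hall's theorem / greedy arguments) and then colors $H_6$ by an explicit vertex-ordering argument, invoking those lemmas together with Lemma \ref{cycle-six} in each case; the Combinatorial Nullstellensatz is never used in the proof of Lemma \ref{reducible-H7}. A different route is of course legitimate in principle, but here it exposes a genuine gap.

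The gap is that the entire argument rests on coefficient computations that you have not performed and whose success you cannot guarantee. The Combinatorial Nullstellensatz gives a \emph{sufficient} condition for list colorability, not a necessary one: it is entirely possible that every monomial $\prod x_i^{t_i}$ of $P_{H_6^2}$ with $t_i<|L_{H_6}(v_i)|$ has coefficient zero even though the configuration is reducible. For $H_6$ in the induced case the list sizes are $2,3,3,3,3,3,3,5,5,6,6$ while $H_6^2$ has $28$ edges, so the slack $\sum(|L_{H_6}(v_i)|-1)-|E(H_6^2)|=31-28=3$ is the same as for $H_5$, but the degree profile is more skewed (one vertex is capped at exponent $1$, six at exponent $2$), and there is no a priori reason a suitable nonzero coefficient exists. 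The fact that the authors switched to a hand-crafted greedy argument for precisely this configuration, after using the polynomial method successfully for $H_3$ and $H_5$, is circumstantial evidence that the computation you are deferring to may simply fail. Writing ``I expect this verification to succeed'' leaves the lemma unproved. Two smaller issues: your pruning list omits Lemma \ref{reducible-H2} and Lemma \ref{reducible-H6} themselves, both of which the paper needs to exclude pairs such as $\{v_6,v_{11}\}$ and the configuration in its Subcase 3.2; and in the common-neighbor subcases the paper must \emph{uncolor} the external vertex $w$ and recolor it along with $H_6$ to create enough slack, a step your ``multiply by one extra linear factor'' template does not capture.
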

\begin{proof}
Suppose that $G$ has $H_6$ as a subgraph, and denote $V(H_6) = \{v_1, \ldots, v_{11}\}$ (Figure \ref{subgraph-H7}).
Let $L$ be a list assignment with lists of size 7 for each vertex in $G$.
We will show that $G^2$ has a proper coloring from the list $L$, which is a contradiction for the fact that $G$ is a counterexample to the theorem.

Let $G' = G - V(H_6)$.
Then $G'$ is also a subcubic planar graph and $|V(G')| < |V(G)|$.   Since $G$ is a minimal counterexample to Theorem \ref{main-thm},
the square of $G'$ has a proper coloring $\phi$ such that $\phi(v) \in L(v)$ for each vertex $v \in V(H)$.
For each $v_i \in V(H_6)$, we define \[
L_{H_6}(v_i) = L(v_i) \setminus \{\phi(x) : xv_i \in E(G^2) \mbox{ and } x \notin V(H_6)\}.
\]
We now consider three cases.

\medskip
\noindent {\bf Case 1:} $H_6^2$ is an induced subgraph of $G^2$.

In this case, we have the following (see Case 1 in Figure \ref{subgraph-H7}).
$$
|L_{H_6}(v_i)| \geq
\begin{cases}
2 & i=8, \\
3 & i=1,4,5,7,9,11, \\
5 & i=6, 10, \\
6 & i=2, 3.
\end{cases}
$$
Now, we  show that
$H_6^2$ admits an $L$-coloring from the list $L_{H_6}(v_i)$.

\medskip
\noindent {\bf Subcase 1.1:} $L_{H_6}(v_1) \cap L_{H_6}(v_7) \neq \emptyset$.

Color $v_1$ and $v_7$ by a color $c \in L_{H_6}(v_1) \cap L_{H_6}(v_7)$, and color $v_8$ and $v_9$ in order.  Let $c_8$ and $c_9$ be the colors at $v_8$ and $v_9$, repectively.
For $v_i \in \{v_2, v_3, v_4, v_5, v_6, v_{10}, v_{11}\}$, let $L_{H_6}'(v_i)$ be the list after coloring $v_1, v_7, v_8, v_9$.  Then
\[\begin{aligned}
|L_{H_6}'(v_2)|& \geq 5, \quad |L_{H_6}'(v_3)| \geq 4,\quad~|L_{H_6}'(v_4)| \geq 3, \quad |L_{H_6}'(v_5)|  \geq 2,\\
 |L_{H_6}'(v_6)|& \geq 3,\quad |L_{H_6}'(v_{10})| \geq 3,\quad |L_{H_6}'(v_{11})| \geq 2.
\end{aligned}
\]
Here since $|L_{H_6}'(v_5)| \geq 2$ and $|L_{H_6}'(v_6)| \geq 3$, we can color $v_5$ and $v_6$ by $c_5, c_6$, respectively, so that $L_{H_6}'(v_3) \setminus \{c_6\} \neq  L_{H_6}'(v_4)$.  For $v_i \in \{v_2, v_3, v_4, v_{10}, v_{11}\}$,
$L_{H_6}''(v)$ be the list after coloring $v_5$ and $v_6$.  That is
\[\begin{aligned}
& L_{H_6}''(v_2) = L_{H_6}'(v_2) \setminus \{c_5, c_6\}, \quad 
 L_{H_6}''(v_3) = L_{H_6}'(v_3) \setminus \{ c_6\},\quad L_{H_6}''(v_4) = L_{H_6}'(v_4),\\
& L_{H_6}''(v_{10}) = L_{H_6}'(v_{10}),\quad\quad\quad \quad~L_{H_6}''(v_{11}) = L_{H_6}'(v_{11}).
\end{aligned}
\]
Then $|L_{H_6}''(v_2)| \geq 3, |L_{H_6}''(v_3)| \geq 3, |L_{H_6}''(v_4)| \geq 3, |L_{H_6}''(v_{10})| \geq 3, |L_{H_6}''(v_{11})| \geq 2$, and $|L_{H_6}''(v_3) \cup L''(v_4) \cup L_{H_6}''(v_{10})| \geq 4$ since $L_{H_6}''(v_3) \neq L_{H_6}''(v_4)$.

Now $v_2, v_3, v_4, v_{10}, v_{11}$ are colorable by Lemma \ref{lem-4cycle-pendent} from the list $L_{H_6}''$.  So, $H_6^2$ admits an $L$-coloring from its list.

\medskip

\noindent {\bf Subcase 1.2:} $L_{H_6}(v_7) \neq L_{H_6}(v_5)$.

Color $v_7$ by a color $c_7 \in L_{H_6}(v_7)$ so that $|L_{H_6}(v_5) \setminus \{c_7\}| \geq 3$,
and greedily color $v_8$ and $v_9$ in order.
For $v_i \in \{v_1, v_2, v_3, v_4, v_5, v_6, v_{10}, v_{11}\}$, let $L_{H_6}'(v_i)$ be the list after coloring $v_7, v_8, v_9$.  Then
\begin{eqnarray*}
|L_{H_6}'(v_i)| \geq
\begin{cases}
2 & i= 11, \\
3 & i= 1, 4, 5, 6, 10, \\
5 & i=2,3.
\end{cases}
\end{eqnarray*}
Then $\{v_1, v_2, v_3, v_4, v_5, v_6, v_{10}, v_{11}\}$ are colorable from the list $L_{H_6}'(v)$ by Lemma \ref{lem-two-4cycle}.
So, $H_6^2$ admits an $L$-coloring from its list.

\medskip

\noindent {\bf Subcase 1.3:} $L_{H_6}(v_1) \cap L_{H_6}(v_7) = \emptyset$ and $L_{H_6}(v_7) = L_{H_6}(v_5)$.

In this case, we have that $L_{H_6}(v_1) \cap L_{H_6}(v_5) = \emptyset$.
Now color greedily $v_7, v_8, v_9$ in order.
For $v_i \in \{v_1, v_2, v_3, v_4,  v_5, v_6, v_{10}, v_{11}\}$, let $L_{H_6}'(v_i)$ be the list after coloring
$v_7, v_8, v_9$.  Then we have the following (see Figure \ref{Case 3-H7}).
$$
|L_{H_6}'(v_i)| \geq
\begin{cases}
2 & i=5, 11, \\
3 & i=1, 4, 6, 10, \\
5 & i=2,3.
\end{cases}
$$
Since $|L_{H_6}'(v_4)| \geq 3$,  $|L_{H_6}'(v_{10})| \geq 3$, and $|L_{H_6}'(v_{11})| \geq 2$,  we can color $v_4, v_{10}, v_{11}$ by $c_4, c_{10}, c_{11}$, respectively, so that $L_{H_6}'(v_1) \neq L_{H_6}'(v_2) \setminus \{c_4, c_{10}\}$.

For $v_i \in \{v_1, v_2, v_3,  v_5, v_6\}$, let $L_{H_6}''(v_i)$ be the list after coloring $v_4, v_{10}, v_{11}$.  Then we have the following (see Figure \ref{Case 3-H7}).
\[
|L_{H_6}''(v_i)| \geq
\begin{cases}
2 & i=3,5, \\
3 & i=1, 2, 6.
\end{cases}
\]
Note that $|L_{H_6}''(v_1) \cup L_{H_6}''(v_2) \cup L_{H_6}''(v_6) | \geq 4$ since $L_{H_6}'(v_1) \neq L_{H_6}'(v_2) \setminus \{c_4, c_{10}\}$, and $|L_{H_6}''(v_1) \cup L_{H_6}''(v_5)| \geq 5$ since $L_{H_6}(v_1) \cap L_{H_6}(v_5) = \emptyset$.
By Lemma \ref{lem-4cycle-pendent}, $v_1, v_2, v_3, v_5, v_6$ are colorable from the list $L_{H_6}''(v)$.
Hence $H_6^2$ can be colored properly from the list.

\begin{figure}[htbp]
  \begin{center}
  \begin{tikzpicture}[
  v2/.style={fill=black,minimum size=4pt,ellipse,inner sep=1pt},
  node distance=1.5cm,scale=0.8]

 \node[v2] (F2_1) at (0, 1) {};
    \node[v2] (F2_2) at (-1, 2) {};
    \node[v2] (F2_3) at (-1, 0) {};

    \node[v2] (F2_4) at (2, 1) {};
    \node[v2] (F2_5) at (3, 2) {};
    \node[v2] (F2_6) at (4, 1) {};
    \node[v2] (F2_7) at (3, 0) {};
    \node[v2] (F2_8) at (-2,1) {};

    \draw (F2_1) -- (F2_2)  -- (F2_8)-- (F2_3) -- (F2_1);

    \draw (F2_4) -- (F2_5) -- (F2_6) -- (F2_7) -- (F2_4);

    \draw (F2_1) -- (F2_4);

    \node[font=\scriptsize,above] at (F2_2) {$v_1$};
    \node[font=\scriptsize,below] at (F2_2) {\color{blue}3};
    \node[font=\scriptsize,above] at (F2_1) {$v_2$};
    \node[font=\scriptsize,xshift=-8pt,yshift=1pt] at (F2_1) {\color{blue}5};
    \node[font=\scriptsize,below] at (F2_3) {$v_6$};
    \node[font=\scriptsize,above] at (F2_3) {\color{blue}3};
     \node[font=\scriptsize,below] at (F2_8) {$v_5$};
    \node[font=\scriptsize,xshift=8pt,yshift=2pt] at (F2_8) {\color{blue}2};
    \node[font=\scriptsize,above] at (F2_4) {$v_3$};
    \node[font=\scriptsize,xshift=8pt,yshift=1pt] at (F2_4) {\color{blue}5};
    \node[font=\scriptsize,below] at (F2_7) {$v_{10}$};
    \node[font=\scriptsize,above] at (F2_7) {\color{blue}3};
      \node[font=\scriptsize,right] at (F2_6) {$v_{11}$};
    \node[font=\scriptsize,xshift=-8pt,yshift=2pt] at (F2_6) {\color{blue}2};

      \node[font=\scriptsize,above] at (F2_5) {$v_4$};
    \node[font=\scriptsize,below] at (F2_5) {\color{blue}3};
      \node[font=\scriptsize] at (1,-1) { Color list $L_{H_6}'(v)$};
\end{tikzpicture}\hspace{3cm}
\begin{tikzpicture}[
  v2/.style={fill=black,minimum size=4pt,ellipse,inner sep=1pt},
  node distance=1.5cm,scale=0.8]

 \node[v2] (F2_1) at (0, 1) {};
    \node[v2] (F2_2) at (-1, 2) {};
    \node[v2] (F2_3) at (-1, 0) {};

    \node[v2] (F2_4) at (2, 1) {};

    \node[v2] (F2_8) at (-2,1) {};

    \draw (F2_1) -- (F2_2)  -- (F2_8)-- (F2_3) -- (F2_1);

    \draw (F2_1) -- (F2_4);

    \node[font=\scriptsize,above] at (F2_1) {$v_2$};
    \node[font=\scriptsize, xshift=-8pt,yshift=2pt] at (F2_1) {\color{blue}3};
    \node[font=\scriptsize,above] at (F2_4) {$v_3$};
    \node[font=\scriptsize,right] at (F2_4) {\color{blue}2};
    \node[font=\scriptsize,above] at (F2_2) {$v_1$};
    \node[font=\scriptsize,below] at (F2_2) {\color{blue}3};
     \node[font=\scriptsize,below] at (F2_8) {$v_5$};
    \node[font=\scriptsize,xshift=8pt,yshift=1pt] at (F2_8) {\color{blue}2};
     \node[font=\scriptsize,below] at (F2_3) {$v_6$};
    \node[font=\scriptsize,above] at (F2_3) {\color{blue}3};
     \node[font=\scriptsize] at (0,-1) {Color list $L_{H_6}''(v)$};
\end{tikzpicture}
\end{center}
\caption{Subcase 1.3.  The numbers at vertices are the number of available colors.}\label{Case 3-H7}
\end{figure}
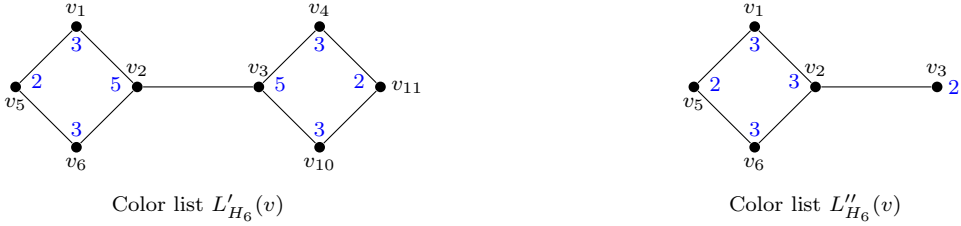


\if0
\textcolor{red}{Kenta:
I think that the following argument the same as the proof of Lemma \ref{lem-4cycle-pendent},
where the only difference are the assumption $|L(v_5)| \geq 2$,
but instead $|L(v_1) \cup L(v_5)| \geq 5$.
So, it might be good if we would replace the statment }

\noindent {\bf Subcase 1.3.1:} $L''(v_3) \cap L''(v_5) \neq \emptyset$.
\medskip

In this subcase, color $v_3$ and $v_5$ by a color $c \in L''(v_3) \cap L''(v_5)$.  For $i = 1, 2, 6$, let $L''' (v_i) = L''(v_i) \setminus \{c\}$.  Then $|L'''(v_i)| \geq 2$ for $i = 1, 2, 6$, and $|L'''(v_1) \cup L'''(v_2) \cup L'''(v_6)| \geq 3$ since $|L''(v_1) \cup L''(v_2) \cup L''(v_6)| \geq 4$. So, $v_1$, $v_2$, $v_6$ are colorable from the list $L'''(v)$.

\medskip

\noindent {\bf Subcase 1.3.2:} When $L''(v_3) \cap L''(v_5) = \emptyset$  \\
Let $X = \{v_1, v_2, v_3, v_5, v_6\}$ and we define a bipartite graph $W$ as follows.

\begin{itemize}
\item $V(W) = X \cup Y$ where $Y = \bigcup_{v_i \in X} L''(v_i)$
\item For each  $v_i$, $v_i \alpha \in E(W)$ if $\alpha \in L''(v_i)$
\end{itemize}

Note that $|N_W(v_1) \cup N_W(v_5)| = 5$ and $|N_W(v_1) \cup N_W(v_2) \cup N_W(v_6) | = 4$.  Thus, we can easily show that $|N_W(S)| \geq |S|$ for every $S \subset \{v_1, v_2, v_3, v_5, v_6\}$.  Hence by Hall's theorem,
the bipartite graph $W$ has a matching $M$ that contains all vertices of $\{v_1, v_2, v_3, v_5, v_6\}$.  So, $v_1, v_2, v_3, v_5, v_6$ are colorable from the list $L''(v)$.
Hence $J_7^2$ can be colored properly from the list.

Thus,
by Theorem \ref{cnull},
$H_6^2$ admits an $L$-coloring from its list.  This gives an $L$-coloring for $G^2$.  This is a contradiction for the fact that $G$ is a counterexample.  So, $G$ has no $H_6$.  \\
\fi

\bigskip
\noindent {\bf Case 2:} $H_6^2$ is not an induced subgraph of $G^2$ and
 $E(G[V(H_6)]) - E(H_6) \neq \emptyset$.

\medskip

\noindent {\bf Simplifying cases:}
\begin{itemize}
\item
The vertices in each of the following pairs are nonadjacent since it makes a $5$-cycle:
$\{v_1,v_8\}$, $\{v_1,v_{9}\}$, $\{v_1,v_{11}\}$,
$\{v_4,v_5\}$, $\{v_4,v_{7}\}$, $\{v_4, v_{8}\}$,
$\{v_5, v_{7}\}$, $\{v_5, v_{9}\}$, $\{v_7, v_{11}\}$, and $\{v_9, v_{11}\}$.

\item
The vertices in each of the following pairs are nonadjacent since it makes $F_2$,
which does not exist by Lemma \ref{reducible-F2}:
$\{v_{7}, v_{9}\}$.

\item
The vertices in each of the following pairs are nonadjacent since it makes $H_1$
in Figure \ref{key configuration},
which does not exist by Lemma \ref{reducible-H0}:
$\{v_1,v_{4}\}$, $\{v_5,v_{8}\}$, and $\{v_8,v_{11}\}$.

\item
The vertices in each of the following pairs are nonadjacent since it makes $J_4$,
which does not exist by Lemma \ref{C4-share-two-edge}:
$\{v_1,v_{7}\}$, and $\{v_4,v_{9}\}$.
\end{itemize}

Thus,
we only need to consider the case when $v_5$ is adjacent to $v_{11}$ (see Case 2 in Figure \ref{subgraph-H7}).
In this case, color greedily $v_1, v_4, v_7, v_8, v_9$ in order.  For $v_i \in \{v_2, v_3, v_5, v_6, v_{10}, v_{11}\}$, let $L_{H_6}'(v_i)$ be the list after coloring $v_1, v_4, v_7, v_8, v_9$.  Then $|L_{H_6}'(v_i)| \geq 3$ for $i = 2, 3, 5, 6, 10, 11$ and $v_2, v_3, v_5, v_6, v_{10}, v_{11}$ form a 6-cycle.  So, $v_2, v_3, v_5, v_6, v_{10}, v_{11}$ are colorable from the list $L'(v)$ by Lemma \ref{cycle-six}.
Hence
the vertices in $H_6$ can be colored from the list $L_{H_6}$
so that we obtain an $L$-coloring in $G^2$.


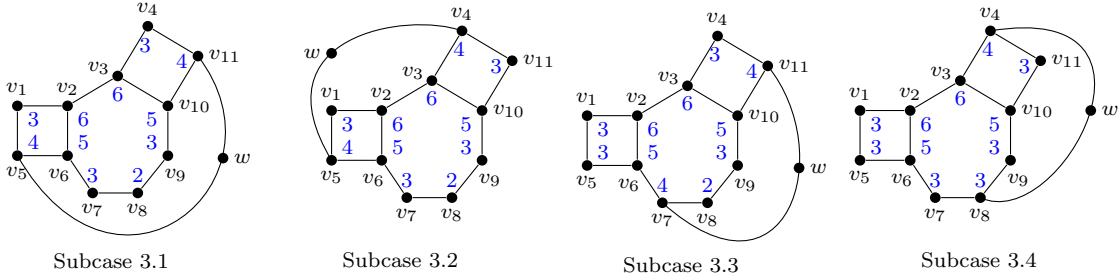
\begin{figure}[htbp]
  \begin{center}
 \raisebox{-0.7cm}{ \begin{tikzpicture}[
  v2/.style={fill=black,minimum size=4pt,ellipse,inner sep=1pt},
  node distance=1.5cm,scale=0.33]
      \node[v2] (H6_1) at (0, 0){};
      \node[v2] (H6_2) at (0,-2){};
      \node[v2] (H6_3) at (1,-3.5){};
      \node[v2] (H6_4) at (2.8,-3.5){};
      \node[v2] (H6_5) at (4,-2){};
      \node[v2] (H6_6) at (4, 0){};
      \node[v2] (H6_7) at (2, 1.2){};
      \node[v2] (H6_9) at (-2, 0){};
      \node[v2] (H6_10) at (-2,-2){};
      \node[v2] (H6_11) at  (3.2,3.2){};
      \node[v2] (H6_12) at  (5.2,2){};
  \node[v2] (H6_13) at  (6.2,-2.1){};
   \draw (H6_1)--(H6_2)--(H6_3)--(H6_4)--(H6_5)--(H6_6)--(H6_7)--(H6_1);
   \draw (H6_1)--(H6_9)--(H6_10)--(H6_2);
   \draw (H6_6)--(H6_12)--(H6_11)--(H6_7);
    \draw(-2,-2)..controls (2,-9)and (9,-3)..(5.2,2);

  \node[font=\scriptsize,above] at (H6_9) {$v_1$};
  \node[font=\scriptsize,xshift=6pt,yshift=-5pt] at (H6_9) {\color{blue}3};
  \node[font=\scriptsize,above] at (H6_1) {$v_2$};
    \node[font=\scriptsize,xshift=6pt,yshift=-5pt] at (H6_1) {\color{blue}6};
  \node[font=\scriptsize,xshift=-7pt,yshift=3pt] at (H6_7) {$v_3$};
    \node[font=\scriptsize,below] at (H6_7) {\color{blue}6};
 \node[font=\scriptsize,above] at (H6_11) {$v_4$};
     \node[font=\scriptsize,xshift=-1pt,yshift=-7pt] at (H6_11) {\color{blue}3};
  \node[font=\scriptsize,below] at (H6_10) {$v_5$};
      \node[font=\scriptsize,xshift=6pt,yshift=5pt] at (H6_10) {\color{blue}4};
  \node[font=\scriptsize,below=1pt, xshift=-3pt] at (H6_2) {$v_6$};
      \node[font=\scriptsize,xshift=6pt,yshift=5pt] at (H6_2) {\color{blue}5};
  \node[font=\scriptsize,below] at (H6_3) {$v_7$};
      \node[font=\scriptsize,above] at (H6_3) {\color{blue}3};
  \node[font=\scriptsize,below] at (H6_4) {$v_{8}$};
      \node[font=\scriptsize,above] at (H6_4) {\color{blue}2};
  \node[font=\scriptsize,below=1pt, xshift=3pt] at (H6_5) {$v_{9}$};
      \node[font=\scriptsize,xshift=-6pt,yshift=5pt] at (H6_5) {\color{blue}3};
   \node[font=\scriptsize,right] at (H6_6) {$v_{10}$};
       \node[font=\scriptsize,xshift=-6pt,yshift=-5pt] at (H6_6) {\color{blue}5};
  \node[font=\scriptsize,right] at (H6_12) {$v_{11}$};
      \node[font=\scriptsize,xshift=-6pt,yshift=-2pt] at (H6_12) {\color{blue}4};
      \node[font=\scriptsize,right] at (H6_13) {$w$};
       \node[font=\scriptsize,right] at (-1,-6.2) {Subcase 3.1};
\end{tikzpicture}}\hspace{-0.5cm}\begin{tikzpicture}[
  v2/.style={fill=black,minimum size=4pt,ellipse,inner sep=1pt},
  node distance=1.5cm,scale=0.33]
      \node[v2] (H6_1) at (0, 0){};
      \node[v2] (H6_2) at (0,-2){};
      \node[v2] (H6_3) at (1,-3.5){};
      \node[v2] (H6_4) at (2.8,-3.5){};
      \node[v2] (H6_5) at (4,-2){};
      \node[v2] (H6_6) at (4, 0){};
      \node[v2] (H6_7) at (2, 1.2){};
      \node[v2] (H6_9) at (-2, 0){};
      \node[v2] (H6_10) at (-2,-2){};
      \node[v2] (H6_11) at  (3.2,3.2){};
      \node[v2] (H6_12) at  (5.2,2){};
  \node[v2] (H6_13) at  (-2,2.3){};
   \draw (H6_1)--(H6_2)--(H6_3)--(H6_4)--(H6_5)--(H6_6)--(H6_7)--(H6_1);
   \draw (H6_1)--(H6_9)--(H6_10)--(H6_2);
   \draw (H6_6)--(H6_12)--(H6_11)--(H6_7);
    \draw(-2,-2)..controls (-5,3)and (1,4)..(3.2,3.2);

  \node[font=\scriptsize,above] at (H6_9) {$v_1$};
  \node[font=\scriptsize,xshift=6pt,yshift=-5pt] at (H6_9) {\color{blue}3};
  \node[font=\scriptsize,above] at (H6_1) {$v_2$};
    \node[font=\scriptsize,xshift=6pt,yshift=-5pt] at (H6_1) {\color{blue}6};
  \node[font=\scriptsize,xshift=-7pt,yshift=3pt] at (H6_7) {$v_3$};
    \node[font=\scriptsize,below] at (H6_7) {\color{blue}6};
 \node[font=\scriptsize,above] at (H6_11) {$v_4$};
     \node[font=\scriptsize,xshift=-1pt,yshift=-7pt] at (H6_11) {\color{blue}4};
  \node[font=\scriptsize,below] at (H6_10) {$v_5$};
      \node[font=\scriptsize,xshift=6pt,yshift=5pt] at (H6_10) {\color{blue}4};
  \node[font=\scriptsize,below=1pt, xshift=-3pt] at (H6_2) {$v_6$};
      \node[font=\scriptsize,xshift=6pt,yshift=5pt] at (H6_2) {\color{blue}5};
  \node[font=\scriptsize,below] at (H6_3) {$v_7$};
      \node[font=\scriptsize,above] at (H6_3) {\color{blue}3};
  \node[font=\scriptsize,below] at (H6_4) {$v_{8}$};
      \node[font=\scriptsize,above] at (H6_4) {\color{blue}2};
  \node[font=\scriptsize,below=1pt, xshift=3pt] at (H6_5) {$v_{9}$};
      \node[font=\scriptsize,xshift=-6pt,yshift=5pt] at (H6_5) {\color{blue}3};
   \node[font=\scriptsize,right] at (H6_6) {$v_{10}$};
       \node[font=\scriptsize,xshift=-6pt,yshift=-5pt] at (H6_6) {\color{blue}5};
  \node[font=\scriptsize,right] at (H6_12) {$v_{11}$};
      \node[font=\scriptsize,xshift=-6pt,yshift=-2pt] at (H6_12) {\color{blue}3};
      \node[font=\scriptsize,left] at (H6_13) {$w$};
    \node[font=\scriptsize,right] at (-2,-6) {Subcase 3.2};
\end{tikzpicture}\raisebox{-0.5cm}{\begin{tikzpicture}[
  v2/.style={fill=black,minimum size=4pt,ellipse,inner sep=1pt},
  node distance=1.5cm,scale=0.33]

      \node[v2] (H6_1) at (0, 0){};
      \node[v2] (H6_2) at (0,-2){};
      \node[v2] (H6_3) at (1,-3.5){};
      \node[v2] (H6_4) at (2.8,-3.5){};
      \node[v2] (H6_5) at (4,-2){};
      \node[v2] (H6_6) at (4, 0){};
      \node[v2] (H6_7) at (2, 1.2){};
      \node[v2] (H6_9) at (-2, 0){};
      \node[v2] (H6_10) at (-2,-2){};
      \node[v2] (H6_11) at  (3.2,3.2){};
      \node[v2] (H6_12) at  (5.2,2){};
  \node[v2] (H6_13) at  (6.45,-2.1){};
   \draw (H6_1)--(H6_2)--(H6_3)--(H6_4)--(H6_5)--(H6_6)--(H6_7)--(H6_1);
   \draw (H6_1)--(H6_9)--(H6_10)--(H6_2);
   \draw (H6_6)--(H6_12)--(H6_11)--(H6_7);
    \draw(1,-3.5)..controls (6,-8)and (8,-1.5)..(5.2,2);

  \node[font=\scriptsize,above] at (H6_9) {$v_1$};
  \node[font=\scriptsize,xshift=6pt,yshift=-5pt] at (H6_9) {\color{blue}3};
  \node[font=\scriptsize,above] at (H6_1) {$v_2$};
    \node[font=\scriptsize,xshift=6pt,yshift=-5pt] at (H6_1) {\color{blue}6};
  \node[font=\scriptsize,xshift=-7pt,yshift=3pt] at (H6_7) {$v_3$};
    \node[font=\scriptsize,below] at (H6_7) {\color{blue}6};
 \node[font=\scriptsize,above] at (H6_11) {$v_4$};
     \node[font=\scriptsize,xshift=-1pt,yshift=-7pt] at (H6_11) {\color{blue}3};
  \node[font=\scriptsize,below] at (H6_10) {$v_5$};
      \node[font=\scriptsize,xshift=6pt,yshift=5pt] at (H6_10) {\color{blue}3};
  \node[font=\scriptsize,below=1pt, xshift=-3pt] at (H6_2) {$v_6$};
      \node[font=\scriptsize,xshift=6pt,yshift=5pt] at (H6_2) {\color{blue}5};
  \node[font=\scriptsize,below] at (H6_3) {$v_7$};
      \node[font=\scriptsize,above] at (H6_3) {\color{blue}4};
  \node[font=\scriptsize,below] at (H6_4) {$v_{8}$};
      \node[font=\scriptsize,above] at (H6_4) {\color{blue}2};
  \node[font=\scriptsize,below=1pt, xshift=3pt] at (H6_5) {$v_{9}$};
      \node[font=\scriptsize,xshift=-6pt,yshift=5pt] at (H6_5) {\color{blue}3};
   \node[font=\scriptsize,right] at (H6_6) {$v_{10}$};
       \node[font=\scriptsize,xshift=-6pt,yshift=-5pt] at (H6_6) {\color{blue}5};
  \node[font=\scriptsize,right] at (H6_12) {$v_{11}$};
      \node[font=\scriptsize,xshift=-6pt,yshift=-2pt] at (H6_12) {\color{blue}4};
      \node[font=\scriptsize,right] at (H6_13) {$w$};
       \node[font=\scriptsize,right] at (-1,-6) {Subcase 3.3};
\end{tikzpicture}}\begin{tikzpicture}[
  v2/.style={fill=black,minimum size=4pt,ellipse,inner sep=1pt},
  node distance=1.5cm,scale=0.33]
      \node[v2] (H6_1) at (0, 0){};
      \node[v2] (H6_2) at (0,-2){};
      \node[v2] (H6_3) at (1,-3.5){};
      \node[v2] (H6_4) at (2.8,-3.5){};
      \node[v2] (H6_5) at (4,-2){};
      \node[v2] (H6_6) at (4, 0){};
      \node[v2] (H6_7) at (2, 1.2){};
      \node[v2] (H6_9) at (-2, 0){};
      \node[v2] (H6_10) at (-2,-2){};
      \node[v2] (H6_11) at  (3.2,3.2){};
      \node[v2] (H6_12) at  (5.2,2){};
  \node[v2] (H6_13) at  (7.2,0){};
   \draw (H6_1)--(H6_2)--(H6_3)--(H6_4)--(H6_5)--(H6_6)--(H6_7)--(H6_1);
   \draw (H6_1)--(H6_9)--(H6_10)--(H6_2);
   \draw (H6_6)--(H6_12)--(H6_11)--(H6_7);
    \draw(3.2,3.2)..controls (11,4.5)and (6,-5)..(2.8,-3.5);

  \node[font=\scriptsize,above] at (H6_9) {$v_1$};
  \node[font=\scriptsize,xshift=6pt,yshift=-5pt] at (H6_9) {\color{blue}3};
  \node[font=\scriptsize,above] at (H6_1) {$v_2$};
    \node[font=\scriptsize,xshift=6pt,yshift=-5pt] at (H6_1) {\color{blue}6};
  \node[font=\scriptsize,xshift=-7pt,yshift=3pt] at (H6_7) {$v_3$};
    \node[font=\scriptsize,below] at (H6_7) {\color{blue}6};
 \node[font=\scriptsize,above] at (H6_11) {$v_4$};
     \node[font=\scriptsize,xshift=-1pt,yshift=-7pt] at (H6_11) {\color{blue}4};
  \node[font=\scriptsize,below] at (H6_10) {$v_5$};
      \node[font=\scriptsize,xshift=6pt,yshift=5pt] at (H6_10) {\color{blue}3};
  \node[font=\scriptsize,below=1pt, xshift=-3pt] at (H6_2) {$v_6$};
      \node[font=\scriptsize,xshift=6pt,yshift=5pt] at (H6_2) {\color{blue}5};
  \node[font=\scriptsize,below] at (H6_3) {$v_7$};
      \node[font=\scriptsize,above] at (H6_3) {\color{blue}3};
  \node[font=\scriptsize,below] at (H6_4) {$v_{8}$};
      \node[font=\scriptsize,above] at (H6_4) {\color{blue}3};
  \node[font=\scriptsize,below=1pt, xshift=3pt] at (H6_5) {$v_{9}$};
      \node[font=\scriptsize,xshift=-6pt,yshift=5pt] at (H6_5) {\color{blue}3};
   \node[font=\scriptsize,right] at (H6_6) {$v_{10}$};
       \node[font=\scriptsize,xshift=-6pt,yshift=-5pt] at (H6_6) {\color{blue}5};
  \node[font=\scriptsize,right] at (H6_12) {$v_{11}$};
      \node[font=\scriptsize,xshift=-6pt,yshift=-2pt] at (H6_12) {\color{blue}3};
      \node[font=\scriptsize,right] at (H6_13) {$w$};
    \node[font=\scriptsize,right] at (0,-6) {Subcase 3.4};
\end{tikzpicture}
\end{center}
\caption{Subcases 3.1--3.4 of the proof of Lemma \ref{reducible-H7}. The numbers at vertices are the number of available colors.} \label{H7-nbr-first}
\end{figure}

\bigskip
\noindent {\bf Case 3:} $H_6^2$ is not an induced subgraph of $G^2$ and
 $E(G[V(H_6)]) - E(H_6) = \emptyset$.

\medskip
\noindent {\bf Simplifying cases:}
\begin{itemize}
\item The vertices in each of the following pairs do not have a common neighbor outside $H_6$,
since it makes a 5-cycle:
$\{v_1, v_4\}$, $\{v_1, v_5\}$, $\{v_1, v_7\}$,
$\{v_4, v_9\}$, $\{v_4, v_{11}\}$, $\{v_5, v_8\}$, and $\{v_8, v_{11}\}$.

\item The vertices in each of the following pairs do not have a common neighbor outside $H_6$,
since it makes $F_2$,
which does not exist by  Lemma \ref{reducible-F2}:
$\{v_{7}, v_{8}\}$, and $\{v_{8}, v_{9}\}$.

\item The vertices in each of the following pairs do not have a common neighbor outside $H_6$,
since it makes $H_1$,
which does not exist by Lemma \ref{reducible-H0}:
$\{v_5, v_7\}$, and $\{v_9, v_{11}\}$.

\item If $v_4$ and $v_7$ have a common neighbor $w$ where $w \notin V(H_6)$,
then the $6$-cycle $w, v_4, v_3, v_2,v_6,v_7$ together with two $4$-cycles $v_1,v_2,v_6,v_5$ and $v_3,v_4,v_{11},v_{10}$ forms $H_2$, contradicting Lemma \ref{reducible-H2}.
Thus, $v_4$ and $v_7$ do not have a common neighbor outside $H_6$.

By symmetry,
$v_1$ and $v_9$ do not have a common neighbor outside $H_6$.
\end{itemize}

Note that $v_7$ and $v_9$ are adjacent in $H_6^2$ through $v_8$,
and hence we do not need to cosider the case $v_7$ and $v_9$ have a common neighbor outside $H_6$.
So, by symmetry we only need to consider the following four subcases in Case 3.
For each subcase, the size of list $L_{H_6}$ is displayed in Figure \ref{H7-nbr-first}.

\medskip

\noindent {\bf Subcase 3.1:} $v_5$ and $v_{11}$ have a common neighbor $w$,
where $w \notin V(H_6)$.

Note that $w$ is not adjacent to any vertex $v_i \in V(H_6) \setminus \{v_5, v_{11}\}$ by the argument of Simplifying cases.

In this case, we uncolor $w$.
For each $v \in V(H_6) \cup \{w\}$, we define \[
L_{H_6}'(v) = L(v) \setminus \{\phi(x) : xv \in E(G^2) \mbox{ and } x \notin V(H_6) \cup \{w\}\}.
\]
The  number of available colors at vertices are in Figure \ref{Case 3-1-H7}.  Note that $d_G(v_7, w) \geq 3$.  First, we color $v_7$ and $w$ so that we can save a color at $v_5$.
If $L_{H_6}'(v_7) \cap L_{H_6}'(w) \neq \emptyset$, then color $v_7$ and $w$ by a color $c \in L_{H_6}'(v_7) \cap L_{H_6}'(w)$.  If $L_{H_6}'(v_7) \cap L_{H_6}'(w) = \emptyset$, then we can color $v_7$ and $w$ by colors $c_7$ and $c_w$, respectively, so that $|L_{H_6}'(v_5) \setminus \{c_7, c_w\}| \geq 4$.
And then color greedily  $v_8$ and $v_9$.

For $v_i \in V(H_6) \setminus \{v_7, v_8, v_9\}$, let $L_{H_6}''(v_i)$ be the list after coloring $v_7, v_8, v_9, w$.  Then the list of $L_{H_6}''(v)$ is like (a) in Figure \ref{Case 3-1-H7}, where $v_5$ and $v_{11}$ have the common neighbor $w$.
Next, we color $v_4, v_{10}, v_{11}$ by colors $c_4, c_{10}, c_{11}$, respectively, so that $L_{H_6}''(v_1) \neq L_{H_6}''(v_2) \setminus\{c_4, c_{10}\}$.  Then, the list of the remaining vertices are presented in Figure \ref{Case 3-1-H7} (b).  For $v_i \in \{v_1, v_2, v_3, v_5, v_6\}$, let $L_{H_6}'''(v_i)$ be the list after coloring $v_4, v_{10}, v_{11}$.

Note that $|L_{H_6}'''(v_1) \cup L_{H_6}'''(v_2) \cup L_{H_6}'''(v_6)| \geq 4$ since $L_{H_6}''(v_1) \neq L_{H_6}''(v_2) \setminus\{c_4, c_{10}\}$.  Then $v_1, v_2, v_3, v_5, v_6$ are colorable from the list $L_{H_6}'''(v)$
by Lemma \ref{lem-4cycle-pendent}.  Hence $H_6^2$ can be colored properly from the list.


\begin{figure}[htbp]
  \begin{center}
  \begin{tikzpicture}[
  v2/.style={fill=black,minimum size=4pt,ellipse,inner sep=1pt},
  node distance=1.5cm,scale=0.4]

      \node[v2] (H6_1) at (0, 0){};
      \node[v2] (H6_2) at (0,-2){};
      \node[v2] (H6_3) at (1,-3.5){};
      \node[v2] (H6_4) at (2.8,-3.5){};
      \node[v2] (H6_5) at (4,-2){};
      \node[v2] (H6_6) at (4, 0){};
      \node[v2] (H6_7) at (2, 1.2){};
      \node[v2] (H6_9) at (-2, 0){};
      \node[v2] (H6_10) at (-2,-2){};
      \node[v2] (H6_11) at  (3.2,3.2){};
      \node[v2] (H6_12) at  (5.2,2){};
  \node[v2] (H6_13) at  (6.1,-2.1){};
   \draw (H6_1)--(H6_2)--(H6_3)--(H6_4)--(H6_5)--(H6_6)--(H6_7)--(H6_1);
   \draw (H6_1)--(H6_9)--(H6_10)--(H6_2);
   \draw (H6_6)--(H6_12)--(H6_11)--(H6_7);
    \draw(-2,-2)..controls (2,-8)and (9,-3)..(5.2,2);

  \node[font=\scriptsize,above] at (H6_9) {$v_1$};
  \node[font=\scriptsize,xshift=6pt,yshift=-5pt] at (H6_9) {\color{blue}4};
  \node[font=\scriptsize,above] at (H6_1) {$v_2$};
    \node[font=\scriptsize,xshift=6pt,yshift=-5pt] at (H6_1) {\color{blue}6};
  \node[font=\scriptsize,xshift=-10pt] at (H6_7) {$v_3$};
    \node[font=\scriptsize,below] at (H6_7) {\color{blue}6};
 \node[font=\scriptsize,above] at (H6_11) {$v_4$};
     \node[font=\scriptsize,xshift=-1pt,yshift=-7pt] at (H6_11) {\color{blue}4};
  \node[font=\scriptsize,below] at (H6_10) {$v_5$};
      \node[font=\scriptsize,xshift=6pt,yshift=5pt] at (H6_10) {\color{blue}5};
  \node[font=\scriptsize,below=1pt, xshift=-3pt] at (H6_2) {$v_6$};
      \node[font=\scriptsize,xshift=6pt,yshift=5pt] at (H6_2) {\color{blue}6};
  \node[font=\scriptsize,below] at (H6_3) {$v_7$};
      \node[font=\scriptsize,above] at (H6_3) {\color{blue}3};
  \node[font=\scriptsize,below] at (H6_4) {$v_{8}$};
      \node[font=\scriptsize,above] at (H6_4) {\color{blue}2};
  \node[font=\scriptsize,below=1pt, xshift=3pt] at (H6_5) {$v_{9}$};
      \node[font=\scriptsize,xshift=-6pt,yshift=5pt] at (H6_5) {\color{blue}3};
   \node[font=\scriptsize,right] at (H6_6) {$v_{10}$};
       \node[font=\scriptsize,xshift=-6pt,yshift=-5pt] at (H6_6) {\color{blue}6};
  \node[font=\scriptsize,right] at (H6_12) {$v_{11}$};
      \node[font=\scriptsize,xshift=-6pt,yshift=-2pt] at (H6_12) {\color{blue}5};
      \node[font=\scriptsize,right] at (H6_13) {$w$};
        \node[font=\scriptsize,left] at (H6_13) {\color{blue}4};
\end{tikzpicture}\hspace{0.5cm}
 \begin{tikzpicture}[
  v2/.style={fill=black,minimum size=4pt,ellipse,inner sep=1pt},
  node distance=1.5cm,scale=0.8]

 \node[v2] (F2_1) at (0, 1) {};
    \node[v2] (F2_2) at (-1, 2) {};
    \node[v2] (F2_3) at (-1, 0) {};

    \node[v2] (F2_4) at (2, 1) {};
    \node[v2] (F2_5) at (3, 2) {};
    \node[v2] (F2_6) at (4, 1) {};
    \node[v2] (F2_7) at (3, 0) {};
    \node[v2] (F2_8) at (-2,1) {};

    \draw (F2_1) -- (F2_2)  -- (F2_8)-- (F2_3) -- (F2_1);

    \draw (F2_4) -- (F2_5) -- (F2_6) -- (F2_7) -- (F2_4);

    \draw (F2_1) -- (F2_4);

    \node[font=\scriptsize,above] at (F2_2) {$v_1$};
    \node[font=\scriptsize,below] at (F2_2) {\color{blue}3};
    \node[font=\scriptsize,above] at (F2_1) {$v_2$};
    \node[font=\scriptsize,xshift=-8pt,yshift=1pt] at (F2_1) {\color{blue}5};
    \node[font=\scriptsize,below] at (F2_3) {$v_6$};
    \node[font=\scriptsize,above] at (F2_3) {\color{blue}3};
     \node[font=\scriptsize,below] at (F2_8) {$v_5$};
    \node[font=\scriptsize,xshift=8pt,yshift=2pt] at (F2_8) {\color{blue}4};
    \node[font=\scriptsize,above] at (F2_4) {$v_3$};
    \node[font=\scriptsize,xshift=8pt,yshift=1pt] at (F2_4) {\color{blue}5};
    \node[font=\scriptsize,below] at (F2_7) {$v_{10}$};
    \node[font=\scriptsize,above] at (F2_7) {\color{blue}3};
      \node[font=\scriptsize,right] at (F2_6) {$v_{11}$};
    \node[font=\scriptsize,xshift=-8pt,yshift=2pt] at (F2_6) {\color{blue}3};

      \node[font=\scriptsize,above] at (F2_5) {$v_4$};
    \node[font=\scriptsize,below] at (F2_5) {\color{blue}3};
      \node[font=\scriptsize] at (1,-2) { (a) List $L_{H_6}''(v)$};
\end{tikzpicture}\hspace{0.3cm}
\begin{tikzpicture}[
  v2/.style={fill=black,minimum size=4pt,ellipse,inner sep=1pt},
  node distance=1.5cm,scale=0.8]

 \node[v2] (F2_1) at (0, 1) {};
    \node[v2] (F2_2) at (-1, 2) {};
    \node[v2] (F2_3) at (-1, 0) {};

    \node[v2] (F2_4) at (2, 1) {};

    \node[v2] (F2_8) at (-2,1) {};

    \draw (F2_1) -- (F2_2)  -- (F2_8)-- (F2_3) -- (F2_1);

    \draw (F2_1) -- (F2_4);

    \node[font=\scriptsize,above] at (F2_1) {$v_2$};
    \node[font=\scriptsize, xshift=-8pt,yshift=2pt] at (F2_1) {\color{blue}3};
    \node[font=\scriptsize,above] at (F2_4) {$v_3$};
    \node[font=\scriptsize,right] at (F2_4) {\color{blue}2};
    \node[font=\scriptsize,above] at (F2_2) {$v_1$};
    \node[font=\scriptsize,below] at (F2_2) {\color{blue}3};
     \node[font=\scriptsize,below] at (F2_8) {$v_5$};
    \node[font=\scriptsize,xshift=8pt,yshift=1pt] at (F2_8) {\color{blue}3};
     \node[font=\scriptsize,below] at (F2_3) {$v_6$};
    \node[font=\scriptsize,above] at (F2_3) {\color{blue}3};
     \node[font=\scriptsize] at (0,-2) {(b) List $L_{H_6}'''(v)$};
\end{tikzpicture}
\end{center}
 \caption{Subcase 3.1.  The numbers at vertices are the number of available colors.}\label{Case 3-1-H7}
\end{figure}
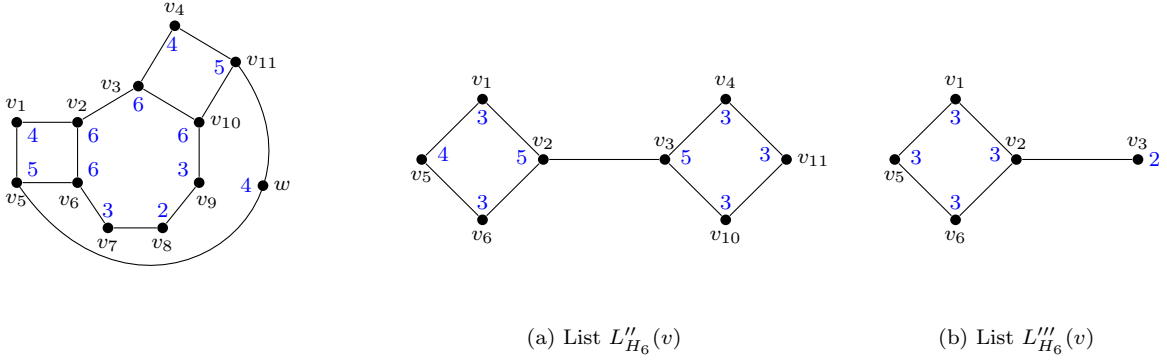

\medskip
\noindent {\bf Subcase 3.2:}
$v_4$ and $v_{5}$ (or $v_1$ and $v_{11}$ by symmetry)
have a common neighbor $w$, where $w \notin V(H_6)$.

Note that $w$ is not adjacent to any vertex $v_i \in V(H_6) \setminus \{v_4, v_5\}$ by the argument of Simplifying cases.
If $v_7$ and $v_{11}$ have a common neighbor $z$ with $z \notin V(H_6)$,
then $w, v_4, v_{11}, z, v_7, v_6, v_5$ form $H_5$, contradicting Lemma \ref{reducible-H6}.

Uncolor $w$ and then color $v_7$ and $w$ to save a color at $v_5$.
Next, follow the same procedure as Case 3.1.
Then
the vertices in $H_6$ can be colored from the list $L_{H_6}$
so that we obtain an $L$-coloring in $G^2$.

\medskip
\noindent {\bf Subcase 3.3:}
$v_7$ and $v_{11}$ (or $v_5$ and $v_{9}$ by symmetry)
have a common neighbor $w$, where $w \notin V(H_6)$.

Note that $w$ is not adjacent to any vertex $v_i \in V(H_6) \setminus \{v_7, v_{11}\}$ by the argument of Simplifying cases.
Color $v_7$ by a color $c_7 \in L_{H_6}(v_7)$
so that $|L_{H_6}(v_5) \setminus \{c_7\}| \geq 3$,
and follow the same procedure as Subcase 1.2.  Then
the vertices in $H_6$ can be colored from the list $L_{H_6}$
so that we obtain an $L$-coloring in $G^2$.

\medskip

\noindent {\bf Subcase 3.4:}
$v_4$ and $v_{8}$ (or $v_1$ and $v_{8}$ by symmetry)
have a common neighbor $w$, where $w \notin V(H_6)$.

Note that $w$ is not adjacent to any vertex $v_i \in V(H_6) \setminus \{v_4, v_8\}$ by the argument of Simplifying cases.
Follow the same procedure as Case 1 from the right hand side with $v_4$, $v_9$, and $v_{11}$.
The roles of $v_4, v_9, v_{11}$ are $v_1, v_7, v_5$, respectively.  Then
we can show that
the vertices in $H_6$ can be colored from the list $L_{H_6}$
so that we obtain an $L$-coloring in $G^2$.

Thus in Case 3,
we obtain a contradiction for the fact that $G$ is a counterexample.
This completes the proof of
Lemma \ref{reducible-H7}.
\end{proof}


\section*{Acknowledgments}
Seog-Jin Kim is supported by the National Research Foundation of Korea(NRF) grant funded by the Korea government(MSIT)(No.NRF-2021R1A2C1005785).
Xiaopan Lian is supported by National Natural Science Foundation of China No. 12371351.
Kenta Ozeki is supported by JSPS KAKENHI Grant Numbers 22K19773 and 23K03195.


\end{document}